\renewcommand\P[1]{\mathbb{P}\left(#1\right)}
\newcommand\E[1]{\mathbb{E}\left[#1\right]}
\newcommand{\bbZ}{\mathbb{Z}}
\newcommand{\bbR}{\mathbb{R}}
\newcommand{\calC}{\mathcal{C}}
\newcommand{\calD}{\mathcal{D}}
\newcommand{\calS}{\mathcal{S}}
\newcommand{\calE}{\mathcal{E}}
\newcommand{\bars}{\bar{s}}
\newcommand{\bbars}{\mathbf{\bar{s}}}
\newcommand\numberthis{\addtocounter{equation}{1}\tag{\theequation}}
\tikzset{cross/.style={cross out, draw=black, minimum size=2*(#1-\pgflinewidth), inner sep=0pt, outer sep=0pt},
cross/.default={1pt}}
\pgfplotsset{compat=newest,
    width=7.5cm,
    height=5.5cm,
    scale only axis=true,
    max space between ticks=25pt,
    try min ticks=5,
    every axis/.style={
        axis y line=left,
        axis x line=bottom,
        axis line style={thick,->,>=latex, shorten >=-.4cm}
    },
    every axis plot/.append style={thick},
    tick style={black, thick}
}
\tikzset{
    semithick/.style={line width=0.8pt},
}
\definecolor{orange}{RGB}{230, 159, 0}
\definecolor{blue1}{RGB}{86, 180, 233}
\definecolor{terquise}{RGB}{0, 158, 115}
\definecolor{red}{RGB}{213, 98, 0}
\definecolor{pink}{RGB}{204, 121, 167}
\definecolor{orange}{rgb}{0.90196078431, 0.62352941176, 0}
\definecolor{blue1}{rgb}{0.33725490196, 0.70588235294, 0.91372549019}
\definecolor{terquise}{rgb}{0, 0.61960784313, 0.45098039215}
\definecolor{red1}{rgb}{0.83529411764, 0.38431372549, 0}
\definecolor{pink}{rgb}{0.8, 0.47450980392, 0.65490196078}
\definecolor{yellow}{rgb}{0.8, 0.89411764705, 0.25882352941}
\definecolor{brown1}{rgb}{0, 0.21960784313, 0.31372549019}
\begin{document}


\RUNAUTHOR{Varma and Maguluri}

\RUNTITLE{Power-of-$d$ Choices Load Balancing}

\TITLE{Power-of-$d$ Choices Load Balancing in the Sub-Halfin-Whitt Regime}

\ARTICLEAUTHORS{%
\AUTHOR{Sushil Mahavir Varma}
\AFF{Industrial and Systems Engineering, Georgia Institute of Technology, \EMAIL{sushil@gatech.edu}, \URL{https://sites.google.com/view/sushil-varma/home}}
\AUTHOR{Francisco Castro}
\AFF{Anderson School of Management, University of California, Los Angeles, \EMAIL{francisco.castro@anderson.ucla.edu}, \URL{https://fcocastro.github.io}}
\AUTHOR{Siva Theja Maguluri}
\AFF{Industrial and Systems Engineering, Georgia Institute of Technology, \EMAIL{siva.theja@gatech.edu}, \URL{https://sites.google.com/site/sivatheja/}}
} 

\ABSTRACT{%
We consider the load balancing system under Poisson arrivals, exponential services, and homogeneous servers. Upon arrival, a job is to be routed to one of the servers, where it is queued until service. We consider the Power-of-$d$ choices routing algorithm, which chooses the queue with minimum length among $d$ randomly sampled queues. We study this system in the many-server heavy-traffic regime where the number of servers goes to infinity simultaneously when the load approaches the capacity. In particular, we consider a sequence of systems with $n$ servers, where the arrival rate of the $n^{\text{th}}$ system is $\lambda=n-n^{1-\gamma}$ for some $\gamma \in (0, 0.5)$, known as the sub-Halfin-Whitt regime. It was shown by [Liu Ying (2020)] that under Power-of-$d$ choices routing with $d \geq n^\gamma \log n$, the queue length behaves similarly to that of JSQ and that there are asymptotically zero queueing delays. 

The focus of this paper is to characterize the behavior when $d$ is below this threshold. We obtain high probability bounds on the queue lengths for various values of $d$ and large enough $n$. In particular, we show that when $d$ grows polynomially in $n$ but slower than in [Liu Ying (2020)], i.e., if $d$ is $\Theta\left((n^\gamma\log n)^{1/m})\right)$ for some integer $m>1$, then the asymptotic queue length is $m$ with high probability. This finite queue length behavior is similar to JSQ in the so-called nondegenerate slowdown regime (where $\gamma=1$). Moreover, if $d$ grows polylog in $n$, i.e., slower than any polynomial, but is at least $\Omega(\log (n)^3)$, the queue length blows up to infinity asymptotically. Such behavior is similar to that under JSQ in the so-called super slowdown regime ($\gamma>1$). We obtain these results by using an iterative state space collapse approach. We first establish a weak state-space collapse (SSC) on the queue lengths. Then, we bootstrap on weak SSC to iteratively narrow down the region of the collapse. After enough steps, this inductive refinement provides the bounds we seek. We establish these sequences of collapse using Lyapunov drift arguments. 
}%


\KEYWORDS{Load Balancing, Sub-Halfin-Whitt, Many Server Heavy Traffic, Iterative State Space Collapse}

\maketitle
\section{Introduction}


We study a load-balancing queuing system in which a single stream of jobs arrives governed by a Poisson process and is routed to one of the $n$ homogeneous servers, operating with a service rate equal to one. Each server is endowed with a queue of maximum buffer size $b$. 

 The job dispatcher uses a load balancing or routing algorithm to route arriving jobs to the queues. The literature considers many possible routing algorithms ranging from random routing to Joining the Shortest Queue (JSQ). In random routing, a new job joins a queue selected uniformly at random. On the other hand, a new job joins the shortest queue under JSQ. While random routing has no informational requirements---the dispatcher does not need to know any information about the system primitives and state---it does not provide optimal delay performance. In contrast, JSQ has more informational requirements---the dispatcher needs to know the system state to determine the shortest queue---but it has a proven near-optimal delay performance, e.g., see \cite{foschini_ht_jsq_diffusion}. In this paper, we consider an in-between policy known as Power-of-$d$ choices, in which once a job arrives, $d$ queues are sampled uniformly at random from the $n$ queues. Then, the job joins the shortest among the $d$ sampled ones. Note that $d=1$ is the same as random routing, and $d=n$ is the same as JSQ.

For a tractable analysis of the performance of the routing algorithm, the literature considers different asymptotic regimes, where the number of servers goes to infinity, the load on the system approaches its capacity, or both happen simultaneously. As we explain below in \Cref{asymptotic-regimes}, the performance of JSQ has been studied extensively under these regimes and combinations thereof. On the other hand, the performance analysis of Power-of-$d$ choices is comparatively limited. We contribute towards this deficiency by analyzing Power-of-$d$ under the sub-Halfin-Whitt asymptotic regime (see \Cref{asymptotic-regimes}). In this regime, the arrival rate of jobs increases with the number of servers at a rate of $\lambda = n -n^{1-\gamma}$ with $\gamma\in (0,0.5)$. Under this scaling, our goal is to characterize the system's asymptotic delay and steady-state behavior for growing choices, i.e. $d \rightarrow \infty$ as $n \rightarrow \infty$. 

It is known by \cite{sub_halfin_whitt_lei} that if $d$ is sufficiently large $(d \geq n^\gamma \log n)$, Power-of-$d$ behaves like JSQ, and the jobs experience zero asymptotic delays in steady-state. In particular, \cite{sub_halfin_whitt_lei} shows that the asymptotic queue lengths at each server are either zero or one. However, for smaller values of $d$, one expects the delay to be higher. In particular, we will later show that the queue lengths can be finite but greater than one or even asymptotically infinite depending on how $d$ scales with $n$. Thus the asymptotic queue lengths are qualitatively different from JSQ, i.e., they are not just zero-one but exhibit a rich steady-state distribution. Characterizing such behavior under all the scenarios warrants a new approach compared to \cite{sub_halfin_whitt_lei}. In this paper, we aim to provide a unified framework for almost all scales of $d$. Note that these results were first established in \cite{brightwell2012supermarket, brightwell2018supermarket} and our paper provides an independent and alternative proof revealing further insights. We provide a detailed comparison with \cite{brightwell2012supermarket, brightwell2018supermarket} in Section~\ref{sec: related_work}. Before presenting our main contributions, we briefly outline prior work on various asymptotic regimes.
\subsection{Many-Server-Heavy-Traffic Regimes}\label{asymptotic-regimes}
In general, it is challenging to determine the exact delay under a routing policy. So, it has been studied in various asymptotic regimes to gain insights into the optimality of routing policies. We now provide a comprehensive overview of these asymptotic regimes and the insights obtained in the literature for different routing policies. In turn, we explain how our results fit into the literature.

\textbf{Mean field.}  
In this regime, the number of servers increases to infinity while maintaining a constant load on each server. It has been shown in the literature \cite{mitzenmacher_power_of_d, mitzenmacher_thesis, vvedenskaya_power_of_2, lei_power_of_2_sub_halfin_whitt} that under the Power-of-$d$ choices algorithm, even for $d=2$, the steady-state queue lengths exhibits a double exponential tail as opposed to an exponential tail for random routing. On the other hand, under JSQ, it was shown \cite{mukherjee2018universality} that almost all the queues have length zero or one. Thus, all the jobs experience asymptotically zero delay. In addition, it was shown that the same behavior holds true for Power-of-$d$ with growing choices, i.e. $d \rightarrow \infty$, as $n \rightarrow \infty$.

\textbf{Classical heavy-traffic }. Another popular regime considered in the literature is the classical heavy-traffic regime. In this regime, the load converges to the capacity while the number of servers is constant. In particular, let $\epsilon \overset{\Delta}{=} 1-\lambda/n$ be such that $1-\epsilon$ quantifies the load on the system. Then, $\epsilon \downarrow 0$ is the heavy-traffic regime. This regime allows one to analyze the bottlenecks in the system. Under any routing algorithm, the queue lengths in this regime increase to infinity asymptotically. Under JSQ \cite{foschini_ht_jsq_diffusion, atilla_jsq_ht_drift, hurtado_ht_jsq_transform}, an appropriately scaled queue length converges to an exponential distribution with a mean depending on the variance of the arrivals and services. In addition, the limiting behavior of Power-of-$d$ for all $d \geq 2$ is identical to that of JSQ \cite{maguluri2014heavy}, while that of random routing is worse by a factor of $n$.

\textbf{Many-server-heavy-traffic}. One can also consider a hybrid of the mean field and the classical heavy-traffic regime, i.e., many-server-heavy-traffic regime, wherein the load increases to capacity simultaneously while the number of servers increases to infinity. Depending on the relative rate at which the load and the number of servers converge to their asymptotes, one can obtain different viewpoints on the performance of the routing algorithms. In particular, the mean-field and classical heavy traffic are two extreme ways to scale the system and provide different perspectives. For instance, there is a distinction between the performance of JSQ and Power-of-$d$ for small $d$ in the mean-field as opposed to an identical limiting behavior in classical heavy traffic. Considering many-server-heavy-traffic regimes provides us with a more comprehensive understanding of the performance of various load balancing algorithms, allows us to differentiate between their performance, and enables us to pick the right $d$ in Power-of-$d$ type algorithms. Studying such regimes was first initiated by Halfin-Whitt \cite{halfin1981heavy} in an M/M/$n$ queue.

More precisely, the parameterization of the arrival rate as $\lambda = n - \beta n^{1-\gamma}$ for some $\gamma \in (0, \infty)$ is defined as the many-server-heavy-traffic regimes. The parameter $\gamma$ determines the relative rate at which $\lambda$ and $n$ converge to their asymptotes. As $\gamma$ increases from 0 to $\infty$, the load on each server is more prominent, resulting in higher delays. Note that the mean-field regime is a special case with $\gamma=0$ and $\beta<1$, and the classical heavy traffic is interpreted as $\gamma \rightarrow \infty$. 

Now, we discuss the performance of JSQ for $\gamma \in (0, \infty)$ as summarized in Fig. \ref{fig: regimes}. For the sub-Halfin-Whitt regime, i.e., $\gamma \in (0, 0.5)$, similar to the mean-field regime, the delay experienced by the jobs is asymptotically zero. A phase transition occurs in the Halfin-Whitt regime, i.e., $\gamma =0.5$. In this regime \cite{eschenfeldt_halfin_whitt, braverman_halfin_whitt, banerjee_halfin_whitt, banerjee_halfin_whitt_insensitivity}, a vanishing fraction of jobs experiences a constant delay bounded away from zero. Similar results were proved by \cite{super_halfin_whitt_lei, zhisheng_debankur_super_halfin_whitt} for the super-Halfin-Whitt regime, i.e., $\gamma \in (0.5, 1)$. Another phase transition occurs at the nondegenerate slowdown (NDS) regime, i.e., $\gamma=1$. In this case \cite{nds_varun_walton}, incoming jobs experience a non-zero, finite delay. When $\gamma$ increases beyond one, it is called the super slowdown regime, and the limiting queue length at each server increases to infinity. The authors of \cite{hurtado_ht_jsq_transform} analyze the limiting stationary distribution of appropriately scaled queue lengths for $\gamma \geq 2$ and show that its behavior is similar to the classical heavy-traffic regime. The case $\gamma \in (1,2)$ was recently resolved in \cite{raj2024exponential}. \Cref{fig: regimes} (left) summarizes this discussion by illustrating the delay performance of the JSQ policy under different asymptotic regimes.

As the load in the system increases, one expects the delay under any routing algorithm to be higher. Consistent with the intuition, increasing delay with $\gamma$ is observed under JSQ, as previously discussed. On the other hand, one can fix a $\gamma$ and consider the delay performance as $d$ is varied in the Power-of-$d$ choices routing. Similar to how JSQ exhibits higher delay for more loaded regimes, one would expect the delay to increase as $d$ reduces. In this paper, we quantify such behavior exhibited by Power-of-$d$ for all $\gamma \in (0, 0.5)$. We now present our main contributions.

\begin{figure}[bth!]
    \centering
    \FIGURE{
    \begin{tikzpicture}[scale=0.85]
            \fill[blue!10] (0, 0) -- (8, 0) -- (8, -4) -- (0, 0);
            \fill[orange!15] (0, 0) -- (8, -4) -- (8, -8) -- (0, 0);
            \fill[green!25] (0, 0) -- (4, -8) -- (0, -8) -- (0, 0);
            \draw[very thick, blue] (0, 0) -- (8, -4);
            \draw[very thick, orange] (0, 0) -- (8, -8);
            \draw[thick] (0, 0) -- (4, -8);
             \draw[thick, ->] (-0.4, 0) -- (8,0);
            \draw[thick, <-] (0, 1) -- (0, -8);
            \node[rotate=-15] at (4.6, -1.25) (a) {Sub-Halfin-Whitt ($0$ delay) $\gamma \in (0, 0.5)$ \cite{sub_halfin_whitt_lei}};
            \node[rotate=-35] at (3.6, -2.6) (b) {Super-Halfin-Whitt $\gamma \in (0.5, 1)$ \cite{super_halfin_whitt_lei}};
            \node[rotate=-26, blue] at (5.2, -2.4) {Halfin-Whitt $(\gamma=0.5)$ \cite{eschenfeldt_halfin_whitt, braverman_halfin_whitt, banerjee_halfin_whitt, banerjee_halfin_whitt_insensitivity}};
            \node[rotate=-45, RedOrange] at (6, -5.6) {NDS: $\gamma = 1$ (Finite delay) \cite{nds_varun_walton}};
            \node[rotate=-53] at (3, -4.1) (c) {Super Slowdown ($\infty$ delay) $\gamma \in (1,2)$ \cite{raj2024exponential}};
            \node[rotate=-76] at (1.1, -4.6) (d) {Super Slowdown ($\infty$ delay) $\gamma \in [2,\infty)$ \cite{heavy_traffic_daniela}};
            \node at (2.8, 0.3) (e) {Mean Field $(\gamma = 0)$};
            \node[rotate=-90] at (-0.4, -3.8) (f) {Classical Heavy Traffic $(\gamma = \infty)$}; 
            \node[blue] at (8.2, -4.3) (j) {$\epsilon=\frac{1}{\sqrt{n}}$};
            \node[RedOrange] at (8.2, -8.2) (k) {$\epsilon=\frac{1}{n}$};
            \node[OliveGreen] at (4, -8.3) (k) {$\epsilon=1/n^2$};
             \node at (8.5, 0.3) (j) {$\log n$};
             \node at (0.6, 0.9) (k) {$\log\epsilon$};
             \node[rectangle, draw, black, minimum height=15] at (4, -9) {$d=n$ fixed (JSQ)};
        \fill[blue!10] (9.5, -8) -- (17.5, 0) -- (17.5, -3) -- (9.5, -8);
        \fill[orange!15] (9.5, -8) -- (17.5, -3) -- (17.5, -5) -- (9.5, -8);
        \fill[green!25] (9.5, -8) -- (17.5, -5) -- (17.5, -6.5) -- (9.5, -8);
        \draw[thick, ->] (9.3, -8) -- (17.5, -8);
        \draw[thick, ->] (9.5, -8.2) -- (9.5, 0);
        \draw[thick] (9.5, -8) -- (17.5, 0);
        \draw[thick] (9.5, -8) -- (17.5, -3);
        \draw[thick] (9.5, -8) -- (17.5, -5);
        \draw[thick] (9.5, -8) -- (17.5, -6.5);
        \node at (18.1, 0) (a) {JSQ};
        \node at (18.3, -3) (a) {$d=n^\gamma$};
        \draw (17.7, -5) -- (19, -5);
        \draw[thick, ->] (17.8, -5) -- (17.8, -4.2) node at (18.4, -4.4) {Pol};
        \draw[thick, ->] (17.8, -5) -- (17.8, -5.8) node at (18.4, -5.6) {Log};
        \node at (18.6, -6.5) (c) {$d=\log (n)^3$};
        \node[rotate=38] at (13.8, -4.5) (d) {Zero-Delay \cite{sub_halfin_whitt_lei, brightwell2018supermarket, brightwell2012supermarket}, [This Paper]};
        \node[rotate=26] at (14.5, -5.5) (e) {Finite-Delay \cite{brightwell2018supermarket, brightwell2012supermarket}, [This Paper]};
        \node[rotate=17] at (14.7, -6.5) (f) {Infinite-Delay \cite{brightwell2012supermarket} [This Paper]};
        \node[rotate=5] at (14.4, -7.6) (g) {Infinite-Delay [Open]};
        \node at (17.7, -8.3) (h) {$n$ increasing};
        \node at (10.8, 0) (i) {$d$ increasing};
         \node[rectangle, draw, black] at (13.5, -9) {$\gamma \in (0, 0.5)$ fixed};
    \end{tikzpicture}}
    {Performance of JSQ $(d=n)$ under many-server-heavy-traffic regimes $(\gamma \in [0, \infty])$, where $\epsilon = n^{-\gamma}$ (left) and performance of Power-of-$d$ for different choices of $d$ under the sub-Halfin-Whitt regime, i.e. $\gamma \in (0, 0.5)$ (right).
    \label{fig: regimes}}{}
\end{figure}
\subsection{Main Contributions}
Our focus is on understanding the performance of the Power-of-$d$ for different choices of $d$. Note that \Cref{fig: regimes} (left) provides the performance of JSQ, and augmenting it with Power-of-$d$ would correspond to adding a third dimension for $d$ as a function of $n$. The special case of JSQ as depicted in \Cref{fig: regimes} (left) corresponds to one slice of the three dimensional figure with $d=n$. 

In this paper, we restrict ourselves to the sub-Halfin-Whitt regime, i.e. $\gamma \in (0, 0.5)$, and consider a broad range of values of $d$. It was shown in \cite{sub_halfin_whitt_lei} that Power-of-$d$ with $d \geq n^\gamma \log n$ has an identical limiting behavior as JSQ. We go beyond this range and provide a quantitative distinction between JSQ and Power-of-$d$ by characterizing the performance of the Power-of-$d$ for $d<n^\gamma$. Our results are almost the same as in \cite{brightwell2012supermarket, brightwell2018supermarket}, but we prove them using a different approach. A detailed comparison with these works can be found in Section~\ref{sec: related_work}. A summary of the results in this paper is given in \Cref{fig: regimes} (right).

\textbf{Finite Delay:} First, we consider the case when $d = (n^{\gamma} \log n)^{1/m}$ for some positive integer $m$.
We show that the queue lengths exhibits the following behavior with high probability: most of the queues are of length $m$ and a vanishing fraction are either longer or shorter. In particular, we show that the fraction of queues with length less than $i$ is equal to $n^{-\gamma} d^{i-1}(1+o(1))$ for $i \leq m$ and the fraction of queues with length more than $m$ is at most $o(n^{-\gamma} d^{m-1})$ which is $o(1)$.
It is worth noting that these results are applicable for the pre-limit system as well, i.e. for all finite, large enough $n$ (and we provide explicit expressions for all the $o(\cdot)$ terms). These results imply that when $m \geq 2$, the queue lengths are non-zero but finite, behaving qualitatively similar to that of JSQ in NDS regime. 
However, a fundamental difference in behavior is that while our results show that the queue lengths are essentially concentrated around $m$ for Power-of-$d$ in sub-Halfin-Whitt regime, the limiting queue lengths 
of JSQ in NDS are spread over multiple values and the distribution has a nontrivial support.
Also note that, when we pick $m=1$, our result implies that the jobs experience zero asymptotic delay and the queue lengths are either zero or one. The result in this special case was first 
established in \cite{sub_halfin_whitt_lei}.

\textbf{Infinite Asymptotic Delay:} Now, we consider the case when $d$ is Poly-Log$(n)$ but is at least  $\Omega\left(\log (n)^3\right)$. Note that, Poly-Log$(n)$ is smaller than $d = (n^{\gamma} \log n)^{1/m}$ for any $m \in \bbZ_+$. We show that all the queue lengths are $\Theta(\log n/\log d)$ with high probability. This implies that the asymptotic queue lengths are infinite. Similar to the finite delay case, we characterize the fraction of queue lengths smaller or larger than $m$ for the pre-limit system. Note that, such a behavior is qualitatively similar to that of JSQ in the super slowdown regime. However, there is again a fundamental difference in behavior because while we show that the queue lengths concentrate around $\Theta(\log n/\log d)$ for Power-of-$d$, JSQ in the super slowdown regime has a large support.
Extending the result to the case when $d < \log (n)^3$ is an open future research direction.

\begin{table}[bth!]
    \TABLE{Power-of-$d$ choices for $\gamma \in (0, 0.5)$ \label{tab: main_result}}
    {\begin{tabular}{|c|c|c|c|}
        \hline
       Value of $d$  & Regime &  Queue Length & References \\
       \hline
       $d \geq n^\gamma \log n$  & Zero-Delay & $\approx 1$ &  This paper and \cite{sub_halfin_whitt_lei, brightwell2018supermarket, brightwell2012supermarket}\\
       Polynomial $(d=(n^{\gamma}\log n)^{1/m})$ & Finite-Delay & $\approx m = \Theta\left(\frac{\log n}{\log d}\right)$  & This paper and \cite{brightwell2018supermarket, brightwell2012supermarket} \\
       Poly-log and $d \geq \log (n)^3$  & Infinite-Delay & $\Theta\left(\frac{\log n}{\log d}\right)$ &  This paper and \cite{brightwell2012supermarket}\\
       $d \leq \log (n)^3$ & Infinite-Delay & $\Theta\left(\frac{\log n}{\log d}\right)$ & Open \\
       \hline
    \end{tabular}}{}
\end{table}

\textbf{Methodological Contribution:} In contrast to the prior work on load balancing that is based on fluid and diffusion limits (e.g., see: \cite{eschenfeldt_halfin_whitt, banerjee_halfin_whitt}), Stein's method (e.g. see: \cite{sub_halfin_whitt_lei, lei_power_of_2_sub_halfin_whitt}), transform method (e.g. see: \cite{hurtado_ht_jsq_transform}), and a combination of iterative SSC and Stein's method \cite{lei_coxian_2_sub_halfin_whitt, lei_coxian_k_sub_halfin_whitt}, our approach uses iterative SSC alone without the use of Stein's method. We first obtain a crude bound on the possible values of the queue lengths, i.e. a weak state space collapse. We then iteratively bootstrap from this weak SSC to obtain more and more refined SSC. This iterative refinement is inductively repeated ($m$ times) until a tight characterization of the steady-state queue lengths as described above is obtained.
Lyapunov drift-based arguments achieve each step of the refinement. 

Iterative SSC was used as an intermediate step in characterizing the limiting distribution of queue lengths in \cite{lei_coxian_2_sub_halfin_whitt, lei_coxian_k_sub_halfin_whitt} to study the case of $m=1$ with Coxian service times. Using their SSC methodology directly in our setting does not suffice as it results in only a crude bound (see Section \ref{sec: special_case_upper_bound} for detailed discussion), so further refinement is required to obtain tight queue length bounds. The novelty of our approach lies in independently constructing a sequence of Lyapunov functions which allows us to obtain tight queue length bounds simply by applying iterative SSC enough times. The main takeaway of our methodology is that iterative SSC is a powerful tool to analyze queueing systems in mean field types of regimes, i.e., the queueing system concentrates around the fixed point of the corresponding deterministic, dynamical system.


\subsection{Related Work} \label{sec: related_work}
The prior work that is closest to ours are \cite{amarjit_power_of_d_sub_halfin_whitt, sub_halfin_whitt_lei, brightwell2018supermarket}. In \cite{amarjit_power_of_d_sub_halfin_whitt}, the analysis for Power-of-$d$ was carried out for the `finite delay' regime, i.e. $d = (n^{\gamma} \log n)^{1/m}$ for $m \in \bbZ_+$, and a process level law of large numbers is established to show convergence of the queue length process to its mean. By observing the mean, it was noted that most of the queue lengths are $m$ in the limit. However, the lower order terms, i.e. the fraction of queues with length larger or smaller than $m$ is not characterized in this result. Although \cite{amarjit_power_of_d_sub_halfin_whitt} also provides a diffusion process that characterizes further fluctuations around the mean, the steady-state distribution of the diffusion process is not characterized. In addition, to conclude that these results holds for the steady-state of the pre-limit process, interchange of limits is required which is not established. If these two steps were completed, then the approach in \cite{amarjit_power_of_d_sub_halfin_whitt} would obtain the lower order terms. In contrast, by directly working with the steady-state quantities (as opposed to process level convergence), we  characterize the dominant lower order term and show that it is exactly $n^{-\gamma} d^{i-1}$ thereby obtaining a sharper characterization of the steady-state queue lengths distribution. Moreover, we also obtain bounds on the fluctuations around these lower-order terms. 

In addition, \cite{sub_halfin_whitt_lei} is also closely related to our result which shows that the asymptotic delay experienced by the jobs is zero under Power-of-$d$ for $d \geq n^\gamma \log n$. A key difference is that \cite{sub_halfin_whitt_lei}  characterizes all the moments of the total number of jobs in the system, whereas we present high probability tail bounds on the queue length distribution. It is worth noting that both of these bounds imply zero waiting probability in the steady state when $m=1$ (see Appendix~\ref{app: zero_waiting}). Another difference is in the methodology.
Note that the iterative SSC is a natural and powerful technical framework to establish that a stochastic system concentrates around a fixed point. As the deterministic, dynamical system is a good approximation of the stochastic behavior of the power-of-$d$ load balancing in the sub-Halfin-Whitt regime, we are able to construct a sequence of Lyapunov functions to establish tight upper and lower bounds on the queue lengths. Such an approach does not require the use of Stein's method combined with a weak SSC as in \cite{sub_halfin_whitt_lei}. We refer the reader to Section~\ref{sec: special_case_upper_bound} for a more detailed comparison between the proof of \cite{sub_halfin_whitt_lei} and ours for the special case of $m=1$.


The main result of \cite{brightwell2018supermarket} and its previous (arXiv) version \cite{brightwell2012supermarket} are closely related to our result. In addition to a few minor technical differences, the papers differ on the proof methodology. In particular, \cite{brightwell2018supermarket} considers the case of $m \notin \bbZ_+$, whereas we consider $m \in \bbZ_+$ along with a logarithmic function for our choice of $d$. Also, while \cite{brightwell2018supermarket} considers a wider range of values of $\gamma$, we consider a broader range of values of $d$, that is, we allow $d$ to be smaller than any polynomial (poly-log). Note that \cite{brightwell2012supermarket} considers the same range of $\gamma$ and $d$ as ours. 
Nonetheless, our proof methodology is different and reveals further insights. In particular, \cite{brightwell2018supermarket, brightwell2012supermarket} analyzes the drift of a sequence of Lyapunov functions in a finite time. As the analysis is carried out in finite time, \cite{brightwell2018supermarket, brightwell2012supermarket} have to ensure that the Lyapunov function stays small while the other Lyapunov functions in the sequence decrease. Such a complication is circumvented in our methodology as we take a steady-state approach. Also, our sequence of Lyapunov functions is completely different from that of \cite{brightwell2018supermarket, brightwell2012supermarket} providing an alternative proof and revealing geometric insights. In particular, while \cite{brightwell2018supermarket, brightwell2012supermarket} provides an elegant algebraic construction of the sequence of Lyapunov functions, our sequence is based on geometric intuition via the trajectory of the ODE approximation.


Now, we present a non-exhaustive overview of the literature on load balancing under the sub-Halfin-Whitt regime. JSQ and Power-of-$d$ for $d \geq n^\gamma \log n$ was analyzed in \cite{sub_halfin_whitt_lei}. This result was extended for more general settings in the literature: coxian-2 service distribution in \cite{lei_coxian_2_sub_halfin_whitt}, coxian-$k$ service distribution in \cite{lei_coxian_k_sub_halfin_whitt}, and parallel jobs arriving in the system in \cite{weina_sub_hafin_whitt_parallel_jobs}. In addition, Power-of-$d$ choices has been analyzed in \cite{lei_power_of_2_sub_halfin_whitt, Eschenfeldt_Gamarnik_sub_halfin_whitt_power_of_2} for $d=2$, and \cite{amarjit_power_of_d_sub_halfin_whitt} provides a transient analysis for growing sequences of $d$. The reader can refer to the survey paper \cite{debankur_survey} for a holistic review of the literature.

Lastly, we would like to point out that our results have a similar qualitative flavor as in \cite{jonckheere2018asymptotics}. In particular, for a given $m \in \bbZ_+$, \cite{jonckheere2018asymptotics} characterizes the critical load below which the blocking probability is very small for a load balancing system with finite buffer equal to $m$; under a state-dependent random routing policy.



\subsection{Notation}
The set of all positive integers (excluding zero) is denoted by $\bbZ_+$. For some $k \in \bbZ_+$, the set of numbers $\{1,2, \hdots, k\}$ is denoted by $[k]$. We use the shorthand $w.h.p.$ to denote ``with high probability''.

\section{Model}
Consider a load balancing system with $n$ homogeneous servers. A single stream of jobs arrive, governed by a Poisson process with rate $\lambda < n$. Upon arrival, the job is routed to one of the servers, where it waits in a queue before getting served. Each queue has a maximum buffer size $b \in \bbZ_+$. Preemption is not allowed and the job cannot move within queues. The service times for all servers are i.i.d. exponential random variables with rate $\mu=1$. An illustration of the model is given in Fig. \ref{fig: model}.
\begin{figure}[bth!]
    \FIGURE{
    \begin{tikzpicture}[scale=0.4]
        \draw[black, thick] (0, 0) -- (5, 0) -- (5, -2) -- (0, -2);
        \draw[black, thick] (6.1, -1) circle (1) node at (6.1, -1) {1};
        \draw[black, thick] (0, -3) -- (5, -3) -- (5, -5) -- (0, -5);
        \draw[black, thick] (6.1, -4) circle (1) node at (6.1, -4) {2};
        \draw[fill=black] (6.1, -6) circle (0.1);
        \draw[fill=black] (6.1, -6.5) circle (0.1);
        \draw[fill=black] (6.1, -7) circle (0.1);
        \draw[black, thick] (0, -8) -- (5, -8) -- (5, -10) -- (0, -10);
        \draw[black, thick] (6.1, -9) circle (1) node at (6.1, -9) {$n$};
        \node (rect) at (-5, -5) [draw,thick,minimum width=2cm,minimum height=1cm] {Load Balancer};
        \draw[black, thick, ->] (-10, -5) -- (rect.west) node at (-9.2, -4.4) {$\lambda$};
        \draw[black, thick, ->] (rect.east) -- (0, -1);
        \draw[black, thick, ->] (rect.east) -- (0, -4);
        \draw[black, thick, ->] (rect.east) -- (0, -9);
        \draw[black, thick, ->] (7.1, -1) -- (8.6, -1) node[anchor=south] {$\mu=1$};
        \draw[black, thick, ->] (7.1, -4) -- (8.6, -4) node[anchor=south] {$\mu=1$};
        \draw[black, thick, ->] (7.1, -9) -- (8.6, -9) node[anchor=south] {$\mu=1$};
    \end{tikzpicture}}{
    A homogeneous load balancing model with $n$-servers.
    \label{fig: model}}{}
\end{figure}
A natural state descriptor for the system is the number of jobs in each queue. However, 
it is mathematically more convenient to consider $\BFs \in (\bbZ_+ \cup \{0\})^b$ as the state descriptor. Here, $s_i$ is the number of queues with length at least $i$, and $b$ is the maximum buffer size. The state space is given by 
\begin{align*}
    \calS := \left\{\BFs \in (\bbZ_+ \cup \{0\})^b : s_{i_1} \leq s_{i_2} \leq n \quad \forall i_1 \geq i_2 \in [b] \right\}.
\end{align*}
In addition, we also denote the number of queues with at least 0 jobs by $s_0 = n$. Once a job arrives, $d$ queues are sampled uniformly at random, with replacement from $n$ queues. Then, the job is routed to the smallest among the $d$ sampled queues. This algorithm is known as Power-of-$d$ choices in the literature. Under this routing scheme, the process $\{\BFs(t): t \geq 0\}$ is a finite state-space, irreducible, continuous time Markov chain. Thus, the CTMC $\{\BFs(t): t \geq 0\}$ is positive recurrent and exhibits a unique stationary distribution. Denote by $\bbars$ a random variable with the same distribution as the stationary distribution of the CTMC.

As the exact analysis is challenging, we consider a many-server-heavy-traffic asymptotic regime, wherein the number of servers are scaled to infinity $(n \rightarrow \infty)$ and the arrival rate increases to the capacity $(\lambda/n \rightarrow 1)$. In particular, consider a sequence of load balancing systems parameterized by $n$. The arrival rate for the $n^{\textit{th}}$ system is given by $\lambda = n - n^{1-\gamma}$. In this paper, we are interested in the case of $\gamma \in (0, 0.5)$, known as the sub-Halfin-Whitt regime. In addition, our focus is on growing choices in Power-of-$d$, i.e. $d \rightarrow \infty$ as $n \rightarrow \infty$. The goal is to characterize the limiting steady-state distribution $\bbars^{(n)}$ as $n \rightarrow \infty$. In the rest of the paper, we suppress the dependence of $\bbars$ on $n$ for notational convenience.

\section{Results and Insights}
In this section, we develop intuition by considering an ODE approximation of the load-balancing system. This will guide the limiting behavior of the stochastic model. In particular, we expect the limiting stationary distribution of the stochastic model to concentrate around the fixed point of the ODE approximation. Note that, we do not directly work with the ODE to prove the result for the stochastic model. We only leverage intuition from the ODE approximation. To prove such a result, we find the region where the steady-state stochastic system resides with high-probability by iteratively narrowing down the possible regions of the state-space \cite{lei_coxian_2_sub_halfin_whitt, lei_coxian_k_sub_halfin_whitt}.
\subsection{Intuition: ODE approximation} \label{sec: mean_field_model}
To simplify the arguments in this section, consider $b=\infty$, i.e., the queues have infinite buffer capacity. Now, the evolution of $n^{\textit{th}}$ system can be approximated by abstracting out the stochasticity to obtain the following ODE:
\begin{align*}
    \frac{ds_i}{dt} = \lambda\left(\left(\frac{s_{i-1}}{n}\right)^d - \left(\frac{s_i}{n}\right)^d\right) - \left(s_i - s_{i+1}\right) \quad \forall i \in \bbZ_+. \numberthis \label{eq: fluid_model_diff_eqn}
\end{align*}
The rate of change of $s_i$ is the difference of the rate at which it increases and decreases. The first term on the RHS is the product of arrival rate of the customers $(\lambda)$ and the probability that the incoming customer will join a queue with length equal to $i-1$. This is equal to the rate at which $s_i$ is increasing. The rest of the terms is the product of the service rate $(\mu=1)$ and the number of queues $(s_i - s_{i+1})$ with queue length equal to $i$. This is equal to the rate at which $s_i$ is decreasing.

To obtain the fixed point of the dynamical system, substitute $ds_i/dt=0$ for all $i \in \bbZ_+$ resulting in a set of non-linear equations. After solving these equations, one obtains the following solution:
\begin{align*}
    \frac{s_i}{n} = \left(\frac{\lambda}{n}\right)^{\frac{d^i-1}{d-1}} \quad \forall i \in \bbZ_+. \numberthis \label{eq: og_fixed_point}
\end{align*}
The above suggests a candidate stationary distribution of the stochastic system. In \cite{mitzenmacher_power_of_d, vvedenskaya_power_of_2}, this intuition is made formal in  the mean field regime, i.e.,  for $\lambda = (1-\beta)n$ with $\beta \in (0,1)$, and $d=2$. Now, we consider the regime where $\lambda=n-n^{1-\gamma}$ for $\gamma \in (0, 0.5)$ and the routing is governed by Power-of-$d$ with $d \rightarrow \infty$ as $n \rightarrow \infty$. Then, \eqref{eq: og_fixed_point} can be approximated as follows:
\begin{align*}
    s_i &= n\left(1 - n^{-\gamma}\right)^{\frac{d^i-1}{d-1}} \overset{(a)}{\approx} n\left(1 - n^{-\gamma}\right)^{d^{i-1}} \overset{(b)}{\approx} \begin{cases}
    n - n^{1-\gamma}d^{i-1} \quad &\forall i \in [b] : n^{1-\gamma}d^{i-1} = o(n) \\
    o(n) \quad &\textit{otherwise}.
    \end{cases} \numberthis \label{eq: fixed_point_before_d}
\end{align*}
where $(a)$ follows by approximating $d^i-1 \approx d^i$ and $d-1 \approx d$ as $d$ scales to infinity. Next, $(b)$ follows by Taylor's series expansion up to the first order term. Now, define $m \in \bbZ_+$ to be the smallest integer such that the number of queues with length at least $m+1$ is $o(n)$, i.e. $s_{m+1}=o(n)$ and $s_m = \Theta(n)$. Then, we must have $n^{1-\gamma}d^m \approx n$ implying that $d \approx n^{\gamma / m}$. More precisely, our choice of $d$ is such that it satisfies $d = (2m n^\gamma)^{1/ m}\log (d)^{1/m}$ which is approximately equivalent to $d \approx n^{\gamma / m}$ when the lower order terms are ignored. For this value of $d$, from \eqref{eq: fixed_point_before_d}, we get
\begin{align*}
    s_i \approx \begin{cases}
    n - \frac{2 m n\log d}{d^{m-i+1}} \quad &\forall i \in [m] \\
    o(n) &\textit{otherwise}.
    \end{cases} \numberthis \label{eq: fixed_point_fluid_model}
\end{align*}
Fig. \ref{fig: fixed_point} illustrates the fixed point in terms of queue occupancy. We expect the stationary distribution of the stochastic model to concentrate around the above fixed point. The main contribution of our paper is to prove that the heuristic argument is indeed correct. We state the formal result in the next sub-section.
\begin{figure}[bth!]
    \FIGURE{
    \begin{tikzpicture}[scale=0.5]
        \draw[black, thick] (0, 0) circle (0.75);
        \draw[black, thick] (2, 0) circle (0.75);
        \draw[black, thick] (4, 0) circle (0.75);
        \draw[black, thick] (16, 0) circle (0.75);
        \draw[black, thick] (8, 0) circle (0.75);
        \draw[black, thick] (10, 0) circle (0.75);
        \draw[black, thick] (12, 0) circle (0.75);
        \draw[black, thick] (14, 0) circle (0.75);
        \draw[black, thick] (18, 0) circle (0.75);
        \draw[fill=black] (5.5, 0) circle (0.1);
        \draw[fill=black] (6, 0) circle (0.1);
        \draw[fill=black] (6.5, 0) circle (0.1);
        \draw[black, thick] (-0.75, 12) -- (-0.75, 1.25) -- (0.75, 1.25) -- (0.75, 12);
        \draw[black, thick] (1.25, 12) -- (1.25, 1.25) -- (2.75, 1.25) -- (2.75, 12);
        \draw[black, thick] (3.25, 12) -- (3.25, 1.25) -- (4.75, 1.25) -- (4.75, 12);
        \draw[black, thick] (15.25, 12) -- (15.25, 1.25) -- (16.75, 1.25) -- (16.75, 12);
        \draw[black, thick] (7.25, 12) -- (7.25, 1.25) -- (8.75, 1.25) -- (8.75, 12);
        \draw[black, thick] (9.25, 12) -- (9.25, 1.25) -- (10.75, 1.25) -- (10.75, 12);
        \draw[black, thick] (11.25, 12) -- (11.25, 1.25) -- (12.75, 1.25) -- (12.75, 12);
        \draw[black, thick] (13.25, 12) -- (13.25, 1.25) -- (14.75, 1.25) -- (14.75, 12);
        \draw[black, thick] (17.25, 12) -- (17.25, 1.25) -- (18.75, 1.25) -- (18.75, 12);
        \draw[fill=brown] (-0.5, 1.75) rectangle (0.5, 2.75);
        \draw[fill=brown] (-0.5, 3.25) rectangle (0.5, 4.25);
        \draw[fill=brown] (-0.5, 4.75) rectangle (0.5, 5.75);
        \draw[brown, very thick, dashed] (-0.48, 6.25) rectangle (0.48, 7.25);
        \draw[fill=brown] (-0.5, 7.75) rectangle (0.5, 8.75);
        \draw[fill=brown] (-0.5, 9.25) rectangle (0.5, 10.25);
        \draw[fill=brown] (-0.5, 10.75) rectangle (0.5, 11.75);
        \draw[fill=brown] (1.5, 1.75) rectangle (2.5, 2.75);
        \draw[fill=brown] (1.5, 3.25) rectangle (2.5, 4.25);
        \draw[fill=brown] (1.5, 4.75) rectangle (2.5, 5.75);
        \draw[brown, very thick, dashed] (1.48, 6.25) rectangle (2.48, 7.25);
        \draw[fill=brown] (1.5, 7.75) rectangle (2.5, 8.75);
        \draw[fill=brown] (1.5, 9.25) rectangle (2.5, 10.25);
        \draw[fill=brown] (1.5, 10.75) rectangle (2.5, 11.75);
        \draw[fill=brown] (3.5, 1.75) rectangle (4.5, 2.75);
        \draw[fill=brown] (3.5, 3.25) rectangle (4.5, 4.25);
        \draw[fill=brown] (3.5, 4.75) rectangle (4.5, 5.75);
        \draw[brown, very thick, dashed] (3.48, 6.25) rectangle (4.48, 7.25);
        \draw[fill=brown] (3.5, 7.75) rectangle (4.5, 8.75);
        \draw[fill=brown] (3.5, 9.25) rectangle (4.5, 10.25);
        \draw[black, thick] (5.25, 12) -- (5.25, 1.25) -- (6.75, 1.25) -- (6.75, 12);
        \draw[brown, very thick, dashed] (5.5, 1.75) rectangle (6.5, 2.75);
        \draw[brown, very thick, dashed] (5.5, 3.25) rectangle (6.5, 4.25);
        \draw[brown, very thick, dashed] (5.5, 4.75) rectangle (6.5, 5.75);
        \draw[brown, very thick, dashed] (5.48, 6.25) rectangle (6.48, 7.25);
        \draw[brown, very thick, dashed] (5.5, 7.75) rectangle (6.5, 8.75);
        \draw[brown, very thick, dashed] (5.5, 9.25) rectangle (6.5, 10.25);
        \draw[fill=brown] (15.5, 1.75) rectangle (16.5, 2.75);
        \draw[fill=brown] (15.5, 3.25) rectangle (16.5, 4.25);
        \draw[fill=brown] (7.5, 1.75) rectangle (8.5, 2.75);
        \draw[fill=brown] (7.5, 3.25) rectangle (8.5, 4.25);
        \draw[fill=brown] (7.5, 4.75) rectangle (8.5, 5.75);
        \draw[brown, very thick, dashed] (7.48, 6.25) rectangle (8.48, 7.25);
        \draw[fill=brown] (7.5, 7.75) rectangle (8.5, 8.75);
        \draw[fill=brown] (7.5, 9.25) rectangle (8.5, 10.25);
        \draw[fill=brown] (9.5, 1.75) rectangle (10.5, 2.75);
        \draw[fill=brown] (9.5, 3.25) rectangle (10.5, 4.25);
        \draw[fill=brown] (9.5, 4.75) rectangle (10.5, 5.75);
        \draw[brown, very thick, dashed] (9.48, 6.25) rectangle (10.48, 7.25);
        \draw[fill=brown] (11.5, 1.75) rectangle (12.5, 2.75);
        \draw[fill=brown] (11.5, 3.25) rectangle (12.5, 4.25);
        \draw[fill=brown] (11.5, 4.75) rectangle (12.5, 5.75);
        \draw[fill=brown] (13.5, 1.75) rectangle (14.5, 2.75);
        \draw[fill=brown] (13.5, 3.25) rectangle (14.5, 4.25);
        \draw[fill=brown] (17.5, 1.75) rectangle (18.5, 2.75);
        \node[align=left] at (22.45, 2.25) {$s_1 \approx n-\frac{2mn\log d}{d^m}=\lambda$};
        \draw[red, thin] (-1.25, 1.5) rectangle (26, 3);
        \node[align=left] at (21.7, 3.75) {$s_2\approx n-\frac{2mn\log d}{d^{m-1}}$};
        \draw[red, thin] (-1.25, 3) rectangle (26, 4.5);
        \node[align=left] at (21.7, 5.25) {$s_3\approx n-\frac{2mn\log d}{d^{m-2}}$};
        \draw[red, thin] (-1.25, 4.5) rectangle (26, 6);
        \node[align=left] at (22.15, 8.25) {$s_{m-1}\approx n-\frac{2mn\log d}{d^{2}}$};
        \draw[red, thin] (-1.25, 7.5) rectangle (26, 9);
        \node[align=left] at (21.8, 9.75) {$s_{m}\approx n-\frac{2mn\log d}{d}$};
        \draw[red, thin] (-1.25, 9) rectangle (26, 10.5);
        \node[align=left] at (21, 11.25) {$s_{m+1}=o(n)$};
        \draw[red, thin] (-1.25, 10.5) rectangle (26, 12);
    \end{tikzpicture}}
   {Illustration of the fixed point of the ODE approximation in terms of queue occupancy.
    \label{fig: fixed_point}}{}
\end{figure}
\subsection{Main Result} \label{sec: main_result}
Now, we present the main results of the paper below.
\begin{theorem} \label{theo: informal}
Let $\left\{m_n \in \bbZ_+ : n \in \bbZ_+\right\}$ be a sequence such that either $m_n \equiv m \in \bbZ_+$ or $m_n \rightarrow \infty$. Consider a load balancing model operating under Power-of-$\lfloor d \rfloor$ routing algorithm with $d = (2m_n n^\gamma)^{1 /m_n} \log (d)^{1/m_n}$. If further $d = \Omega(\log (n)^3)$ and $b = O(\log (n)^3)$, then with probability at least $1-\left(\frac{1}{n}\right)^{(m_n\log n)/9}$, for large enough $n$, we have
\begin{align*}
    \bars_i =\begin{cases} n - \frac{2m_n n\log d}{d^{m_n-i+1}} +o\left(\frac{2m_n n\log d}{d^{m_n-i+1}}\right) \quad &\forall i \in [m_n] \\
    o(n) &\textit{for } i = m_n+1\\
    o(1) &\textit{otherwise}.
    \end{cases}
\end{align*}
\end{theorem}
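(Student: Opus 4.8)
The plan is to carry out the iterative state-space-collapse program of \cite{lei_coxian_2_sub_halfin_whitt, lei_coxian_k_sub_halfin_whitt} directly on the stationary chain, using only the drift identity $\E{G V(\bbars)}=0$ for bounded $V$, where $G$ is the generator (arrivals send $\BFs\mapsto\BFs+e_i$ at rate $\lambda((s_{i-1}/n)^d-(s_i/n)^d)$, departures send $\BFs\mapsto\BFs-e_i$ at rate $s_i-s_{i+1}$, with $s_0=n$ and $s_{b+1}=0$). Choosing $V(\BFs)=s_i$ gives the level-$i$ rate balance $\E{\bar s_i-\bar s_{i+1}}=\lambda\,\E{(\bar s_{i-1}/n)^d-(\bar s_i/n)^d}$; summing from $i$ to $b$ and writing $g_j:=1-\bar s_j/n$ for the (random) gap fraction yields the exact relation
\[
  1-\E{g_i}=(1-n^{-\gamma})\Big(\E{(1-g_{i-1})^d}-\E{(1-g_b)^d}\Big),
\]
the stochastic counterpart of the recursion behind the fixed point \eqref{eq: fixed_point_fluid_model}, whose value at level $j$ I will denote $g^*_j$ (so $g^*_j=2m_n\log d\,/\,d^{\,m_n-j+1}$ for $j\le m_n$, and $d g^*_{j-1}=g^*_j$). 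The whole argument is an induction on $i$ that converts this relation into two-sided high-probability bounds on $g_i$.

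First I would establish a \emph{weak collapse}: from the drift of $V(\BFs)=\sum_i s_i$ (which gives $\E{\bar s_1}\le\lambda$) together with a coarse exponentially-weighted Lyapunov function, deduce that w.h.p.\ $\bar s_b/n$ is bounded away from $1$ — so $\E{(1-g_b)^d}$ is super-polynomially small and can be dropped from the recursion — and, crudely, that $\bar s_i=n(1-o(1))$ for $i\le m_n$ and $\bar s_{m_n+1}=o(n)$. The heart of the proof is then a \emph{collapse step}: a lemma that, assuming $g_{i-1}\in[g^*_{i-1}-\epsilon_{i-1},\,g^*_{i-1}+\epsilon_{i-1}]$ w.h.p., produces the same statement at level $i$ with an explicit and only slightly inflated $\epsilon_i$. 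Its proof has three parts. (a) \emph{Propagation through the $d$-th power}: on the good event $(1-g_{i-1})^d$ lies in a multiplicatively narrow band around $e^{-d g^*_{i-1}}=e^{-g^*_i}$, which is where the amplification by the exponent $d$ forces $\epsilon_{i-1}$ to be both $o(g^*_{i-1})$ and $o(1/d)$; on the rare complementary event one uses only $0\le(1-g_{i-1})^d\le1$. (b) \emph{Rate balance}: feed the resulting two-sided bound on $\E{(1-g_{i-1})^d}$ into the telescoped identity to obtain matching two-sided bounds on $\E{\bar s_i}$. (c) \emph{Concentration}: upgrade the mean bound to a high-probability one via a Lyapunov drift argument with an exponential test function centred at the target, e.g.\ $W_\pm(\BFs)=\exp(\pm\theta_i(s_i-\bar s^*_i))$ for a suitable rate $\theta_i$, checking that its drift is $\le -c\,W_\pm$ whenever $s_i$ leaves a band of width $O((\log n)/\theta_i)$ around $\bar s^*_i=n(1-g^*_i)$; since $\E{G W_\pm(\bbars)}=0$, this gives an exponential tail, and taking $\theta_i$ of order $\log n$ makes the per-level failure probability $n^{-\Omega(m_n\log n)}$.

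Running the collapse step for $i=1,\dots,m_n$, started from the exact value $g_0\equiv0$ and the weak collapse, pins $\bar s_i$ down to the claimed precision. Level $m_n+1$ is handled separately: there $d g^*_{m_n}=2m_n\log d\to\infty$, so $e^{-d g^*_{m_n}}=d^{-2m_n}$ and one only gets $\bar s_{m_n+1}=o(n)$; for $i\ge m_n+2$ the crude bound $\E{\bar s_i}\le\lambda\,\E{(\bar s_{i-1}/n)^d}$ together with $\bar s_{m_n+1}/n=o(1)$ and $d=\Omega((\log n)^3)$ forces $\bar s_i=O(n e^{-\Omega(d)})=o(1)$. A union bound over the $b=O((\log n)^3)$ levels — each collapse failing with probability $n^{-\Omega(m_n\log n)}$ — yields the stated probability. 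The main obstacle is part (c) together with the book-keeping in (a): the exponential Lyapunov functions must be designed level by level so that the drift is genuinely negative outside a band narrow enough to be admissible input to the next level's $d$-th-power step, and one has to control how the multiplicative slack compounds over the — possibly growing — $m_n$ iterations, keeping the accumulated factor $(1+o(1))^{m_n}=1+o(1)$; a collapse carrying a fixed constant factor $C>1$ of slack per level would degrade to $C^{m_n}$ and break down when $m_n\to\infty$.
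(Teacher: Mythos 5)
Your mean-level recursion (steps (a)--(b)) is sound, but the proof breaks where you upgrade mean bounds to high-probability bounds with single-coordinate test functions in a \emph{forward} pass over levels. The drift of $s_i$ is $\lambda\big((s_{i-1}/n)^d-(s_i/n)^d\big)-(s_i-s_{i+1})$, so for the upper tail it is nonnegative on every state with $s_{i+1}=s_i$ (the departure term vanishes and $s_{i-1}\ge s_i$), no matter how far $s_i$ sits above its target; hence no Lyapunov function of $s_i$ alone, exponential or otherwise, can have negative drift outside a band unless you already hold a w.h.p.\ upper bound on $s_{i+1}$ — which a forward induction has not yet produced. The lower tail fails symmetrically: when $s_{i+1}$ is small the departure rate $s_i-s_{i+1}$ can be of order $n$ and pushes $s_i$ further below its target, so controlling the new level requires either the bound at level $i+1$ or Lyapunov functions coupling several coordinates. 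This is exactly why the paper's upper-bound induction runs \emph{backward} (first $\sum_{l\ge m+2}\bars_l$, then $\bars_{m+1}$, then $\bars_m$ down to $\bars_1$, each step's drift using the level-$(i{+}1)$ upper bound inside the set $\calE$ of Lemma \ref{lemma: iterative_ssc}), and why the lower bound is built from $\min$-type functions such as $L_{ik}=\min\{s_i-\cdots,\ \cdots-\sum_{l=i+1}^k s_l\}$ ("either $s_{i-1}$ is small or $\sum_{l=i}^k s_l$ is large") followed by a drift analysis of the partial sum $L_{0k}$, rather than from coordinatewise drifts.

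Relatedly, your "weak collapse" — $\bars_i=n(1-o(1))$ for all $i\le m_n$ and $\bars_{m_n+1}=o(n)$ — is not a cheap preliminary obtainable from the drift of $\sum_i s_i$ plus a coarse exponential Lyapunov function: the total-mass drift cannot distinguish the true steady state from one in which all jobs sit in queues of length one, so it says nothing about how mass is distributed across levels. That statement is essentially Theorem \ref{theo: lower_bound} at leading order, and in the paper it consumes the bulk of the work (Lemmas \ref{lemma: base_case_lower_bound}--\ref{lemma: induction_step} with an outer forward induction on the level, inner backward inductions, and a bootstrapping pass that sharpens the constants from $6m^2$ and $9m^2$ to $2m$ precisely so that, as you correctly worry, the per-level slack does not compound over $m_n$ iterations). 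So the proposal identifies the right framework (iterative SSC, rate balance, drift concentration, union bound over $b$ levels, tails of size $n^{-\Theta(m_n\log n)}$) but is missing the structural ingredients that make the concentration steps go through: the backward ordering for upper bounds, the coupled min/partial-sum Lyapunov functions for introducing each new level's lower bound, and the bootstrap that controls accumulated error.
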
 
The result in Theorem \ref{theo: informal} is obtained by proving a high probability lower bound and a high probability upper bound separately on $\BFs$ in Theorem \ref{theo: lower_bound} and Theorem \ref{theo: upper_bound}, respectively. These are presented at the end of this section, wherein, we also explicitly characterize the $o(\cdot)$ terms.

Note that the above theorem considers $d$ as a solution of an implicit equation. However, 
one can obtain upper and lower bounds on $d$, matching up to a logarithmic term. In particular, we have $(2mn^\gamma)^{1 /m} \leq d \leq (2mn^\gamma)^{1 /m} \log (n)^{1/m}$. 
The above result shows that the stationary distribution of the stochastic model concentrates around the fixed point of the ODE approximation given by \eqref{eq: fixed_point_fluid_model}. To further understand the result, consider the limit as $n \rightarrow \infty$ to get
\begin{align*}
    \lim_{n \rightarrow \infty} \frac{\bars_i}{n} = \begin{cases}
    1 \quad & \forall i \in [m] \\
    0 &\textit{otherwise}.
    \end{cases}
\end{align*}
Thus, most of the queues have lengths equal to $m$ which implies that an incoming customer joins a queue with length $m-1$ with high probability. 
In particular, we have
\begin{align*}
    m = \gamma \frac{\log n}{\log d} + \frac{\log (2m\log d)}{\log d} = \gamma \frac{\log n}{\log d}\left(1 + o(1)\right) \leq \frac{\log n}{\log d} ,\numberthis \label{eq: m}
\end{align*}
where the last inequality holds for $n$ large enough. So, if $d$ is a polynomial in $n$, then $m$ is finite. On the other hand, if $d$ is smaller than any polynomial, then $m$ will increase to infinity as $n \rightarrow 
\infty$. We discuss such a phase transition in the queue length behavior below.
\subsubsection{Phase Transitions}
The limiting steady-state performance in this regime exhibits phase transitions as the values of different parameters like $\gamma$ and $d$ are varied. We outline two such phase transitions below.

\textbf{Phase Transition as $d$ varies:} It was shown in \cite{sub_halfin_whitt_lei} that if $d \geq n^{\gamma} \log n$, the delay experienced by the customers is asymptotically zero. This is consistent with our result as $d=n^{\gamma} \log n$ corresponds to the case of $m=1$ implying asymptotically zero-delay. Now, if the value of $d$ decreases beyond $n^\gamma$, one would expect the steady-state queue length distribution to be higher. According to Theorem \ref{theo: informal}, if $d$ is of the form $d = n^{\gamma /m}$ for some $m \geq 2$, the steady-state queue lengths are equal to $m$. In particular, if $d$ is a polynomial less than $n^{\gamma}$, then the waiting times are non-zero but finite. As discussed before, such a qualitative behavior is similar to that of JSQ in the NDS regime. Now, consider the case when $d$ is smaller than any polynomial, for instance, Poly-Log$(n)$. This corresponds to $d \approx n^{\gamma/m}$ with $m \approx \gamma \log n/\log d$ which increases to infinity as $n \rightarrow \infty$. In this case, the queue lengths are asymptotically infinite, which corresponds to infinite delay. As discussed before, such behavior is qualitatively similar to that of JSQ in the super slowdown regime.

To summarize, we characterize the limiting steady-state behavior of the stochastic model in Theorem \ref{theo: informal}. Based on this, we show that different orders of delays can emerge depending on the choice of $d$. In particular, zero-delay for $d \geq n^\gamma \log n$, finite-delay for polynomial $d(n)$, and infinite delay for poly-log $d(n)$. Such a phase transition is reminiscent of the phase transition of JSQ as the load increases from the mean field to the classical heavy traffic regime. To summarize, we characterize the limiting steady-state behavior of the stochastic model in Theorem \ref{theo: informal}. Based on this, we show that different orders of delays can emerge depending on the choice of $d$. In particular, zero-delay for $d \geq n^\gamma \log n$, finite-delay for polynomial $d(n)$, and infinite delay for poly-log $d(n)$. Such a phase transition is reminiscent of the phase transition of JSQ as the load increases from the mean field to the classical heavy traffic regime.

\textbf{Phase Transition as $\gamma$ varies:} It is well known \cite{vvedenskaya_power_of_2, mitzenmacher_power_of_d} that the queue length under the power of $d$ choices for $d \geq 2$ is approximately $\Theta(\log \log n/\log d)$ for the mean field regime, i.e. $\gamma = 0$. On the other hand, we show that the queue lengths are $m = \Theta(\gamma \log n/\log d)$ when $\gamma > 0$. In particular, we observe a phase transition in the queue length behavior as the value of $\gamma$ moves from $0$ to $>0$. In other words, as the value of $\gamma$ increases, the arrival rate increases resulting in a higher load on the system, which results in larger queue lengths.

Another phase transition is observed at $\gamma =0.5$. In particular, we show that $\frac{n-s_1}{n^{1-\gamma}} \overset{P}{\rightarrow} 0$ as $n \rightarrow \infty$ for $\gamma \in (0, 0.5)$. On the contrary, when $\gamma = 0.5$, \cite{eschenfeldt_halfin_whitt} shows that $(\frac{n-s_1}{\sqrt{n}}, \frac{s_2}{\sqrt{n}})$ converges weakly to a two-dimensional OU process under the JSQ policy, which is similar to $m=1$ for our case. Thus, our proof technique fails precisely at $\gamma=0.5$. In particular, iterative SSC is only capable of providing high probability upper and lower bounds on a stochastic process. These upper and lower bounds would only match if the appropriately scaled and centered stochastic process converges to a Dirac-delta as is the case with $\frac{n-s_1}{n^{1-\gamma}}$ when $\gamma \in (0, 0.5)$. However, for $\gamma = 0.5$ and $m = 1$, $\frac{n-s_1}{\sqrt{n}}$ exhibits a non-degenerate distribution in the limit, which cannot be accounted for using simply iterative SSC. As the proof is quite involved with several variables $(d, m)$ dependent on $n$, we simply impose the restriction of $\gamma < 0.5$. For $m>1$, we refer the readers to \cite{brightwell2018supermarket} that considers a broader range of $\gamma \in \left(0, 1/(1+1/m)\right)$ while restricting $d$ to be a polynomial in $n$.




\subsubsection{Matching Upper and Lower Bounds} We present two theorems characterizing matching high probability lower and upper bounds on $\BFs$. Taken together, these two theorems give us Theorem \ref{theo: informal}.
\begin{theorem} \label{theo: lower_bound}
Consider the same setup as Theorem \ref{theo: informal}. Then, there exists $n_{LB} \in \bbZ_+$ such that for all $n \geq n_{LB}$, we have
\begin{align*}
    \bars_i \geq n - 2m \frac{n \log d}{d^{m-i+1}} - 4m d^{i-1}\sqrt{m n}\log n -16m^3 \frac{n \log (d)^2}{d^{m-i+2}} \textit{ w.p. at least } 1 - \left(\frac{1}{n}\right)^{(m \log n)/5} \quad \forall i \in [m].
\end{align*}
\end{theorem}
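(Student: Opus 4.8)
\emph{Proof strategy.} It is convenient to work with $q_i := n - \bars_i$, the stationary number of queues of length at most $i-1$; the claim is equivalent to the simultaneous upper bounds $q_i \le 2m\,n\log d\, / d^{m-i+1} + 4m\, d^{i-1}\sqrt{mn}\,\log n + 16m^3\, n\log(d)^2/d^{m-i+2}$ for all $i \in [m]$, with the stated probability. The first step is to record the generator of the CTMC on these coordinates: $q_i$ jumps up by one at rate $\bars_i - \bars_{i+1}$ (a length-$i$ queue completing service) and down by one at rate $\lambda\bigl[(\bars_{i-1}/n)^d - (\bars_i/n)^d\bigr]$ (an arrival routed to a length-$(i-1)$ queue), with $\bars_0 := n$; losses at the full buffer are negligible once we know $\bars_b = o(1)$. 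Substituting $\bars_j = n - q_j$, the mean drift of $q_i$ is $(q_{i+1}-q_i) - \lambda\bigl[(1-q_{i-1}/n)^d - (1-q_i/n)^d\bigr]$, which vanishes precisely at the ODE fixed point \eqref{eq: fixed_point_fluid_model} and whose linearization (valid for $i\le m$, where $q_i = o(n)$) is $\approx q_{i+1} - d\,q_i$; this is the geometric-in-$i$ structure, with ratio $\approx d$, that the argument must reproduce.

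Following the iterative state-space-collapse template of \cite{lei_coxian_2_sub_halfin_whitt, lei_coxian_k_sub_halfin_whitt}, I would first establish a \emph{weak} collapse: a crude but high-probability region $\mathcal{R}_0$, obtained from one-step drift arguments that are easy because they are far from tight. Concretely, a drift bound on the number of busy servers $\bars_1$ (increase rate $\le\lambda$, decrease rate $\le \bars_1$) gives $q_1 = n - \bars_1 \le C n^{1-\gamma}\log n$ w.h.p., and a drift bound on $\sum_{j>m}\bars_j$ (an arrival reaches a queue of length $\ge m$ only with probability $(\bars_m/n)^d$, which is already small once $\bars_m$ is bounded away from $n$ by $\mathcal{R}_0$) gives $\bars_{m+1} = o(n)$ w.h.p. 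The role of $\mathcal{R}_0$ is only to pin down orders of magnitude so that the exponential linearization $(1-q_j/n)^d = e^{-dq_j/n}(1+o(1))$ may be used for $j \le m$.

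The core of the proof is then the iterative refinement. I would construct a nested family $\mathcal{R}_0 \supsetneq \mathcal{R}_1 \supsetneq \cdots \supsetneq \mathcal{R}_m$ in which each round tightens the bound on \emph{every} coordinate $q_i$ by (roughly) one power of $d$ — using the previous round's weaker bounds on the neighbouring coordinates $q_{i-1},q_{i+1}$ that enter the drift — so that $\mathcal{R}_m$ is exactly the target region. For the $k$-th refinement I would use a weighted Lyapunov function $V_k(\BFs) = \sum_{i=1}^{m} w_i\bigl(q_i - \tau_i^{(k)}\bigr)_+$ with geometrically decreasing weights $w_i \asymp d^{-i}$ and thresholds $\tau_i^{(k)}$ decreasing in $k$ towards the theorem's coefficients, chosen so that on $\mathcal{R}_{k-1}\setminus\mathcal{R}_k$ the telescoping of the consecutive-coordinate terms $q_{i+1}-q_i$ against $d(q_i-q_{i-1})$ leaves a strictly negative drift $\E{\Delta V_k \mid \BFs} \le -\epsilon_k$, while the weighted jump size stays $O(w_1)=O(1/d)$. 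Feeding this into the standard exponential-tail-from-drift estimate for stationary distributions, together with the inductive hypothesis $\P{\bbars\notin\mathcal{R}_{k-1}}\le\delta_{k-1}$, yields $\P{\bbars\notin\mathcal{R}_k}\le\delta_k$ with $\delta_k$ still super-polynomially small; after $m$ rounds and a union bound over the $\le b = O(\log(n)^3)$ coordinates and the $O(m)$ rounds one lands on the claimed probability $1-(1/n)^{(m\log n)/5}$. The additive slack $4m\, d^{i-1}\sqrt{mn}\,\log n$ in the conclusion is just a (non-tight) bound on the width into which this concentration argument localizes $q_i$: a coordinate with $\Theta(n)$-rate unit jumps fluctuates on an $O(\sqrt{n})$ scale, the $d^{i-1}$ is the geometric weight, and the $\log n$ buys the union bounds; one checks $a\ posteriori$ that, since $\gamma<1/2$, this slack and the $16m^3 n\log(d)^2/d^{m-i+2}$ term are both lower order than the main term $2m\,n\log d/d^{m-i+1}$, recovering the $o(\cdot)$ form of Theorem \ref{theo: informal}.

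The main obstacle, I expect, is the drift estimate in the refinement step, and in particular the near-cancellation of the two $\Theta(n)$ rates $\bars_i-\bars_{i+1}$ and $\lambda[(\bars_{i-1}/n)^d-(\bars_i/n)^d]$: both are of order $n$ (for $i$ near $m$), only their difference carries the restoring force, and the relevant $f_j:=(1-q_j/n)^d$ sits in the exponentially small regime near $j=m$ but near $1$ for $j<m$, so the regions $\mathcal{R}_k$, the weights $w_i$, and the thresholds $\tau_i^{(k)}$ must be calibrated to within the very $\log d$ factors that appear in the statement for the telescoping to produce a negative drift of the right order rather than being swamped by the error in the $e^{-dq_j/n}$ approximation or by the $i=1$ boundary coordinate. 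A related subtlety is that $q_{m+1}=n-\bars_{m+1}$ is itself $\Theta(n)$ and enters the drift of $q_m$ through the service term $\bars_m-\bars_{m+1}$, so the weak bound $\bars_{m+1}=o(n)$ has to be sharpened in tandem with the refinement — this is the point at which the lower bound and the companion upper bound, Theorem \ref{theo: upper_bound}, feed into one another — and the bookkeeping of how the $o(\cdot)$ errors propagate through the $m$ rounds must be done carefully to land exactly on the stated coefficients.
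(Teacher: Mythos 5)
Your high-level framing (iterative SSC via Lyapunov drift, the geometric-in-$d$ structure $q_i \approx q_{i+1}/d$, the $\sqrt{mn}\log n$ fluctuation scale) matches the paper's philosophy, but the concrete plan has gaps at exactly the points where the real work lies. First, your ``weak collapse'' $q_1 = n-\bars_1 \le C n^{1-\gamma}\log n$ does not follow from the one-step domination you describe: at that level the routing probability $1-(1-q_1/n)^d \approx d q_1/n$ gives a down-rate of order $d\,n^{1-\gamma}\log n \ll n$, while the up-rate $s_1-s_2$ can be of order $n$ when nothing is yet known about $s_2$, so there is no negative drift. The crude bound obtainable at the start is only $q_1 \lesssim m n\log d/d$ (this is the paper's base case, Lemma \ref{lemma: base_case_lower_bound}); the bound $q_1 \approx n^{1-\gamma}$ is essentially the $i=1$ case of the theorem itself and is only reached after all $m$ refinement rounds. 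Second, the refinement step — which you correctly identify as the crux, the near-cancellation of the two $\Theta(n)$ rates — is left unresolved: a single weighted-sum function $\sum_i w_i(q_i-\tau_i^{(k)})_+$ with $w_i\asymp d^{-i}$ is proposed but never shown to have negative drift, and the difficulty is real, since within a round the arrival term for coordinate $i$ needs $q_{i-1}$ already tightened while the service term needs $q_{i+1}$, and with only the previous round's bound on $q_{i+1}$ the naive per-coordinate comparison is off by a factor of $d$; the truncations $(\cdot)_+$ also break the clean telescoping you invoke. The paper resolves this with a different device: min-type Lyapunov functions coupling $s_i$ with the partial sums $\sum_{l=i+1}^k s_l$ (Lemmas \ref{lemma: outer_induction_weak_induction_step}, \ref{lemma: base_case_of_induction_hypothesis}), an inner backward induction over $i$, a drift analysis of the full sum $\sum_{l=1}^k s_l$ whose drift is $s_1-s_{k+1}-\lambda(1-(s_k/n)^d)$, and a two-stage bootstrap within each step (coefficients $6m^2, 9m^2$ improved to $2m$ in parts $c$--$d$) to land on the stated constants.

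Third, your proposed handling of $q_{m+1}$ — sharpening $\bars_{m+1}=o(n)$ ``in tandem'' with Theorem \ref{theo: upper_bound} — would be circular: in the paper the upper bound's proof invokes Theorem \ref{theo: lower_bound}, so the lower bound must be self-contained. It is, precisely because the paper works with partial sums: in the drift $s_1 - s_{k+1} - \lambda(1-(s_k/n)^d)$ the term $-s_{k+1}$ has the favorable sign and is simply dropped ($s_{k+1}\ge 0$), so no upper bound on $\bars_{m+1}$ is ever needed for this theorem. In short, your proposal states the right goals and the right orders of magnitude, but the initial collapse is wrong as claimed, the key drift estimate is a placeholder rather than an argument, and one ingredient you lean on would introduce a circular dependency.
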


\begin{theorem} \label{theo: upper_bound}
Consider the same setup as Theorem \ref{theo: informal}. Then, there exists $n_{UB} \in \bbZ_+$, such that for all $n \geq n_{UB}$, we have
\begin{align*}
   \bars_i &\leq n - 2m \frac{n\log d}{d^{m-i+1}} + 19m d^{i-1}\sqrt{m n}\log n + 49m^3 \frac{n \log (d)^2}{d^{m-i+2}}+\frac{n^{1-\gamma}}{d^{m-i}}\mathbbm{1}\left\{m > 1\right\}  \ \forall i \in [m] \\
    \bars_{m+1} &\leq 18m d^{m-1}\sqrt{m n}\log n + 48m^3 \frac{n \log (d)^2}{d^{2}}+ n^{1-\gamma}\mathbbm{1}\left\{m >1\right\} \\
    \sum_{l=m +2}^b \bars_{l} &\leq 1.
\end{align*}
each one with probability at least $1-\left(\frac{1}{n}\right)^{(m\log n)/9}$.
\end{theorem}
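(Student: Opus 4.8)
\emph{Proof plan.}
The plan is to prove Theorem~\ref{theo: upper_bound} by the iterative state-space collapse (SSC) scheme announced in the introduction. I will construct a nested family of ``boxes'' $\calB_0 \supseteq \calB_1 \supseteq \cdots \supseteq \calB_m$ in the state space $\calS$, each of the form ``$\bars_i$ lies in a prescribed interval for every $i \in [b]$'', such that $\P{\bbars \in \calB_k}$ is close to $1$ for each $k$, and $\calB_m$ is exactly the region claimed by the theorem. The only analytic input is the stationarity identity $\E{GV(\bbars)} = 0$, where $G$ is the generator of the CTMC $\{\BFs(t)\}$ --- so the drift of the coordinate $s_i$ is the right-hand side of \eqref{eq: fluid_model_diff_eqn} --- applied to well-chosen test functions $V$ (linear, quadratic, and exponential functions of carefully chosen linear combinations of the $s_i$). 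Each refinement $\calB_k \to \calB_{k+1}$ follows by splitting this identity as $\E{GV(\bbars)\mathbbm{1}\{\bbars \in \calB_k\}} + \E{GV(\bbars)\mathbbm{1}\{\bbars \notin \calB_k\}} = 0$, estimating the first expectation from the shape of $\calB_k$, and absorbing the second using the small probability of $\{\bbars \notin \calB_k\}$ together with the fact that $GV$ is at most a fixed polynomial in $n$ on all of $\calS$ (here $b = O(\log(n)^3)$ keeps this polynomial controlled). A linear $V$ produces statements about $\E{\bars_i}$; a quadratic or exponential $V$ upgrades them to the two-sided high-probability bounds in the theorem.

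\emph{Weak SSC (the box $\calB_0$).} Telescoping the identity for $V = s_j$ over $j \geq i$ gives the exact conservation law $\E{\bars_i} = \lambda\,\E{(\bars_{i-1}/n)^d - (\bars_b/n)^d}$, so in particular $\E{\bars_1} \leq \lambda$ and $\E{\sum_{l=1}^b \bars_l} \leq b\lambda$. From these crude estimates, an initial concentration bound for $\bars_1$ (an exponential-drift argument on the idle-server count $n - s_1$), and the double-exponential contraction of Power-of-$d$ --- $\bars_i/n \leq 1 - c$ forces the arrival rate into level $i+1$ below $\lambda e^{-cd}$ and hence drives $\bars_{i+1}$ down --- I obtain a coarse location of the stationary state: with high probability $\bars_i = n - \Theta(n\log d/d^{m-i+1})$ for $i \leq m$, with loose constants, and $\bars_l = o(n)$ for every $l > m$. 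This is $\calB_0$; in particular it already pins $\bars_m$ far enough below $n$ that $(\bars_m/n)^d$ is small.

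\emph{Iterative refinement.} Inside $\calB_0$ I sharpen the bounds one level at a time, $i = 1, \dots, m$ and then $i = m+1$. Given the already-refined upper bound $\bars_{i-1} \leq c_{i-1} + \mathrm{err}_{i-1}$ with center $c_{i-1} = n - 2mn\log d/d^{m-i+2}$, I plug it into the conservation law, drop the negative $(\bars_b/n)^d$ term (legitimate for an upper bound), and expand $(1-x)^d$ with $x = (n - \bars_{i-1})/n$: the leading order yields the main term $n - 2mn\log d/d^{m-i+1}$, the higher-order terms together with $\mathrm{err}_{i-1}$ --- amplified by a factor of order $d$ on passing through the $d$-th power --- yield the $m d^{i-1}\sqrt{mn}\log n$ and $m^3 n\log(d)^2/d^{m-i+2}$ terms, and the uncancelled lower-order residue accounts for the $n^{1-\gamma}d^{-(m-i)}\mathbbm{1}\{m>1\}$ piece. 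The linearization of $x \mapsto (1-x)^d$ is legitimate because the $\sqrt n$-scale fluctuation of $\bars_{i-1}$ enters $(\bars_{i-1}/n)^d$ multiplied by $d/n$ and $d \asymp n^{\gamma/m} \ll \sqrt n$ since $\gamma < 1/2$ --- this is exactly where the sub-Halfin Whitt restriction is used. The passage from this mean bound to the high-probability bound is obtained by re-running the drift argument with an exponential test function of the relevant coordinate, truncated to $\calB_0$, which yields a sub-Gaussian-type tail $\P{\bars_i - c_i > t} \leq \exp(-\Theta(t^2/(mn)))$ and hence, for $t$ of the stated order, the error term and the failure probability $n^{-\Theta(m\log n)}$. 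Running this for $i = 1, \dots, m$ gives the first assertion of the theorem; running it once more for $i = m+1$, now starting from the sharpened bound on $\bars_m$ (so that $\lambda(\bars_m/n)^d \approx n/d^{2m}$ up to the same corrections), gives the bound on $\bars_{m+1}$; and one final application of the contraction, summed over the $O(\log(n)^3)$ levels above $m+1$, collapses $\sum_{l=m+2}^b \bars_l$ below $1$.

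\emph{Main obstacle.} The delicate part is carrying the two error sources through the iterations --- of which there are up to $m$, and $m$ may itself grow like $\gamma\log n/\log d$ --- without either blowing up. Each refinement pays a factor in the failure probability to absorb $\{\bbars \notin \calB_k\}$, and, more importantly, each level's error is multiplied by roughly $d$ when it is carried through the $d$-th power into the next level's drift, so the centers $c_i$ and the widths of the intervals must be engineered so that this amplification is exactly compensated level by level. Because $m$ can grow, none of the constants may be absorbed into $O(\cdot)$; tracking them explicitly is precisely what produces the $m d^{i-1}\sqrt{mn}\log n$ and $m^3 n\log(d)^2/d^{m-i+2}$ terms and the $(1/n)^{(m\log n)/\Theta(1)}$ shape of the failure probabilities, the mild worsening of the exponent (from $9$ to $8$ to $7$) recording the additional union bounds incurred on passing from the levels $i \leq m$, to level $m+1$, to the tail $\sum_{l \geq m+2}\bars_l$.
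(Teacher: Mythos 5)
There is a genuine gap in the step that upgrades your mean bounds to the stated high-probability bounds, and it is tied to the \emph{direction} of your sweep. Your forward recursion via the conservation law $\E{\bars_i}=\lambda\,\E{(\bars_{i-1}/n)^d-(\bars_b/n)^d}$ does produce the correct centers $c_i=n-2mn\log d/d^{m-i+1}$ in expectation, but since $c_i\approx n$, Markov-type reasoning cannot turn $\E{\bars_i}\le c_i+\mathrm{err}$ into a $n^{-\Theta(m\log n)}$ tail; you must exhibit negative drift of some Lyapunov function on the region $\{s_i> c_i+t\}$ (intersected with a high-probability set). The drift of the coordinate $s_i$ is $\lambda\left((s_{i-1}/n)^d-(s_i/n)^d\right)-(s_i-s_{i+1})$: the departure term is $s_i-s_{i+1}$, so certifying negative drift just above $c_i$ requires an upper bound on $s_{i+1}$ with essentially the \emph{sharp} constant $2m$. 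In your scheme, when you refine level $i$ the level above carries only the ``loose constants'' of $\calB_0$; if $s_{i+1}\le n-Cn\log d/d^{m-i}$ with $C\ll 2m$, the drift at $s_i=c_i+t$ can remain positive until $t\gtrsim (2m-C)n\log d/d^{m-i}$, so the exponential test function only reproduces the loose bound and no sharpening occurs. Switching to the tail sum $\sum_{l\ge i}s_l$ removes the $s_{i+1}$ term from the drift but creates the converse problem: its level sets do not control $s_i$ without sharp upper bounds on the levels above $i$ (or a $b$-factor loss). This is exactly why the paper runs the upper-bound induction \emph{backward}: it first nails levels $m+1$ and $m$ through the min-type Lyapunov functions $U_j=\min\{U_j^{(1)},U_j^{(2)}\}$ of Lemma \ref{lemma: base_case_and_beyond_for_upper_bound} and Claim \ref{claim: upper_bound} (leaning heavily on the already-proved lower bounds of Theorem \ref{theo: lower_bound}), refines $\bars_{m+1}$ in two passes using Lemma \ref{lemma: very_light_tail} to strip a factor of $b$, and only then descends through Lemma \ref{lemma: induction_step_upper_bound}, where the drift at level $j$ uses the sharp bound on $s_{j+1}$ from the previous (higher) step while bounding the inflow trivially by $s_{j-1}\le n$.

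A secondary but related issue: the upper-bound half of your coarse box $\calB_0$ (``$\bars_i\le n-\Theta(n\log d/d^{m-i+1})$ w.h.p.\ for all $i\le m$, loose constants'') is not obtainable from the crude estimates you cite. The conservation law plus Markov controls expectations, not lower tails of $n-\bars_i$, and in the paper even this loose-constant statement is essentially the content of the hardest part of the argument (the master lemma with its inner backward induction). So as written the proposal is circular at its starting point and stalls at each refinement; the missing ingredient is the backward ordering together with the min-type Lyapunov constructions (or some equivalent device coupling a level to the levels above it), not additional bookkeeping of constants.
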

We prove Theorem \ref{theo: lower_bound} in Section \ref{sec: lower_bound_general_case} and Theorem \ref{theo: upper_bound} in Section \ref{sec: general_case_upper_bound}. Our result only holds true for $b \leq \log (n)^3$ for technical reasons discussed in Section \ref{sec: general_case_upper_bound}. However, note that, since the queue lengths are of size $m$ w.h.p. which is  $o(\log n)$,  the finite buffer requirement $b \leq \log (n)^3$ is inconsequential. So, we expect that the result holds true for all $b \in \bbZ_+$. In particular, by \eqref{eq: m}, we have
\begin{align*}
    m = \Theta\left(\frac{\log n}{\log d}\right) = o(\log n),
\end{align*}
where the last equality follows as $d = \Omega(\log (n)^3)$. 

The terms $4m d^{i-1}\sqrt{m n}\log n$, $16m^3 \frac{n \log (d)^2}{d^{m-i+2}}$, and $\frac{n^{1-\gamma}}{d^{m-i}}\mathbbm{1}\left\{m > 1\right\}$ in Theorem \ref{theo: lower_bound} and similar terms in Theorem \ref{theo: upper_bound} are lower order terms compared to the leading term $2m \frac{n \log d}{d^{m-i+1}}$. We verify it by considering their ratio as follows:
\begin{align*}
    &\frac{4m d^{i-1}\sqrt{m n}\log n + 16m^3 \frac{n \log (d)^2}{d^{m-i+2}}+\frac{n^{1-\gamma}}{d^{m-i}}\mathbbm{1}\left\{m > 1\right\}}{2m \frac{n \log d}{d^{m-i+1}}} = \frac{2\sqrt{m}d^{m}\log n}{\sqrt{n}\log d} + \frac{8m^2\log d}{d} + \frac{d n^{-\gamma}\left\{m > 1\right\}}{2m \log d} \\ 
    \overset{(a)}{\leq}{}& 4n^{\gamma - 0.5}\log (n)^{2.5} + \frac{8\log (n)^2 \log d}{d} + (2m \log d)^{1/m-1}n^{-\gamma(1-1/m)}\left\{m > 1\right\}  \overset{(b)}{\rightarrow} 0, \numberthis \label{eq: lower_order_terms}
\end{align*}
where $(a)$ follows as $d^m = 2m n^\gamma \log d$ and $m=o(\log n)$. Next, $(b)$ follows by substituting $d = \Omega(\log (n)^3)$ and noting that $\gamma < 0.5$. This shows that the rate of increase of $d$ should be large enough to assure that the bound obtained in Theorem \ref{theo: lower_bound} concentrates around the fixed point \eqref{eq: fixed_point_fluid_model}. One can observe that even $d \approx \log (n)^2$ suffices to ensure $(b)$ holds. However, we require a slightly stronger condition due to certain intermediate bounds in the proof. To extend the result to $d < \log (n)^2$, one needs to improve the lower order terms obtained in Theorem \ref{theo: lower_bound}. We leave this as a possible future work and provide heuristic calculations to understand the correct lower-order scaling in Appendix~\ref{app: lower_order_terms}.

\subsection{Iterative State Space Collapse} \label{sec: iterative_ssc}
The central idea to prove Theorem \ref{theo: informal} is the iterative state space collapse framework based on drift analysis which was first developed in \cite{lei_coxian_2_sub_halfin_whitt, lei_coxian_k_sub_halfin_whitt}. As the stochastic system is expected to concentrate around the fixed `point' of the ODE approximation, we keep slicing off the state space until the stationary distribution is implied to live in a ball around the fixed point. In particular, the stationary distribution is iteratively shown to collapse to smaller regions of the state space. Each step of the iteration is achieved by analyzing drift of a carefully engineered Lyapunov function. We present an intuitive overview of the framework and defer the details to Appendix \ref{app: iterative_ssc}

\textbf{Weak SSC:} We start by proving a weak SSC. Let $V_1(\cdot)$ be a Lyapunov functions with $\Delta V_1(\BFs) \leq -\gamma$ for $\BFs \in \calS$ such that $V_1(\BFs) \geq B$ for some $\gamma, B \in \bbR_+$. Then, by standard drift arguments \cite[Theorem 1]{bertsimas_tail_bounds}, we obtain that $\P{V_1(\bbars) \geq B+j}$ is very small for a large enough $j$. This implies that $\bbars$ collapses to the set $\calE_1 = \{V_1(\BFs) \leq B+j\} \subseteq \calS$. 

\textbf{Refining the SSC:} Now, consider the subset of the state space $\calE_1 \subseteq \calS$ where $\bbars$ resides with high probability as shown in the previous step. We analyze the drift of another Lyapunov function $V_2(\cdot)$ restricted to the set $\BFs \in \calE_1$ such that $V_2(\BFs) \geq B$. As $\calE_1 \subseteq \calS$, one would obtain a stronger upper bound on the drift of $V_2(\cdot)$ as opposed to the bound over all whole state-space. Iterative SSC \cite[Lemma 10]{lei_coxian_2_sub_halfin_whitt} framework says that, as long as $\P{\bbars \notin \calE_1} \approx 0$, negative drift of $V_2(\cdot)$ over $\calE_1$ implies a high probability bound on the steady-state distribution. This implies that $\bbars$ now collapses to the set $\calE_2 = \calE_1 \cap \{V_2(\BFs) \leq B+j\}$. A key takeaway is that, the SSC framework poses a trade-off between relaxing the negative drift condition to $\BFs \in \calE_1$ and the steady-state probability $\P{\bbars \notin \calE_1}$. One can carefully negotiate this trade-off to obtain meaningful results.

\textbf{Further Refinements:} The refinement can be repeated multiple times. Consider a family of Lyapunov functions $\{V_k(\BFs): k \in \bbZ_+\}$ and define
\begin{align*}
    \calE_k = \calE_{k-1} \cap \{V_k(\BFs) \leq B + j\}.
\end{align*}
Now, we inductively analyze the drift of $V_{k+1}(\BFs)$ for $\BFs \in \calE_k$ such that $V_{k+1}(\BFs) \geq B$. By iterative SSC framework, negative drift of $V_{k+1}(\BFs)$ implies the high probability bound $\P{V_{k+1}(\BFs) \leq B+j} \approx 1$. Thus, after $k+1$ iterations, the region of SSC shrinks to $\calE_{k+1} = \calE_k \cap \{V_{k+1}(\BFs) \leq B+j\}$. In further sections, we use this inductive framework to prove Theorem \ref{theo: lower_bound} and Theorem \ref{theo: upper_bound}.

Lemma \ref{lemma: iterative_ssc} in Appendix \ref{app: iterative_ssc} formalizes the above intuition and is the workhorse of our proof, used to show each step of the SSC refinement. Our proof is tailored to analyze a queueing system in the steady state allowing us to greatly simplify the methodology compared to \cite{brightwell2018supermarket}. In particular, each step of our refinement directly implies high probability bounds on the steady-state distribution of the queue length $(\BFs)$ process. On the other hand, \cite{brightwell2018supermarket} (see Lemma 3 in their paper) essentially works with a transient version of Lemma \ref{lemma: iterative_ssc}, so they need to worry about any given Lyapunov function to stay small while the other Lyapunov functions decrease, further complicating the proof. Another difference is that our sequence of Lyapunov functions is completely different from that of \cite{brightwell2018supermarket} providing an alternative proof and revealing geometric insights. In particular, while \cite{brightwell2018supermarket} provides an elegant algebraic construction of the sequence of Lyapunov functions, our sequence is based on geometric intuition via the trajectory of the ODE approximation.

\section{Lower Bound}
In this section, we prove Theorem \ref{theo: lower_bound}, by iteratively showing high probability lower bounds on $\{s_i\}_{i \in [m]}$ based on iterative SSC described in the previous section. At a high level, starting with an empty system, we prove a high probability lower bound on $s_1$ as most of the incoming jobs would join an empty queue. Further, using the lower bound on $s_1$, we obtain a lower bound on $s_2$. This procedure is continued for $\{s_i\}_{i \in [m]}$. As most of the proof is algebraic, we first focus on the special case of $m=2$ to provide intuition behind the methodology.
\subsection{Special case \texorpdfstring{$(m=2)$}{}}
\subsubsection{Approximate ODE Trajectory}
We start by analyzing the trajectory of the ODE approximation for $m=2$. The main idea is that the stochastic model would follow a noisy sample path around the ODE trajectory. We present a cartoon of the approximate ODE trajectory based on the differential equations given by \eqref{eq: fluid_model_diff_eqn} in Fig. \ref{fig: fluid_model_trajectory}. The model is initialized by an all-empty system. 

\textbf{Trajectory} $\mathbf{(o) - (a)}$: Each incoming job joins an empty queue until $s_1 = n - n/d$, as there is a surplus of empty queues. In particular, if $s_1 \leq n - n\log d/d$, then the probability of sampling at least one empty queue $\left(1 - (s_1/n)^d\right)$ is almost one. On the other hand, if $s_1 \geq n - n/(d\log d)$, then the probability is almost zero. Mathematically, we have
\begin{align*}
    1 - \left(1 - \frac{\log d}{d}\right)^d &\approx 1 - \frac{1}{d} \rightarrow 1 \\
    1 - \left(1 - \frac{1}{d \log d}\right)^d &\approx \frac{1}{\log d} \rightarrow 0.
\end{align*}
Consistent with the intuition, one can confirm that $ds_1/dt > 0$ when $s_1 \leq n - n\log d/d$. We have
\begin{align*}
    \frac{ds_1}{dt} &= \lambda\left(1 - \left(\frac{s_1}{n}\right)^d\right) - s_1 + s_2\geq \lambda\left(1 - \left(1-\frac{\log d}{d}\right)^d\right) - n + \frac{n \log d}{d} \\
    &\approx n\left(1 - \frac{1}{d}\right)- n + \frac{n \log d}{d}= \frac{n (\log d-1)}{d} > 0.
\end{align*}
Note that, one can show that $s_2$ increases as well but at a smaller rate compared to $s_1$ by observing that $0 < ds_2/dt \ll ds_1/dt$. We omit the details here for brevity. We approximately represent the trajectory as horizontal from $(o)$ to $(a)$ in Fig. \ref{fig: fluid_model_trajectory}.

\textbf{Trajectory} $\mathbf{(a) - (b)}$:
Once $s_1 \approx n - n/d$, most of the incoming jobs start joining a queue with length one, as the probability of sampling an empty queue is asymptotically zero.  Mathematically, for $s_1 \geq n - n/(d \log d)$ and $s_2=o(n)$, we have
\begin{align*}
    \frac{ds_2}{dt} &= \lambda\left(\left(\frac{s_1}{n}\right)^d - \left(\frac{s_2}{n}\right)^d\right) - s_2 + s_3\geq \lambda\left(\left(1 - \frac{1}{d\log d}\right)^d - o(1)^d\right) - o(n) \approx n - \frac{n}{\log d} - o(n) > 0. 
\end{align*}
Thus, $s_2$ increases until $s_2 = \Theta(n)$. Similar to $(o) - (a)$, $s_1$ increases as well but at a smaller rate compared to $s_2$ as $ds_1/dt \ll ds_2/dt$. We approximately represent the trajectory as vertical from $(a)$ to $(b)$ in Fig. \ref{fig: fluid_model_trajectory}.

\textbf{Trajectory} $\mathbf{(b) - (c)}$: As $s_2$ increases, $ds_1/dt$ increases and $ds_2/dt$ decreases. After a critical point, $ds_1/dt$ and $ds_2/dt$ are comparable resulting in both $s_1$ and $s_2$ increasing at a similar rate. This is approximately represented as a tilted trajectory from $(b)$ to $(c)$. Once $s_1 = \lambda \approx n - n/d^2$, the arrival rate is equal to the effective service rate (number of busy servers), and the ODE trajectory converges to that point. One can verify that $(c)$ is indeed the fixed point (ignoring the logarithmic terms) of the ODE approximation.
\begin{figure}[hbt!]
    \FIGURE{
    \begin{tikzpicture}[scale=0.7]
    \draw[black, thick, ->] (0,0) -- (0,6);
    \draw[black, thick, ->] (0,0) -- (7,0);
    \draw[red, ultra thick] (0,0) -- (4,0);
    \draw[red, ultra thick, ->] (0, 0) -- (2,0);
    \draw[red, ultra thick] (4,0) -- (4,3);
    \draw[red, ultra thick, ->] (4,0) -- (4,1.5);
    \draw[red, ultra thick] (4,3) -- (5,4);
    \draw[red, ultra thick, ->] (4,3) -- (4.5, 3.5);
    \draw[black, thick] (5.5,0) -- (5.5,5.5);
    \draw[black, thick] (0,0) -- (5.5,5.5);
    \draw[black, thin, dashed] (0, 5.5) -- (5.5,5.5);
    \node at (5.5, -0.3) (1) {$n$};
    \node at (-0.3, 5.5) (1) {$n$};
    \node at (7, 0.2) (1) {$s_1$};
    \node at (0.25, 6) (1) {$s_2$};
    \node at (0, -0.3) {$(o)$};
    \node at (4, -0.3) (1) {$(a)$};
    \node at (3.7, 3) (1) {$(b)$};
    \node at (5.25, 4) (1) {$(c)$};
    \matrix[ampersand replacement=\&] at (10,4){
    \node (species1)  {
        \begin{tabular}{|c |c| c|}
        \hline 
         & $s_1$ & $s_2$ \\ \hline
        $(o)$ & 0 & 0 \\
        $(a)$ & $n-n/d$ & 0 \\
        $(b)$ & $n-n/d$ & $\Theta(n)$ \\
        $(c)$ & $s_1=n-n/d^2$ & $s_2=n-n/d$ \\ \hline
        \end{tabular}
    }; \\
};
    \end{tikzpicture}}{
   Approximate ODE trajectory with the initial condition equal to $s_1=0$.
    \label{fig: fluid_model_trajectory}}{}
\end{figure}

Note that Fig. \ref{fig: fluid_model_trajectory} is the transient behavior of the ODE approximation starting from an all-empty system. On the other hand, our goal is to characterize the steady-state behavior of the stochastic system. Steady-state corresponds to a fixed distribution invariant with time and it doesn't follow a transient trajectory as shown in Fig. \ref{fig: fluid_model_trajectory}. However, the sequence of SSC we establish in the next sub-section is inspired by the transience of the ODE approximation.  Such an interpretation of the ODE approximation is also consistent with the fundamentals of the Lyapunov drift arguments. In particular, drift of a Lyapunov function is a transient quantity as it depends on the state of the system. In turn, it implies a high probability bound on the steady-state distribution. Lastly, note that we do not have a cartoon for the ODE approximation for the general case as it would be $m$ dimensional. However, we carry forward the algebraic intuitions developed using $m=2$ for the general case.

\subsubsection{Stochastic Analysis}
The form of iterative SSC is inspired by the ODE trajectory depicted in Fig. \ref{fig: ssc_lower_bound_1}. In particular, there is a drift that pulls the system closer to the ODE trajectory. We use this idea to show that the states that are far from the trajectory are experienced with low steady-state probability. 
\begin{figure}[hbt!]
\FIGURE{
    \begin{minipage}[b]{0.24\textwidth}
    \centering
     \begin{tikzpicture}[scale=0.5]
     \fill[blue!20] (3.8, 0) -- (5.5, 0) -- (5.5, 5.5) -- (3.8, 3.8);
     \fill[pattern=crosshatch, pattern color=red!45] (0, 0) -- (3.8 , 0) -- (3.8, 3.8);
    \draw[black, thick, ->] (0,0) -- (0,6);
    \draw[black, thick, ->] (0,0) -- (7,0);
    \draw[red, ultra thick] (0,0) -- (4,0);
    \draw[red, ultra thick] (4,0) -- (4,3);
    \draw[red, ultra thick] (4,3) -- (5,4);
    \draw[black, thick] (5.5,0) -- (5.5,5.5);
    \draw[black, thick] (0,0) -- (5.5,5.5);
    \draw[black, thick] (3.8, 0) -- (3.8, 3.8);
    \draw[blue, very thick, ->] (1.8, 1.3) -- (3.6, 0.7);
    \draw[blue, very thick, ->] (1, 0.5) -- (3.6, 0.2);
    \draw[blue, very thick, ->] (2.4, 2) -- (3.6, 1.25);
    \draw[black, dashed, thick] (3.8, 3.8) -- (3.8, 6);
    \draw[black, thick, ->] (3.8, 5.8) -- (5, 5.8) node at (5.2, 6.4) {Large $s_1$};
    \end{tikzpicture}
    \subcaption{$s_1 \uparrow$ in the red region until it enters the blue region.}
    \label{fig: lower_bound_step1}
    \end{minipage}
    \begin{minipage}[b]{0.24\textwidth}
    \centering
    \begin{tikzpicture}[scale=0.5]
     \fill[blue!20] (3.8, 0) -- (4.8, 0) -- (4.8, 3.8) -- (5.5, 3.8) --  (5.5, 5.5) -- (3.8, 3.8);
     \fill[pattern=crosshatch, pattern color=red!45] (4.8, 0) -- (5.5, 0) -- (5.5, 3.8) -- (4.8, 3.8);
    \draw[black, thick, ->] (0,0) -- (0,6);
    \draw[black, thick, ->] (0,0) -- (7,0);
    \draw[red, ultra thick] (0,0) -- (4,0);
    \draw[red, ultra thick] (4,0) -- (4,3);
    \draw[red, ultra thick] (4,3) -- (5,4);
    \draw[black, thick] (5.5,0) -- (5.5,5.5);
    \draw[black, thick] (0,0) -- (5.5,5.5);
    \draw[black, thick] (3.8, 0) -- (3.8, 3.8);
    \draw[black, thick] (4.8,0) -- (4.8, 3.8);
    \draw[black, thick] (5.5,3.8) -- (4.8, 3.8);
    \draw[blue, very thick, ->] (5.4, 0.3) -- (4.9, 0.5);
     \draw[blue, very thick, ->] (5.4, 1.3) -- (4.9, 1.5);
      \draw[blue, very thick, ->] (5.4, 2.5) -- (4.9, 2.7);
     \draw[black, thick, dashed] (2.5, 3.8) -- (6.5, 3.8);
     \draw[black, thick, ->] (3, 3.8) -- (3, 4.5) node at (1.5, 4.4) {Large $s_2$};
     \draw[black, thick, dashed] (4.8, 3.8) -- (4.8, 6.2);
     \draw[black, thick, ->] (4.8, 5.8) -- (3.8, 5.8) node at (3.4, 6.4) {Small $s_1$};
    \end{tikzpicture}
    \subcaption{$s_1 \downarrow$ \& $s_2 \uparrow$ in the red region until it enters the blue region.}
    \label{fig: lower_bound_step2a_1}
    \end{minipage}
        \begin{minipage}[b]{0.24\textwidth}
    \centering
      \begin{tikzpicture}[scale=0.5]
     \fill[blue!20] (4.8, 3.8) -- (5.5, 3.8) -- (5.5, 5.5) -- (4.2, 4.2) -- (4.8, 3.6);
     \fill[pattern=crosshatch, pattern color=red!45] (3.8, 0) -- (4.8, 0) -- (4.8, 3.6) -- (4.2, 4.2) -- (3.8, 3.8);
    \draw[black, thick, ->] (0,0) -- (0,6);
    \draw[black, thick, ->] (0,0) -- (7,0);
    \draw[red, ultra thick] (0,0) -- (4,0);
    \draw[red, ultra thick] (4,0) -- (4,3);
    \draw[red, ultra thick] (4,3) -- (5,4);
    \draw[black, thick] (5.5,0) -- (5.5,5.5);
    \draw[black, thick] (0,0) -- (5.5,5.5);
    \draw[black, thick] (3.8, 0) -- (3.8, 3.8);
    \draw[black, thick] (4.8,0) -- (4.8, 3.8);
    \draw[black, thick] (5.5,3.8) -- (4.8, 3.8);
    \draw[black, thick] (4.8, 3.6) -- (4.2, 4.2);
    \draw[blue, very thick, ->] (4.5, 0.3) -- (4.3, 2);
    \draw[blue, very thick, ->] (3.9, 3.5) -- (4.4, 3.7);
    \draw[black, thick, dashed] (3.2, 5.2) -- (6, 2.4);
    \draw[black, thick, ->] (3.5, 4.9) -- (4.5, 5.9) node[midway, above, sloped, align=center] at (4.4, 6.4) {Large $s_1+s_2$};
    \end{tikzpicture}
    \subcaption{$s_1+s_2 \uparrow$ in the red region until it enters the blue region.}
    \label{fig: lower_bound_step2a_2}
    \end{minipage}
    \begin{minipage}[b]{0.24\textwidth}
    \centering
      \begin{tikzpicture}[scale=0.5]
     \fill[blue!20] (4.8 ,3.8) -- (5.5, 3.8) -- (5.5, 5.5) -- (4.6, 4.6) -- (4.6, 3.8) -- (4.8, 3.6);
     \fill[pattern=crosshatch, pattern color=red!45] (4.6, 4.6) -- (4.6, 3.8) -- (4.2, 4.2);
    \draw[black, thick, ->] (0,0) -- (0,6);
    \draw[black, thick, ->] (0,0) -- (7,0);
    \draw[red, ultra thick] (0,0) -- (4,0);
    \draw[red, ultra thick] (4,0) -- (4,3);
    \draw[red, ultra thick] (4,3) -- (5,4);
    \draw[black, thick] (5.5,0) -- (5.5,5.5);
    \draw[black, thick] (0,0) -- (5.5,5.5);
    \draw[black, thick] (3.8, 0) -- (3.8, 3.8);
    \draw[black, thick] (4.8,0) -- (4.8, 3.8);
    \draw[black, thick] (5.5,3.8) -- (4.8, 3.8);
    \draw[black, thick] (4.8, 3.6) -- (4.2, 4.2);
    \draw[black, thick] (4.6, 3.8) -- (4.6, 4.6);
    \draw[blue, very thick, ->] (4.2, 4.2) -- (4.9, 4.2);
    \draw[black, thick, dashed] (4.6, 2.3) -- (4.6, 6.5);
    \draw[black, thick, ->] (4.6, 5.8) -- (5.6, 5.8) node at (6, 6.4) {Large $s_1$};
    \end{tikzpicture}
    \subcaption{$s_1$ increases in the red region until it enters the blue region.}
    \label{fig: lower_bound_step2b}
    \end{minipage}}{
 Graphical representation of iterative SSC for lower bound for $m=2$: The red hatched region is shown to have low steady-state probability leading to the collapse into the solid blue region. In addition, the arrows represent the drift -  $(ds_1/dt, ds_2/dt)$.
    \label{fig: ssc_lower_bound_1}}{}
\end{figure}
Now, we elucidate the steps to prove the theorem that are outlined in Fig. \ref{fig: proof_outline_lb}.
\begin{itemize}
    \item \textbf{Step 1:} We first show that $\bars_1 \geq n - 2m n\log d/d$ w.h.p. corresponding to trajectory $(o)-(a)$ in the ODE approximation. Intuitively, all incoming jobs join empty queues due to their availability in surplus.
    \item \textbf{Step 2a:} Given the lower bound on $\bars_1$, we obtain $\bars_2 \geq n - 6m^2 n \log d/d$ w.h.p. corresponding to trajectory $(a)-(b)$ in the ODE approximation. This is carried out in two-steps as depicted in Fig. \ref{fig: lower_bound_step2a_1} and \ref{fig: lower_bound_step2a_2}.
    \item \textbf{Step 2b:} We improve the lower bound on $\bars_1$ using the lower bound on $\bars_2$ to get $\bars_1 \geq n - 9m^2n\log d/d^2$ that loosely corresponds to trajectory $(b)-(c)$ in the ODE approximation. This step is illustrated in Fig. \ref{fig: lower_bound_step2b}.
    \item \textbf{Step 2c:} This step improves the lower bound on $\bars_2$ previously obtained in Step 2a. In particular, we leverage the newly obtained lower bound on $\bars_1$ and repeat the same steps as in Step 2a, to obtain a better lower bound on $\bars_2$. We call this a bootstrapping step as a weaker lower bound on $\bars_2$ results in a stronger lower bound on itself.
    \item \textbf{Step 2d:} Similar to the previous step, we improve the lower bound on $\bars_1$ previously obtained in Step 2b. In particular, using a better lower bound on $\bars_2$ obtained in Step 2c, we obtain a better lower bound on $\bars_1$ by following the outline of Step 2b.
\end{itemize}
\textbf{Why Iterative SSC?:} Ideally, we would like to construct a single Lyapunov function whose drift analysis would reveal that the stochastic system concentrates around the fixed point of the ODE. However, it is not clear if such a Lyapunov function exists, or at least, we were not successful in constructing such a function. In our approach, we construct a sequence of Lyapunov functions that mimics the trajectory of the ODE. The ODE trajectory in Fig.~\ref{fig: fluid_model_trajectory} reveals that the drift of $s_1$ and $s_2$ are of different orders in different parts of the state space. For example, the upward drift of $s_1$ is very large compared the drift of $s_2$ when the system is close to an empty system. Such a behavior is reminiscent of the state space collapse result in classical heavy-traffic regime \cite{atilla_jsq_ht_drift} and two timescale algorithms in reinforcement learning (e.g. see \cite[Chapter 6]{borkar2009stochastic} and \cite{haque2023tight}). Such a two timescale behavior warrants the need of constructing multiple Lyapunov functions to analyze the collapse of $s_1$ and $s_2$. 

In the next sub-section, we build upon the intuition to extend the proof to the general case as shown in Fig. \ref{fig: proof_outline_lb}. We do not separately present the proof for $m=2$ as it is subsumed in the general case.
\begin{figure}[htb!]
    \FIGURE{
    \begin{tikzpicture}
         \node[rectangle, draw=black!60, fill=green!5, very thick, minimum size=7mm, align=center] at (-8, -0.5) (1) {\textbf{Step 1:} $\bars_1 \geq n - \frac{2 m n \log d}{d}\left(1+o(1)\right)$ (Fig. \ref{fig: lower_bound_step1})};
          \node[rectangle, draw=black!60, fill=green!5, very thick, minimum size=7mm, text width=65mm, align=center] [below= 7mm of 1] (2) {\textbf{Step 2:} $\bars_1 \geq n - \frac{2 m n \log d}{d^{2}}\left(1+o(1)\right)$ and $\bars_2 \geq n - \frac{2 m n \log d}{d}\left(1+o(1)\right)$};
          \draw[fill=black] ([yshift=-5mm] 2.south) circle (0.05);
          \draw[fill=black] ([yshift=-7mm] 2.south) circle (0.05);
          \draw[fill=black] ([yshift=-9mm] 2.south) circle (0.05);
           \node[rectangle, draw=black!60, fill=green!5, very thick, minimum size=7mm, text width=70mm, align=center] [below= 15mm of 2] (k) {\textbf{Step $k$:} $\bars_i \geq n - \frac{2 m n \log d}{d^{k-i+1}}\left(1+o(1)\right)$ for all $i \in [k]$};
            \draw[fill=black] ([yshift=-5mm] k.south) circle (0.05);
          \draw[fill=black] ([yshift=-7mm] k.south) circle (0.05);
          \draw[fill=black] ([yshift=-9mm] k.south) circle (0.05);
           \node[rectangle, draw=black!60, fill=green!5, very thick, minimum size=7mm, text width=70mm, align=center] [below= 15mm of k] (m) {\textbf{Step $m$:} $\bars_i \geq n - \frac{2 m n \log d}{d^{m-i+1}}\left(1+o(1)\right)$ for all $i \in [m]$};
         \draw[fill=green!5, draw=black!60, very thick] (-3.4, -1.05) -- (4.6, -1.05) -- (4.6, -3.6) -- (-3.4, -3.6) -- (-3.4, -1.05);
        \node[rectangle, draw=black!60, fill=red!5, thick, minimum size=7mm] [below right= 2.5mm and 8mm of 1] (2a) {\textbf{Step 2a:} $\bars_2 \geq n - \left(\frac{6m^2 n\log d}{d}\right)$ (Fig. \ref{fig: lower_bound_step2a_1} \& \ref{fig: lower_bound_step2a_2})};
        \node[rectangle, draw=black!60, fill=red!5, thick, minimum size=7mm] [below= 4mm of 2a] (2b) {\textbf{Step 2b:} $\bars_1 \geq n - \left(\frac{9m^2 n\log d}{d^2}\right)$ (Fig. \ref{fig: lower_bound_step2b})};
         \draw[fill=green!5, draw=black!60, very thick] (-3.4, -4.3) -- (4.6, -4.3) -- (4.6, -6.6) -- (-3.4, -6.6) -- (-3.4, -4.3);
        \node[rectangle, draw=black!60, fill=red!5, thick, minimum size=7mm] [below= 9mm of 2b] (3a) {\textbf{Step 2c:} $\bars_2 \geq n - \frac{2 m n \log d}{d}\left(1+o(1)\right)$};
        \node[rectangle, draw=black!60, fill=red!5, thick, minimum size=7mm] [below= 6mm of 3a] (3b) {\textbf{Step 2d:} $\bars_1 \geq n - \frac{2 m n \log d}{d^{2}}\left(1+o(1)\right)$};
        \node [above = 1mm of 1] (base_case) {Base Case};
        \node [above = 0.9mm of 2a] (bootstrap_1) {First iteration of Bootstrapping};
        \node [above = 0.9mm of 3a] (bootstrap_2) {Second iteration of Bootstrapping};
        \draw[black, thick] ([yshift=6mm] 2.east) -- (-3.4, -1.05);
        \draw[black, thick] ([yshift=-6mm] 2.east) -- (-3.4, -6.6);
        \draw[fill=green!3] ([yshift=6mm] 2.east) -- (-3.4, -1.05) -- (-3.4, -6.6) -- ([yshift=-6mm] 2.east);
    \end{tikzpicture}}{Proof Outline: High Probability Lower Bounds.
    \label{fig: proof_outline_lb}}{}
\end{figure}
\subsection{General Case} \label{sec: lower_bound_general_case}
The proof is mainly divided in five steps and we present five lemmas corresponding to these steps. The proof outline is presented in Fig. \ref{fig: proof_outline_lb} and we naturally prove it using induction. In step $k$, we provide a high probability lower bound on $\bars_k$ and improve the previous bounds obtained for $\{\bars_i\}_{i \in [k-1]}$. The proof of all the lemmas mentioned in this section is deferred to Appendix \ref{app: lb_lemmas}.

\textbf{Base Case:} We start by obtaining a high probability lower bound on $\bars_1$ (Step 1) in the following lemma.
\begin{lemma}[Base Case] \label{lemma: base_case_lower_bound} Consider the same setup as Theorem \ref{theo: informal}. Then, there exists $n_0 \in \bbZ_+$ such that for all $n \geq n_0$,  \label{lemma: outer_base_case}
\begin{align*}
    \P{\bars_1 \leq n - \frac{2m n \log d}{d} - \frac{2n\log d}{d^{2}} - 2\sqrt{m n}\log n} \leq \left(\frac{1}{n}\right)^{m\log n /4}.
\end{align*}
\end{lemma}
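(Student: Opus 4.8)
This is the ``weak state-space collapse'' step, and I would run it with the Lyapunov function $V_1(\BFs):=n-s_1$ --- the number of empty queues (equivalently idle servers) --- and the threshold $B_1:=\frac{2mn\log d}{d}+\frac{2n\log d}{d^{2}}$, so that the statement is exactly $\P{V_1(\bbars)\ge B_1+2\sqrt{mn}\log n}\le (1/n)^{m\log n/4}$. The plan has four steps: write down the drift of $V_1$; show that on $\{V_1\ge B_1\}$ this drift supplies a \emph{linear restoring force}; feed that into a Lyapunov-based tail bound to obtain a \emph{Gaussian-type} tail; and substitute $j=2\sqrt{mn}\log n$.

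For the drift, put $v:=n-s_1=V_1(\BFs)$. The only transitions that change $s_1$ are an arrival routed to an empty queue (rate $\lambda(1-(s_1/n)^d)$, decreasing $V_1$ by one) and a departure from a queue of length exactly one (rate $s_1-s_2$, increasing $V_1$ by one), so
\begin{align*}
    \Delta V_1(\BFs)=(s_1-s_2)-\lambda\bigl(1-(s_1/n)^d\bigr)\le s_1-\lambda\bigl(1-e^{-dv/n}\bigr)=n^{1-\gamma}-v+\lambda e^{-dv/n},
\end{align*}
using $s_1-s_2\le s_1$, $(s_1/n)^d=(1-v/n)^d\le e^{-dv/n}$, and $\lambda=n-n^{1-\gamma}$. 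For $v\ge B_1$ one has $dv/n\ge dB_1/n=2m\log d+2(\log d)/d$, hence $e^{-dv/n}\le d^{-2m}$; and from the defining relation $d^{m}=2mn^\gamma\log d$ one gets $\lambda d^{-2m}\le n^{1-2\gamma}/(4m^2(\log d)^2)$ and, crucially, $n^{1-\gamma}+\lambda d^{-2m}\le B_1$. For $m\ge 2$ the latter is immediate since $B_1\ge\frac{2mn\log d}{d}=n^{1-\gamma}d^{\,m-1}$; for $m=1$ it is the tight case, where $B_1-n^{1-\gamma}=2n(\log d)/d^{2}\ge\lambda d^{-2}$ as soon as $2\log d\ge 1$. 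Hence $\Delta V_1(\BFs)\le -(v-B_1)$ for every $\BFs$ with $V_1(\BFs)\ge B_1$; the only reason to carry the lower-order term $2n(\log d)/d^{2}$ in the threshold is to make this hold when $m=1$, where the two leading terms cancel exactly.

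To turn the linear restoring force into a tail bound I would use the ``Gaussian-shaped'' Lyapunov function $W(\BFs):=\exp\bigl(((V_1(\BFs)-B_1)^+)^2/(Cn)\bigr)$ for a suitable absolute constant $C$. A second-order expansion of $W$ along the $\pm1$ moves of $V_1$, combined with $\Delta V_1(\BFs)\le-(V_1(\BFs)-B_1)$ and the fact that the total transition rate is at most $2n$, yields a Foster--Lyapunov inequality $\Delta W(\BFs)\le-c'\,W(\BFs)+C'\,\mathbbm{1}\{V_1(\BFs)<B_1+2\sqrt n\}$ with absolute constants $c',C'$ --- on the bounded set $\{V_1<B_1+2\sqrt n\}$ one has $W\le e^{4/C}$, and outside it the drift of $V_1$ is strongly negative. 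Taking stationary expectations gives $\E{W(\bbars)}=O(1)$ (the state space is finite since $b<\infty$, so $\E{\Delta W(\bbars)}=0$), and Markov's inequality then gives $\P{V_1(\bbars)\ge B_1+j}\le O(1)\,e^{-j^2/(Cn)}$ for all $j$. Choosing $j=2\sqrt{mn}\log n$ makes the exponent $\Theta(m(\log n)^2)$, which for $n$ large beats $\tfrac14 m(\log n)^2=\log\!\big(n^{m\log n/4}\big)$ even after the $O(1)$ prefactor is absorbed; rewriting $V_1(\bbars)=n-\bars_1$ and substituting $B_1$ gives the lemma. (Equivalently, one could apply the drift-based tail bound \cite[Theorem~1]{bertsimas_tail_bounds} to the quadratic Lyapunov function $((V_1-B_1)^+)^2/(Cn)$.)

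The step I expect to be the main obstacle is the passage from drift to a \emph{Gaussian} tail. A plain constant negative drift would only give an exponential tail $e^{-\Theta(j/\mathrm{poly}(n))}$, which is far too weak for the target $(1/n)^{m\log n/4}$ when $m=1$ and $\gamma$ is close to $1/2$ (for $m\ge 2$ the constant drift margin alone is $\Theta(n^{1-\gamma}d^{\,m-1})$ and already suffices). One genuinely has to exploit that the restoring force is \emph{linear} in $V_1-B_1$ --- which is free from $s_1-s_2\le s_1=n-V_1$ --- and carry it through the Lyapunov analysis to produce the $O(\sqrt{mn}\log n)$ fluctuation scale in the statement; arranging the boundary layer and the constant $C$ so that the final exponent is at least $\tfrac14 m(\log n)^2$ is the delicate bookkeeping.
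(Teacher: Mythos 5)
Your drift computation is correct and coincides with the paper's: the same function ($n-s_1$ measured against the threshold $\frac{2mn\log d}{d}+\frac{2n\log d}{d^2}$), the same transition rates, the bound $s_1-s_2\le s_1$, and the same use of $d^m=2mn^\gamma\log d$ to absorb $n^{1-\gamma}$ and $\lambda d^{-2m}$ into the threshold (your $m=1$ versus $m\ge2$ case split is exactly why the $2n\log d/d^2$ term is carried). Where you diverge is the drift-to-tail step. The paper does not need your exponential-of-quadratic Lyapunov function: it simply builds the extra buffer $\sqrt{mn}\log n$ into the Lyapunov function itself, so that on $\{V_1\ge0\}$ the drift is bounded by the \emph{constant} $-\sqrt{mn}\log n$, and then applies the geometric tail bound of Lemma \ref{lemma: iterative_ssc} (i.e.\ \cite[Theorem 1]{bertsimas_tail_bounds} with $\calE=\calS$) with $\nu_{\max}=1$, $q_{\max}=n$, $j=\sqrt{mn}(\log n)/2$, giving $\left(\frac{n}{n+\sqrt{mn}\log n}\right)^{\sqrt{mn}(\log n)/2}\le n^{-m\log n/4}$ via Lemma \ref{lemma: simplified_ssc_identity}. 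So your stated ``main obstacle'' --- that a plain constant negative drift only yields $e^{-\Theta(j/\mathrm{poly}(n))}$ and hence cannot reach the target when $m=1$ --- mischaracterizes the situation: the constant drift available after shifting the threshold by the fluctuation scale is $\sqrt{mn}\log n$, and the exponent it produces, $\Theta(\gamma j/(q_{\max}\nu_{\max}^2))=\Theta(m(\log n)^2)$, is already sufficient; no genuinely ``Gaussian'' mechanism is required. Your route does exploit the full linear restoring force $\Delta V_1\le-(V_1-B_1)$ and would also work: the Foster--Lyapunov inequality for $W=\exp(((V_1-B_1)^+)^2/(Cn))$ goes through on the finite state space, and there is a viable window for the constant (roughly $C\in[12,16)$, e.g.\ $C=12$ gives exponent $m(\log n)^2/3>m(\log n)^2/4$), but this is exactly the constant-tracking you left unverified, and it buys nothing over the paper's one-line application of the off-the-shelf geometric bound; its only advantage is a sharper (sub-Gaussian in $j$) tail for all overshoot levels simultaneously, which the lemma does not need.
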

The proof of the lemma follows by showing that the drift of the Lyapunov function,
\begin{align*}
    V_1(\BFs) = n - \frac{2m n \log d}{d} - \frac{2n\log d}{d^{2}} - \sqrt{m n}\log n-s_1,
\end{align*}
is negative when $V_1(\BFs) \geq 0$. Thus, $s_1$ increases w.h.p. when $s_1 \leq n-\frac{2m n \log d}{d}(1+o(1))$, implying that $\bars_1 > n-\frac{2m n \log d}{d}(1+o(1))$ w.h.p. This completes the base case. Also note that the above lemma completes the proof of Theorem~\ref{theo: lower_bound} for $m=1$. Now, we consider $m \geq 2$ for the rest of the proof.

\textbf{Induction Step:} For some $k \in \{2, \hdots, m\}$, we define our induction hypothesis as follows. There exists $n_{IH} \in \bbZ_+$, independent of $k$ such that for all $n \geq n_{IH}$, we have
\begin{align}
   \P{\bars_i \leq n - 2m \frac{n \log d}{d^{k-i}} - 4m d^{i-1}\sqrt{m n}\log n -16m^3 \frac{n \log (d)^2}{d^{k-i+1}}} \leq \left(\frac{1}{n}\right)^{\frac{m \log n}{4}-4(k-1)m} \quad \forall i \in [k-1]. \tag{IH} \label{eq: outer_induction}
\end{align}
Note that, \eqref{eq: outer_induction} is equivalent to Step $k-1$ as in the proof outline given by Fig. \ref{fig: proof_outline_lb}. After completing the induction step, we improve the bound on $(n-\bars_i)$ by a factor of $d$ for all $i \in [k-1]$ and moreover, we introduce a lower bound on $\bars_k$ of the order $n - 2nm\log d/d$. Similar to the way Step 2 is proved in four parts as in Fig. \ref{fig: proof_outline_lb}, we present four lemmas that together completes the induction step. The first of the four lemmas corresponds to part $a$ of the induction step (Steps $2$ to $m$ in Fig. \ref{fig: proof_outline_lb}), wherein we obtain a lower bound on $\bars_k$. 
\begin{lemma}[Induction Part $a$] \label{lemma: outer_induction_weak_induction_step}
Consider the same setup as Theorem \ref{theo: informal}. Also, assume that \eqref{eq: outer_induction} holds for some $k \leq m$. Then, there exists $n_1 \in \bbZ_+$ such that for all $n \geq n_1$, we have
\begin{align*}
    \P{\bars_{k} \leq n - 6m^2 \frac{n\log d}{d}} \leq \left(\frac{1}{n}\right)^{\frac{m \log n}{4}-4\left(k-\frac{3}{4}\right)m}.
\end{align*}
\end{lemma}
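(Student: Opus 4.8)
\textbf{Proof proposal for Lemma~\ref{lemma: outer_induction_weak_induction_step} (Induction Part $a$).}

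The plan is to build a Lyapunov function that witnesses a negative drift of the ``deficit'' $n-s_k$ whenever $s_k$ is below the target level $n-6m^2 n\log d/d$, and then invoke the iterative state-space collapse machinery (Appendix~\ref{app: iterative_ssc}) restricted to the set $\calE$ on which the induction hypothesis \eqref{eq: outer_induction} holds. Following the two-substep picture in Fig.~\ref{fig: lower_bound_step2a_1}--\ref{fig: lower_bound_step2a_2}, I expect this to be done in two stages. First, a Lyapunov function of the form $V(\BFs) = \big(n - c\,\tfrac{n\log d}{d} - (\text{lower order})\big) - s_k$ (with $c$ a constant to be tuned, roughly $c\approx 6m^2$ absorbing the various slack terms) whose drift I compute using the transition rates implicit in \eqref{eq: fluid_model_diff_eqn}: $s_k$ increases at rate $\lambda\big((s_{k-1}/n)^d - (s_k/n)^d\big)$ and decreases at rate $s_k - s_{k+1}$. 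On the event from the induction hypothesis we have $s_{k-1} \geq n - 2m\,n\log d/d^{2} - (\text{l.o.t.})$, so $(s_{k-1}/n)^d \geq \big(1 - 2m\log d/d^{2} - \cdots\big)^d \approx 1 - 2m\log d/d \to 1$, i.e. almost every arrival that samples a queue of length $\leq k-1$ finds one of length exactly $k-1$; meanwhile if $s_k \leq n - c\,n\log d/d$ then $(s_k/n)^d \leq \big(1 - c\log d/d\big)^d \approx d^{-c}$ is negligible, and $s_k - s_{k+1} \leq s_k \leq n$. Hence the ``up minus down'' rate for $s_k$ is at least $\lambda(1 - o(1)) - n + c\,n\log d/d \geq \tfrac{1}{2} c\,n\log d/d > 0$ on the relevant region, giving $\Delta V(\BFs) \leq -\varepsilon$ for a suitable $\varepsilon$ (of order $n\log d/d$, or after normalization a constant times $\gamma$).

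From this negative drift on $\calE \cap \{V \geq 0\}$ I would apply the iterative-SSC lemma (the analogue of \cite[Lemma~10]{lei_iterative_ssc} stated in Section~\ref{sec: iterative_ssc}): as long as $\P{\bbars \notin \calE}$ is at most the bound in \eqref{eq: outer_induction}, negative drift of $V$ over $\calE$ yields $\P{V(\bbars) \geq j} \leq (\text{geometric in } j) + \P{\bbars \notin \calE}$, and choosing $j = \Theta(\sqrt{mn}\log n)$ (to dominate the $O(\sqrt{n})$-scale martingale increments via a Bertsimas-type tail bound, \cite[Theorem~1]{bertsimas_tail_bounds}) drives the geometric term below $(1/n)^{\Theta(m\log n)}$. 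Taking a union bound over the $k-1$ events in \eqref{eq: outer_induction} plus the new event degrades the exponent additively by an $O(m)$ amount, which is exactly the bookkeeping reflected in the exponent $\tfrac{m\log n}{4} - 4(k-\tfrac34)m$ in the statement. I would also need to check that the constant $6m^2$ is genuinely large enough to swallow (i) the factor-$m$ loss from $s_{k-1}$ being at level $1 - O(m\log d/d^2)$ rather than exactly $1$, (ii) the contribution of the lower-order terms in \eqref{eq: outer_induction}, and (iii) the additive $\sqrt{mn}\log n$ fluctuation slack, so some slack in the $V$-definition should be carried explicitly.

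The main obstacle, I expect, is controlling the event that an arriving job, upon sampling $d$ queues, fails to find one of length exactly $k-1$ even though $s_{k-1}$ is large --- i.e. making the estimate $(s_{k-1}/n)^d = 1 - o(1)$ rigorous uniformly over the collapsed region, and simultaneously ensuring $(s_k/n)^d$ really is negligible on $\{V \geq 0\}$. The quantity $(s_k/n)^d$ is extremely sensitive near the boundary $s_k \approx n$, so the gap between the ``red region'' where drift is provably negative and the ``blue region'' $\{V \leq 0\}$ must be chosen with care; this is precisely why the target in this lemma is the crude $n - 6m^2 n\log d/d$ rather than the sharp $n - 2mn\log d/d$, the latter being recovered only after the bootstrapping Steps 2b--2d. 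A secondary technical point is that $k$ ranges up to $m$ and $m$ may itself grow (the $m_n\to\infty$ case), so all the $o(1)$ and constant-factor estimates must be tracked with explicit dependence on $m$ and shown to remain harmless given $d = \Omega(\log(n)^3)$ and $m = o(\log n)$, exactly as in the verification \eqref{eq: lower_order_terms}.
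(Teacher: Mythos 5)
There is a genuine gap, and it sits at the heart of your drift computation. You lower bound the arrival rate into level $k$ by writing $(s_{k-1}/n)^d \geq \bigl(1 - 2m\log d/d^{2} - \cdots\bigr)^d = 1-o(1)$, attributing the bound on $s_{k-1}$ to the induction hypothesis. But \eqref{eq: outer_induction} at stage $k$ only gives $\bars_{k-1} \geq n - 2m\,n\log d/d^{\,k-(k-1)} - (\textit{lower order}) = n - 2m\,n\log d/d\,(1+o(1))$ with high probability; the bound with $d^{2}$ in the denominator is the \emph{conclusion} of the induction step (Lemma~\ref{lemma: induction_step}), not its hypothesis, so invoking it here is circular. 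With the bound that is actually available, $(s_{k-1}/n)^d \approx (1-2m\log d/d)^d \leq 2/d^{2m} \to 0$, so the increase rate $\lambda\bigl((s_{k-1}/n)^d - (s_k/n)^d\bigr)$ of $s_k$ cannot be lower bounded by $\lambda(1-o(1))$ — it can be essentially zero throughout the region certified by \eqref{eq: outer_induction} — and a deficit Lyapunov function in $s_k$ alone has no provably negative drift there. Enlarging the constant to $6m^2$ does not repair this: the obstruction is not the constant but the fact that, at this stage, you cannot certify that a sampled batch of $d$ queues contains one of length exactly $k-1$.

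The paper's proof is built precisely to avoid pushing $s_k$ up directly. It first proves (Claim~\ref{claim: induction_for_weak_induction_step}) a backward induction over $i$ with min-type Lyapunov functions $L_{ik}=\min\{L^{(1)}_{ik},L^{(2)}_{ik}\}$, establishing the collapse ``either $s_i$ is small or $\sum_{l=i+1}^{k} s_l$ is large'' (the analogue of Fig.~\ref{fig: lower_bound_step2a_1}); the drifts there need only Bernoulli/Taylor bounds at scales where $d\cdot(\textit{deficit})/n$ is controlled. It then analyzes $L^{(1)}_{0k}$, which is essentially $kn - \Theta(m^2 n\log d/d) - \sum_{l=1}^{k} s_l$: its upward rate is $\lambda\bigl(1-(s_k/n)^d\bigr)\approx \lambda$, requiring only that $s_k$ lie $\Theta(mn\log d/d)$ below $n$ (available on $\{L^{(1)}_{0k}\geq 0\}$ together with the sets $\calD^{(1)}_l$ from \eqref{eq: outer_induction}), while its downward rate is $s_1 - s_{k+1}\leq s_1$, bounded via the stage-one collapse ($\BFs\in\calC^{(1)}_{1,k}$ forces $s_1 \leq n - \tfrac{5m n\log d}{2d^{k}}$, and $mn\log d/d^{k}\geq n^{1-\gamma}/2$ since $k\leq m$), yielding drift at most $-\sqrt{mn}\log n$; the lemma then follows from $\bars_k \geq \sum_{l=1}^{k}\bars_l - (k-1)n$. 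Your single-function argument instead mirrors the later bootstrapping steps (Lemmas~\ref{lemma: remaining_weak_outer_induction} and~\ref{lemma: base_case_of_induction_hypothesis}), which do exploit $(s_{k-1}/n)^d = 1-o(1)$ — but only after the $\bars_{k-1}$ bound has been improved to the $d^{2}$ scale, an improvement that part~$a$ is needed to enable. Your choice $j=\Theta(\sqrt{mn}\log n)$ and the union-bound bookkeeping for the exponent are consistent with the paper's template, but the core drift step as proposed would fail.
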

Note that, the above lower bound is weak, as the term accompanying $n\log n/d$ is $6m^2$ which is larger than the required $2 m$ in the induction step. We improve this bound in part $c$ (Lemma \ref{lemma: base_case_of_induction_hypothesis}). Now, we present the proof sketch.
\proof{Proof Sketch of Lemma \ref{lemma: outer_induction_weak_induction_step}} The proof of this lemma is induction based. To state the induction hypothesis, consider a Lyapunov function of the following form (the exact expression is presented in the proof):
\begin{align*}
    L_{ik}(\BFs) = \min \left\{s_i - n + \Theta\left(\frac{m^2 n\log d}{d^{k-i+1}}\right), (k-i+1)n - \Theta\left(\frac{m^2 n\log d}{d}\right)-\sum_{l=i+1}^k s_l\right\} \quad \forall i \in [k-1
    ].
\end{align*}
Now, for some $i \in [k-1]$, the induction hypothesis is given as follows, which runs backwards on the index $i$: $L_{ik}(\bbars) = O(\sqrt{m n}\log n).$
To prove the induction step, we show that the drift of the Lyapunov function $L_{i-1,k}(\BFs)$ is negative for all states such that $L_{i, k}(\BFs) = O(\sqrt{m n}\log n)$ and $L_{i-1,k}(\BFs) \geq 0$. Thus, by applying the iterative SSC framework with $V=L_{ik}$ and, $\calE = \left\{L_{i+1, k}(\BFs) = O(\sqrt{m n}\log n)\right\}$, we conclude that $L_{i-1, k}(\bbars) = O(\sqrt{m n}\log n)$ w.h.p. Thus, the induction implies that $L_{1k}(\bbars)=O(\sqrt{m n}\log n)$ w.h.p. Note that $L_{i-1, k}(\BFs)$ is small implies that either $s_{i-1}$ is small or $\sum_{l=i}^k s_l$ is large. This form of state space collapse is reminiscent of the first sub-task of Step 2a as depicted in Fig. \ref{fig: lower_bound_step2a_1} in which we showed that either $\bars_1$ is small or $\sum_{l=2}^b \bars_l$ is large. 

To complete the proof, we further consider the following Lyapunov function:
\begin{align*}
    L_{0k}(\BFs) = kn - \Theta\left(\frac{m^2 n\log d}{d}\right) - \sum_{l=1}^k s_l.
\end{align*}
Using the fact that $L_{1k}(\bbars) = O(\sqrt{m n}\log n)$ w.h.p., we show that $L_{0k}(\bbars) = O(\sqrt{m n}\log n)$ w.h.p. In particular, we show that the drift of $L_{0k}(\BFs)$ is negative for all states such that $L_{1k}(\BFs) = O(\sqrt{m n}\log n)$ and $L_{0k}(\BFs) \geq 0$. Thus, by Lemma \ref{lemma: iterative_ssc} with $V = L_{0k}$, $\calE = \{L_{1k} = O(\sqrt{m n}\log n)\}$, $B=0$, and $j=\sqrt{m n}\log n$, we conclude that $\sum_{l=1}^k \bars_l \geq kn - \Theta\left(m^2 n\log d /d\right)$ w.h.p. Observing that $\bars_l \leq n$ w.p. 1, we get $\bars_k \geq n-\Theta\left(m^2n\log d/d\right)$ w.h.p. This completes the proof. Note that, analyzing the drift of $L_{0k}(\BFs)$ is equivalent to the second sub-task of Step 2a as depicted in Fig. \ref{fig: lower_bound_step2a_2}. \hfill $\square$
\endproof
In the next lemma, we prove weak lower bounds on $\bars_i$ for all $i \in [k-1]$ completing part $b$ of steps $2-m$, analogous to Step 2b as outlined in Fig. \ref{fig: proof_outline_lb}.
\begin{lemma}[Induction Part $b$] \label{lemma: remaining_weak_outer_induction}
Consider the same setup as Theorem \ref{theo: informal}. Also, assume that $m \geq 2$ and \eqref{eq: outer_induction} holds for some $k \leq m$. Then, there exists $n_2 \in \bbZ_+$ such that for all $n \geq n_2$, we have
\begin{align*}
    \P{\bars_i \leq n - 9m^2\frac{n \log d}{d^{k-i+1}}} \leq \left(\frac{1}{n}\right)^{\frac{m \log n}{4}-4\left(k-\frac{1}{2}\right)m} \quad \forall i \in [k].
\end{align*}
\end{lemma}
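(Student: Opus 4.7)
The plan is to run an inner backward induction on the index $i$, starting from $i = k-1$ and decreasing to $i = 1$, in the spirit of Step~2b of the outline in Fig.~\ref{fig: proof_outline_lb}. At each step I upgrade the IH bound $\bars_i \geq n - 2m n\log d/d^{k-i}$ to the improved bound $\bars_i \geq n - 9m^2 n\log d/d^{k-i+1}$, leveraging the already-improved bound on $\bars_{i+1}$ (from the previous inner-induction step) together with the weak bound on $\bars_k$ supplied by Lemma~\ref{lemma: outer_induction_weak_induction_step}. The case $i = k$ is immediate from Lemma~\ref{lemma: outer_induction_weak_induction_step} since $9m^2 > 6m^2$.

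At step $i$, I condition on the SSC event $\calE_i$ on which the IH lower bounds on $\bars_j$ for $j \in [k-1]$, the Part-$a$ bound $\bars_k \geq n - 6m^2 n\log d/d$, and the already-established improved bounds on $\bars_{i+1},\ldots,\bars_{k-1}$ hold simultaneously; the complementary probability is additively absorbed via a union bound into the final failure tail. Inside $\calE_i$, I analyze the drift of a Lyapunov function of the min-form
\begin{align*}
V_i(\BFs) \;=\; \min\!\left\{\, n - 9m^2\tfrac{n\log d}{d^{k-i+1}} - s_i,\ \sum_{l=1}^{i-1}(n-s_l) - \Theta\!\left(\tfrac{m^2 n\log d}{d^{k-i+1}}\right)\right\}^{+},
\end{align*}
mirroring the min-structure of $L_{ik}$ in the proof sketch of Lemma~\ref{lemma: outer_induction_weak_induction_step}. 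The second clause enforces that whenever the first clause is active, all of $s_1,\ldots,s_{i-1}$ are sufficiently close to $n$; this in turn forces $(s_{i-1}/n)^d$ close to $1$, so that a Taylor expansion of $(s_i/n)^d$ using $d^m \approx 2mn^\gamma \log d$ yields an inflow $\lambda[(s_{i-1}/n)^d - (s_i/n)^d]$ that strictly exceeds the outflow $s_i - s_{i+1} \leq n - s_{i+1}$ (bounded by $9m^2 n\log d/d^{k-i}$ via the improved bound on $\bars_{i+1}$, or by $6m^2 n\log d/d$ via Part~$a$ when $i+1=k$). The resulting strictly negative drift of order $\Theta(m^2 n\log d/d^{k-i})$ dominates the noise corrections under $d = \Omega(\log(n)^3)$, and the iterative SSC framework (Lemma~\ref{lemma: iterative_ssc}) then converts it into the desired tail bound on $\bars_i$.

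The principal obstacle is that the IH on $s_{i-1}$ is a factor of $d$ weaker than the bound we are trying to prove at index $i$, so a single-term Lyapunov function $V_i(\BFs) = (n - 9m^2 n\log d/d^{k-i+1} - s_i)^{+}$ would not yield a usable inflow estimate when $s_{i-1}$ sits at its IH lower bound. The min-form bypasses this by splitting the state space into a ``large $s_{i-1}$'' regime (where the direct drift on $s_i$ is strictly negative) and an ``IH-tight $s_{i-1}$'' regime (where the weighted-sum clause $\sum_{l<i}(n-s_l)$ is strictly negative instead, by a rate-balance computation for level $i-1$). A secondary bookkeeping issue is that each of the $k-1$ inner-induction steps and each application of Lemma~\ref{lemma: iterative_ssc} additively consumes a bounded amount of exponent in the failure probability; verifying that the accumulated loss matches the target exponent $m\log n/4 - 4(k-1/2)m$ is delicate but routine given $k \leq m$.
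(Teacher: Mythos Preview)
Your ``principal obstacle'' is misidentified. The outer hypothesis \eqref{eq: outer_induction} applied at index $i-1$ gives
\[
\bars_{i-1} \;\geq\; n - 2m\,\frac{n\log d}{d^{\,k-(i-1)}} - (\text{l.o.t.}) \;=\; n - 2m\,\frac{n\log d}{d^{\,k-i+1}} - (\text{l.o.t.}),
\]
which has the \emph{same} power $d^{k-i+1}$ as the target bound on $\bars_i$, and a smaller coefficient ($2m$ versus $9m^2$). So the available bound on $s_{i-1}$ is not a factor of $d$ weaker than what you need---it is in fact tighter. Consequently, $(s_{i-1}/n)^d$ is already forced close enough to $1$ on the outer-IH event; there is no need for a second clause to ``enforce $s_1,\dots,s_{i-1}$ close to $n$,'' and in any case your second clause $\sum_{l<i}(n-s_l)-\Theta(\cdot)$ points the wrong way: when the min is nonnegative, both clauses are nonnegative, which forces $\sum_{l<i}(n-s_l)$ to be \emph{large}, i.e.\ the $s_l$ to be \emph{far} from $n$, the opposite of what you claim.

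The paper dispenses with the min-form entirely and uses a single linear Lyapunov function
\[
W_j(\BFs)\;=\;n - 3(2m+k-j)m\,\frac{n\log d}{d^{\,k-j+1}} - s_j,
\]
running the inner induction backward on $j$. The only subtlety---and this is the real obstacle you half-detected---is that a \emph{constant} coefficient $9m^2$ would not close the drift: with $s_j\leq n-9m^2 n\log d/d^{k-j+1}$ and the inner-IH bound $s_{j+1}\geq n-9m^2 n\log d/d^{k-j}$, the outflow $s_j-s_{j+1}$ and the inflow $\lambda[(s_{j-1}/n)^d-(s_j/n)^d]$ match to leading order, and the $5m/2$ contribution from $s_{j-1}$ tips the balance the wrong way. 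The paper's fix is to let the coefficient grow by $3m$ at each backward step, so that the inflow beats the outflow by $(3m-5m/2)\,n\log d/d^{k-j}$; since $3(2m+k-j)m\leq 9m^2$ for all $j\in[k]$, the final statement follows. This is much lighter than the two-clause construction you sketch, and it avoids the sign error in your second clause.
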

Note that, the bound obtained in the above lemma is weaker compared to the required bound for the induction step as $9m^2 > 2m$. We improve on this bound in part $d$ to obtain the induction step. We now present the sketch of the proof.
\proof{Proof Sketch of Lemma \ref{lemma: remaining_weak_outer_induction}}
We again use induction on $i$ to prove bounds on $\bars_i$ for all $i \in [k]$ by going backwards over the index $i$. The base case $(i=k)$ is already proved in Lemma \ref{lemma: outer_induction_weak_induction_step}. To prove the induction step, we analyze the drift of the following Lyapunov function:
\begin{align*}
    W_i(\BFs) = n - \Theta\left(\frac{m^2 n\log d}{d^{k-i+1}}\right) - s_i.
\end{align*}
We consider the induction hypothesis $W_{i+1}(\bbars)=O(\sqrt{mn}\log n)$ w.h.p. which is equivalent to a high probability lower bound on $\bars_{i+1}$. We show that the drift of $W_i(\BFs)$ is negative when $W_{i+1}(\BFs)=O(\sqrt{mn}\log n)$ and $W_i(\BFs) \geq 0$. Thus, by Lemma \ref{lemma: iterative_ssc}, we conclude that $W_i(\bbars)=O(\sqrt{mn}\log n)$ w.h.p. This provides a lower bound on $\bars_i$ which completes the proof. Note that, the bound on $W_{i+1}(\BFs)$ is crucial in obtaining the negative drift for $W_i(\BFs)$. Thus, the iterative version of SSC is an integral part of the proof. \hfill $\square$
\endproof
Now, we prove the required lower bound on $\bars_k$ completing part $c$ of steps $2-m$, analogous to Step 2c as in Fig. \ref{fig: proof_outline_lb}.
\begin{lemma}[Induction Part $c$] \label{lemma: base_case_of_induction_hypothesis}
Consider the same setup as Theorem \ref{theo: informal}. Also, assume that $m \geq 2$ and \eqref{eq: outer_induction} holds for some $k \leq m$. Then, there exists $n_3 \in \bbZ_+$ such that for all $n \geq n_3$, we have
\begin{align*}
    \P{\bars_k \leq n - 2m \frac{n \log d}{d} - 3m d^{k-1}\sqrt{m n}\log n -12m^2 \frac{n \log d}{d^2}} \leq \left(\frac{1}{n}\right)^{\frac{m \log n}{4}-4\left(k-\frac{1}{4}\right)m}.
\end{align*}
\end{lemma}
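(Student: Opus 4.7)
The plan is to mirror the architecture of the proof of Lemma~\ref{lemma: outer_induction_weak_induction_step} (part~$a$), but to feed into it the strictly sharper input afforded by Lemma~\ref{lemma: remaining_weak_outer_induction} (part~$b$). That lemma yields $\bars_i \geq n - 9m^2 n\log d/d^{k-i+1}$ for every $i\in[k]$, a factor-$d$ improvement in the denominator over what the outer induction hypothesis~\eqref{eq: outer_induction} alone affords. This extra factor of $d$ is precisely the slack needed to shrink the leading coefficient in the lower bound on $\bars_k$ from $6m^2$ down to $2m$.

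Concretely, I set up a family of min-form Lyapunov functions
\begin{equation*}
L_{ik}(\BFs) = \min\bigl\{\,s_i - n + \Theta(m^2 n\log d/d^{k-i+1}),\ (k-i+1)n - C - \textstyle\sum_{l=i+1}^k s_l\,\bigr\}, \quad i\in[k-1],
\end{equation*}
with the sum offset tightened to $C = 2mn\log d/d + 12m^2 n\log d/d^2$. The base case is handed to us by Lemma~\ref{lemma: remaining_weak_outer_induction}, whose bound on $\bars_{k-1}$ is strong enough to force $L_{k-1,k}(\bbars) \leq O(d^{k-1}\sqrt{mn}\log n)$ w.h.p. I then backward-induct on $i$ from $k-1$ down to $1$, showing at each step that the drift of $L_{i-1,k}$ is strictly negative on $\{L_{ik}(\BFs) = O(d^{k-1}\sqrt{mn}\log n)\}\cap\{L_{i-1,k}(\BFs)\geq 0\}$, and invoking the iterative SSC framework (Lemma~\ref{lemma: iterative_ssc}) to propagate the collapse. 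The end product of the inner induction is $L_{1k}(\bbars) = O(d^{k-1}\sqrt{mn}\log n)$ w.h.p.

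To finish, I analyze $L_{0k}(\BFs) = kn - C - 3m d^{k-1}\sqrt{mn}\log n - \sum_{l=1}^k s_l$. The key drift identity, obtained by telescoping, is $\Delta\sum_{l=1}^k s_l = \lambda(1 - (s_k/n)^d) - s_1 + s_{k+1}$. The analysis splits by which branch of $L_{1k}$ is small: on the branch where $s_1 \leq n - \Theta(m^2 n\log d/d^k) + O(d^{k-1}\sqrt{mn}\log n)$, the drift acquires a positive $\Theta(m^2 n\log d/d^k)$ contribution that dominates $n^{1-\gamma}$; on the complementary branch the target lower bound on $\sum_{l=1}^k s_l$ is already implied by the second component of $L_{1k}$. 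In either case, iterative SSC delivers $\sum_{l=1}^k \bars_l \geq kn - C - 3m d^{k-1}\sqrt{mn}\log n$ w.h.p., and combining with $\bars_l \leq n$ for $l\in[k-1]$ immediately gives the claimed bound on $\bars_k$.

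The hard part will be the drift calculation responsible for the sharp constant $2m$ rather than $6m^2$. The key leverage is that Lemma~\ref{lemma: remaining_weak_outer_induction} makes $1 - \bars_{i-1}/n$ small enough that the Bernoulli inequality $(1-x)^d \geq 1 - dx$, applied to $x = 9m^2\log d/d^{k-i+2}$, yields $(s_{i-1}/n)^d \geq 1 - O(m^2\log d/d)$, in sharp contrast to the much weaker $(s_{i-1}/n)^d \geq d^{-2m}$ obtainable from~\eqref{eq: outer_induction} alone. This lets the arrival flow into levels $\geq i$ be accounted for essentially at rate $\lambda$, and the $\Theta(m^2 n\log d/d^2)$ slack left over from the Taylor error is exactly what the $12m^2 n\log d/d^2$ correction term in the claim absorbs. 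Balancing the constants $c_i$ inside each $L_{ik}$ and the single $C$ so that the drift is uniformly negative in both branches of every $L_{ik}$, while keeping $C$ pinned at $2mn\log d/d + O(n\log d/d^2)$, is where essentially all of the algebraic labor sits.
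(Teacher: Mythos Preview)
Your proposal is essentially the paper's proof: define a family of min-form Lyapunov functions (the paper's $Z_{ik}$), run a backward inner induction on $i$ (the paper packages this as Claim~\ref{claim: base_case_of_induction_hypothesis}), then analyze the drift of the sum Lyapunov $Z_{0k}^{(1)}$ on the collapsed set intersected with the events $\calD_l^{(3)}=\{s_l\ge n-9m^2n\log d/d^{k-l+1}\}$ supplied by Lemma~\ref{lemma: remaining_weak_outer_induction}, using the Bernoulli bound $(1-x)^d\ge 1-dx$ to extract the sharp $2m$ coefficient.

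The one place where your description slips is the inner base case. Lemma~\ref{lemma: remaining_weak_outer_induction} only provides \emph{lower} bounds on $\bars_{k-1}$ and $\bars_k$, and neither makes a branch of your $L_{k-1,k}$ small: the first branch $s_{k-1}-n+\Theta(m^2n\log d/d^2)$ needs an \emph{upper} bound on $s_{k-1}$, while the second branch $n-C-s_k$ needs $s_k\ge n-2mn\log d/d\,(1+o(1))$, far stronger than the $9m^2$ bound from part~$b$. The paper does not get this for free either; it establishes the base case $Z_{k-1,k}(\bbars)\le\sqrt{mn}\log n$ w.h.p.\ by its own two-branch drift computation (the base case of Claim~\ref{claim: base_case_of_induction_hypothesis}), and you will need to do the same.
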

Note that the above lemmas replaces the coefficient of $n \log d /d$ from $6m^2$ in Lemma \ref{lemma: outer_induction_weak_induction_step} with the required $2m$ for the induction step. The proof of the above lemma follows similarly to the proof of Lemma \ref{lemma: outer_induction_weak_induction_step} but leverages the lower bound on $\bars_{k-1}$ proved in Lemma \ref{lemma: remaining_weak_outer_induction}. As the lower bound provided by Lemma \ref{lemma: remaining_weak_outer_induction} is sharper than the induction hypothesis \eqref{eq: outer_induction}, we obtain a better lower bound on $\bars_k$. Turns out, applying this bootstrapping step once suffices to prove the induction step for $\bars_k$. Now, using this bound, we obtain the required lower bound on $\bars_i$ for all $i \in [k-1]$ which completes the part $d$ of steps $2-m$, analogous to Step 2d as outlined in Fig. \ref{fig: proof_outline_lb}.
\begin{lemma}[Induction Part $d$] \label{lemma: induction_step}
Consider the same setup as Theorem \ref{theo: informal}. Also, assume that $m \geq 2$ and \eqref{eq: outer_induction} holds for some $k \leq m$. Then, there exists $n_4 \in \bbZ_+$ such that for all $n \geq n_4$, we have
\begin{align*}
    \P{\bars_i \leq n - 2m \frac{n \log d}{d^{k-i+1}} - 4m d^{i-1}\sqrt{m n}\log n -16m^3 \frac{n \log (d)^2}{d^{k-i+2}}} \leq \left(\frac{1}{n}\right)^{\frac{m \log n}{4}-4km} \quad \forall i \in [k]. \numberthis \label{eq: induction_step_lb}
\end{align*}
\end{lemma}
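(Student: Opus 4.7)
The plan is to mirror the structure of Lemma \ref{lemma: remaining_weak_outer_induction} (Part $b$), but now bootstrapping from the sharp lower bound on $\bars_k$ provided by Lemma \ref{lemma: base_case_of_induction_hypothesis} (Part $c$) rather than the weak bound from Part $a$. The key observation is that Part $b$ produced the coefficient $9m^2$ because its base case ($i = k$) only knew $\bars_k \geq n - 6m^2 n\log d/d$. With the improved base case $\bars_k \geq n - 2m n\log d/d - \text{(lower-order terms)}$, an analogous backward induction over $i$ should recover the tight coefficient $2m$ and the sharper denominator $d^{k-i+1}$ in place of $d^{k-i}$.

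First, I would set $i = k$ as the base case: this is exactly the bound provided by Lemma \ref{lemma: base_case_of_induction_hypothesis}, rewritten as $W_k(\bbars) \leq O(\sqrt{mn}\log n)$ w.h.p., where
\[
W_i(\BFs) = n - 2m\frac{n\log d}{d^{k-i+1}} - 16 m^3 \frac{n\log(d)^2}{d^{k-i+2}} - \Theta(d^{i-1}\sqrt{mn}\log n) - s_i.
\]
Next, for the backward induction step from $i+1$ to $i$ (with $1 \leq i \leq k-1$), I would analyze the drift of $W_i(\BFs)$ on the restricted region $\calE_{i+1} \cap \{W_i(\BFs) \geq 0\}$, where $\calE_{i+1}$ is the high-probability event $\{W_{i+1}(\bbars) \leq O(\sqrt{mn}\log n)\}$. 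Over $\calE_{i+1}$, the event $W_{i+1}(\BFs) \leq \text{small}$ translates to a tight lower bound on $s_{i+1}$, which keeps the service rate $s_i - s_{i+1}$ small; combined with $W_i(\BFs) \geq 0$, which forces $s_i$ to be well below its target $n - 2mn\log d/d^{k-i+1}$, the probability of sampling $d$ queues all of length at least $i$ is controlled by $(s_i/n)^d \leq (1 - 2m\log d/d^{k-i+1})^d$. Expanding and using $d^m \approx 2m n^\gamma \log d$ (the defining equation for $d$), the arrival contribution to $s_i$ dominates the departure, yielding a strictly negative drift. Applying the iterative SSC bound (Lemma \ref{lemma: iterative_ssc}) with $V = W_i$, $\calE = \calE_{i+1}$, $B = 0$, and $j = \Theta(d^{i-1}\sqrt{mn}\log n)$ then produces $W_i(\bbars) \leq O(\sqrt{mn}\log n)$ w.h.p., which is the claimed bound. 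Carrying the induction backward down to $i = 1$ and taking a union bound over the $k \leq m$ failure events produces the claimed probability $1 - (1/n)^{m\log n/4 - 4km}$.

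The main obstacle is the sharpness of the drift computation at each step, specifically extracting the coefficient $2m$ rather than a larger multiple. The arrival rate increases $s_i$ at rate $\lambda\bigl((s_{i-1}/n)^d - (s_i/n)^d\bigr)$ and the departure decreases it at rate $s_i - s_{i+1}$; the sharp coefficient requires that in the region where $W_i(\BFs) \geq 0$, the binomial expansion of $(s_i/n)^d$ be carried out to first order with the lower-order $16m^3 n\log(d)^2/d^{k-i+2}$ correction absorbing the second-order remainder, while the $4m d^{i-1}\sqrt{mn}\log n$ term absorbs the stochastic fluctuations that enter through Azuma/Freedman-type concentration bounds (already packaged in the drift framework). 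The $d^{i-1}$ factor in the $\sqrt{mn}\log n$ term is crucial because the sensitivity of the arrival rate to perturbations in $s_i$ scales like $d(s_i/n)^{d-1}/n$, amplifying noise by roughly $d$ at each decrement of the index. Finally, because the bounds on $\bars_{i+1}$ enter through $\calE_{i+1}$, one must verify that the failure probabilities accumulate only additively across $i \in [k]$, giving the exponent $m\log n/4 - 4km$ stated in \eqref{eq: induction_step_lb}.
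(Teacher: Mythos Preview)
Your high-level strategy matches the paper's: a backward induction on $i$ with base case $i=k$ supplied by Lemma~\ref{lemma: base_case_of_induction_hypothesis}, and at each step a drift analysis of a Lyapunov function $W_i(\BFs) = \text{const} - s_i$ combined with Lemma~\ref{lemma: iterative_ssc}. However, there is a genuine gap in the drift step.

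You condition only on $\calE_{i+1} = \{W_{i+1}(\BFs) \leq O(\sqrt{mn}\log n)\}$ and on $\{W_i(\BFs) \geq 0\}$. But the drift of $W_i$ is
\[
\Delta W_i(\BFs) = (s_i - s_{i+1}) - \lambda\Bigl(\Bigl(\tfrac{s_{i-1}}{n}\Bigr)^d - \Bigl(\tfrac{s_i}{n}\Bigr)^d\Bigr),
\]
so to make it negative you must lower-bound the arrival term, which requires $(s_{i-1}/n)^d$ to be close to $1$. Your conditions give an upper bound on $s_i$ and a lower bound on $s_{i+1}$, but say nothing about $s_{i-1}$; in the worst case $s_{i-1} = s_i$, the arrival contribution vanishes and $\Delta W_i = s_i - s_{i+1} \geq 0$. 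Upper-bounding $(s_i/n)^d$ alone, as you do, is not enough. Moreover, the outer hypothesis \eqref{eq: outer_induction} only gives $s_{i-1} \geq n - 2m n\log d / d^{k-i+1}$, which is the \emph{same} scale as the upper bound on $s_i$ and therefore still leaves the arrival term of lower order than the departure term.

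The paper closes this gap by also intersecting with the event $\calD_{i-1}^{(4)} = \{s_{i-1} \geq n - 9m^2 n\log d / d^{k-i+2}\}$, which is exactly the bound from Lemma~\ref{lemma: remaining_weak_outer_induction} (Part~$b$); the extra factor of $d$ in the denominator is what makes $(s_{i-1}/n)^d \approx 1 - 9m^2 \log d / d^{k-i+1}$, so that the arrival term is $\lambda \cdot 2m\log d/d^{k-i} + O(\text{lower order})$, dominating $s_i - s_{i+1}$ by the margin needed for the sharp constant $2m$. So Part~$b$ is not superseded by Part~$c$: it is re-used here as an input. Your write-up should restrict to $\calE_{i+1} \cap \calD_{i-1}^{(4)} \cap \{W_i \geq 0\}$ and invoke both the inner induction hypothesis and Lemma~\ref{lemma: remaining_weak_outer_induction} when bounding $\P{\bbars \notin \calE}$.
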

Note that the above lemma proves the coefficient of $n \log d /d^{k-i+1}$ from $9m^2$ in Lemma \ref{lemma: remaining_weak_outer_induction} to the required $2m$ for the induction step. The proof follows similarly to the proof of Lemma \ref{lemma: remaining_weak_outer_induction} but leverages a sharper lower bounds on $\bars_i$ for all $i \in [k-1]$ proved in Lemma \ref{lemma: remaining_weak_outer_induction}. Note that the above lemma essentially completes the induction because \eqref{eq: induction_step_lb} is same as \eqref{eq: outer_induction} with $k-1$ replaced by $k$. Now, we use the above lemmas below to prove Theorem \ref{theo: lower_bound}.
\proof{Proof of Theorem \ref{theo: lower_bound}}
Consider the induction hypothesis as defined in \eqref{eq: outer_induction}. The base case follows by Lemma \ref{lemma: base_case_lower_bound} for $n_{IH} \geq \max_{l \in \{0, \hdots, 4\}}\{n_l\}$, and the induction step follows by Lemma \ref{lemma: induction_step}. Thus, for all $n \geq n_{IH}$, we have
\begin{align*}
     \P{\bars_i \leq n - 2m \frac{n \log d}{d^{m-i+1}} - 4m d^{i-1}\sqrt{m n}\log n -16m^3 \frac{n \log (d)^2}{d^{m-i+2}}} &\leq \left(\frac{1}{n}\right)^{\frac{m \log n}{4}-4m^2} \\
     &\leq  \left(\frac{1}{n}\right)^{\frac{m \log n}{5}}\quad \forall i \in [m],
\end{align*}
where the last inequality follows for all $n \geq n_5$ for some $n_5 \in \bbZ_+$ by noting that $m=o(\log n)$. Now, by defining $n_{LB}\overset{\Delta}{=} \max\{n_5, n_{IH}\}$, the proof is complete. \hfill $\square$ 
\endproof
\section{Upper Bound}
Similar to the previous section, the analysis for the upper bound follows the iterative SSC framework. We first prove high probability upper bounds on $\bars_{m+1}$ and $\sum_{l=m+2}^b \bars_l$ and then inductively prove high probability upper bounds on $\{\bars_i\}_{i \in [m]}$. We start by focusing on the special case of $m=1$ to provide intuition behind the methodology.
\subsection{Special Case \texorpdfstring{$(m=1)$}{}} \label{sec: special_case_upper_bound}
The geometric intuition for the steps to prove the special case is given in Fig. \ref{fig: upper_bound_ssc}. In particular, Fig. \ref{fig: step0_ub} corresponds to Theorem \ref{theo: lower_bound} (lower bound on $s_1$) for the special case of $m=1$. The other two sub-figures correspond to proving matching upper bounds. Now, we elucidate the steps to obtain the upper bound for $m=1$, as outlined in Fig. \ref{fig: proof_sketch_ub}.  
\begin{itemize}
    \item \textbf{Step 1a:} We first show that $\sum_{l=2}^b \bars_l = O\left(b\sqrt{n}\log n\right)$ w.h.p. Noting that $\bars_i \geq 0$ for all $i \in \{3, \hdots, b\}$, we get $\bars_2=O\left(b\sqrt{n}\log n\right)=o(n)$ w.h.p. This is a weaker bound than what is required for Theorem \ref{theo: upper_bound}. We improve it further in Step 1c by first proving a high probability upper bound on $\sum_{l=m+2}^b \bars_l$ in the next step.
    \item \textbf{Step 1b:} The probability of an incoming job joining a queue with length at least two is equal to the probability of sampling $d$ queues with length at least two. This is equal to $\left(\frac{\bars_2}{n}\right)^d = o(1)^d$ w.h.p. which converges to zero very fast. Consistent with the intuition, we show that $\sum_{l=3}^b \bars_l=o(1)$ w.h.p.
    \item \textbf{Steps 1c and 1d:} Using the upper bound proved in Step 1b, we show that $\bars_2 = O\left(\sqrt{n}\log n\right)$ and $\bars_1 \leq \lambda + O(\sqrt{n}\log n)$ w.h.p. This is obtained by first showing \eqref{eq: min_lyapunov _function} holds, which allows us to prove \eqref{eq: rest_of_the_ssc} as shown below.    \begin{subequations}
    \begin{align}
        \min{}&\left\{\sum_{l=2}^b \bars_l - O(\sqrt{n}\log n), \lambda + O(\sqrt{n}\log n)-\bars_1\right\} \leq 0 \quad \textit{w.h.p.} \label{eq: min_lyapunov _function}\\
        \sum_{l=1}^b{}& \bars_l \leq \lambda + O(\sqrt{n}\log n) \quad \textit{w.h.p.} \label{eq: rest_of_the_ssc}
    \end{align}
    \end{subequations}
    Eq. \eqref{eq: rest_of_the_ssc} immediately implies that $\bars_1 \leq \lambda + O(\sqrt{n}\log n)$ as $\sum_{l=2}^b \bars_l \geq 0$. In addition, by using the lower bound on $\bars_1$ given by Theorem \ref{theo: lower_bound} in \eqref{eq: rest_of_the_ssc}, we obtain that $\sum_{l=2}^b \bars_l \leq O(\sqrt{n}\log n)$ completing Step 1c and 1d. The SSC corresponding to \eqref{eq: min_lyapunov _function} is depicted in Fig. \ref{fig: step1a_ub_1} which shows that either $s_1$ is large or $s_2$ is small. In particular, if $s_1$ is small and $s_2$ is large, then most of incoming jobs will join an empty queue resulting in $s_1$ increasing and $s_2$ decreasing. Further, the SSC corresponding to \eqref{eq: rest_of_the_ssc} is depicted in Fig. \ref{fig: step1a_ub_2} which upper bounds $\sum_{l=1}^b \bars_l$ w.h.p.
\end{itemize}
Tight characterization of $\bbars$ for the special case of $m=1$ was first obtained in \cite{sub_halfin_whitt_lei}. The authors used Stein's method in conjunction with SSC to prove the result. In particular, the SSC proved in \cite{sub_halfin_whitt_lei} is equivalent to \eqref{eq: min_lyapunov _function} as depicted in Fig. \ref{fig: step1a_ub_1}. This alone is not sufficient to characterize the complete stationary distribution. So, Stein's method was used along with \eqref{eq: min_lyapunov _function} to obtain the stationary distribution. On the other hand, we circumvent the use of Stein's method by using the iterative SSC framework, to further improve the SSC and obtain tight bounds on the stationary distribution. A takeaway from our paper is that one can simply use the iterative SSC approach to obtain tight bounds on the queue lengths if the stochastic system lives close to the fixed point of the corresponding dynamical system given by a set of ODEs.
\begin{figure}
    \FIGURE{
        \begin{minipage}[b]{0.32\textwidth}
        \centering
            \begin{tikzpicture}[scale=0.6]
                \fill[blue!20] (3.8, 0) -- (5.5, 0) -- (5.5, 5.5) -- (3.8, 3.8);
                \fill[pattern=crosshatch, pattern color=red!45] (0, 0) -- (3.8, 0) -- (3.8, 3.8);
                \draw[black, thick, ->] (0,0) -- (0,6);
                \draw[black, thick, ->] (0,0) -- (7,0);
                \draw[black, thick] (5.5,0) -- (5.5,5.5);
                \draw[black, thick] (0,0) -- (5.5,5.5);
                \draw[black, thick] (3.8, 0) -- (3.8, 3.8);
                \draw[blue, very thick, ->] (1.8, 1.3) -- (3.6, 0.7);
                \draw[blue, very thick, ->] (1, 0.5) -- (3.6, 0.2);
                \draw[blue, very thick, ->] (2.4, 2) -- (3.6, 1.25);
                \filldraw[black] (4.1,0) circle (2.5pt) node[anchor=north]{$\lambda$};
                \draw[black, thick, dashed] (3.8, 3.8) -- (3.8, 6.5);
                \draw[black, thick, ->] (3.8, 5.8) -- (5.5, 5.8) node at (5.1, 6.3) {Large $s_1$};
            \end{tikzpicture}
            \subcaption{$s_1 \uparrow$ in the red region until it enters the blue region}
            \label{fig: step0_ub}
        \end{minipage}
        \begin{minipage}[b]{0.32\textwidth}
        \centering
            \begin{tikzpicture}[scale=0.6]
                \fill[blue!20] (3.8, 0) -- (3.8, 1) -- (4.4, 1) -- (4.4, 4.4) -- (5.5, 5.5) -- (5.5, 0);
                \fill[pattern=crosshatch, pattern color=red!45] (3.8, 1) -- (3.8, 3.8) -- (4.4, 4.4) -- (4.4, 1);
                \draw[black, thick, ->] (0,0) -- (0,6);
                \draw[black, thick, ->] (0,0) -- (7,0);
                \draw[black, thick] (5.5,0) -- (5.5,5.5);
                \draw[black, thick] (0,0) -- (5.5,5.5);
                \draw[black, thick] (3.8, 0) -- (3.8, 3.8);
                \draw[blue, very thick, ->] (3.9, 3.5) -- (4.3, 3);
                \draw[blue, very thick, ->] (3.9, 2.5) -- (4.3, 2.2);
                \draw[blue, very thick, ->] (3.9, 1.6) -- (4.3, 1.5);
                 \draw[black, thick] (4.4, 1) -- (4.4, 4.4);
                 \draw[black, thick] (3.8, 1) -- (4.4, 1);
                \filldraw[black] (4.1,0) circle (2.5pt) node[anchor=north]{$\lambda$};
                \draw[black, thick, dashed] (4.4, 4.4) -- (4.4, 6.5);
                \draw[black, thick, ->] (4.4, 5.8) -- (6.1, 5.8) node at (5.7, 6.3) {Large $s_1$};
                \draw[black, thick, dashed] (1.5, 1) -- (6.5, 1);
                \draw[black, thick, ->] (3.2, 1) -- (3.2, 0.1) node at (1.9, 0.6) {Small $s_2$};
            \end{tikzpicture}
            \subcaption{$s_1 \uparrow$ \& $s_2 \downarrow$ in the red region until it enters the blue region}
            \label{fig: step1a_ub_1}
        \end{minipage}
        \begin{minipage}[b]{0.32\textwidth}
        \centering
             \begin{tikzpicture}[scale=0.6]
                \fill[blue!20] (3.8, 0) -- (3.8, 1) -- (4.4, 1) -- (4.4, 1.4) -- (4.8, 0);
                \fill[pattern=crosshatch, pattern color=red!45] (4.8, 0) -- (5.5, 0) -- (5.5, 5.5) -- (4.4, 4.4) -- (4.4, 1.4);
                \draw[black, thick, ->] (0,0) -- (0,6);
                \draw[black, thick, ->] (0,0) -- (7,0);
                \draw[black, thick] (5.5,0) -- (5.5,5.5);
                \draw[black, thick] (0,0) -- (5.5,5.5);
                \draw[black, thick] (3.8, 0) -- (3.8, 3.8);
                \draw[blue, very thick, ->] (5.4, 0.2) -- (4.9, 0.7);
                \draw[blue, very thick, ->] (5.4, 1.2) -- (5.1, 1.5);
                \draw[blue, very thick, ->] (4.6, 4.2) -- (4.8, 3.6);
                \draw[blue, very thick, ->] (4.7, 3.3) -- (4.9, 3);
                \draw[blue, very thick, ->] (5.3, 4.6) -- (5.35, 3.5);
                \draw[blue, very thick, ->] (5.4, 3.3) -- (5.2, 2.4);
                \draw[blue, very thick, ->] (5.25, 2.1) -- (5, 1.8);
                \draw[blue, very thick, ->] (4.9, 1.55) -- (4.6, 1.35);
                 \draw[black, thick] (4.4, 1) -- (4.4, 4.4);
                 \draw[black, thick] (3.8, 1) -- (4.4, 1);
                 \draw[black, thick] (4.4, 1.4) -- (4.8, 0);
                \filldraw[black] (4.1,0) circle (2.5pt) node[anchor=north]{$\lambda$};
                \draw[black, thick, dashed] (3.2, 5.4) -- (4.8, 0);
                \draw[black, thick, ->] (3.4, 4.725) -- (2, 4.3) node[rotate=17] at (1.8, 5.3) {Small};
                \node[rotate=17] at (2.1, 4.8) {$s_1+s_2$};
            \end{tikzpicture}
            \subcaption{$s_1+s_2 \downarrow$ in the red region until it enters the blue region}
            \label{fig: step1a_ub_2}
        \end{minipage}}{
 Graphical representation of iterative SSC for lower and upper bound for $m=1$: The red hatched region is shown to have low steady-state probability leading to the collapse into the solid blue region. In addition, the arrows represents the drift -  $(ds_1/dt, ds_2/dt)$.
    \label{fig: upper_bound_ssc}}{}
\end{figure}

\subsection{General Case} \label{sec: general_case_upper_bound}
To present the required intermediate results in a concise manner, define the following:
\begin{align*}
    B_i = 18m d^{i-1}\sqrt{m n}\log n + 48m^3 \frac{n \log (d)^2}{d^{m -i+2}}+ \frac{n^{1-\gamma}}{d^{m -i}}\mathbbm{1}\left\{m >1\right\}. \numberthis \label{eq: B_j_upper_bound}
\end{align*}
Note that, $B_i$ consists of lower order terms. In particular, $B_i = o\left(\frac{mn \log d}{d^{m -i +1}}\right)$ by \eqref{eq: lower_order_terms}. Now, corresponding to Step 1a, we provide a high probability upper bound on $\bars_{m + 1}$ in the following lemma.
\begin{lemma}[Step 1a] \label{lemma: weak_bound_s_mplus1} There exists $\tilde{n}_1 \in \bbZ_+$ such that for all $n \geq \tilde{n}_1$, we have \label{lemma: weak_upper_bound_smplus1}
\begin{align*} 
    \P{\bars_{m+1} \geq b B_{m}} &\leq \left(\frac{1}{n}\right)^{(m\log n)/6}.
\end{align*}
\end{lemma}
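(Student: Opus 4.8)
The plan is to apply the iterative state-space-collapse machinery of Lemma~\ref{lemma: iterative_ssc} to the Lyapunov function $V(\BFs):=\sum_{l=m+1}^{b}s_l$, the total number of jobs sitting above height $m$. Its drift comes straight from the dynamics: an arrival lifts some queue above height $m$ precisely when all $d$ sampled queues already have length $\ge m$, while each queue of length $\ge m+1$ releases a slot at rate $1$, so $\Delta V(\BFs)=\lambda\bigl((s_m/n)^d-(s_b/n)^d\bigr)-s_{m+1}\le \lambda(s_m/n)^d-s_{m+1}$, and since $s_{m+1}\ge V(\BFs)/b$ we get $\Delta V(\BFs)\le \lambda(s_m/n)^d-V(\BFs)/b$. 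Thus $V$ has strictly negative drift on any region where $\lambda(s_m/n)^d<B_m$ and $V(\BFs)\ge bB_m$; running Lemma~\ref{lemma: iterative_ssc} with this drift, the restricting event $\calE$ described below, $B=bB_m$, and an appropriate slack $j$ (here $b=O(\log(n)^3)$ keeps both $bB_m=o(n)$ and the slack negligible against $(1/n)^{(m\log n)/6}$) and then using $\bars_{m+1}\le V(\bbars)$ yields the lemma.

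Everything hinges on exhibiting an event $\calE$ with $\P{\bbars\notin\calE}\le (1/n)^{(m\log n)/6}$ on which $\lambda(s_m/n)^d\le B_m$. Since $(s_m/n)^d$ is hypersensitive to $s_m$ near $n$ --- an $\Theta(n/d)$ perturbation of $s_m$ moves it by a constant factor --- this needs a high-probability \emph{upper} bound $\bars_m\le n-\Omega\bigl(\tfrac{n\log d}{d}\bigr)$, whereas Theorem~\ref{theo: lower_bound} only furnishes lower bounds, so the upper-side control has to be bootstrapped. I would build it bottom-up using the exact rate-balance identities $\E{\bars_i}=\lambda\,\E{(\bars_{i-1}/n)^d-(\bars_b/n)^d}$ (with the convention $\bars_0:=n$): from $\E{\bars_1}\le\lambda$, which together with a concentration/drift argument for the total-jobs process gives $\bars_1\le n-n^{1-\gamma}(1-o(1))$ w.h.p., one propagates through levels $i=2,\dots,m$, using that on $\{\bars_i\le n-\theta_i\}$ one has $\lambda(1-\theta_i/n)^d\approx n-d\theta_i$ for $i<m$, to get $\bars_{i+1}\le n-\theta_{i+1}$ w.h.p.\ with $\theta_{i+1}\approx d\,\theta_i$; after $m-1$ steps, $d^{m}=2mn^{\gamma}\log d$ gives $\theta_m\approx\tfrac{2mn\log d}{d}$, hence $\lambda(\bars_m/n)^d\le \lambda\, d^{-2m(1-o(1))}=O(B_m)$. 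Intersecting this event with the event of Theorem~\ref{theo: lower_bound} (which also pins $n-s_i$ from above for $i\le m$) and a union bound produces $\calE$; the generous exponents $\tfrac{m\log n}{5},\tfrac{m\log n}{6},\dots$ in those statements leave exactly the room a union bound needs.

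The main obstacle is this cascade. Each level must be controlled with $1+o(1)$ precision in the exponent of $(s_i/n)^d$, and one must check that the multiplicative errors and the additive $\tilde O(\sqrt n)$ concentration slacks incurred over the $m=o(\log n)$ levels do not accumulate past the (loose) target $bB_m$; this is where the hypotheses $\gamma<1/2$ and $d=\Omega(\log(n)^3)$ are used. A related subtlety is that the upward step at level $i+1$ really needs the departure term $s_{i+1}-s_{i+2}$ to supply the negative drift, which flirts with circularity against the higher levels --- broken by extracting the expectation-level bounds from the rate-balance identities first and only afterwards promoting them to high-probability statements in order of increasing $i$, and by using the Theorem~\ref{theo: lower_bound} lower bounds to close the estimates. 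Lastly, the factor $b$ in $bB_m$ is merely the price of the cheap inequality $s_{m+1}\ge V(\BFs)/b$, and it is removed later in Step~1c once $\sum_{l\ge m+2}\bars_l=o(1)$ is in hand.
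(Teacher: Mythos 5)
Your drift computation for $V(\BFs)=\sum_{l=m+1}^{b}s_l$ is correct, and the final application of Lemma~\ref{lemma: iterative_ssc} would go through \emph{if} you had the event $\calE$ you describe. But the construction of $\calE$ is a genuine gap, and it is exactly the hard part. Your choice of Lyapunov function makes the arrival term $\lambda(s_m/n)^d$, so you need a high-probability \emph{upper} bound $\bars_m\le n-\Omega\!\left(\tfrac{n\log d}{d}\right)$. In the paper this is Lemma~\ref{lemma: base_case_upper_bound} (Step~1d), which sits logically \emph{downstream} of the present lemma: its proof uses the bound on $\bars_{m+1}$ and the tail bound of Lemma~\ref{lemma: very_light_tail}. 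Your proposed bottom-up cascade to obtain it independently does not work as sketched. The rate-balance identities only give expectation-level information, and there is no mechanism offered to promote $\E{\bars_{i+1}}\le n-d\theta_i$ to a statement holding with failure probability $(1/n)^{\Theta(m\log n)}$: Markov's inequality is hopeless at that scale (and in that direction), and the only available tool for such tails is the drift machinery of Lemma~\ref{lemma: iterative_ssc}, whose negative-drift condition for $s_{i+1}$ requires the departure term $-(s_{i+1}-s_{i+2})$ to be genuinely negative, i.e.\ an upper bound on the level \emph{above} — precisely the circularity you acknowledge but do not break. (This is why the paper's upper-bound induction, Lemma~\ref{lemma: induction_step_upper_bound}, runs top-down from $i=m$ to $i=1$, not bottom-up.) Your suggestion to first get $\bars_1\le \lambda+\tilde O(\sqrt n)$ w.h.p.\ ``from a concentration/drift argument for the total-jobs process'' has the same flaw: the drift of $s_1$ is $\lambda(1-(s_1/n)^d)-(s_1-s_2)$, so negative drift on $\{s_1\ \text{large}\}$ already needs an upper bound on $s_2$.

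The paper's proof avoids any upper bound on $\bars_m$ altogether. It applies the drift argument to (a shifted version of) the \emph{full} sum $\sum_{l=1}^{b}s_l$, whose drift is $\lambda\bigl(1-(s_b/n)^d\bigr)-s_1\le\lambda-s_1$; negative drift then needs only a sharp \emph{lower} bound $s_1\ge\lambda+2\sqrt{mn}\log n$, which is supplied by the nested min-type Lyapunov functions $U_j$ and the backward induction of Claim~\ref{claim: upper_bound} (the iterative SSC saying ``either $s_1$ is large or $\sum_{l\ge m+1}s_l$ is small''). The resulting upper bound on $\sum_{l=1}^{b}\bars_l$ is then converted into the bound $\sum_{l=m+1}^{b}\bars_l\le bB_m$ by subtracting the Theorem~\ref{theo: lower_bound} lower bounds on $\bars_1,\dots,\bars_m$ — which is the only place the already-proved lower bounds enter, and is the step your plan tried to replace with the unavailable upper bound on $\bars_m$. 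So the proposal as written cannot be completed without essentially importing the paper's construction (or proving Step~1d first by some other means, which your sketch does not do).
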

\proof{Proof Sketch of Lemma \ref{lemma: weak_upper_bound_smplus1}} To prove the lemma, we first consider a family of functions defined as follows:
\begin{align*}
    U_i(\BFs) = \min\left\{\sum_{l=i+1}^b s_l - (m-i)n -o\left(\frac{2mn\log d}{d}b\right), n - \frac{2mn\log d}{d^{m-i+1}}(1-o(1))-s_i \right\} \quad \forall i \in [m].
\end{align*}
In Section~\ref{sec: special_case_upper_bound}, the idea was to show $\sum_{i=2}^b s_i = o(n)$ and $s_1 \approx \lambda$ by establishing SSC as in Fig~\ref{fig: upper_bound_ssc} (b) and (c) using the Lyapunov function given by \eqref{eq: min_lyapunov _function} and \eqref{eq: rest_of_the_ssc}. More generally, we expect $s_{m+1}$ to be small and so a natural generalization of \eqref{eq: min_lyapunov _function} is $U_m(\BFs)$. However, simply using $U_m(\BFs)$ does not suffice to obtain an appropriate bound of $\sum_{l=2}^b s_l \approx (m-1)n + o(n)$ similar to \eqref{eq: rest_of_the_ssc}. Thus, we do induction on $U_i(\BFs)$ to translate the bound on $U_m(\BFs)$ to $U_1(\BFs)$ which establishes an SSC similar to Fig.~\ref{fig: upper_bound_ssc} (b). In particular, $U_1(\BFs) = o(n)$ implies either $s_1 \approx \lambda + o(n)$ or $\sum_{l=2}^b s_l \approx (m-1)n + o(n)$, which are the correct orders for $s_1$ and $\sum_{l=2}^b s_l$. Now, it remains to translate this bound to a useful bound on $\sum_{l=1}^b s_l$ similar to Fig.~\ref{fig: upper_bound_ssc} (c). To implement this step, we consider a Lyapunov function similar to that of \eqref{eq: rest_of_the_ssc} given as follows:
\begin{align*}
    U_0(\BFs) = \sum_{l=1}^b s_l - mn-o\left(\frac{2mn\log d}{d}b\right).
\end{align*}
Now, using the fact that $U_1(\bbars)$ is small, we show that $U_0(\bbars)$ is small, which provides the required bound of $\sum_{i=1}^b s_i \approx mn+o\left(\frac{2mn\log d}{d}b\right)$. Finally, as $s_l = \Omega(n)$ for all $l \in [m]$ by the lower bounds established in Theorem~\ref{theo: lower_bound}, we immediately obtain
\begin{align*}
    \sum_{l=m+1}^b \bars_l = o\left(\frac{2mn\log d}{d}b\right).
\end{align*}
More precisely, we show that the lower order term $o\left(\frac{2mn\log d}{d}b\right)$ is equal to $b B_m$. This completes the proof of the lemma. Observe that the above bound is equivalent to Step 1a in Section~\ref{sec: special_case_upper_bound}.
\hfill $\square$
\endproof
Note that, the bound in Lemma \ref{lemma: weak_bound_s_mplus1} only makes sense if $b B_{m} = o(n)$. One can verify that $b B_{m} = o(n)$ by substituting $b \leq \log (n)^3$ and $d \geq \log (n)^3$. This implies that an appropriate upper bound on $b$ is essential for the proof to work. It turns out that $b \leq \log (n)^3$ works for all $d \geq \log (n)^3$. The result can be easily extended for $b \leq \min\{n^{0.5-\gamma}, d\}$. 

Now, the next lemma corresponds to Step 1b in Fig. \ref{fig: proof_sketch_ub} and proves an $o(1)$ high probability upper bound on $\sum_{l=m+2}^b \bars_l$ by using the high probability upper bound $\bars_{m+1} = o(n)$ that was proved in Lemma \ref{lemma: weak_bound_s_mplus1}.
\begin{lemma}[Step 1b]  There exists $\tilde{n}_2 \in \bbZ_+$ such that for all $n \geq \tilde{n}_2$, we have \label{lemma: very_light_tail}
\begin{align*}
    \P{\sum_{l=m +2}^b \bars_{l} \geq 1} \leq \left(\frac{1}{n}\right)^{(m\log n)/7}.
\end{align*}
\end{lemma}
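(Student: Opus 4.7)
The plan is to apply the iterative SSC framework (Appendix \ref{app: iterative_ssc}) to the Lyapunov function $V(\BFs) = \sum_{l=m+2}^b s_l$, conditioning on the high-probability set $\calE = \{\BFs : s_{m+1} \leq bB_m\}$ provided by Lemma \ref{lemma: weak_bound_s_mplus1}.

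First I would compute the drift of $V$. An arrival adds one to $V$ only if all $d$ sampled queues have length at least $m+1$ (otherwise the incoming job lands at length $\leq m$), contributing an up-rate of $\lambda[(s_{m+1}/n)^d - (s_b/n)^d]$. A service completion removes one from $V$ exactly when it occurs at a queue of length at least $m+2$, contributing a down-rate of $s_{m+2} - s_{b+1} = s_{m+2}$. Thus
\[
\Delta V(\BFs) \;\leq\; \lambda\!\left(\frac{s_{m+1}}{n}\right)^{d} - s_{m+2},
\]
and each jump of $V$ has magnitude exactly one. Because $s_l$ is non-increasing in $l$, on $\{V(\BFs) \geq 1\}$ at least one coordinate $s_l$ with $l \geq m+2$ is positive, forcing $s_{m+2} \geq 1$. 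On $\calE$, a term-by-term computation (essentially the one performed in \eqref{eq: lower_order_terms}) using $b \leq \log(n)^3$, $d \geq \log(n)^3$ and $m = o(\log n)$ shows $bB_m/n \leq C/\log\log n$ for an absolute constant $C$ once $n$ is large. Hence
\[
\lambda\!\left(\frac{s_{m+1}}{n}\right)^{d} \;\leq\; n\!\left(\frac{C}{\log\log n}\right)^{d} \;\leq\; \tfrac{1}{2},
\]
so that $\Delta V(\BFs) \leq -1/2$ on $\calE \cap \{V(\BFs) \geq 1\}$ for all sufficiently large $n$.

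Next I would convert the drift bound into a tail bound for the stationary $V(\bbars)$. The cleanest route is a birth--death domination: conditioned on $\calE$, the marginal of $V$ under the stationary measure is dominated by a birth--death chain on $\bbZ_{\geq 0}$ with birth rate at most $\epsilon := n(C/\log\log n)^{d}$ and death rate at least $1$ at every state $V \geq 1$, giving $\pi(V \geq 1 \mid \calE) \leq \epsilon/(1-\epsilon) \leq 2\epsilon$. Combining with $\P{\bbars \notin \calE} \leq (1/n)^{(m\log n)/6}$ through a union bound yields
\[
\P{V(\bbars) \geq 1} \;\leq\; \left(\frac{1}{n}\right)^{(m\log n)/6} + 2n\!\left(\frac{C}{\log\log n}\right)^{d} \;\leq\; \left(\frac{1}{n}\right)^{(m\log n)/7},
\]
since the second summand is of order $e^{-\Theta(d\log\log\log n)}$ while the target decays only like $e^{-(m\log n)^2/7}$, and $m\log(n)^2 \ll d\log\log\log n$ under $d \geq \log(n)^3$ and $m = o(\log n)$.

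The main obstacle is precisely this last step: turning an \emph{order-one} negative drift into an $(1/n)^{(m\log n)/7}$ tail bound at the very coarse threshold $V \geq 1$. A generic drift-exponentiation $e^{\theta V}$ with constant $\theta$ only delivers a constant-factor geometric tail, insufficient for the target exponent. What rescues the argument is that on $\calE$ the absolute up-rate is super-polynomially smaller than the down-rate, so either the birth--death comparison above, or an $e^{\theta V}$ Lyapunov with $\theta = \Theta(d)$ (for which the tiny up-rate keeps the exponential generator nonpositive), is required to extract the full strength of the SSC from Step 1a. If the iterative SSC lemma in the appendix is stated only in the constant-$\theta$ form, one would either pass directly to the $\calE$-restricted stationary measure as above, or prove a sharpened version tailored to a super-polynomially small up-rate.
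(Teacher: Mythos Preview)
Your drift computation and the key observation---that on $\calE=\{s_{m+1}\leq bB_m\}$ the up-rate of $V=\sum_{l\geq m+2}s_l$ is super-polynomially small in $n$---are both correct and match the paper. Where your proposal diverges, and runs into the very obstacle you flag, is the conversion of this drift information into a tail bound at the threshold $V\geq 1$. The ``birth--death domination conditioned on $\calE$'' is not a rigorous step as written: the stationary law of $\{\BFs(t)\}$ conditioned on $\calE$ is not the stationary law of any Markov chain, so you cannot simply assert $\pi(V\geq 1\mid\calE)\leq \epsilon/(1-\epsilon)$. Making this precise would require either a coupling argument or, as you say, a sharpened SSC lemma with a state-dependent $q_{\max}$; neither is available in the paper's toolkit.

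The paper sidesteps this entirely with a much simpler device. Since the chain is positive recurrent, the stationary drift identity $\E{\Delta V(\bbars)}=0$ holds, which for your $V$ reads
\[
\E{\bars_{m+2}} \;=\; \lambda\,\E{\left(\tfrac{\bars_{m+1}}{n}\right)^d-\left(\tfrac{\bars_b}{n}\right)^d} \;\leq\; n\,\E{\left(\tfrac{\bars_{m+1}}{n}\right)^d}.
\]
Splitting the expectation on $\{\bars_{m+1}\leq bB_m\}$ and its complement (Lemma~\ref{lemma: weak_bound_s_mplus1}) gives $\E{\bars_{m+2}}\leq n(bB_m/n)^d + n\cdot(1/n)^{(m\log n)/6}$, and both terms are at most $(1/n)^{(m\log n)/7}$ for large $n$. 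Then Markov's inequality yields $\P{\bars_{m+2}\geq 1}\leq \E{\bars_{m+2}}$, and since the $s_l$ are nonnegative integers with $s_{m+2}\geq s_{m+3}\geq\cdots$, the events $\{\bars_{m+2}\geq 1\}$ and $\{\sum_{l\geq m+2}\bars_l\geq 1\}$ coincide. So the paper's route is: stationary balance $\Rightarrow$ expectation bound $\Rightarrow$ Markov, with no need for any drift-to-tail machinery beyond first moments. Your approach would work in spirit if the domination were justified, but the paper's argument is both shorter and fully rigorous within the tools already developed.
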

As $\bars_{m+1} = o(n)$ w.h.p., we have $\bars_{m + 1} \leq n/2$ w.h.p. for large enough $n$. Thus, the probability that an incoming customer will be matched with a queue with at least $m + 1$ customers is at most $0.5^d \leq (1/n)^{\log (n)^2}$ w.h.p. as $d \geq \Omega(\log (n)^3)$. We analyze the drift of $\sum_{l=m+2}^b \bars_l$ to obtain a high probability upper bound on itself.  

Next, we employ this bound to improve the upper bound on $\bars_{m+1}$, corresponding to Step 1c in the following lemma:
\begin{lemma}[Step 1c] There exists $\tilde{n}_3 \in \bbZ_+$ such that for all $n \geq \tilde{n}_3$, we have \label{lemma: strong_upper_bound_smplus1}
\begin{align*}
     \P{\bars_{m+1} \geq B_{m}} &\leq \left(\frac{1}{n}\right)^{(m\log n)/8}.
\end{align*}
\end{lemma}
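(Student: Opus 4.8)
The plan is to apply the iterative SSC machinery (Lemma~\ref{lemma: iterative_ssc}) once more, bootstrapping simultaneously from the three estimates already available: the high-probability lower bounds on $\{\bars_i\}_{i\in[m]}$ (Theorem~\ref{theo: lower_bound}), the weak upper bound $\bars_{m+1}\le b B_m$ (Lemma~\ref{lemma: weak_bound_s_mplus1}), and the very-light-tail bound $\sum_{l=m+2}^b\bars_l\le1$ (Lemma~\ref{lemma: very_light_tail}). First I would take $\calE\subseteq\calS$ to be the (state-space) event on which all three of these hold; a union bound over the three small complements gives $\P{\bbars\notin\calE}\le(1/n)^{\Omega(m\log n)}$, and this $\calE$ is the set handed to Lemma~\ref{lemma: iterative_ssc}. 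The key point making the improvement from $bB_m$ to $B_m$ possible is that, on $\calE$, the bound $\sum_{l=m+2}^b s_l\le1$ lets one replace the buffer-wide tail $\sum_{l\ge m+1}s_l$ by the single coordinate $s_{m+1}$ (up to an additive $1$), so the error terms in the drift analysis no longer pick up the factor $b$ that was present in Lemma~\ref{lemma: weak_bound_s_mplus1}.

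The core of the argument is a ``min''-type Lyapunov function, in the spirit of \eqref{eq: min_lyapunov _function} and of the family $U_i$ used to prove Lemma~\ref{lemma: weak_bound_s_mplus1}, but now \emph{centered at the fluid fixed point} and with its tail truncated at level $m+1$. Schematically,
\begin{align*}
  U(\BFs)=\min\Bigl\{\,s_{m+1}-B_m-c\sqrt{mn}\log n,\ \ \Bigl(n-\tfrac{2mn\log d}{d}+c\sqrt{mn}\log n\Bigr)-s_m\,\Bigr\}
\end{align*}
for a suitable absolute constant $c$; in the general case one runs a short backward induction over the coordinates $s_{m-1},\dots,s_1$ exactly as in Lemma~\ref{lemma: weak_bound_s_mplus1}, the lower bounds of Theorem~\ref{theo: lower_bound} doing most of the work there. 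On $\calE\cap\{U(\BFs)\ge0\}$ one shows $\Delta U(\BFs)\le-\theta<0$ by a case analysis on which branch achieves the minimum. If the $s_{m+1}$-branch binds, then $s_m\le n-\tfrac{2mn\log d}{d}(1-o(1))$, so the probability an arriving job samples $d$ queues all of length at least $m$ is at most $(s_m/n)^d\le e^{-d(n-s_m)/n}\le d^{-2m(1-o(1))}$ by the defining relation $d^m=2m n^\gamma\log d$; hence the arrival rate into level $m+1$ is at most $\tfrac12 B_m$ for large $n$ (this is exactly what the $36m^3\tfrac{n\log(d)^2}{d^2}$ and $n^{1-\gamma}\mathbbm 1\{m>1\}$ terms in \eqref{eq: B_j_upper_bound} are calibrated for), while the departure rate $s_{m+1}-s_{m+2}\ge s_{m+1}-1\ge B_m-1$ (again using Lemma~\ref{lemma: very_light_tail}) dominates. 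If the $s_m$-branch binds, then on $\calE$ one has $s_{m-1}\ge n-O\!\left(\tfrac{mn\log d}{d^2}\right)$, so $(s_{m-1}/n)^d=1-o(1)$, and combined with $s_{m+1}\ge B_m$ forced by the other branch being nonnegative this gives $\Delta s_m>0$, i.e.\ negative drift of that branch. Lemma~\ref{lemma: iterative_ssc} with this $U$, the set $\calE$, $B=0$ and $j=\Theta(\sqrt{mn}\log n)$ then yields $U(\bbars)=O(\sqrt{mn}\log n)$ w.h.p.

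It remains to extract $\bars_{m+1}\le B_m$. From $U(\bbars)=O(\sqrt{mn}\log n)$, either $\bars_{m+1}\le B_m+O(\sqrt{mn}\log n)$, which is already $\le B_m$ once the slack is absorbed into the constant $18m$ in \eqref{eq: B_j_upper_bound} (note $\sqrt{mn}\log n\le d^{m-1}\sqrt{mn}\log n$), or $\bars_m$ lies within $O(\sqrt{mn}\log n)$ of its fixed-point value. In the latter case one runs a companion Lyapunov function (again of truncated min type) to show $\sum_{l=1}^{m+1}\bars_l\le mn-\sum_{i=1}^m 2m\tfrac{n\log d}{d^{m-i+1}}(1-o(1))+O(\sqrt{mn}\log n)$ w.h.p., and then subtracts the lower bounds of Theorem~\ref{theo: lower_bound} for $\bars_1,\dots,\bars_m$: the leading $2m\tfrac{n\log d}{d^{m-i+1}}$ terms cancel and the surviving remainder is $O\!\left(md^{m-1}\sqrt{mn}\log n+m^3\tfrac{n\log(d)^2}{d^2}+n^{1-\gamma}\mathbbm 1\{m>1\}\right)\le B_m$. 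Propagating the exponents of $n$ through this one additional SSC step (one more union bound, one more drift step) produces the stated exceptional probability $(1/n)^{(m\log n)/8}$, slightly weaker than in Lemma~\ref{lemma: very_light_tail}.

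The main obstacle is the drift bookkeeping in the bad region. Because Step 1d is not yet available, there is no a priori upper bound on $s_m$, so negative drift of $s_{m+1}$ cannot be obtained in isolation; it is the min-structure, together with the fact that on $\calE$ the coordinates $s_1,\dots,s_{m-1}$ are pinned near $n$ and $s_{m+1}$ is forced to be large in the bad region, that makes both branches drift negatively at once. Equally delicate is tuning the centering of $U$ and of the companion function to the fixed point of Theorem~\ref{theo: lower_bound} precisely enough that the leading $\tfrac{2mn\log d}{d^{m-i+1}}$ terms cancel upon subtraction, leaving only a remainder of size $B_m$; this is where the relation $d^m=2m n^\gamma\log d$ and the hypothesis $b\le\log(n)^3$ are used in an essential way.
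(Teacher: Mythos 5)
Your proposal is correct in outline and follows essentially the same route as the paper: one more pass of the iterative SSC with a min-type Lyapunov family truncated at level $m+1$, using Lemma \ref{lemma: very_light_tail} to replace the buffer-wide tail by $s_{m+1}+1$ (removing the factor $b$), establishing negative drift via $d^m=2mn^\gamma\log d$, and extracting the bound by a total-sum estimate from which the lower bounds of Theorem \ref{theo: lower_bound} are subtracted. The paper merely packages this as the ``master'' Lemma \ref{lemma: base_case_and_beyond_for_upper_bound} (proved via Claim \ref{claim: upper_bound}) invoked with $B_{m+2}=1$ and $x=7$, so no genuinely different ideas are involved; your inclusion of Lemma \ref{lemma: weak_bound_s_mplus1} in $\calE$ is harmless but not needed at this stage.
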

The proof of the lemma is a more fine tuned version of the proof of Lemma \ref{lemma: weak_bound_s_mplus1}. In particular, the proof of Lemma \ref{lemma: weak_bound_s_mplus1} uses the coarse bound $\sum_{l=m+1}^b s_l \leq bs_{m+1}$. Lemma \ref{lemma: very_light_tail} improves this bound by showing $\sum_{l=m+2}^b \bars_l = o(1)$ w.h.p., which implies $\sum_{l=m+1}^b s_l \leq s_{m+1}+o(1)$. With this refinement, we repeat the steps of the proof of Lemma \ref{lemma: weak_bound_s_mplus1}, to get Lemma \ref{lemma: strong_upper_bound_smplus1}.

The rest of the proof of Theorem \ref{theo: upper_bound} is based on induction. In particular, we inductively prove upper bounds on $\{\bars_i : i \in [m]\}$, where the induction on $i$ runs backward. The induction hypothesis is given as follows: There exists $\tilde{n}_{IH} \in \bbZ_+$ such that for all $n \geq \tilde{n}_{IH}$, we have
\begin{align}
       \P{\bars_{i} \geq n - 2m \frac{n\log d}{d^{m + i-1}}+B_i + 2(m -i)m \frac{n \log d}{d^{m -i +2}}+\sqrt{m n}\log n} \leq \left(\frac{1}{n}\right)^{(m\log n)/8-(m-i)}. \tag{IH2} \label{eq: induction_upper_bound}
\end{align}
The base case is to prove an upper bound on $\bars_{m}$ that corresponds to Step 1d in Fig. \ref{fig: proof_sketch_ub}. This is done in the lemma below.
\begin{lemma}[Step 1d] There exists $\tilde{n}_4 \in \bbZ_+$ such that for all $n \geq \tilde{n}_4$, we have \label{lemma: base_case_upper_bound}
\begin{align*}
    \P{\bars_{m} \geq n - 2m \frac{n\log d}{d}+ B_{m}} \leq \left(\frac{1}{n}\right)^{(m\log n)/8}.
\end{align*}
\end{lemma}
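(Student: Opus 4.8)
The plan is to obtain Step 1d as the base case ($i = m$) of the backward induction \eqref{eq: induction_upper_bound}, by running the iterative SSC framework of Lemma \ref{lemma: iterative_ssc} on top of the high-probability facts already established. Let $\calE$ be the intersection of the events of Theorem \ref{theo: lower_bound} (the lower bounds on $\bars_i$ for all $i\in[m]$, in particular $\bars_{m-1}\ge n-2m\tfrac{n\log d}{d^2}-o\!\left(\tfrac{n\log d}{d^2}\right)$), of Lemma \ref{lemma: weak_bound_s_mplus1} ($\bars_{m+1}\le bB_m$), and of Lemma \ref{lemma: very_light_tail} ($\sum_{l=m+2}^b\bars_l\le 1$); a union bound gives $\P{\calE^c}\le(1/n)^{(m\log n)/8}$ for $n$ large. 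Two consequences of $\calE$ drive the argument: first, $\sum_{l=m+1}^b s_l\le bB_m+1=o\!\left(\tfrac{n\log d}{d}\right)$, so the level-$\ge m$ mass is essentially concentrated at level exactly $m$; second, raising the Theorem \ref{theo: lower_bound} bound on $s_{m-1}$ to the $d$-th power and using $(1-x)^d\ge 1-dx$ gives $\left(\tfrac{s_{m-1}}{n}\right)^d\ge 1-\tfrac{2m\log d}{d}-o\!\left(\tfrac{\log d}{d}\right)$ on $\calE$, i.e.\ the arrival rate into level $m$, $\lambda\!\left(\left(\tfrac{s_{m-1}}{n}\right)^d-\left(\tfrac{s_m}{n}\right)^d\right)$, is pinned below by essentially its fixed-point value.

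We carry out the collapse in two steps, generalizing Figures \ref{fig: step1a_ub_1} and \ref{fig: step1a_ub_2} from the special case $m=1$. First we rule out the ``intermediate'' configuration in which $s_m$ lies between its target $n-2m\tfrac{n\log d}{d}$ and $\lambda$ while $s_{m-1}$ is close enough to $n$ to sustain an arrival rate into level $m$ of order $\lambda$. To this end we analyze, on $\calE\cap\{M(\BFs)\ge0\}$, the drift of a min-type Lyapunov function $M$ that couples the deficits at levels $m-1$ and $m$ --- schematically $M(\BFs)=\min\{\,(n-2m\tfrac{n\log d}{d^2}+\tfrac14 B_{m-1})-s_{m-1}-\kappa\sqrt{mn}\log n,\ (n-2m\tfrac{n\log d}{d}+\tfrac12 B_m)-s_m-\kappa\sqrt{mn}\log n\,\}$, up to a small coupling correction. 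When the first coordinate is the active one, $s_{m-1}$ sits above its fixed-point level while $s_m$ is not much above its target, so the departure rate $s_{m-1}-s_m$ from level $m-1$ dominates the arrival rate into level $m-1$, which is at most $\lambda\!\left(1-\left(\tfrac{s_{m-1}}{n}\right)^d\right)$, up to lower-order terms; when the second coordinate is active, one combines the $\calE$-lower bound on $\left(\tfrac{s_{m-1}}{n}\right)^d$ with $\left(\tfrac{s_m}{n}\right)^d=(1-\tfrac{2m\log d}{d}(1-o(1)))^d=d^{-\Theta(m)}=o(1)$ to show the arrival rate into level $m$ is at most $\left(n-2m\tfrac{n\log d}{d}\right)(1+o(1))$, which is below the departure rate $s_m-s_{m+1}$ on the large part of $\{M\ge0\}$. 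Hence Lemma \ref{lemma: iterative_ssc} yields $M(\bbars)=O(\sqrt{mn}\log n)$ w.h.p. Second, we intersect $\calE$ with this event and analyze the count Lyapunov $W(\BFs)=\sum_{l=m}^b s_l-\left(n-2m\tfrac{n\log d}{d}\right)-B_m$, whose drift is exactly $\lambda\!\left(\tfrac{s_{m-1}}{n}\right)^d-s_m$: on $\{W\ge0\}$ we have $s_m\ge n-2m\tfrac{n\log d}{d}(1-o(1))$, so $\left(\tfrac{s_m}{n}\right)^d=o(1)$, and the conclusion of the first step (in either of its alternatives) forces $\left(\tfrac{s_{m-1}}{n}\right)^d\le 1-\tfrac{2m\log d}{d}(1-o(1))$; therefore $\Delta W=\lambda\!\left(\tfrac{s_{m-1}}{n}\right)^d-s_m<0$, and Lemma \ref{lemma: iterative_ssc} with $V=W$, $B=0$, $j=\sqrt{mn}\log n$ gives $\sum_{l=m}^b\bars_l\le n-2m\tfrac{n\log d}{d}+B_m$ w.h.p., whence $\bars_m\le\sum_{l=m}^b\bars_l$ is the claimed bound. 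Adding $\P{\calE^c}$ and the (geometrically small) SSC failure probabilities gives the stated bound $(1/n)^{(m\log n)/8}$.

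The main obstacle is exactly the control of the arrival rate into level $m$. The naive estimate $\lambda\!\left(\left(\tfrac{s_{m-1}}{n}\right)^d-\left(\tfrac{s_m}{n}\right)^d\right)\le\lambda$ only yields $\bars_m\le\lambda+B_m$, which for $m\ge 2$ is strictly weaker than the target, since $n-2m\tfrac{n\log d}{d}+B_m<\lambda$ (using $B_m=o\!\left(\tfrac{n\log d}{d}\right)$ from \eqref{eq: lower_order_terms} and $\lambda=n-2mn\log d/d^m$); closing the gap requires bounding $\left(\tfrac{s_{m-1}}{n}\right)^d$ away from $1$, i.e.\ an upper bound on $s_{m-1}$, which is not yet available --- it is the next step of the induction. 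This is why the level-$(m-1)$ and level-$m$ coordinates must be collapsed \emph{jointly} in the first step: the requirement that $s_{m-1}$ drift down when it exceeds its fixed point and the requirement that $s_m$ drift down when it exceeds its target constrain the constants in $M$ from opposite directions (both the relevant arrival and departure rates are of order $\tfrac{n\log d}{d}$, so their leading terms must cancel), and it is exactly the lower-order slack carried by $B_{m-1},B_m$ and the hypothesis $d=\Omega(\log(n)^3)$ that makes both coupled drifts simultaneously negative. The case $m=1$, where $\left(\tfrac{s_0}{n}\right)^d\equiv 1$ and the first step collapses to the single count Lyapunov $\sum_{l\ge 1}s_l$ of Figure \ref{fig: step1a_ub_2}, is the degenerate instance of this scheme in which no coupling is needed.
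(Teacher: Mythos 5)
You have a genuine gap, and it sits exactly at the point you flag as the main obstacle. Your Step 2 needs, on $\{W\ge 0\}$ (i.e.\ when $s_m$ is large), an \emph{upper} bound $s_{m-1}\le n-2m\tfrac{n\log d}{d^2}(1-o(1))$, equivalently $\left(\tfrac{s_{m-1}}{n}\right)^d\le 1-\tfrac{2m\log d}{d}(1-o(1))$, and you assert this is ``forced'' by the conclusion of Step 1. But the collapse your Step 1 produces is $M(\bbars)=O(\sqrt{mn}\log n)$ for $M=\min\{c_{m-1}-s_{m-1}-\kappa\sqrt{mn}\log n,\;c_m-s_m-\kappa\sqrt{mn}\log n\}$, which says only that w.h.p.\ either $s_{m-1}\ge c_{m-1}-O(\sqrt{mn}\log n)$ or $s_m\ge c_m-O(\sqrt{mn}\log n)$: a disjunction of two \emph{lower} bounds, essentially already contained in Theorem \ref{theo: lower_bound}, and containing no upper bound on $s_{m-1}$ whatsoever. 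Hence $\Delta W\le\lambda\left(\tfrac{s_{m-1}}{n}\right)^d-s_m$ cannot be shown negative and Step 2 does not close. The drift analysis of $M$ itself is also internally inconsistent: on $\{M\ge0\}$ with the first coordinate active, $s_{m-1}$ lies \emph{below} $c_{m-1}$ (not above its fixed point, as you write), and negative drift of $c_{m-1}-s_{m-1}$ would require arrivals into level $m-1$ to dominate the departures $s_{m-1}-s_m$, the opposite of what you argue. Flipping the orientation of $M$ (so it is large precisely on the configuration you want to exclude, $s_{m-1}$ and $s_m$ both large) does not repair it: in the branch where the $s_{m-1}$-coordinate is active, the branch condition yields only a lower bound on $s_m$, hence only an upper bound on the departure rate $s_{m-1}-s_m$ and no sign on the drift, unless one already has the upper bounds being proved. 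So the chicken-and-egg problem you correctly identify is not actually broken by the coupled function.

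The paper breaks it by never requiring an upper bound on $s_{m-1}$ at this stage. Lemma \ref{lemma: base_case_upper_bound} (together with Lemma \ref{lemma: strong_upper_bound_smplus1}) is obtained from the master Lemma \ref{lemma: base_case_and_beyond_for_upper_bound}, applied with $B_{m+2}=1$ via Lemma \ref{lemma: very_light_tail}: the count that is controlled is the \emph{total} $\sum_{l=1}^b s_l$ (the function $U_0^{(1)}$), whose inflow is at most $\lambda\left(1-\left(\tfrac{s_b}{n}\right)^d\right)\le\lambda$ uniformly in the state and whose outflow is $s_1$. The downward chain of min-collapses in Claim \ref{claim: upper_bound}, each coupling ``$s_j$ below its slightly inflated fixed point'' with ``$\sum_{l>j}s_l$ large'' and invoking the Theorem \ref{theo: lower_bound} lower bound on $s_{j-2}$, forces $s_1\ge\lambda+2\sqrt{mn}\log n$ whenever $U_0^{(1)}\ge0$, giving the negative drift via Lemma \ref{lemma: iterative_ssc}; the bounds on $\bars_m$ and $\bars_{m+1}$ are then read off by subtracting the already-proved lower bounds on $\bars_1,\dots,\bars_{m-1}$ from the bound on the total (this is also where the $n^{1-\gamma}\mathbbm{1}\{m>1\}$ slack in $B_m$ is consumed). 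If you insist on your tail-count route via $\sum_{l\ge m}s_l$, you would first need an upper bound on $\bars_{m-1}$, which in the paper is only the next step of the backward induction (Lemma \ref{lemma: induction_step_upper_bound}) and itself consumes the bound on $\bars_m$ you are trying to prove.
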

Further, we prove the induction step, corresponding to Steps $2$ to $m$ in Fig. \ref{fig: proof_sketch_ub}.
\begin{lemma}[Steps $2$ to $m$] \label{lemma: induction_step_upper_bound}
Assume that \eqref{eq: induction_upper_bound} holds true for $i+1$. Then, \eqref{eq: induction_upper_bound} holds true for $i$.
\end{lemma}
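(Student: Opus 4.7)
The plan is to establish the induction step via one more application of the iterative state space collapse framework of \Cref{sec: iterative_ssc}, now bootstrapping from the bound on $\bars_{i+1}$ supplied by \eqref{eq: induction_upper_bound} evaluated at index $i+1$. The structure will mirror the proof of the base case \Cref{lemma: base_case_upper_bound}.

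First, I would let $T_i$ denote the threshold appearing inside the probability in \eqref{eq: induction_upper_bound} minus the slack $\sqrt{mn}\log n$, and define the Lyapunov function $V_i(\BFs) = s_i - T_i$. Next, I would take as the conditioning event $\calE_i$ the complement of the bad event in \eqref{eq: induction_upper_bound} at index $i+1$, so that the induction hypothesis yields $\P{\bbars \notin \calE_i} \leq (1/n)^{(m\log n)/8 - (m-i-1)}$.

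Next, I would verify that $\Delta V_i(\BFs) \leq -\sqrt{mn}\log n$ whenever $\BFs \in \calE_i$ and $V_i(\BFs) \geq 0$. Using the drift expression from \eqref{eq: fluid_model_diff_eqn}, I would upper bound the arrival term by Bernoulli's inequality,
\begin{align*}
\lambda\left[\left(\frac{s_{i-1}}{n}\right)^d - \left(\frac{s_i}{n}\right)^d\right] \leq \lambda\left[1 - \left(\frac{s_i}{n}\right)^d\right] \leq \frac{\lambda d}{n}\,(n - T_i),
\end{align*}
and lower bound the service term $s_i - s_{i+1}$ by $T_i - s_{i+1}$, using $V_i(\BFs) \geq 0$ together with the induction bound on $s_{i+1}$ available on $\calE_i$. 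The leading $2mn\log d/d^{m-i}$ contributions from the arrival and service sides then cancel (as one would expect since $T_i$ is engineered to track the ODE fixed point), and the critical algebraic identity $d B_i = B_{i+1}$, which holds exactly for the form of $B_i$ given in \eqref{eq: B_j_upper_bound}, arranges the $B$-corrections to yield a strictly negative residue of size roughly $-B_i(1 - d\cdot n^{-\gamma})$. The auxiliary summand $2(m-i)m \frac{n\log d}{d^{m-i+2}}$ in $T_i$ exactly absorbs the discrepancy between the coefficient $2(m-i)m$ in $T_i$ and the coefficient $2(m-i-1)m$ appearing in the induction bound on $s_{i+1}$; after this cancellation the residue $-B_i/2$ comfortably dominates $\sqrt{mn}\log n$ since $B_i \geq 18m d^{i-1}\sqrt{mn}\log n$.

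Finally, I would invoke \Cref{lemma: iterative_ssc} with $V = V_i$, $\calE = \calE_i$, $B = 0$, and slack $j = \sqrt{mn}\log n$ to obtain $\P{V_i(\bbars) \geq 0} \leq (1/n)^{(m\log n)/8 - (m-i)}$, which is exactly \eqref{eq: induction_upper_bound} at index $i$. The principal obstacle is the algebraic bookkeeping described above; the specific form of the correction $2(m-i)m \frac{n\log d}{d^{m-i+2}}$ is chosen precisely so that the arrival-minus-service computation telescopes cleanly via the identity $dB_i = B_{i+1}$, and the failure probability degrades by exactly the multiplicative factor $1/n$ needed to propagate the $(m-i)$-dependence in the exponent through the induction.
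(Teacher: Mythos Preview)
Your proposal is essentially the paper's proof: the paper defines the Lyapunov function $\tilde{W}_i(\BFs)=s_i-T_i$ (with $T_i$ exactly as you describe), conditions on the event $\{\tilde W_{i+1}\le\sqrt{mn}\log n\}$ supplied by \eqref{eq: induction_upper_bound} at $i+1$, bounds the arrival term via $s_{i-1}\le n$ and Bernoulli's inequality $1-(s_i/n)^d\le d(n-T_i)/n$, and then uses the identity $dB_i=B_{i+1}$ to collapse the algebra to $\Delta\tilde W_i\le \sqrt{mn}\log n - B_i\le -\sqrt{mn}\log n$; the iterative SSC lemma then closes the induction.

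Two minor corrections to your write-up: (i) the residue after cancellation is simply $-B_i-2(m-i)m\frac{n\log d}{d^{m-i+2}}+\sqrt{mn}\log n$, not $-B_i(1-d\,n^{-\gamma})$ (there is no such factor once you replace $\lambda$ by $n$ in the arrival bound, which is what the paper does); and (ii) the iterative SSC output is $\P{V_i(\bbars)\ge\sqrt{mn}\log n}$, not $\P{V_i(\bbars)\ge 0}$---this matches \eqref{eq: induction_upper_bound} exactly since the $\sqrt{mn}\log n$ slack is already built into the threshold there.
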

\proof{Proof Sketch of Lemma \ref{lemma: induction_step_upper_bound}}
Consider the following family of functions:
\begin{align*}
    W_i(\BFs) = s_i - n + \frac{2m n \log d}{d^{m-i+1}} + o\left(\frac{2m n \log d}{d^{m-i+1}}\right) \quad \forall i \in [m].
\end{align*}
Note that Lemma~\ref{lemma: base_case_upper_bound}, Lemma~\ref{lemma: strong_upper_bound_smplus1}, and Theorem~\ref{theo: lower_bound} together provides tight upper and lower bounds on $s_i$ for all $i \geq m$. Now, to obtain an upper bound on $s_{m-1}$, one can simply use itself as the Lyapunov function. In particular, the drift of $s_{m-1} \propto W_{m-1}(\BFs)$ depends on $\{s_{m-1}, s_m, s_{m+1}\}$. As we have a tight characterization of $s_{m}$ and $s_{m+1}$, we can analyze the drift of $s_{m-1}$ to obtain an upper bound on $s_{m-1}$. In this fashion, we continue to inductively obtain upper bounds on $s_i$ for $i < m$. This completes the proof of Lemma~\ref{lemma: induction_step_upper_bound}. \endproof


\endproof
We conclude this section by presenting the proof of Theorem \ref{theo: upper_bound} using the results outlined above.
\proof{Proof of Theorem \ref{theo: upper_bound}}
Consider the induction hypothesis defined in \eqref{eq: induction_upper_bound}. By setting $\tilde{n}_{IH} \geq \tilde{n}_4$, the base case is complete by Lemma \ref{lemma: base_case_upper_bound}. In addition, the induction step is complete by Lemma \ref{lemma: induction_step_upper_bound}. Thus, for all $n \geq \tilde{n}_{IH}$ and $i \in [m]$,
\begin{align*}
    &\P{\bars_i \geq n - 2m \frac{n\log d}{d^{m-i+1}} + 19m d^{i-1}\sqrt{m n}\log n + 49m^3 \frac{n \log (d)^2}{d^{m-i+2}}+\frac{n^{1-\gamma}}{d^{m-i}}\mathbbm{1}\left\{m > 1\right\}} \\
    \leq{}& \P{\bars_{i} \geq n - 2m \frac{n\log d}{d^{m + i-1}}+B_i + 2(m -i)m \frac{n \log d}{d^{m -i +2}}+\sqrt{m n}\log n} \\ 
    \leq{}& \left(\frac{1}{n}\right)^{(m\log n)/8-(m-i)} 
    \leq \left(\frac{1}{n}\right)^{(m\log n)/9},
\end{align*}
where the last inequality follows for all $n \geq \tilde{n}_6$ for some $\tilde{n}_6 \in \bbZ_+$ as $m=o(\log n)$. Thus, by setting $n_{UB} \overset{\Delta}{=} \max_{k \in \{2, 3, 6\}}\{\tilde{n}_k, \tilde{n}_{IH}\}$, and using Lemma \ref{lemma: very_light_tail} and \ref{lemma: strong_upper_bound_smplus1}, completes the proof of Theorem \ref{theo: upper_bound}. \hfill $\square$
\endproof
\begin{figure}[h!]
    \FIGURE{
    \begin{tikzpicture}
        \draw[black!60, fill=green!5, very thick] (-2.9, 0.5) -- (2.9, 0.5) -- (2.9, -5.35) -- (-2.9, -5.35) -- (-2.9, 0.5);
        \node[rectangle, draw=black!60, fill=red!5, very thick, minimum size=7mm, text width=52mm, align=center] at (0, 0) (1a) {\textbf{Step 1a:} $\bars_{m+1} \leq b B_m$};
        \node[rectangle, draw=black!60, fill=red!5, very thick, minimum size=7mm, text width=52mm, align=center] [below= 7mm of 1a] (1b) {\textbf{Step 1b:} $\sum_{l=m+2}^b \bars_{l}=o(1)$};
        \node[rectangle, draw=black!60, fill=red!5, very thick, minimum size=7mm, text width=52mm, align=center] [below= 7mm of 1b] (1c) {\textbf{Step 1c:} $\bars_{m+1}\leq B_m$};
        \node[rectangle, draw=black!60, fill=red!5, very thick, minimum size=7mm, text width=52mm, align=center] [below= 7mm of 1c] (1d) {\textbf{Step 1d:} $\bars_{m} \leq n - \frac{2m n \log d}{d}(1-o(1))$};
         \node[rectangle, draw=black!60, fill=green!5, very thick, minimum size=7mm, text width=65mm, align=center] [right= 15mm of 1a] (2) {\textbf{Step 2:} $\bars_{m-1}\leq n-\frac{2m n \log d}{d^2}(1-o(1))$};
         \node[rectangle, draw=black!60, fill=green!5, very thick, minimum size=7mm, text width=65mm, align=center] [below= 14.2mm of 2] (k) {\textbf{Step $k$:} $\bars_{k}\leq n-\frac{2m n \log d}{d^{m-k+1}}(1-o(1))$};
         \node[rectangle, draw=black!60, fill=green!5, very thick, minimum size=7mm, text width=65mm, align=center] [below= 14.2mm of k] (m) {\textbf{Step $m$:} $\bars_{1}\leq n-\frac{2m n \log d}{d^m}(1-o(1))$};
         \draw[black, thick, ->] (1d.east) -- ([xshift=7.5mm] 1d.east) -- ([xshift=-7.5mm] 2.west) -- (2.west);
         \fill[black] ([yshift=-4.5mm] 2.south) circle (0.06);
         \fill[black] ([yshift=-7mm] 2.south) circle (0.06);
         \fill[black] ([yshift=-9.5mm] 2.south) circle (0.06);
          \fill[black] ([yshift=-4.5mm] k.south) circle (0.06);
         \fill[black] ([yshift=-7mm] k.south) circle (0.06);
         \fill[black] ([yshift=-9.5mm] k.south) circle (0.06);
    \end{tikzpicture}}{
Outline of the proof to establish high probability upper bounds, where $B_m$ is defined in \eqref{eq: B_j_upper_bound}
    \label{fig: proof_sketch_ub}}{}
\end{figure}
\section{Simulations}
In this section, we simulate the load balancing model for various values of $n$ and $d$. To avoid confusion, all the simulated variables are denoted with a dagger in the super-script. A Python script to simulate this system is available online \cite{pod_load_balancing}.
\subsection{Setup}
Load balancing under power-of-$d$ choices routing algorithm is governed by a continuous time Markov chain (CTMC) with transition rates $q_{\BFs^1, \BFs^2}$ given by 
\begin{align*}
    q_{\BFs^1, \BFs^2} = \begin{cases}
        \lambda\left(\left(\frac{s_i^1}{n}\right)^d - \left(\frac{s_{i+1}^1}{n}\right)^d\right) \quad &\textit{for}  \quad \BFs^2 = \BFs^1 + \mathbf{e}_{i+1} \ \forall i \in [b-1] \\
        s_i^1 - s_{i+1}^1 \quad \quad &\textit{for} \quad \BFs^2 = \BFs^1 - \mathbf{e}_i \ \forall i \in [b] \\
        0 &\textit{otherwise}.
    \end{cases}
\end{align*}
We fix the initial state $\BFs(0) = \mathbf{0}_{b}$, buffer size $b=10$, arrival rate $\lambda = n - n^{0.6}$ and carry out the simulation for various values of $n$ and $d$. Let the simulated trajectory be denoted as $s^\dagger_i(t)$. For each tuple, $(n, d)$, the CTMC is simulated until it approximately reaches the steady state. This is verified by plotting the evolution of $\BFs^\dagger(t)$ as a function of time, as in Fig. \ref{fig: transience_m_5}. The mean estimate, i.e. $\E{\bbars^\dagger}$ is calculated by considering only the last 75\% of the data to ensure that a steady state has already been reached. 

The goal of this section is to compare the theoretical bounds with the simulated steady-state expectations. Denote the fixed point of the ODE as $\bar{s}_i = \left(\frac{\lambda}{n}\right)^{\frac{d^i-1}{d-1}}$. We investigate the error of the simulated mean from the fixed point of the ODE defined as follows:
\begin{align}
    \text{Error} = \frac{1}{n}\max_{i \in [b]}|\E{\bars^\dagger_i} - \bars_i|. \label{eq: simulated_error}
\end{align}
In particular, the leading order terms in Theorem~\ref{theo: informal} is approximately equal to $\bar{s}_i = \left(\frac{\lambda}{n}\right)^{\frac{d^i-1}{d-1}}$ as seen in \eqref{eq: fixed_point_before_d}. And so, we expect the simulated mean to match closely with $\bars_i$. We report this error for different values of $(n, d)$ in the next sub-section.

\subsection{Results}
We simulate the system for $n = (10^3, 10^4, 10^5)$ and set $d$ to be the greatest integer such that $d \leq (2mn^\gamma \log d)^{1/m}$ for $m = (1, 2, 3, 4)$. We summarize the results in Tables \ref{tab: mean_queue} and \ref{tab: mean_queue_gamma}.
\begin{table}[tbh!]
    \TABLE{Simulated mean, error as in \eqref{eq: simulated_error}, and $\max\{i : \max_{t \in \bbZ_+}s_i^\dagger(t) > 0\}$ for different values of $n$ and $m$ with $\gamma=0.4$. Cell corresponding to $s_i^\dagger$ such that $\max_{t \in \bbZ_+}s_i^\dagger(t) = 0$ are highlighted in blue. The notation $a\operatorname{E}b$ denotes $a \times 10^b$ for $a, b \in \bbZ_+$.
    \label{tab: mean_queue}}{
    \begin{tabular}{c|c|c||c|c|c|c|c||c||c}
         &  &  & \multicolumn{5}{c||}{Simulated mean: $\E{\bars_i^\dagger} / n$} &  &  \\ \hline
        $m$ & $n$ & $d$ & $i=1$ & $i=2$ & $i=3$ & $i=4$ & $i=5$ & Error \eqref{eq: simulated_error} & $i : s_{\geq i} = 0$ \\ \hline \hline
        1 & 1000 & 161 & 0.94 & 0.01 & \cellcolor{blue!30}0.00 & \cellcolor{blue!30}0.00 & \cellcolor{blue!30}0.00 & 7E-03 & 3 \\ \hline
        2 & 1000 & 12 & 0.94 & 0.43 & 0.00 & \cellcolor{blue!30}0.00 & \cellcolor{blue!30}0.00 & 6E-03 & 4 \\ \hline
        3 & 1000 & 5 & 0.94 & 0.67 & 0.14 & 0.00 & \cellcolor{blue!30}0.00 & 3E-03 & 5 \\ \hline
        4 & 1000 & 3 & 0.94 & 0.77 & 0.43 & 0.08 & 0.00 & 2E-03 & 6 \\ \hline\hline
        1 & 10000 & 493 & 0.98 & 0.00 & \cellcolor{blue!30}0.00 & \cellcolor{blue!30}0.00 & \cellcolor{blue!30}0.00 & 2E-03 & 3 \\ \hline
        2 & 10000 & 22 & 0.97 & 0.56 & 0.00 & \cellcolor{blue!30}0.00 & \cellcolor{blue!30}0.00 & 2E-03 & 4 \\ \hline
        3 & 10000 & 7 & 0.97 & 0.81 & 0.23 & 0.00 & \cellcolor{blue!30}0.00 & 5E-03 & 5 \\ \hline
        4 & 10000 & 4 & 0.97 & 0.88 & 0.58 & 0.11 & 0.00 & 7E-03 & 6 \\ \hline\hline
        1 & 100000 & 1456 & 0.99 & 0.00 & \cellcolor{blue!30}0.00 & \cellcolor{blue!30}0.00 & \cellcolor{blue!30}0.00 & 2E-04 & 3 \\ \hline
        2 & 100000 & 38 & 0.99 & 0.68 & 0.00 & \cellcolor{blue!30}0.00 & \cellcolor{blue!30}0.00 & 2E-03 & 4 \\ \hline
        3 & 100000 & 11 & 0.99 & 0.89 & 0.26 & 0.00 & \cellcolor{blue!30}0.00 & 2E-03 & 5 \\ \hline
        4 & 100000 & 6 & 0.99 & 0.93 & 0.65 & 0.08 & 0.00 & 2E-03 & 6 \\
        \hline
    \end{tabular}}{}
\end{table}
\begin{table}[tbh!]
    \TABLE{Simulated mean, error as in \eqref{eq: simulated_error}, and $\max\{i : \max_{t \in \bbZ_+}s_i^\dagger(t) > 0\}$ for different values of $\gamma$ and $m$ with $n = 10^4$. Cell corresponding to $s_i^\dagger$ such that $\max_{t \in \bbZ_+}s_i^\dagger(t) = 0$ are highlighted in blue. The notation $a\operatorname{E}b$ denotes $a \times 10^b$ for $a, b \in \bbZ_+$.
    \label{tab: mean_queue_gamma}}{
    \begin{tabular}{c|c|c||c|c|c|c|c||c||c}
         &  &  & \multicolumn{5}{c||}{Simulated mean: $\E{\bars_i^\dagger} / n$} &  &  \\ \hline
        $m$ & $\gamma$ & $d$ & $i=1$ & $i=2$ & $i=3$ & $i=4$ & $i=5$ & Error \eqref{eq: simulated_error} & $i : s_{\geq i} = 0$ \\ \hline \hline
        1 & 0.1 & 12 & 0.60 & 0.00 & \cellcolor{blue!30}0.00 & \cellcolor{blue!30}0.00 & \cellcolor{blue!30}0.00 & 3E-04 & 3 \\ \hline
        2 & 0.1 & 3 & 0.60 & 0.13 & 0.00 & \cellcolor{blue!30}0.00 & \cellcolor{blue!30}0.00 & 7E-05 & 4 \\ \hline
        3 & 0.1 & 2 & 0.60 & 0.22 & 0.03 & 0.00 & 0.00 & 5E-04 & 6 \\ \hline \hline
        1 & 0.3 & 161 & 0.94 & 0.00 & \cellcolor{blue!30}0.00 & \cellcolor{blue!30}0.00 & \cellcolor{blue!30}0.00 & 2E-04 & 3 \\ \hline
        2 & 0.3 & 12 & 0.94 & 0.43 & 0.00 & \cellcolor{blue!30}0.00 & \cellcolor{blue!30}0.00 & 4E-04 & 4 \\ \hline
        3 & 0.3 & 5 & 0.94 & 0.68 & 0.13 & 0.00 & \cellcolor{blue!30}0.00 & 2E-03 & 5 \\ \hline
        4 & 0.3 & 3 & 0.94 & 0.77 & 0.43 & 0.07 & 0.00 & 7E-04 & 6 \\ \hline \hline
        1 & 0.5 & 1456 & 0.99 & 0.01 & \cellcolor{blue!30}0.00 & \cellcolor{blue!30}0.00 & \cellcolor{blue!30}0.00 & 1E-02 & 3 \\ \hline
        2 & 0.5 & 38 & 0.99 & 0.66 & 0.00 & \cellcolor{blue!30}0.00 & \cellcolor{blue!30}0.00 & 2E-02 & 4 \\ \hline
        3 & 0.5 & 11 & 0.99 & 0.89 & 0.27 & 0.00 & \cellcolor{blue!30}0.00 & 8E-03 & 5 \\ \hline
        4 & 0.5 & 6 & 0.99 & 0.93 & 0.65 & 0.08 & 0.00 & 8E-03 & 6 \\ \hline \hline
        1 & 0.6 & 4190 & 1.00 & 0.04 & \cellcolor{blue!30}0.00 & \cellcolor{blue!30}0.00 & \cellcolor{blue!30}0.00 & 4E-02 & 3 \\ \hline
        2 & 0.6 & 64 & 1.00 & 0.80 & 0.00 & \cellcolor{blue!30}0.00 & \cellcolor{blue!30}0.00 & 3E-02 & 4 \\ \hline
        3 & 0.6 & 16 & 1.00 & 0.93 & 0.31 & 0.00 & \cellcolor{blue!30}0.00 & 2E-02 & 5 \\ \hline
        4 & 0.6 & 8 & 1.00 & 0.97 & 0.76 & 0.12 & 0.00 & 2E-02 & 6 \\ \hline\hline
        1 & 0.7 & 10000 & 1.00 & 0.12 & \cellcolor{blue!30}0.00 & \cellcolor{blue!30}0.00 & \cellcolor{blue!30}0.00 & 1E-01 & 3 \\ \hline
        2 & 0.7 & 108 & 1.00 & 0.83 & 0.01 & \cellcolor{blue!30}0.00 & \cellcolor{blue!30}0.00 & 2E-02 & 4 \\ \hline
        3 & 0.7 & 22 & 1.00 & 0.97 & 0.49 & 0.00 & \cellcolor{blue!30}0.00 & 4E-02 & 5 \\ \hline
        4 & 0.7 & 10 & 1.00 & 0.98 & 0.84 & 0.21 & 0.00 & 4E-02 & 6 \\
        \hline
    \end{tabular}}{}
\end{table}
In particular, we document the simulated mean $\{\E{\bars_i^\dagger} / n\}_{i=1}^5$, the error defined in \eqref{eq: simulated_error}, and the maximum queue lengths that were observed in the simulation, i.e. $\min\{i : s^\dagger_i(t) = 0 \ \forall t \in \bbZ_+\}$. 
Furthermore, to understand the transient behavior, we plot the evolution of $\BFs^\dagger(t)$ with time for $n = 10^5$ and $m = (2, 3, 4)$ in Fig. \ref{fig: transience_m_5} and \ref{fig: transience_m_2_4}.
\begin{figure}[bth!]
    \FIGURE{
    \begin{tikzpicture}[scale=1.75]
        \node[scale=0.45] at (1, 0.65) {\input{m_4.0_n_100000_gamma_0.4_d_6.pgf}};
        \draw[fill=brown] (3.5, -0.9) rectangle (3.75, -0.65);
        \draw[fill=brown] (3.5, -0.6) rectangle (3.75, -0.35);
        \draw[fill=brown] (3.5, -0.3) rectangle (3.75, -0.05);
        \draw[fill=brown] (3.5, 0) rectangle (3.75, 0.25);
        \draw[fill=brown] (3.5, 0.3) rectangle (3.75, 0.55);
        \draw[fill=brown] (3.5, 0.6) rectangle (3.75, 0.85);
        \draw[fill=brown] (3.5, 0.9) rectangle (3.75, 1.15);
        \draw[fill=brown] (3.5, 1.2) rectangle (3.75, 1.45);
        \draw[fill=brown] (3.5, 1.5) rectangle (3.75, 1.75);
        \draw[fill=brown] (3.5, 1.8) rectangle (3.75, 2.05);
        \draw[fill=brown] (3.8, -0.9) rectangle (4.05, -0.65);
        \draw[fill=brown] (3.8, -0.6) rectangle (4.05, -0.35);
        \draw[fill=brown] (3.8, -0.3) rectangle (4.05, -0.05);
        \draw[fill=brown] (3.8, 0) rectangle (4.05, 0.25);
        \draw[fill=brown] (3.8, 0.3) rectangle (4.05, 0.55);
        \draw[fill=brown] (3.8, 0.6) rectangle (4.05, 0.85);
        \draw[fill=brown] (3.8, 0.9) rectangle (4.05, 1.15);
        \draw[fill=brown] (3.8, 1.2) rectangle (4.05, 1.45);
        \draw[fill=brown] (3.8, 1.5) rectangle (4.05, 1.75);
        \draw[fill=brown] (4.1, -0.9) rectangle (4.35, -0.65);
        \draw[fill=brown] (4.1, -0.6) rectangle (4.35, -0.35);
        \draw[fill=brown] (4.1, -0.3) rectangle (4.35, -0.05);
        \draw[fill=brown] (4.1, 0) rectangle (4.35, 0.25);
        \draw[fill=brown] (4.1, 0.3) rectangle (4.35, 0.55);
        \draw[fill=brown] (4.1, 0.6) rectangle (4.35, 0.85);
        \draw[fill=brown] (4.1, 0.9) rectangle (4.35, 1.15);
        \draw[fill=brown] (4.4, -0.9) rectangle (4.65, -0.65);
        \draw[fill=brown] (4.4, -0.6) rectangle (4.65, -0.35);
        \draw[thick, black] (3.3, -0.775) circle (0.125);
        \draw[thick, black] (3.3, -0.475) circle (0.125);
        \draw[thick, black] (3.3, -0.175) circle (0.125);
        \draw[thick, black] (3.3, 0.125) circle (0.125);
        \draw[thick, black] (3.3, 0.425) circle (0.125);
        \draw[thick, black] (3.3, 0.725) circle (0.125);
        \draw[thick, black] (3.3, 1.025) circle (0.125);
        \draw[thick, black] (3.3, 1.325) circle (0.125);
        \draw[thick, black] (3.3, 1.625) circle (0.125);
        \draw[thick, black] (3.3, 1.925) circle (0.125);
        \draw[ultra thin, black!60] (5.5, -0.925) -- (3.455, -0.925) -- (3.455, -0.625) -- (5.5, -0.625);
        \draw[ultra thin, black!60] (3.455, -0.625) -- (3.455, 2.075);
        \draw[ultra thin, black!60] (3.455, -0.325) -- (5.5, -0.325);
         \draw[ultra thin, black!60] (3.455, -0.025) -- (5.5, -0.025);
         \draw[ultra thin, black!60] (3.455, 0.275) -- (5.5, 0.275);
         \draw[ultra thin, black!60] (3.455, 0.575) -- (5.5, 0.575);
         \draw[ultra thin, black!60] (3.455, 0.875) -- (5.5, 0.875);
         \draw[ultra thin, black!60] (3.455, 1.175) -- (5.5, 1.175);
         \draw[ultra thin, black!60] (3.455, 1.475) -- (5.5, 1.475);
         \draw[ultra thin, black!60] (3.455, 1.775) -- (5.5, 1.775);
         \draw[ultra thin, black!60] (3.455, 2.075) -- (5.5, 2.075);
         \node at (6.3, 2) {Legend};
         \draw[ultra thick, color=blue1] (5.7, 1.62) -- (6.3, 1.62) node[black] at (6, 1.73) {$1$};
         \node at (6.63, 1.64) {$s_1^\dagger / n$};
         \draw[ultra thick, color=orange] (5.7, 1.3) -- (6.3, 1.3) node[black] at (6, 1.41) {$2$};
         \node at (6.63, 1.32) {$s_2^\dagger / n$};
         \draw[ultra thick, color=terquise] (5.7, 0.98) -- (6.3, 0.98) node[black] at (6, 1.09) {$3$};
         \node at (6.63, 1) {$s_3^\dagger / n$};
         \draw[ultra thick, color=red1] (5.7, 0.66) -- (6.3, 0.66) node[black] at (6, 0.77) {$4$};
         \node at (6.63, 0.68) {$s_4^\dagger / n$};
         \draw[ultra thick, color=pink] (5.7, 0.34) -- (6.3, 0.34) node[black] at (6, 0.45) {$5$};
         \node at (6.63, 0.36) {$s_5^\dagger / n$};
         \draw[very thick, dotted, color=brown1] (5.7, 0.02) -- (6.3, 0.02);
         \node at (6.63, 0.04) {$\bbars / n$};
         \draw[thick, black] (5.6, 2.15) -- (6.95, 2.15) -- (6.95, -0.15) -- (5.6, -0.15) -- (5.6, 2.15);
         \node at (3, 2) {$1$};
         \node at (3, 1.8) {$2$};
         \node at (3, 1.05) {$3$};
         \node at (3, -0.5) {$4$};
         \node at (3, -0.75) {$5$};
    \end{tikzpicture}}{
    Evolution of the load balancing CTMC with $n=10^5$, $\gamma=0.4$, and $d=6$. 
    \label{fig: transience_m_5}}{}
\end{figure}
\begin{figure}
    \FIGURE{
\begin{minipage}[b]{0.48\textwidth}
        \centering
        \pgfplotsset{
        width=0.7\textwidth,
        height=0.75\textwidth}
        \begin{tikzpicture}[scale=1.75]
        \node[scale=0.42] at (5, 1.3) {\input{m_3.0_n_100000_gamma_0.4_d_11.pgf}};
        \node at (5.625, 1.85) {Legend};
         \draw[very thick, dotted, color=brown1] (5.7, 1.3) -- (6.3, 1.3);
         \node at (6.57, 1.32) {$\bbars / n$};
         \draw[ultra thick, color=blue1] (4.55, 1.62) -- (5.15, 1.62) node[black] at (4.85, 1.73) {$1$};
         \node at (5.42, 1.64) {$s_1^\dagger / n$};
         \draw[ultra thick, color=orange] (4.55, 1.3) -- (5.15, 1.3) node[black] at (4.85, 1.41) {$2$};
         \node at (5.42, 1.32) {$s_2^\dagger / n$};
         \draw[ultra thick, color=terquise] (5.7, 1.62) -- (6.3, 1.62) node[black] at (6, 1.73) {$3$};
         \node at (6.57, 1.64) {$s_3^\dagger / n$};
         \draw[thick, black] (4.45, 2) -- (6.8, 2) -- (6.8, 1.15) -- (4.45, 1.15) -- (4.45, 2);
         \node at (6.73, 2.9) {$1$};
         \node at (6.73, 2.6) {$2$};
         \node at (6.73, 0.7) {$3$};
         \end{tikzpicture}
            \label{fig: transience_m_4}
        \end{minipage}
        \begin{minipage}[b]{0.48\textwidth}
        \centering
        \pgfplotsset{
        width=0.7\textwidth,
        height=0.75\textwidth}
        \begin{tikzpicture}[scale=1.75]
        \node[scale=0.42] at (5, 2) {\input{m_2.0_n_100000_gamma_0.4_d_38.pgf}};
        \node at (5.1, 1.95) {Legend};
         \draw[very thick, dotted, color=brown1] (4.55, 0.98) -- (5.15, 0.98);
         \node at (5.42, 1) {$\bbars / n$};
         \draw[ultra thick, color=blue1] (4.55, 1.62) -- (5.15, 1.62) node[black] at (4.85, 1.73) {$1$};
         \node at (5.42, 1.64) {$s_1^\dagger / n$};
         \draw[ultra thick, color=orange] (4.55, 1.3) -- (5.15, 1.3) node[black] at (4.85, 1.41) {$2$};
         \node at (5.42, 1.32) {$s_2^\dagger / n$};
         \draw[thick, black] (4.45, 2.15) -- (5.7, 2.15) -- (5.7, 0.8) -- (4.45, 0.8) -- (4.45, 2.15);
         \node at (6.73, 3.6) {$1$};
         \node at (6.73, 2.7) {$2$};
         \end{tikzpicture}
            \label{fig: transience_m_2}
        \end{minipage}}{
    Load balancing CTMC with $n=10^5$, $\gamma=0.4$, and $d=38$ (left) and $d=6$ (right).
    \label{fig: transience_m_2_4}}{}
\end{figure}
We now summarize the takeaways from Tables \ref{tab: mean_queue} and \ref{tab: mean_queue_gamma}, and Figures \ref{fig: transience_m_5} and \ref{fig: transience_m_2_4}.

As observed in Tables \ref{tab: mean_queue} and \ref{tab: mean_queue_gamma}, the fixed point $\mathbf{\bars}$ closely approximates the stationary mean even for $n$ as small as $10^3$. As expected, the approximation is tight for a wide range of values of $\gamma$ and $m$ except when $\gamma$ is large and $m$ is small. For example, we observe a non-trivial error of $0.1$ for $(\gamma, m) = (0.7, 1)$. These values of the parameters are out of the permissible range of $\gamma < 0.5$ as in Theorem~\ref{theo: informal}. Next, as observed in Tables \ref{tab: mean_queue} and \ref{tab: mean_queue_gamma}, no incoming customers are rejected due to a finite waiting space for $b \geq 8$. This suggests that the assumption $b = O(\log(n)^3)$ is not fundamental to the model. It is merely an artefact of the proof. Lastly, observe that $s_i \approx 0$ for all $i \geq m+1$ which verifies the bounds obtained in Theorem~\ref{theo: informal}.

As observed in Figures \ref{fig: transience_m_5} and \ref{fig: transience_m_4}, $\BFs^\dagger(t)$ stays close to $\bbars$ for all $t \geq 500$. In particular, $\BFs^\dagger(t) \approx \bbars$ w.h.p. in the steady-state as established in Theorem~\ref{theo: informal}. Lastly, one can also observe in the figures that the fluctuations around $\bars_i$ increase with $i$ which aligns with the error bound $(d^{i-1}\sqrt{n}\log(n))$ established in Theorems  \ref{theo: lower_bound} and \ref{theo: upper_bound}, which increases with $i$.

\section{Conclusion and Future Work}
In this paper, we characterized the performance of Power-of-$d$ choices routing algorithm for the sub-Halfin-Whitt regime in the load balancing model. We showed that if $d$ grows polynomially with $n$, then the jobs experience a finite delay. On the other hand, if $d$ grows only as Poly-Log$(n)$, then the jobs experience infinite asymptotic delay. In particular, we characterized the delay for Power-of-$d$ with $d \in [\Omega(\log (n)^3), n]$ and $\gamma \in (0, 0.5)$. Future work is to similarly understand the performance of Power-of-$d$ choices for other many-server-heavy-traffic regimes.
\section{Acknowledgement}
We thank Dr. Debankur Mukherjee for insightful discussions that helped in proving the result. In addition, the illustration of the fixed point, as shown in Fig. \ref{fig: fixed_point} is inspired from a figure in \cite{mukherjee2018universality}. Illustration of the asymptotic regime as a 2D graph in Fig. \ref{fig: regimes} is inspired by Dr. Lei Ying. 
\newpage

\bibliographystyle{informs2014} 
\bibliography{references.bib} 

\begin{thebibliography}{33}
\providecommand{\natexlab}[1]{#1}
\providecommand{\url}[1]{\texttt{#1}}
\providecommand{\urlprefix}{URL }

\bibitem[{Banerjee \protect\BIBand{} Mukherjee(2019)}]{banerjee_halfin_whitt}
Banerjee S, Mukherjee D (2019) Join-the-shortest queue diffusion limit in
  halfin--whitt regime: Tail asymptotics and scaling of extrema. \emph{The
  Annals of Applied Probability} 29(2):1262--1309.

\bibitem[{Banerjee \protect\BIBand{}
  Mukherjee(2020)}]{banerjee_halfin_whitt_insensitivity}
Banerjee S, Mukherjee D (2020) Join-the-shortest queue diffusion limit in
  halfin--whitt regime: Sensitivity on the heavy-traffic parameter. \emph{The
  Annals of Applied Probability} 30(1):80--144.

\bibitem[{Bertsimas et~al.(2001)Bertsimas, Gamarnik, \protect\BIBand{}
  Tsitsiklis}]{bertsimas_tail_bounds}
Bertsimas D, Gamarnik D, Tsitsiklis JN (2001) Performance of multiclass
  markovian queueing networks via piecewise linear lyapunov functions.
  \emph{Annals of Applied Probability} 11(4):1384--1428.

\bibitem[{Bhamidi et~al.(2022)Bhamidi, Budhiraja, \protect\BIBand{}
  Dewaskar}]{amarjit_power_of_d_sub_halfin_whitt}
Bhamidi S, Budhiraja A, Dewaskar M (2022) Near equilibrium fluctuations for
  supermarket models with growing choices. \emph{The Annals of Applied
  Probability} 32(3):2083--2138.

\bibitem[{Borkar(2009)}]{borkar2009stochastic}
Borkar VS (2009) \emph{Stochastic approximation: a dynamical systems
  viewpoint}, volume~48 (Springer).

\bibitem[{Braverman(2020)}]{braverman_halfin_whitt}
Braverman A (2020) Steady-state analysis of the join-the-shortest-queue model
  in the halfin--whitt regime. \emph{Mathematics of Operations Research}
  45(3):1069--1103.

\bibitem[{Brightwell et~al.(2018)Brightwell, Fairthorne, \protect\BIBand{}
  Luczak}]{brightwell2018supermarket}
Brightwell G, Fairthorne M, Luczak MJ (2018) The supermarket model with bounded
  queue lengths in equilibrium. \emph{Journal of Statistical Physics}
  173:1149--1194.

\bibitem[{Brightwell \protect\BIBand{}
  Luczak(2012)}]{brightwell2012supermarket}
Brightwell G, Luczak M (2012) The supermarket model with arrival rate tending
  to one. arXiv:1201.5523.

\bibitem[{der Boor et~al.(2022)der Boor, Borst, Van~Leeuwaarden,
  \protect\BIBand{} Mukherjee}]{debankur_survey}
der Boor MV, Borst SC, Van~Leeuwaarden JS, Mukherjee D (2022) Scalable load
  balancing in networked systems: A survey of recent advances. \emph{SIAM
  Review} 64(3):554--622.

\bibitem[{Eryilmaz \protect\BIBand{} Srikant(2012)}]{atilla_jsq_ht_drift}
Eryilmaz A, Srikant R (2012) Asymptotically tight steady-state queue length
  bounds implied by drift conditions. \emph{Queueing Systems} 72(3):311--359.

\bibitem[{Eschenfeldt \protect\BIBand{}
  Gamarnik(2016)}]{Eschenfeldt_Gamarnik_sub_halfin_whitt_power_of_2}
Eschenfeldt P, Gamarnik D (2016) Supermarket queueing system in the heavy
  traffic regime. short queue dynamics.

\bibitem[{Eschenfeldt \protect\BIBand{}
  Gamarnik(2018)}]{eschenfeldt_halfin_whitt}
Eschenfeldt P, Gamarnik D (2018) Join the shortest queue with many servers. the
  heavy-traffic asymptotics. \emph{Mathematics of Operations Research}
  43(3):867--886.

\bibitem[{Foschini \protect\BIBand{} Salz(1978)}]{foschini_ht_jsq_diffusion}
Foschini G, Salz J (1978) A basic dynamic routing problem and diffusion.
  \emph{IEEE Transactions on Communications} 26(3):320--327.

\bibitem[{Gupta \protect\BIBand{} Walton(2019)}]{nds_varun_walton}
Gupta V, Walton N (2019) Load balancing in the nondegenerate slowdown regime.
  \emph{Operations Research} 67(1):281--294.

\bibitem[{Halfin \protect\BIBand{} Whitt(1981)}]{halfin1981heavy}
Halfin S, Whitt W (1981) Heavy-traffic limits for queues with many exponential
  servers. \emph{Operations research} 29(3):567--588.

\bibitem[{Haque et~al.(2023)Haque, Khodadadian, \protect\BIBand{}
  Maguluri}]{haque2023tight}
Haque SU, Khodadadian S, Maguluri ST (2023) Tight finite time bounds of
  two-time-scale linear stochastic approximation with markovian noise.
  \emph{arXiv preprint arXiv:2401.00364} .

\bibitem[{Hurtado-Lange \protect\BIBand{}
  Maguluri(2020{\natexlab{a}})}]{heavy_traffic_daniela}
Hurtado-Lange D, Maguluri ST (2020{\natexlab{a}}) Load balancing system under
  join the shortest queue: Many-server-heavy-traffic asymptotics.

\bibitem[{Hurtado-Lange \protect\BIBand{}
  Maguluri(2020{\natexlab{b}})}]{hurtado_ht_jsq_transform}
Hurtado-Lange D, Maguluri ST (2020{\natexlab{b}}) Transform methods for
  heavy-traffic analysis. \emph{Stochastic Systems} 10(4):275--309.

\bibitem[{Jonckheere \protect\BIBand{}
  Prabhu(2018)}]{jonckheere2018asymptotics}
Jonckheere M, Prabhu BJ (2018) Asymptotics of insensitive load balancing and
  blocking phases. \emph{Queueing Systems} 88(3):243--278.

\bibitem[{Liu et~al.(2022{\natexlab{a}})Liu, Gong, \protect\BIBand{}
  Ying}]{lei_coxian_k_sub_halfin_whitt}
Liu X, Gong K, Ying L (2022{\natexlab{a}}) Large-system insensitivity of
  zero-waiting load balancing algorithms. \emph{Abstract Proceedings of the
  2022 ACM SIGMETRICS/IFIP PERFORMANCE Joint International Conference on
  Measurement and Modeling of Computer Systems}, 101–102,
  SIGMETRICS/PERFORMANCE '22 (New York, NY, USA: Association for Computing
  Machinery), ISBN 9781450391412,
  \urlprefix\url{http://dx.doi.org/10.1145/3489048.3526955}.

\bibitem[{Liu et~al.(2022{\natexlab{b}})Liu, Gong, \protect\BIBand{}
  Ying}]{lei_coxian_2_sub_halfin_whitt}
Liu X, Gong K, Ying L (2022{\natexlab{b}}) Steady-state analysis of load
  balancing with coxian-2 distributed service times. \emph{Naval Research
  Logistics (NRL)} 69(1):57--75.

\bibitem[{Liu \protect\BIBand{} Ying(2020)}]{sub_halfin_whitt_lei}
Liu X, Ying L (2020) Steady-state analysis of load-balancing algorithms in the
  sub-halfin–whitt regime. \emph{Journal of Applied Probability}
  57(2):578–596, \urlprefix\url{http://dx.doi.org/10.1017/jpr.2020.13}.

\bibitem[{Liu \protect\BIBand{} Ying(2022)}]{super_halfin_whitt_lei}
Liu X, Ying L (2022) Universal scaling of distributed queues under load
  balancing in the super-halfin-whitt regime. \emph{IEEE/ACM Transactions on
  Networking} 30(1):190--201,
  \urlprefix\url{http://dx.doi.org/10.1109/TNET.2021.3105480}.

\bibitem[{Maguluri et~al.(2014)Maguluri, Srikant, \protect\BIBand{}
  Ying}]{maguluri2014heavy}
Maguluri ST, Srikant R, Ying L (2014) Heavy traffic optimal resource allocation
  algorithms for cloud computing clusters. \emph{Performance Evaluation}
  81:20--39.

\bibitem[{Mitzenmacher(1996)}]{mitzenmacher_thesis}
Mitzenmacher M (1996) Load balancing and density dependent jump markov
  processes. \emph{Proceedings of 37th Conference on Foundations of Computer
  Science}, 213--222 (New York, NY, USA: IEEE),
  \urlprefix\url{http://dx.doi.org/10.1109/SFCS.1996.548480}.

\bibitem[{Mitzenmacher(2001)}]{mitzenmacher_power_of_d}
Mitzenmacher M (2001) The power of two choices in randomized load balancing.
  \emph{IEEE Transactions on Parallel and Distributed Systems}
  12(10):1094--1104.

\bibitem[{Mukherjee et~al.(2018)Mukherjee, Borst, Van~Leeuwaarden,
  \protect\BIBand{} Whiting}]{mukherjee2018universality}
Mukherjee D, Borst SC, Van~Leeuwaarden JS, Whiting PA (2018) Universality of
  power-of-d load balancing in many-server systems. \emph{Stochastic Systems}
  8(4):265--292.

\bibitem[{Raj~Jhunjhunwala et~al.(2024)Raj~Jhunjhunwala, Hurtado-Lange,
  \protect\BIBand{} Theja~Maguluri}]{raj2024exponential}
Raj~Jhunjhunwala P, Hurtado-Lange D, Theja~Maguluri S (2024) Exponential tail
  bounds on queues: A confluence of non-asymptotic heavy traffic and large
  deviations. \emph{ACM SIGMETRICS Performance Evaluation Review} 51(4):18--19.

\bibitem[{Varma et~al.(2025)Varma, Castro, \protect\BIBand{}
  Maguluri}]{pod_load_balancing}
Varma SM, Castro F, Maguluri ST (2025) Simulator for the power-of-d load
  balancing algorithm. \url{https://github.com/smv30/pod_load_balancing},
  gitHub repository.

\bibitem[{Vvedenskaya et~al.(1996)Vvedenskaya, Dobrushin, \protect\BIBand{}
  Karpelevich}]{vvedenskaya_power_of_2}
Vvedenskaya ND, Dobrushin RL, Karpelevich FI (1996) Queueing system with
  selection of the shortest of two queues: An asymptotic approach.
  \emph{Problemy Peredachi Informatsii} 32(1):20--34.

\bibitem[{Weng \protect\BIBand{}
  Wang(2020)}]{weina_sub_hafin_whitt_parallel_jobs}
Weng W, Wang W (2020) Achieving zero asymptotic queueing delay for parallel
  jobs. \emph{Proceedings of the ACM on Measurement and Analysis of Computing
  Systems} 4(3):1--36.

\bibitem[{Ying(2017)}]{lei_power_of_2_sub_halfin_whitt}
Ying L (2017) Stein's method for mean field approximations in light and heavy
  traffic regimes. \emph{Proceedings of the ACM on Measurement and Analysis of
  Computing Systems} 1(1):1--27.

\bibitem[{Zhao et~al.(2021)Zhao, Banerjee, \protect\BIBand{}
  Mukherjee}]{zhisheng_debankur_super_halfin_whitt}
Zhao Z, Banerjee S, Mukherjee D (2021) Many-server asymptotics for
  join-the-shortest queue in the super-halfin-whitt scaling window.

\end{thebibliography}
\newpage
\renewcommand{\theHsection}{A\arabic{section}}
\begin{APPENDICES}
\section{Preliminary Lemmas}
\subsection{Taylor Series Based Inequalities}
In this section, we present a few inequalities based on Taylor's series expansion which will be useful later to bound some of the terms. The proofs of these lemmas are deferred to Appendix~\ref{app: taylor_series}.
\begin{lemma} \label{lemma: power_of_d_going_to_zero}
Let $f(d)$ and $r$ be such that $d f(d) \rightarrow 0$  and 
$r \log d/d \rightarrow 0$ as $d \rightarrow \infty$. Then, there exists $d_0$ such that for all $d \geq d_0$, we have
\begin{align*}
   \left(1 - r\frac{\log d}{d}+f(d)\right)^{\lfloor d \rfloor} \leq \frac{2}{d^r}.
\end{align*}
\end{lemma}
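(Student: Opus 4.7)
The plan is to reduce the inequality to the elementary bound $\log(1-x) \leq -x$, valid for $x \in [0,1)$, applied to $x = r\log(d)/d - f(d)$. Since by assumption $d f(d) \to 0$, in particular $f(d) = o(1/d) = o(\log(d)/d)$, so for $d$ sufficiently large we have $0 \leq r\log(d)/d - f(d) < 1$, which legitimizes both writing the base as $1-x$ with $x \geq 0$ and applying the logarithm inequality.

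The key steps, in order, are as follows. First, I would choose $d_0$ large enough so that (i) $r\log(d)/d - f(d) \in [0, 1)$, and (ii) $|d f(d)| \leq \log 2$; the first is possible because $\log(d)/d \to 0$ and $f(d) = o(\log(d)/d)$, and the second follows directly from $d f(d) \to 0$. Second, I would take the natural logarithm of the left-hand side and apply $\log(1-x) \leq -x$ to obtain
\begin{align*}
    d\log\!\left(1 - r\frac{\log d}{d}+f(d)\right) \leq d\left(-r\frac{\log d}{d}+f(d)\right) = -r\log d + d f(d).
\end{align*}
Third, exponentiating this gives
\begin{align*}
    \left(1 - r\frac{\log d}{d}+f(d)\right)^d \leq d^{-r}\,e^{d f(d)} \leq \frac{2}{d^r},
\end{align*}
where the final inequality uses $e^{d f(d)} \leq e^{\log 2} = 2$ from step one.

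There is no serious obstacle here; the only technical care needed is to ensure the sign of $x$ is nonnegative (so the inequality $\log(1-x) \leq -x$ applies in the intended direction) and that $x < 1$ so the logarithm is well-defined. Both are immediate from the hypothesis $d f(d) \to 0$, which forces $f(d)$ to decay faster than $\log(d)/d$, making the $f(d)$ correction negligible relative to the dominant term $r\log(d)/d$. The factor $2$ on the right-hand side is exactly what absorbs the $e^{d f(d)}$ slack.
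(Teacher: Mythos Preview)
Your proposal is correct and follows essentially the same approach as the paper: both take logarithms and apply $\log(1+x)\leq x$ (equivalently $\log(1-x)\leq -x$) to obtain $d\log(\text{base})\leq -r\log d + df(d)$, then use $df(d)\to 0$ to absorb the remainder into the factor $2$. The only cosmetic difference is that the paper phrases the conclusion as $d^r(\text{base})^d\to 1$ (from the upper bound on the log), while you bound $e^{df(d)}\leq 2$ directly; your version is in fact slightly cleaner, and your care about the domain of the logarithm is not strictly needed since $\log(1+x)\leq x$ holds for all $x>-1$.
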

In the application of the above lemma, we always pick $r = O(m)$ and so we have $r \log d/d \leq \log n/d \rightarrow 0$ as $d = \Omega(\log (n)^3)$.
\begin{lemma} \label{lemma: power_of_d_not_going_to_zero}
Let $f(d)$ be such that $d f(d) \rightarrow 0$ as $d \rightarrow \infty$ and $f(d) \geq 0$ for $d \geq d_1$ for some $d_1 \in \bbR_+$. Then, there exists $d_2 \in \bbR_+$ such that for all $d \geq d_2$, we have
\begin{align*}
    1-df(d) \leq \left(1-f(d)\right)^{\lfloor d \rfloor} \leq 1-\lfloor d \rfloor f(d)+\frac{1}{2} d^2f(d)^2.
\end{align*}
\end{lemma}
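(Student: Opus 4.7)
The plan is to prove the two inequalities separately using classical tools: Bernoulli's inequality for the lower bound and the pair of bounds $1-x \le e^{-x}$, $e^{-a} \le 1-a+a^2/2$ for the upper bound. The hypothesis $df(d)\to 0$ together with $f(d)\ge 0$ for $d\ge d_1$ will be used only to guarantee that $f(d)\in[0,1]$ for all sufficiently large $d$, so that the standard inequalities apply.

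For the lower bound, I would first pick $d_2'\ge d_1$ large enough that $f(d)\le 1/2$ for all $d\ge d_2'$ (possible since $f(d)\le df(d)/d \to 0$ for $d\ge 1$). Then Bernoulli's inequality, valid for $x\in[-1,\infty)$ and integer $d\ge 1$ (and in the stronger form for real $d\ge 1$), gives
\[
(1-f(d))^d \;\ge\; 1 - d f(d),
\]
which is exactly the claimed left-hand inequality.

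For the upper bound, I would combine two standard bounds. First, since $1-x\le e^{-x}$ for all $x\in\mathbb{R}$, and since $1-f(d)\ge 0$ for $d\ge d_2'$, I get $(1-f(d))^d\le e^{-d f(d)}$. Second, I would show $e^{-a}\le 1-a+\tfrac{1}{2}a^2$ for every $a\ge 0$. This reduces to $g(a):=1-a+\tfrac{1}{2}a^2-e^{-a}\ge 0$, which follows from $g(0)=0$, $g'(0)=0$, and $g''(a)=1-e^{-a}\ge 0$ on $[0,\infty)$, so $g$ is convex on $[0,\infty)$ with a minimum at $0$. Applying this with $a=d f(d)\ge 0$ yields
\[
(1-f(d))^d \;\le\; e^{-d f(d)} \;\le\; 1 - d f(d) + \tfrac{1}{2}\,d^2 f(d)^2.
\]

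Taking $d_2=\max\{d_1,d_2',1\}$ suffices. There is no real obstacle here; the only subtlety is ensuring the inputs lie in the intervals where the elementary bounds $1-x\le e^{-x}$, Bernoulli's inequality, and the convexity argument for $g$ apply, and that is handled by the hypothesis $d f(d)\to 0$ together with $f(d)\ge 0$ eventually. Notice that the hypothesis $df(d)\to 0$ is used qualitatively to ensure $f(d)\in[0,1]$, but the resulting inequalities are valid pointwise (not asymptotic), so the statement is essentially a clean Taylor-style sandwich.
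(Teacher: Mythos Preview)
Your proof is correct. The lower bound via Bernoulli's inequality is exactly what the paper does. For the upper bound, however, you take a different and cleaner route than the paper: you pass through the exponential via $1-x\le e^{-x}$ and then invoke the quadratic Taylor bound $e^{-a}\le 1-a+\tfrac{1}{2}a^2$ for $a\ge 0$, established by a short convexity argument. The paper instead expands $(1-f(d))^d$ as a binomial series, keeps the first three terms explicitly, and then bounds the tail $\sum_{k\ge 4}\binom{d}{k}(-f(d))^k$ by comparing $\binom{d}{k}$ to $d(d-1)(d-2)d^{k-3}/6$ and summing a geometric series in $df(d)$; this is where the hypothesis $df(d)\to 0$ enters in the paper's upper bound, to ensure the geometric ratio is below $1/2$. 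Your argument is shorter, avoids any series manipulation, and uses $df(d)\to 0$ only to guarantee $f(d)\in[0,1]$ so that $(1-f(d))^d$ is nonnegative and the monotonicity step $(1-f(d))^d\le e^{-df(d)}$ is valid. Both approaches yield the same bound; yours is the more elementary one.
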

\begin{lemma} \label{lemma: simplified_ssc_identity} There exists $n_a \in \bbZ_+$ such that for all $n \geq n_a$, we have
\begin{align*}
    \left(\frac{n}{n+\sqrt{m n}\log n}\right)^{\sqrt{m n} (\log n)/2} \leq \frac{1}{n^{(m \log n)/4}}.
\end{align*}
\end{lemma}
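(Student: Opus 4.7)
The plan is to take logarithms and reduce the inequality to a standard Taylor-series estimate for $\log(1+x)$. Taking the logarithm of both sides and setting $\alpha = \sqrt{mn}\log n$, the inequality is equivalent to
\[
\frac{\alpha}{2}\,\log\!\left(1+\frac{\alpha}{n}\right) \;\geq\; \frac{m(\log n)^2}{4}.
\]
Writing $x = \alpha/n = \sqrt{m/n}\,\log n$, the task reduces to showing $\tfrac{\alpha}{2}\log(1+x) \geq \tfrac{m(\log n)^2}{4}$.

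The key observation is that under the paper's standing assumption $m = o(\log n)$ (which follows from $d = \Omega(\log(n)^3)$ via equation \eqref{eq: m} in the text), we have $x = \sqrt{m/n}\log n \to 0$ as $n\to\infty$. Hence, for all $n$ sufficiently large, $x \leq 1$. On this range, the standard Taylor estimate $\log(1+x) \geq x - x^2/2 = x(1 - x/2)$ together with $x \leq 1$ yields $\log(1+x) \geq x/2$.

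Plugging this back in gives
\[
\frac{\alpha}{2}\log\!\left(1+\frac{\alpha}{n}\right) \;\geq\; \frac{\alpha}{2}\cdot\frac{\alpha}{2n} \;=\; \frac{\alpha^2}{4n} \;=\; \frac{mn(\log n)^2}{4n} \;=\; \frac{m(\log n)^2}{4},
\]
which is exactly the required bound. The constant $n_a$ is then chosen as any integer large enough to guarantee $\sqrt{m/n}\,\log n \leq 1$, which amounts to $n \geq m(\log n)^2$; this is easily satisfied once $n$ is large because $m = o(\log n)$.

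There is no serious obstacle here: the entire proof is a one-line Taylor expansion plus an algebraic simplification. The only mild subtlety is making sure that the regime of $m$ (constant, growing to infinity as in the infinite-delay case, or in between) is uniformly handled; this is taken care of by the observation $m(\log n)^2/n \to 0$ which holds in every regime considered in the paper.
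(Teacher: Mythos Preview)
Your proof is correct and takes essentially the same approach as the paper: both take logarithms, set $x=\sqrt{m/n}\,\log n$, use that $x\to 0$ (from $m=o(\log n)$), and invoke $\log(1+x)\geq x/2$ for small $x$ to obtain the bound. The only cosmetic difference is that the paper phrases the last step via the limit $\log(1+x)/x\to 1$ (hence $\geq 1/2$ eventually), whereas you use the explicit Taylor remainder $\log(1+x)\geq x-x^2/2$; these are equivalent.
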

For the rest of the appendix, we consider $n$ to be large enough such that all the results in this section holds true. 
\subsection{Iterative State Space Collapse} \label{app: iterative_ssc}
We start by formally defining the drift of a Lyapunov function. Let $q_{\BFs, \BFs'}$ be the rate at which the CTMC transitions from $\BFs$ to $\BFs'$. 
\begin{definition} \label{def: lyapunov_function}
Consider a Lyapunov function $V: \calS \rightarrow \bbR$ and define the drift of $V$ at state $\BFs$ as
\begin{align*}
    \Delta V(\BFs) = \sum_{\BFs' \in \calS, \BFs' \neq \BFs} q_{\BFs, \BFs'} \left(V(\BFs') - V(\BFs)\right).
\end{align*}
\end{definition}
\textbf{State Space Collapse:} If the drift of $V(\cdot)$ is such that $\Delta V(\BFs) \leq -\gamma$ when $V(\BFs)\geq B$ for some $\gamma, B \in \bbR_+$, then, one can obtain high probability tail bounds on $V(\bbars)$ \cite{bertsimas_tail_bounds} that depends on $B, \gamma$ and the properties of the CTMC. Intuitively, every time the CTMC jumps to a state $\BFs$ such that $V(\BFs) \geq B$, due to a strong drift ($\Delta V(\BFs) \leq -\gamma$), the CTMC will quickly jump back to a state such that $V(\BFs) \leq B$. Loosely speaking, the stationary probability $\P{V(\bbars) \geq B + j}$ decreases exponentially in terms of $j$. This implies that the underlying CTMC $\{\BFs(t): t \geq 0\}$ ``collapses'' to a subset of the state space $\{\BFs \in \calS : V(\BFs) \leq B + j\}$ for a large enough $j \in \bbR_+$. 

\textbf{Iterative State Space Collapse:} The main challenge in obtaining SSC is to show that the Lyapunov function $V(\cdot)$ exhibits negative drift when $V(\BFs) \geq B$. The authors in \cite{lei_coxian_2_sub_halfin_whitt} ingeniously showed that if $\calE \subseteq \calS$ such that $\P{\bbars \notin \calE} \approx 0$, then, it suffices to show negative drift for $\BFs \in \calE : V(\BFs) \geq B$. Thus, by exploiting the properties of the stationary distribution of $\{\BFs(t): t \geq 0\}$, one needs to show negative drift for only a subset of the state space. We state this as a lemma below.
\begin{lemma} \label{lemma: iterative_ssc}
Consider a Lyapunov function $V : \calS \rightarrow \bbR$ such that $V(\BFs) \geq D$ for all $\BFs \in \calS$ for some $D \in \bbR$. Consider
\begin{align*}
    \nu_{\max} := \max_{\BFs, \BFs' \in \calS, q_{\BFs, \BFs'} > 0} \big| V(\BFs') - V(\BFs)| < \infty,
\end{align*}
and define
\begin{align*}
    q_{\max} := \max_{\BFs \in \calS} \sum_{\BFs' \in \calS: V(\BFs) < V(\BFs')} q_{\BFs, \BFs'}.
\end{align*}
Assume that there exists a set $\calE$ with $B>D$, $\gamma>0$, $\delta \geq 0$ such that the following conditions are satisfied.
\begin{itemize}
    \item $\Delta V(\BFs) \leq -\gamma$ when $V(\BFs) \geq B$ and $\BFs \in \calE$,
    \item $\Delta V(\BFs) \leq \delta$ when $V(\BFs) \geq B$ and $\BFs \notin \calE$.
\end{itemize}
Then, 
\begin{align*}
    \P{V(\bbars) \geq B + 2\nu_{\max} j} \leq \alpha^j + \beta \P{\bbars \notin \calE} \quad \forall j \in \bbZ_+, \numberthis \label{eq: iterative_ssc}
\end{align*}
with
\begin{align*}
    \alpha = \frac{q_{\max}\nu_{\max}}{q_{\max}\nu_{\max} + \gamma} \quad \textit{and} \quad \beta = \frac{\delta}{\gamma} + 1.
\end{align*}
\end{lemma}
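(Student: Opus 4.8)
Lemma~\ref{lemma: iterative_ssc} is (a version of) \cite[Lemma~10]{lei_iterative_ssc}, itself a refinement of the drift tail bound \cite[Theorem~1]{bertsimas_tail_bounds}; here is the argument I would write. The only input is the \emph{stationary drift--balance identity}: since $\{\BFs(t):t\ge 0\}$ is a finite, irreducible, positive recurrent CTMC whose stationary law is that of $\bbars$, every $g:\calS\to\bbR$ satisfies $\E{\Delta g(\bbars)}=0$. The plan is to feed this identity a geometric test function and rearrange it into a tail bound.

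First I would set $\rho:=\bigl(1+\tfrac{\gamma}{q_{\max}\nu_{\max}}\bigr)^{1/(2\nu_{\max})}$, so that $\rho^{2\nu_{\max}}=1/\alpha$, and, for the level $L:=B+2\nu_{\max}j$ we want to probe, use the truncated test function $g(\BFs):=\rho^{\min\{V(\BFs),\,L\}}$ (bounded above by $\rho^{L}$ and below by $\rho^{D}>0$, which is where the hypothesis $V\ge D$ enters). The core estimate is that, for some $c>0$ depending only on $\gamma,q_{\max},\nu_{\max}$,
\[
\Delta g(\BFs)\le -c\,g(\BFs)\qquad\text{whenever }V(\BFs)\ge B,\ \BFs\in\calE,\ V(\BFs)\le L-\nu_{\max}.
\]
On that region $g$ agrees locally with $\rho^{V}$, so $\Delta g(\BFs)=\rho^{V(\BFs)}\sum_{\BFs'}q_{\BFs,\BFs'}\bigl(\rho^{\,V(\BFs')-V(\BFs)}-1\bigr)$. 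I would split the transitions into those that increase $V$ (total rate $\le q_{\max}$, each jump in $(0,\nu_{\max}]$) and those that decrease it, bound each $\rho^{x}-1$ by the relevant chord of the convex map $x\mapsto\rho^{x}$, and then use $\sum_{\BFs':V(\BFs')>V(\BFs)}q_{\BFs,\BFs'}(V(\BFs')-V(\BFs))\le q_{\max}\nu_{\max}$ for the upward part and $\sum_{\BFs':V(\BFs')<V(\BFs)}q_{\BFs,\BFs'}(V(\BFs')-V(\BFs))\le\Delta V(\BFs)\le-\gamma$ for the downward part; with $\rho^{2\nu_{\max}}=1/\alpha$ the upward contribution loses to the downward one with room to spare.

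Next I would collect the remaining contributions: everywhere one has the crude bound $\Delta g(\BFs)\le\frac{\gamma}{2\nu_{\max}}g(\BFs)$ (refinable on $\{V\ge B\}\setminus\calE$ through $\Delta V(\BFs)\le\delta$), and $\Delta g(\BFs)=0$ once $V(\BFs)\ge L+\nu_{\max}$ since $g$ is then flat. Plugging $g$ into $\E{\Delta g(\bbars)}=0$ and splitting $\calS$ into (i)~$\{B\le V\le L-\nu_{\max}\}\cap\calE$, (ii)~$\{V\ge B\}\setminus\calE$, (iii)~$\{V<B\}$, and (iv)~the width-$\nu_{\max}$ layer around $L$: region~(i) contributes a term $-c\,\E{g(\bbars);\,\text{region (i)}}$, regions~(iii)--(iv) an $O(\rho^{B})$ constant plus an $O\bigl(\rho^{L}\,\P{V(\bbars)>L-\nu_{\max}}\bigr)$ error, and region~(ii) at most $O(\rho^{L})\,\P{\bbars\notin\calE}$. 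Because $\P{V(\bbars)>L-\nu_{\max}}$ is a slightly lower-level tail, rearranging produces a recursion in $j$; unrolling it, and applying Markov's inequality to $g(\bbars)$ at the threshold $\rho^{L}=\rho^{B}\alpha^{-j}$, yields $\P{V(\bbars)\ge B+2\nu_{\max}j}\le\alpha^{j}+\beta\,\P{\bbars\notin\calE}$. Tracking the constants --- the increment $2\nu_{\max}$ rather than $\nu_{\max}$ is precisely the slack absorbing the $O(\rho^{B})$ factor and the boundary-layer error --- reproduces $\alpha=\frac{q_{\max}\nu_{\max}}{q_{\max}\nu_{\max}+\gamma}$ and $\beta=\frac{\delta}{\gamma}+1$, as in the cited works.

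The step I expect to be the main obstacle is the interplay between the test function and the event $\{\bbars\notin\calE\}$: because $\rho^{V}$ is a priori unbounded and $V$ can be large precisely on $\calE^{c}$, the region-(ii) term is \emph{not} automatically controlled by $\P{\bbars\notin\calE}$ alone. Truncating $g$ at the very level being probed --- and then carefully handling the flat top and the width-$\nu_{\max}$ transition layer this creates --- is what makes that term behave, and it is the only genuinely delicate point; everything else is the chord estimates and the constant bookkeeping sketched above.
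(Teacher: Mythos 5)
The paper does not prove this lemma from first principles: immediately after the statement it notes that the result \emph{is} \cite[Lemma 10]{lei_iterative_ssc} applied to the shifted function $V'=V-D\ge 0$, so the paper's ``proof'' is a one-line reduction to that cited result (which is itself proved by a level-by-level recursion in the spirit of \cite[Theorem 1]{bertsimas_tail_bounds}). Your proposal instead reproves the cited lemma via a single geometric test function $g=\rho^{\min\{V,L\}}$ and the stationarity identity $\E{\Delta g(\bbars)}=0$; that is a genuinely different, more self-contained route, and your observation that truncating $g$ at the probed level is what tames the contribution of $\calE^{c}$ is the right instinct.

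However, as written your core estimate fails exactly in the regime this paper uses the lemma. Write $u:=\rho^{\nu_{\max}}$ and $Q:=q_{\max}\nu_{\max}$. Bounding the upward and downward sums \emph{separately}, as you propose (upward sum $\le Q$, downward sum $\le -\gamma$), gives $\Delta g(\BFs)\le \frac{g(\BFs)}{\nu_{\max}}\left[(u-1)Q-(1-u^{-1})\gamma\right]=\frac{g(\BFs)}{\nu_{\max}}Q(u-1)\left(1-\tfrac{\gamma}{Qu}\right)$, and with your choice $u=\sqrt{1+\gamma/Q}$ this is \emph{positive} whenever $\gamma< Qu$, in particular whenever $\gamma\le q_{\max}\nu_{\max}$ --- which is the case in every application here ($\gamma=\sqrt{mn}\log n$, $q_{\max}=n$, $\nu_{\max}=1$). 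So the claimed $\Delta g\le -c\,g$ does not follow from the stated bookkeeping. The repair is to keep $\Delta V$ intact: write the downward sum as $\Delta V(\BFs)$ minus the upward sum, so the upward rate enters only through the second-order coefficient $(u-1)-(1-u^{-1})=(u-1)^2/u$, and one gets $\Delta g\le \frac{g}{\nu_{\max}}\cdot\frac{u-1}{u}\left[(u-1)Q-\gamma\right]$, which is negative for your $u$ since $\sqrt{1+\gamma/Q}<1+\gamma/Q$. Even with that fix, the sketch still owes the exact constants: the recursion in $j$ must be unrolled so that the prefactor at level $B+2\nu_{\max}j$ is exactly $\alpha^{j}$ and the accumulated $\calE^{c}$ terms give exactly $\beta=\delta/\gamma+1$; this constant-tracking is nontrivial and is precisely what \cite[Lemma 10]{lei_iterative_ssc} supplies, which is why the paper simply invokes it (after the shift $V-D$) rather than reproving it.
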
 
The above lemma is obtained by directly using \cite[Lemma 10]{lei_coxian_2_sub_halfin_whitt} with $V' = V-D$ as \cite[Lemma 10]{lei_coxian_2_sub_halfin_whitt} requires the Lyapunov function to be non-negative. It'll be helpful to note that proof of almost all the following lemmas follows a four-part template. 
\begin{enumerate}
    \item Define a Lyapunov function $V(\BFs)$ depending on what is required to be proved.
    \item Show that $\Delta V(\BFs) \leq -\sqrt{m n}\log n$  for all $\BFs \in \calE$ such that $V(\BFs) \geq 0$, where $\P{\bbars \in \calE} \approx 1$. Lemma \ref{lemma: power_of_d_going_to_zero} and Lemma \ref{lemma: power_of_d_not_going_to_zero} will be useful in completing this step.
    \item Apply Lemma \ref{lemma: iterative_ssc} to obtain a high probability bound on $V(\bbars)$. In all the instances of application of Lemma \ref{lemma: iterative_ssc}, we use $B=0$, $j=\sqrt{m n}(\log n)/2$, $\nu_{\max}=1$, $q_{\max} = \delta = n$, and $\gamma=\sqrt{m n}\log n$. This also implies that $\beta \leq \sqrt{n}$ for large enough $n$. Also note that, Lemma \ref{lemma: simplified_ssc_identity} will be useful in simplifying the expression obtained. In particular, we have
    \begin{align*}
        \P{V(\bbars) \geq  \sqrt{m n}\log n} &\leq \left(\frac{n}{n+\sqrt{m n}\log n}\right)^{\sqrt{m n}(\log n)/2} + \sqrt{n}\P{\bbars \notin \calE} \\
        &\leq \left(\frac{1}{n}\right)^{(m \log n)/4} + \sqrt{n}\P{\bbars \notin \calE}.
    \end{align*}
    \item Translate the high probability bound on $V(\bbars)$ to the required bound by employing basic results in probability like union bound, law of total probability, etc.
\end{enumerate}
\section{Proof of Lemmas for Lower Bound} \label{app: lb_lemmas}
\proof{Proof of Lemma \ref{lemma: outer_base_case} (Base Case)}
To prove the lemma, we consider the following Lyapunov function:
\begin{align*}
    V_1(\BFs) = n - \frac{2m n \log d}{d} - \frac{2n\log d}{d^{2}} -\sqrt{ m n}\log n - s_1.
\end{align*}
Now, we analyze the drift of $V_1(\BFs)$ when $V_1(\BFs) \geq 0$. Thus, we have $s_1 \leq n - \frac{2m n \log d}{d} - \frac{2n\log d}{d^{2}}  - \sqrt{m n}\log n$. Now, the drift is given as follows:
\begin{align*}
    \Delta V_1(\BFs) &= s_1 - s_2 - \lambda\left(1 - \left(\frac{s_1}{n}\right)^{\lfloor d \rfloor}\right) \\
     &\leq n - \frac{2m n \log d}{d}- \frac{2n\log d}{d^{2}} -\sqrt{m n}\log n - \lambda\left(1 - \left(1 - \frac{ 2m\log d}{d}\right)^{\lfloor d \rfloor}\right) \\
    &\overset{(a)}{\leq} n - \frac{2m n \log d}{d}- \frac{2n\log d}{d^{2}} -\sqrt{m n}\log n - \lambda\left(1 - \frac{2}{d^{2m}}\right) \\
    &\leq  - \frac{2m n \log d}{d}- \frac{2n\log d}{d^{2}} -\sqrt{m n}\log n + \frac{2n}{d^{2m}} + n^{1-\gamma} \\
    &\overset{(b)}{\leq}  -\sqrt{m n}\log n,
\end{align*}
where $(a)$ follows for all $n \geq n_0$ for some $n_0 \in \bbZ_+$ by Lemma \ref{lemma: power_of_d_going_to_zero}. Next, $(b)$ follows by the following observations: $\frac{2m n \log d}{d} \geq \frac{2m n \log d}{d^{m}} = n^{1-\gamma}$, and $\frac{2n}{d^{2m}} \leq \frac{2n}{d^2}$. Thus, we have $V_1(\BFs) \leq -\sqrt{m n}\log n$ for all $n \geq n_0$. By Lemma \ref{lemma: iterative_ssc}, we get
\begin{align*}
    \P{V_1(\bbars) \geq \sqrt{m n}\log n} \leq \left(\frac{n}{n+\sqrt{m n}\log n}\right)^{\sqrt{m n}(\log n)/2} \leq \frac{1}{n^{(m\log n)/4}},
\end{align*}
where the last inequality follows by Lemma \ref{lemma: simplified_ssc_identity}. This completes the proof. \hfill $\square$
\endproof
\proof{Proof of Lemma \ref{lemma: outer_induction_weak_induction_step}}
Define a family of functions $\{V_{ik}(\BFs): i \in [k]\}$ for $k \geq 2$ as follows:
\begin{align*}
    V_{ik}(\BFs) &= s_i - n + 3im \frac{n\log d}{d^{k-i+1}} \quad \forall i \in [k-1]\\
    V_{kk}(\BFs) &= n - 3km  \frac{n\log d}{d} - s_k. 
\end{align*}
Now, using the above family of functions, we define the Lyapunov functions $\{L_{lk}: l \in [k-1]\}$ as follows:
\begin{align*}
    L_{lk}^{(1)} &= V_{kk} - \sum_{j=l+1}^{k-1} V_{jk}, \quad L_{lk}^{(2)} = V_{lk} \quad \forall l \in [k-1] \\
    L_{lk} &= \min\left\{L_{lk}^{(1)}, L_{lk}^{(2)} \right\} \quad \forall l \in [k-1].
\end{align*}
To prove the lemma, we make use of the following claim:
\begin{claim} There exists $n_{c1} \in \bbZ_+$ such that for all $n\geq n_{c1}$, we have \label{claim: induction_for_weak_induction_step}
\begin{align*}
    \P{L_{1k}(\bbars) \geq \sqrt{m n}\log n} \leq \left(\frac{1}{n}\right)^{(m\log n)/4-4(k-1)m-(k-1) }.
\end{align*}
\end{claim}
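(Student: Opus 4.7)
\textbf{Proof plan for Claim \ref{claim: induction_for_weak_induction_step}.} I will prove the claim by an \emph{inner} backward induction on $l$, running from $l=k-1$ down to $l=1$, iteratively contracting the set where $\bbars$ is allowed to live. At step $l$ the inner induction hypothesis is a high-probability bound of the form $\P{L_{l+1,k}(\bbars) \geq \sqrt{mn}\log n} \leq (1/n)^{(m\log n)/4 - 4(k-1)m - (k-1-l)}$. The favorable event at step $l$ is $\calE_l = \{L_{l+1,k}(\BFs) \leq \sqrt{mn}\log n\} \cap \calE_{IH}$, where $\calE_{IH}$ is the event on which the outer hypothesis \eqref{eq: outer_induction} holds simultaneously for every $i \in [k-1]$; a union bound gives $\P{\bbars \notin \calE_{IH}} \leq (k-1)(1/n)^{(m\log n)/4 - 4(k-1)m}$.

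For the base case $l = k-1$ the empty sum in $L_{k-1,k}^{(1)}$ makes $L_{k-1,k}^{(1)} = V_{kk}$ and $L_{k-1,k}^{(2)} = V_{k-1,k}$; these are single-coordinate Lyapunov functions in $s_k$ and $s_{k-1}$. Lemma \ref{lemma: power_of_d_not_going_to_zero} applied to $(s_{k-1}/n)^d$, using the outer lower bound on $s_{k-1}$, yields the negative drift for $\Delta V_{kk} = -\Delta s_k$, and an analogous argument based on $(s_{k-2}/n)^d$ handles $\Delta V_{k-1,k} = \Delta s_{k-1}$. Invoking Lemma \ref{lemma: iterative_ssc} with $B=0$, $j = \sqrt{mn}(\log n)/2$, and $\calE = \calE_{IH}$ then supplies the desired tail bound.

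For the induction step I use the elementary fact that if $L = \min\{L^{(1)}, L^{(2)}\}$ and $L(\BFs) = L^{(\alpha)}(\BFs)$, then $\Delta L(\BFs) \leq \Delta L^{(\alpha)}(\BFs)$; it therefore suffices to show that both $\Delta L_{lk}^{(1)}$ and $\Delta L_{lk}^{(2)}$ are $\leq -\sqrt{mn}\log n$ on $\calE_l$ whenever the respective component is nonnegative. The drift $\Delta L_{lk}^{(2)} = \Delta s_l$ is handled as in the base case: the outer hypothesis lower-bounds $(s_{l-1}/n)^d$ (and hence the arrival rate into length-$l$ queues), and $L_{lk}^{(2)}(\BFs) \geq 0$ forces $s_l$ close enough to $n$ that the departure term $s_l - s_{l+1}$ dominates, with the inner hypothesis on $L_{l+1,k}$ bounding $s_{l+1}$. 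The drift $\Delta L_{lk}^{(1)}$ telescopes to
\begin{align*}
\Delta L_{lk}^{(1)}(\BFs) \;=\; -\lambda\left(\frac{s_l}{n}\right)^d + \lambda\left(\frac{s_k}{n}\right)^d + s_{l+1} - s_{k+1}.
\end{align*}
On $\calE_l$ with $L_{lk}(\BFs) \geq 0$, the condition $L_{lk}^{(2)}(\BFs)\geq 0$ forces $s_l$ close enough to $n$ that $\lambda(s_l/n)^d \approx \lambda$, while the constraint $L_{lk}^{(1)}(\BFs) \geq 0$ caps $\sum_{j=l+1}^k s_j$ and, in particular, forces $s_k$ strictly below $n$; Lemma \ref{lemma: power_of_d_going_to_zero} then makes $\lambda(s_k/n)^d$ negligible, and the inner hypothesis controls $s_{l+1}$, producing the required drift $\leq -\sqrt{mn}\log n$. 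Applying Lemma \ref{lemma: iterative_ssc} gives $\P{L_{lk}(\bbars) \geq \sqrt{mn}\log n} \leq (1/n)^{(m\log n)/4} + \sqrt{n}\,\P{\bbars \notin \calE_l}$; iterating this through the $k-1$ inner steps and absorbing each $\sqrt{n}$ overhead into the exponent yields exactly the claimed tail $(1/n)^{(m\log n)/4 - 4(k-1)m - (k-1)}$.

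The main obstacle is the drift analysis for $L_{lk}^{(1)}$: the telescoping identity reduces it to a delicate comparison between $\lambda$, the telescoped service corrections $s_{l+1} - s_{k+1}$, and the $(s_k/n)^d$ term, none of which has a free coefficient. The choice of coefficients $3jm$ in the definition of $V_{jk}$ must be calibrated so that $L_{lk}^{(1)}(\BFs) \geq 0$ forces $s_k$ far enough below $n$ for Lemma \ref{lemma: power_of_d_going_to_zero} to be applicable, while still keeping the resulting lower bound on $s_k$ no worse than $n - O(m^2 n\log d/d)$ so that the final output matches the $6m^2$ constant in Lemma \ref{lemma: outer_induction_weak_induction_step}. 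Verifying this two-sided balance, and carrying it through $k-1$ inner iterations without accumulating extra factors of $m$ or $\log n$ in the exponent, is the technical heart of the argument.
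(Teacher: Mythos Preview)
Your overall architecture is correct and matches the paper: backward induction on $l$ from $k-1$ to $1$, splitting into the two cases $L_{lk}=L_{lk}^{(1)}$ and $L_{lk}=L_{lk}^{(2)}$, the telescoped identity $\Delta L_{lk}^{(1)}=-\lambda(s_l/n)^d+\lambda(s_k/n)^d+s_{l+1}-s_{k+1}$, and the iterative application of Lemma~\ref{lemma: iterative_ssc}. The induction-step analysis of the $L_{lk}^{(1)}$ case, where you use $L_{lk}^{(2)}\geq 0$ to force $s_l$ close to $n$ and then combine $L_{lk}^{(1)}\geq 0$ with the outer hypothesis to cap $s_k$, is exactly what the paper does.

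There is, however, a genuine gap in your base case. For $\Delta V_{kk}=-\Delta s_k$ you propose to control $(s_{k-1}/n)^d$ via the \emph{outer} lower bound on $s_{k-1}$. But the outer hypothesis \eqref{eq: outer_induction} only gives $s_{k-1}\geq n-2m\,n\log d/d-o(\cdot)$, and with that $(s_{k-1}/n)^d\leq (1-2m\log d/d)^d=O(d^{-2m})\to 0$; you cannot lower-bound $\lambda(s_{k-1}/n)^d$ by anything close to $\lambda$, and the drift bound collapses to $\Delta V_{kk}\leq n-3km\,n\log d/d+o(\cdot)$, which is large and positive. The bound that actually works is the one you yourself invoke in the induction step: since $L_{k-1,k}\geq 0$ forces \emph{both} components nonnegative, you have $V_{k-1,k}\geq 0$, i.e.\ $s_{k-1}\geq n-3(k-1)m\,n\log d/d^2$, and then Lemma~\ref{lemma: power_of_d_not_going_to_zero} gives $(s_{k-1}/n)^d\geq 1-3(k-1)m\log d/d$. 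This is what the paper uses, and with it the base case needs no favorable event $\calE$ at all. A related slip occurs in your $L_{lk}^{(2)}$ case: you say the outer hypothesis \emph{lower}-bounds $(s_{l-1}/n)^d$, but to upper-bound $\Delta s_l$ you need to \emph{upper}-bound $\lambda(s_{l-1}/n)^d$; the trivial bound $s_{l-1}\leq n$ is what is used.
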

We defer the proof of the claim to Appendix \ref{app: claim_lb} and continue with the proof of Lemma \ref{lemma: outer_induction_weak_induction_step}. We analyze the drift of $L_{0k}^{(1)}(\BFs)$ resulting in a high probability upper bound on $L_{0k}^{(1)}(\BFs)$ which will imply a high probability lower bound on $\bbars_k$. We start by analyzing the drift of $L_{0k}^{(1)}(\BFs)$ when $L_{0k}^{(1)}(\BFs) \geq 0$ and $\BFs \in \calC_{1, k}^{(1)} \cap \bigcap_{l=1}^{k-1} \calD_l^{(1)}$ where
\begin{subequations} \label{eq: sets_weak_induction_base_case}
\begin{align}
    \calC_{l, k}^{(1)} &= \left\{L_{lk} \leq \sqrt{m n}\log n\right\} \quad \forall l \in [k-1] \\
    \calD_l^{(1)} &= \left\{s_l \geq n - \frac{5m n\log d}{2d^{k -l }}\right\} \quad \forall l \in [k-1].
\end{align}
\end{subequations}
First, by using that $\BFs \in \calC_{1, k}^{(1)}$, we get a useful upper bound on $s_1$ as follows:
\begin{align*}
    L_{0k}^{(1)}(\BFs) \geq 0 &\Rightarrow V_{kk}(\BFs) - \sum_{j=2}^k V_{jk}(\BFs) \geq V_{1k}(\BFs) \\
    &\overset{(*)}{\Rightarrow} V_{1k}(\BFs) \leq \sqrt{m n}\log n \quad \textit{as } \BFs \in \calC_{1, k}^{(1)} \\
    &\Rightarrow s_1 \leq n - 3m \frac{n\log d}{d^k} + \sqrt{m n}\log n \leq n - 5m \frac{n\log d}{2d^k}, \numberthis \label{eq: upper_bound_s1}
\end{align*}
where $(*)$ follows as $L_{1k} = \min\left\{V_{kk}(\BFs) - \sum_{j=2}^k V_{jk}(\BFs), V_{1k}(\BFs)\right\} = V_{1k}$ as $V_{kk}(\BFs) - \sum_{j=2}^k V_{jk}(\BFs) \geq V_{1k}(\BFs)$ and so $\BFs \in \calC_{1, k}^{(1)}$ implies $L_{1k}(\BFs) = V_{1k}(\BFs) \leq \sqrt{mn}\log n$. Further, the last inequality holds because $m \frac{n \log d}{2d^k} \geq m \frac{n \log d}{2d^{m}} = n^{1-\gamma}/4 \geq \sqrt{m n}\log n$ for all $n \geq n_1^{(1)}$ for some $n_1^{(1)} \in \bbZ_+$ independent of $k$.
Next, we get a useful upper bound on $s_k$ as follows:
\begin{align*}
    L_{0k}^{(1)}(\BFs) \geq 0 &\Rightarrow V_{kk}(\BFs) - \sum_{j=1}^{k-1} V_{jk}(\BFs) \geq 0 \\ &\Rightarrow  s_k \leq n - 3km \frac{n \log d}{d} - \sum_{j=1}^{k-1} V_{jk}(\BFs) \\
    &\overset{(*)}{\Rightarrow} s_k \leq n - 3km \frac{n \log d}{d} + 2.5m n \log d\sum_{l=1}^{k-1} \frac{1}{d^l} \quad \textit{as } \BFs \in \bigcap_{l=1}^{k-1} \calD_l^{(1)} \\
     &\Rightarrow s_k \leq n - 3m \frac{n \log d}{d}, \numberthis \label{eq: upper_bound_sk}
\end{align*}
where $(*)$ follows as $\calD_l^{(1)}$ implies $s_l \geq n - \frac{5mn \log d}{2d^{k-l}}$ which further implies $V_{lk}(\BFs) \geq - \frac{5mn \log d}{2d^{k-l}} + 3lm\frac{n \log d}{d^{k-l+1}} \geq - \frac{5mn \log d}{2d^{k-l}}$. Further,
 the last assertion follows by using the bound $1/d^l \leq 1/d$ for $l \in \{1, 2, \hdots, k-1\}$. Now, the drift is given as follows:
\begin{align*}
\Delta L_{0k}^{(1)}(\BFs)
    ={}& s_1 - s_{k+1} - \lambda\left(1 - \left(\frac{s_k}{n}\right)^{\lfloor d \rfloor}\right) \\
    \overset{(a)}{\leq}{}& n - 5m \frac{n\log d}{2d^{k}}-\lambda\left(1 - \left(1 - 3m \frac{\log d}{d}\right)^{\lfloor d \rfloor}\right) \\
    \overset{(b)}{\leq}{}& n - 5m \frac{n\log d}{2d^{k}}-\lambda\left(1 - \frac{2}{d^{3m}}\right) \\
    \leq{}& - 5m \frac{n\log d}{2d^{k}}   + \frac{2n}{d^{3m}} + n^{1-\gamma}  \\
    \overset{(c)}{\leq}{}& - m \frac{n\log d}{2d^{k}} + \frac{2n}{d^{3m}} \\
    \overset{(d)}{\leq}{}& - m \frac{n\log d}{4d^{k}}
    \leq - m \frac{n\log d}{4d^{m }}\\
    \leq{}& -\frac{1}{8}n^{1-\gamma} \overset{(e)}{\leq} -\sqrt{m n}\log n,
\end{align*}
where $(a)$ follows by lower bounding $s_{k+1}$ by zero and using the upper bounds on $s_1$ and $s_k$ given by \eqref{eq: upper_bound_s1} and \eqref{eq: upper_bound_sk} respectively. Next, $(b)$ follows by Lemma \ref{lemma: power_of_d_going_to_zero}. Further, $(c)$ follows as $2m \frac{n\log d}{d^{k}} \geq 2m \frac{n\log d}{d^{m}} = n^{1-\gamma}$. Now, $(d)$ follows as $m \frac{n\log d}{4d^{k}} \geq m \frac{n\log d}{4d^{m}}$. Thus, there exists $n_1^{(2)} \in \bbZ_+$ independent of $k$ such that for all $n \geq n_1^{(2)}$, we have $m \frac{n\log d}{4d^{k}} \geq 2n/d^{3m}$ as $d$ increases with $n$. Lastly, $(e)$ follows as there exists $n_1^{(3)} \in \bbZ_+$ such that for all $n \geq n_1^{(3)}$, we have $n^{1-\gamma}/8 \geq \sqrt{m n}\log n$. Thus, for all $n \geq \max_{k \in [3]}\{n_1^{(k)}\}$, we have $\Delta L_{0k}^{(1)}(\BFs) \leq -\sqrt{m n}\log n$ when  $L_{0k}^{(1)}(\BFs) \geq 0$ and $\BFs \in \calC_{1, k}^{(1)} \cap \bigcap_{l=1}^{k-1} \calD_l^{(1)}$. Now, by using Lemma \ref{lemma: iterative_ssc}, we get a high probability upper bound on $L_{0k}^{(1)}(\bbars)$ as follows:
\begin{align*}
    &\P{L_{0k}^{(1)}(\bbars) \geq \sqrt{m n}\log n} \\
    \leq{}& \left(\frac{n}{n+\sqrt{m n}\log n}\right)^{(\sqrt{m n}\log n)/2} + \sqrt{n}\P{\bbars \notin \calC_{1, k}^{(1)} \cap \bigcap_{l=1}^{k-1} \calD_l^{(1)}} \\
     \overset{(a)}{\leq}{}& \left(\frac{1}{n}\right)^{(m\log n)/4} + \sqrt{n}\left(\P{\bbars \notin \calC_{1, k}^{(1)}} + \sum_{l=1}^{k-1} \P{\bbars \notin \calD_l^{(1)}}\right) \\
     \overset{(b)}{\leq}{}&  \left(\frac{1}{n}\right)^{(m\log n)/4} + \sqrt{n}\left(\frac{1}{n}\right)^{(m\log n)/4-4(k-1)m-(k-1)} + \sqrt{n}(k-1)\left(\frac{1}{n}\right)^{(m\log n)/4-4(k-1)m} \\
    \overset{(c)}{\leq}{}& \left(\frac{1}{n}\right)^{(m\log n)/4-4(k-1)m-k} \leq \left(\frac{1}{n}\right)^{(m\log n)/4-4(k-1)m-m},
\end{align*}
where $(a)$ follows by Lemma \ref{lemma: simplified_ssc_identity}. Next, $(b)$ follows by bounding  $\P{\bbars \notin \calC_{1, k}^{(1)}}$ using Claim \ref{claim: induction_for_weak_induction_step} and $\P{\bbars \notin \calD_l^{(1)}}$ is bounded by using \eqref{eq: outer_induction} and noting that the lower order terms in \eqref{eq: outer_induction} are upper bounded by $m n \log d/(2d^{k-l})$. In particular, similar to \eqref{eq: lower_order_terms} there exists $n_1^{(4)} \in \bbZ_+$ independent of $k$ such that for all $n \geq n_1^{(4)}$, we have
\begin{align}
    m \frac{n\log d}{2d^{k-l}} \geq 16m^3 \frac{n \log(d)^2}{d^{k-l+1}}+4m d^{l-1}\sqrt{m n}\log n \quad \forall l \in [k-1]. \label{eq: converting_to_leading_term}
\end{align}
Lastly, $(c)$ holds for all $n \geq n^{(5)}_1 \in \bbZ_+$ for some $n \geq n^{(5)}_1 \in \bbZ_+$. Now, we use the above probability bound to obtain the required result for the lemma as follows.
\begin{align*}
    \left\{L_{0k}^{(1)}(\BFs) \geq \sqrt{m n}\log n\right\} &= \left\{kn - 3km \frac{n\log d}{d} - 3m n \log d\sum_{l=1}^{k-1} \frac{l}{d^{k-l+1}}-\sum_{l=1}^k s_l \geq \sqrt{m n}\log n\right\} \\
    &\supseteq \left\{s_k \leq n - 3km \frac{n\log d}{d} - 3m n \log d\sum_{l=1}^{k-1} \frac{l}{d^{k-l+1}}-\sqrt{m n}\log n\right\} \\
    &\supseteq \left\{s_k \leq n - 6m^2 \frac{n\log d}{d}\right\},
\end{align*}
where the last assertion follows as there exists $n_1^{(6)} \in \bbZ_+$ independent of $k$ such that for all $n \geq n_1^{(6)}$, we have
\begin{align*}
    3m n \log d\sum_{l=1}^{k-1} \frac{l}{d^{k-l+1}}+\sqrt{m n}\log n &\overset{(a)}{\leq} 3m  n \log d\left(\frac{m}{d^2} + \frac{m^2}{d^3}\right)+\sqrt{m n}\log n
    \\
    &\overset{(b)}{\leq} 6m^2 \frac{n \log d}{d^2} + \sqrt{m n}\log n \overset{(c)}{\leq} 3m^2 \frac{n \log d}{d},
\end{align*}
where $(a)$ follows by using the bounds $k \leq m$ and $1/d^l \leq 1/d^3$ for all $l \in \{3, \hdots, k-1\}$. Next, $(b)$ follows as $m^2 / d^3 \leq m / d^2$ for large enough $n$. Lastly, $(c)$ follows as $\sqrt{m n}\log n \leq n^{1-\gamma} = 2mn\log d/d^m \leq 2mn\log d/d \leq 2m^2 n \log d/d$ and $6/d^2 \leq 1/d$ for large enough $n$ as $d\rightarrow \infty$ as $n \rightarrow \infty$. Thus, by defining $n_1 \overset{\Delta}{=} \max_{k \in [6]}\left\{n_1^{(k)}, n_{c1}\right\}$, the proof is complete. \hfill $\square$
\endproof
\proof{Proof of Lemma \ref{lemma: remaining_weak_outer_induction}}
We will prove this using induction. For some $j \in \{1, \hdots, k-1\}$, the induction hypothesis is given as follows. There exists $n^{(1)}_2 \in \bbZ_+$ such that for all $n \geq n^{(1)}_2$, we have
\begin{align}
    \P{\bars_{j} \leq n - 3\left(2m+k-j\right)m\frac{n \log d}{d^{k-j+1}} - \sqrt{m n}\log n} \leq \left(\frac{1}{n}\right)^{\frac{1}{4}m\log n - 4\left(k-\frac{3}{4}\right)m-(k-j)}. \tag{IH2} \label{eq: inner_induction}
\end{align}
Note that, the above expression would directly imply the required result as shown later in \eqref{eq: 9msquare}. The base case $(j=k)$ is satisfied for all $n \geq n_1$ by Lemma \ref{lemma: outer_induction_weak_induction_step}. Now, we show the induction step for $j$. Define the family of Lyapunov functions $\{W_l(\BFs) : l \in [k-1]\}$ for $k \geq 2$ as follows:
\begin{align*}
    W_l(\BFs) = n - 3\left(2m+k-l\right)m\frac{n \log d}{d^{k-l+1}} - s_l.
\end{align*}
We analyze the drift of $W_j(\BFs)$ when $W_j(\BFs) \geq 0$ and $\BFs \in \calC^{(2)}_{j+1} \cap \calD^{(2)}_{j-1}$ where
\begin{align*}
    \calC^{(2)}_l &= \left\{W_l(\BFs) \leq \sqrt{m n}\log n\right\} \quad \forall l \in [k-1] \\
    \calD_l^{(2)} &= \left\{s_l \geq n - \frac{5m n\log d}{2d^{k-l}}\right\} \quad \forall l \in [k-1].
\end{align*}
The drift is given as follows:
\begin{align*}
    &\Delta W_j(\BFs) \\
    ={}& s_j - s_{j+1} - \lambda\left(\left(\frac{s_{j-1}}{n}\right)^{\lfloor d \rfloor} - \left(\frac{s_j}{n}\right)^{\lfloor d \rfloor}\right) \\
    \overset{(a)}{\leq}{}&  3\left(2m+k-j-1\right)m\frac{n \log d}{d^{k-j}} + \sqrt{m n}\log n \\
    &- \lambda\left(\left(1 - \frac{5m\log d}{2d^{k-j+1}}\right)^{\lfloor d \rfloor}-\left(1- 3\left(2m+k-j\right)m\frac{ \log d}{d^{k-j+1}}\right)^{\lfloor d \rfloor}\right) \\
    \overset{(b)}{\leq}{}&3\left(2m+k-j-1\right)m\frac{n \log d}{d^{k-j}} + \sqrt{m n}\log n \\
    &- \lambda\left( -\frac{5m\log d}{2d^{k-j}}+3\left(2m+k-j\right)m\frac{\lfloor d \rfloor \log d}{d^{k-j+1}}-9\left(2m+k-j\right)^2m^2\frac{ \log (d)^2}{2d^{2k-2j}}\right) \\
    \leq{}& - m\frac{n \log d}{2d^{k-j}}+ 9m^2 \frac{n\log d}{d^{k-j+1}} + \sqrt{m n}\log n+3\left(2m+k-j\right)m\frac{ n^{1-\gamma}\log d}{d^{k-j}}+9\left(2m+k-j\right)^2m^2\frac{ n\log (d)^2}{2d^{2k-2j}} \\
    \overset{(c)}{\leq}{}& - m\frac{n \log d}{4d^{k-j}} \overset{(d)}{\leq} -\sqrt{m n}\log n.
\end{align*}
where $(a)$ follows by bounding $s_{j-1}, s_j$ and $s_{j+1}$ using $\BFs \in \calC_{j+1}^{(2)} \cap \calD_{j-1}^{(2)}$ and $W_j(\BFs) \geq 0$. Next, $(b)$ follows by Lemma \ref{lemma: power_of_d_not_going_to_zero}.

Further, $(c)$ follows as there exists $n_2^{(2)} \in \bbZ_+$, independent of $j$ and $k$ such that for all $n \geq n_2^{(2)}$, we have
\begin{align*}
    m\frac{n \log d}{16d^{k-j}} &\overset{(c_1)}{\geq} m\frac{n \log d}{16d^{m}} \geq \frac{1}{32}n^{1-\gamma} \overset{(c_2)}{\geq} \sqrt{m n}\log n \\
    m\frac{n \log d}{16d^{k-j}} &\overset{(c_3)}{\geq} m\frac{n \log d}{16d^{k-j}} \times 144m n^{-\gamma} \overset{(c_4)}{\geq} 3\left(2m+k-j\right)m\frac{ n^{1-\gamma}\log d}{d^{k-j}} \\
    m\frac{n \log d}{16d^{k-j}} &\overset{(c_5)}{\geq} m\frac{n \log d}{16d^{k-j}} \times \frac{648m^3\log d}{d} \overset{(c_6)}{\geq} 9\left(2m+k-j\right)^2m^2\frac{ n\log (d)^2}{2d^{2k-2j}}, \\
    m\frac{n \log d}{16d^{k-j}} &\geq m\frac{n \log d}{16d^{k-j}} \times \frac{144m}{d} = 9m^2 \frac{\log d}{d^{k-j+1}}
\end{align*}
where $(c_1)$ follows as $k-j \leq k \leq m$, and $(c_2)$ follows as $\gamma < 0.5$. Next, $(c_3)$ follows as $\gamma >0$ and $m \leq \log n$, and $(c_4)$ follows as $3\left(2m+k-j\right)\leq 9m$. Lastly, by \eqref{eq: m}, $(c_5)$ follows as $m ^3\log d /d \leq \gamma^3\log (n)^3(1+o(1))/(d\log (d)^2) \rightarrow 0$ as $n \rightarrow \infty$, and $(c_6)$ follows as $3\left(2m+k-j\right) \leq 9m$. 

Further, $(d)$ follows as there exists $n_2^{(3)} \in \bbZ_+$, independent of $j$ and $k$ such that for all $n \geq n_2^{(3)}$ we have $m n \log d / 4d^{m} \geq \sqrt{m n}\log n$. Thus, for all $n \geq \max\{n_2^{(2)}, n_2^{(3)}\}$, we have $\Delta W_j(\BFs) \leq -\sqrt{m n}\log n$ when $W_j(\BFs) \geq 0$ and $\BFs \in \calC^{(2)}_{j+1} \cap \calD^{(2)}_{j-1}$. Now, by using Lemma \ref{lemma: iterative_ssc}, we get a high probability upper bound on $\bars_j$ as follows:
\begin{align*}
    &\P{\bars_j \leq n - 9 m^2 \frac{n \log d}{d^{k-j+1}}} \\
    \overset{(a)}{\leq}{}& \P{W_j(\bbars) \geq \sqrt{m n}\log n} \\
    \overset{(b)}{\leq}{}& \left(\frac{n}{n+\sqrt{m n}\log n}\right)^{(\sqrt{m n}\log n)/2} + \sqrt{n}\P{\bbars \notin \calC^{(2)}_{j+1} \cap \calD^{(2)}_{j-1}} \\
    \overset{(c)}{\leq}{}& \left(\frac{1}{n}\right)^{(m\log n)/4} +\sqrt{n}\left(\P{\bbars \notin \calC^{(2)}_{j+1}} + \P{\bbars \notin \calD^{(2)}_{j-1}}\right)  \\
    \overset{(d)}{\leq}{}& \left(\frac{1}{n}\right)^{(m\log n)/4} +\sqrt{n}\left(\frac{1}{n}\right)^{(m\log n)/4 - 4(k-1)m-m-(k-j)} + \sqrt{n}\left(\frac{1}{n}\right)^{(m\log n)/4 - 4(k-1)m} \\
    \overset{(e)}{\leq}{}& \left(\frac{1}{n}\right)^{(m\log n)/4 - 4(k-1)m-m-(k-j+1)},
\end{align*}
where $(a)$ follows for all $n \geq n_2^{(4)}$ for some $n_2^{(4)} \in \bbZ_+$ as 
\begin{align*}
    &n-3(2m +k-j)m \frac{n \log d}{d^{k-j+1}}-\sqrt{m n}\log n \\
    \geq{}&  n-9m^2 \frac{n \log d}{d^{k-j+1}}+3m \frac{n \log d}{d^{k-j+1}}-\sqrt{m n}\log n \\
    \geq{}& n-9m^2 \frac{n \log d}{d^{k-j+1}}+3m \frac{n \log d}{d^{m}}-\sqrt{m n}\log n \geq n-9m^2 \frac{n \log d}{d^{k-j+1}}. \numberthis \label{eq: 9msquare}
\end{align*}
Next, $(b)$ follows by Lemma \ref{lemma: iterative_ssc} and $(c)$ follows by Lemma \ref{lemma: simplified_ssc_identity}. Now, $(d)$ follows by upper bounding $\P{\bbars \notin \calC^{(2)}_{j+1}}$ using \eqref{eq: inner_induction}. In addition, $\P{\bbars \notin  \calD_{j-1}^{(2)}}$ is upper bounded by using \eqref{eq: outer_induction} and noting that the lower order terms in \eqref{eq: outer_induction} are upper bounded by $m n \log d/(2d^{k-j+1})$ using \eqref{eq: converting_to_leading_term} for all $n \geq n_2^{(5)}$ for some $n_2^{(5)} \in \bbZ_+$. Lastly, $(e)$ follows for all $n \geq n_2^{(6)}$ for some $n_2^{(6)} \in \bbZ_+$. Now, by considering $n_2 \overset{\Delta}{=} n_2^{(1)} \geq \max_{k \in [6]}\{n_2^{(k)}, n_1\}$, the induction step is complete. Thus, we have
\begin{align*}
    \P{\bars_l \leq n - 9 m^2 \frac{n \log d}{d^{k-l+1}}} \leq \left(\frac{1}{n}\right)^{(m\log n)/4 - 4(k-1)m-m-(k-l+1)} \leq \left(\frac{1}{n} \right)^{(m\log n)/4-4(k-0.5)m} \ \forall l \in [k].
\end{align*}
This completes the proof of the lemma. \hfill $\square$
\endproof
\proof{Proof of Lemma \ref{lemma: base_case_of_induction_hypothesis}}
Define the family of functions $\{W_{ik}(\BFs): i \in [k]\}$ as follows:
\begin{align*}
    W_{ik}(\BFs) &= s_{i} - n + 2m \frac{n\log d}{d^{k-i+1}} +  (2i+1)d^{i-1}\sqrt{m n}\log n + 3(i+1)m\frac{n\log d}{d^{k-i+2}} \quad \forall i \in \{1, \hdots, k-1\} \\
    W_{kk}(\BFs) &= n - 2m \frac{n \log d}{d} - 2k d^{k-1}\sqrt{m n}\log n -10m^2 \frac{n \log d}{d^2} - s_k.
\end{align*}
Now, we use the above family of functions to define the Lyapunov functions $\{Z_{ik}: i \in [k]\}$ as follows:
\begin{subequations}
\label{eq: z_lyapunov_functions}
\begin{align}
    Z_{ik}^{(1)} &= W_{kk} - \sum_{l=i+1}^{k-1} W_{lk}, \quad Z_{ik}^{(2)} = W_{ik} \quad \forall i \in [k-1] \\
    Z_{ik} &= \min\left\{Z_{ik}^{(1)}, Z_{ik}^{(2)}\right\} \quad \forall i \in [k-1].
\end{align}
\end{subequations}
To prove the lemma, we make use of the following claim:
\begin{claim} There exists $n_{c2} \in \bbZ_+$ such that for all $n \geq n_{c2}$, we have \label{claim: base_case_of_induction_hypothesis}
\begin{align*}
    \P{Z_{1k}(\bbars) \geq \sqrt{m n}\log n} \leq \left(\frac{1}{n}\right)^{(m \log n)/4 - 4(k-0.5)m-(k-1)}.
\end{align*}
\end{claim}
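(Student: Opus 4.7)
The plan is to establish Claim~\ref{claim: base_case_of_induction_hypothesis} via a backward induction on $i \in \{k-1, k-2, \ldots, 1\}$, directly paralleling the proof of Claim~\ref{claim: induction_for_weak_induction_step} used inside Lemma~\ref{lemma: outer_induction_weak_induction_step}. The key difference is that we now exploit the sharper lower bounds $\bars_l \geq n - 9m^2 n\log d/d^{k-l+1}$ on $\{\bars_l\}_{l \in [k-1]}$ supplied by Lemma~\ref{lemma: remaining_weak_outer_induction}, rather than the coarser outer induction hypothesis~\eqref{eq: outer_induction}. Concretely, I would show by downward induction on $i$ that
\[
\P{Z_{ik}(\bbars) \geq \sqrt{m n}\log n} \leq \left(\frac{1}{n}\right)^{(m\log n)/4 - 4(k-0.5)m - (k-i)} \quad \forall i \in [k-1],
\]
and then instantiating at $i = 1$ delivers the claim.

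The base case $i = k-1$ amounts to bounding the drift of $Z_{k-1,k} = \min\{W_{kk}, W_{k-1,k}\}$ on a good set where the refined lower bounds of Lemma~\ref{lemma: remaining_weak_outer_induction} hold. For the inductive step, assuming the bound at $i+1$, I analyze $\Delta Z_{ik}$ by splitting into the two branches of the $\min$. On the branch $Z_{ik} = W_{ik}$, expanding $\lambda\bigl((s_{i-1}/n)^d - (s_i/n)^d\bigr)$ via Lemma~\ref{lemma: power_of_d_not_going_to_zero} and substituting the sharper bound $s_{i-1} \geq n - 9m^2 n\log d/d^{k-i+2}$ yields a dominant term of order $n\log d/d^{k-i+1}$ with coefficient $2m$ (matching the coefficient built into $W_{ik}$); this is precisely the improvement over Lemma~\ref{lemma: outer_induction_weak_induction_step}, which only had access to the weaker bound $s_{i-1} \geq n - 2m n\log d/d^{k-i+1}$ and ended up with the inflated coefficient $6m^2$. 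On the branch $Z_{ik} = Z_{ik}^{(1)}$, the drift telescopes across $W_{kk} - \sum_{l=i+1}^{k-1} W_{lk}$ and can be bounded in the same spirit as the analysis of $L_{0k}^{(1)}$ inside Lemma~\ref{lemma: outer_induction_weak_induction_step}, using that $Z_{i+1,k} \leq \sqrt{m n}\log n$ with high probability by the inductive hypothesis. Combining these with Lemma~\ref{lemma: iterative_ssc} (taking $B = 0$, $j = (\sqrt{m n}\log n)/2$, $\nu_{\max} = 1$, $q_{\max} = \delta = n$, $\gamma = \sqrt{m n}\log n$) and Lemma~\ref{lemma: simplified_ssc_identity} produces the desired tail bound, with each induction step accruing an additive $-1$ in the exponent from the $\sqrt{n}\,\P{\bbars \notin \calE}$ term, accumulating to $-(k-i)$.

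The main obstacle will be the bookkeeping: verifying that the carefully tuned lower-order corrections $(2i+1)d^{i-1}\sqrt{m n}\log n$ and $3(i+1)m\,n\log d/d^{k-i+2}$ in $W_{ik}$, together with the $2k d^{k-1}\sqrt{m n}\log n$ and $10m^2 n\log d/d^2$ terms in $W_{kk}$, are simultaneously large enough to absorb all Taylor-series residuals so that $\Delta Z_{ik} \leq -\sqrt{m n}\log n$ uniformly in $i$, and small enough that the final lower bound on $\bars_k$ extracted from $Z_{1k} \leq \sqrt{m n}\log n$ agrees with the statement of Lemma~\ref{lemma: base_case_of_induction_hypothesis}. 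This verification is routine but algebraically delicate and essentially mirrors the corresponding bookkeeping already carried out in the proofs of Claim~\ref{claim: induction_for_weak_induction_step} and Lemma~\ref{lemma: remaining_weak_outer_induction}; in particular, the parameter choices in $W_{ik}$ have been engineered so that the same cascade of inequalities goes through once the sharper Lemma~\ref{lemma: remaining_weak_outer_induction} bounds replace \eqref{eq: outer_induction}.
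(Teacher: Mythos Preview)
Your plan is essentially the paper's own proof: backward induction on $i$ with the hypothesis
\[
\P{Z_{ik}(\bbars)\ge\sqrt{mn}\log n}\le (1/n)^{(m\log n)/4-4(k-0.5)m-(k-i)},
\]
drift analysis of $Z_{i-1,k}$ split into the two $\min$ branches, and iterative SSC via Lemma~\ref{lemma: iterative_ssc} with the parameters you list. So the architecture is right.

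There is one mislocation of the key bound, though, and it matters for the bookkeeping you flag. On the branch $Z_{i-1,k}=W_{i-1,k}$ (your ``$Z_{ik}=W_{ik}$'' after reindexing), the $2m$ cancellation does \emph{not} come from the Lemma~\ref{lemma: remaining_weak_outer_induction} bound $s_{i-1}\ge n-9m^{2}n\log d/d^{k-i+2}$. On that branch you need an \emph{upper} bound on $s_{i-2}$ (the trivial $s_{i-2}\le n$ suffices) together with the lower bound on $s_{i-1}$ built into the drift condition $W_{i-1,k}\ge 0$ itself, namely $s_{i-1}\ge n-2mn\log d/d^{k-i+2}-(2i-1)d^{i-2}\sqrt{mn}\log n-3im\,n\log d/d^{k-i+3}$. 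It is this \emph{designed-in} coefficient $2m$, not the $9m^{2}$ from Lemma~\ref{lemma: remaining_weak_outer_induction}, that produces the cancellation; the upper bound on $s_{i}$ then comes from $W_{ik}\le\sqrt{mn}\log n$, which you extract from $Z_{i-1,k}^{(1)}\ge 0$ together with the induction hypothesis $Z_{ik}\le\sqrt{mn}\log n$. The Lemma~\ref{lemma: remaining_weak_outer_induction} bounds enter instead on the \emph{other} branch $Z_{i-1,k}=Z_{i-1,k}^{(1)}$: there the telescoped drift contains $(s_{k}/n)^{d}$, and you need $s_{k}\le n-2mn\log d/d$, which follows from $Z_{i-1,k}^{(1)}\ge 0$ only after lower-bounding $\sum_{l=i}^{k-1}W_{lk}$ via $\BFs\in\bigcap_{l}\calD_{l}^{(3)}=\{s_{l}\ge n-9m^{2}n\log d/d^{k-l+1}\}$. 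They also enter, as you note, in the tail probability $\P{\bbars\notin\calD_{l}^{(3)}}$.

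A minor point: the base case $i=k-1$ does not require the good set at all; both the lower bound on $s_{k-1}$ and the upper bound on $s_{k}$ come directly from $Z_{k-1,k}\ge 0$ since $Z_{k-1,k}=\min\{W_{kk},W_{k-1,k}\}$, and the resulting tail bound is simply $(1/n)^{(m\log n)/4}$.
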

We defer the proof of claim to Appendix \ref{app: claim_lb} and continue with the proof of Lemma \ref{lemma: base_case_of_induction_hypothesis}. We analyze the drift of $Z_{0k}^{(1)}(\BFs)$ when $Z_{0k}^{(1)}(\BFs) \geq 0$ and $\BFs \in \calC^{(3)}_{1,k} \cap \bigcap_{l=1}^{k-1}\calD^{(3)}_l$ where
\begin{subequations} \label{eq: sets_induction_step_base_case}
\begin{align}
    \calC_{l, k}^{(3)} &= \left\{Z_{l,k}(\BFs) \leq \sqrt{m n}\log n\right\} \quad \forall l \in [k-1] \\
    \calD_l^{(3)} &= \left\{s_l \geq n - \frac{9m^2 n \log d}{d^{k-l+1}}\right\} \quad \forall l \in [k-1].
\end{align}
\end{subequations}
We first obtain a useful upper bound on $s_1$ as follows.
\begin{align*}
    Z_{0k}^{(1)}(\BFs) \geq 0 &\Rightarrow W_{kk}(\BFs) - \sum_{l=2}^{k-1} W_{lk}(\BFs) \geq W_{1k}(\BFs) \\
    &\Rightarrow W_{1k}(\BFs) \leq \sqrt{m n}\log n \quad \textit{as } \BFs \in \calC_{1,k}^{(3)} \\
    &\Rightarrow s_1 \leq n - 2m \frac{n\log d}{d^{k}}-  2\sqrt{m n}\log n - 6m\frac{n\log d}{d^{k+1}}. \numberthis \label{eq: precise_upper_bound_s1}
\end{align*}
Next, we will get a useful upper bound on $s_k$ as follows:
\begin{align*}
    Z_{0k}^{(1)}(\BFs) \geq 0 &\Rightarrow s_k \leq n - 2m \frac{n \log d}{d} - 2k d^{k-1}\sqrt{m n}\log n -10m^2 \frac{n \log d}{d^2} - \sum_{l=1}^{k-1} W_{lk}(\BFs) \\
    &\overset{(*)}{\Rightarrow} s_k \leq n - 2m \frac{n \log d}{d} - 2k d^{k-1}\sqrt{m n}\log n -10m^2 \frac{n \log d}{d^2} + 9m^2 n \log d\sum_{l=2}^k \frac{1}{d^l} \\
    &\overset{(**)}{\Rightarrow} s_k \leq n - 2m \frac{n \log d}{d}, \numberthis \label{eq: precise_upper_bound_sk}
\end{align*}
where $(*)$ follows as $\BFs \in \bigcap_{l=1}^{k-1}\calD_l^{(3)}$ and $(**)$ follows as there exists $n_3^{(1)} \in \bbZ_+$ such that for all $n \geq n_3^{(1)}$ we have
\begin{align*}
    9m^2 n \log d\sum_{l=2}^k \frac{1}{d^l} \leq 9m^2 n \log d \left(\frac{1}{d^2} + \frac{m}{d^3}\right) \leq 10m^2 \frac{n \log d}{d^2}.
\end{align*}
where the last inequality follows by noting that $m / d \rightarrow 0$ as $n \rightarrow \infty$. Now, the drift of $Z_{0k}^{(1)}(\BFs)$ is given as follows:
\begin{align*}
\Delta Z_{0k}^{(1)}(\BFs)
    ={}& s_1 - s_{k+1} - \lambda\left(1 - \left(\frac{s_k}{n}\right)^{\lfloor d \rfloor}\right) \\
    \overset{(a)}{\leq}{}& n - 2m \frac{n\log d}{d^{k}}-  2\sqrt{m n}\log n - 6m\frac{n\log d}{d^{k+1}} - \lambda\left(1 - \left(1 - 2m \frac{\log d}{d} \right)^{\lfloor d \rfloor}\right) \\
    \overset{(b)}{\leq}{}& n - 2m \frac{n\log d}{d^{k}}-  2\sqrt{m n}\log n - 6m\frac{n\log d}{d^{k+1}} - \lambda\left(1-\frac{2}{d^{2m}}\right) \\
    \leq{}& - 2m \frac{n\log d}{d^{k}}-  2\sqrt{m n}\log n - 6m\frac{n\log d}{d^{k+1}} + n^{1-\gamma} + \frac{2n}{d^{2m}} \\
    \overset{(c)}{\leq}{}& -  2\sqrt{m n}\log n \leq -\sqrt{m n}\log n.
\end{align*}
where $(a)$ follows as $s_{k+1} \geq 0$, $s_1$ is upper bounded as in \eqref{eq: precise_upper_bound_s1}, and $s_k$ is upper bounded as in \eqref{eq: precise_upper_bound_sk}. Next, by Lemma \ref{lemma: power_of_d_going_to_zero}, there exists $n_3^{(2)} \in \bbZ_+$ independent of $k$ such that for all $n \geq n_3^{(2)}$, $(b)$ follows. Lastly, $(c)$ follows by noting that
\begin{align*}
    2m \frac{n\log d}{d^{k}} &\geq 2m \frac{n\log d}{d^{m}} = n^{1-\gamma}, \\
    6m\frac{n\log d}{d^{k+1}} &\geq 6m\frac{n\log d}{d^{m+1}} \geq 6m\frac{n\log d}{d^{2m}} \geq \frac{2n}{d^{2m}},
\end{align*}
where the last set of inequalities follows as $m \geq 2$ and there exists $n_3^{(3)} \in \bbZ_+$ such that for all $n \geq n_3^{(3)}$, we have $\log d \geq 1$. Thus, for all $n \geq \max_{k \in [3]}\left\{n_3^{(k)}\right\}$, we have $\Delta Z_{0k}^{(1)}(\BFs) \leq -\sqrt{m n}\log n$ when $Z_{0k}^{(1)}(\BFs) \geq 0$ and $\BFs \in \calC_{1k}^{(3)} \cap \bigcap_{l=1}^{k-1} \calD_l^{(3)}$. Now, by using Lemma \ref{lemma: iterative_ssc}, we obtain a high probability upper bound on $Z_{0k}^{(1)}(\bbars)$ as follows:
\begin{align*}
    &\P{Z_{0k}(\bbars) \geq \sqrt{m n}\log n} \\
    \leq{}& \left(\frac{n}{n+\sqrt{m n}\log n}\right)^{(\sqrt{m n}\log n)/2}+\sqrt{n} \P{\bbars \notin \calC_{1k}^{(3)} \cap \bigcap_{l=1}^{k-1} \calD_l^{(3)}} \\
    \overset{(a)}{\leq}{}& \left(\frac{1}{n}\right)^{(m\log n)/4} + \sqrt{n} \left(\P{\bbars \notin \calC_{1k}^{(3)}} + \sum_{l=1}^{k-1} \P{\bbars \notin \calD_l^{(3)}}\right) \\
    \overset{(b)}{\leq}{}& \left(\frac{1}{n}\right)^{(m\log n)/4}  + \sqrt{n}\left(\frac{1}{n}\right)^{(m\log n)/4-4(k-0.5)m-(k-1)} + \sqrt{n}(k-1)\left(\frac{1}{n}\right)^{(m\log n)/4-4(k-0.5)m} \\
    \overset{(c)}{\leq}{}&  \left(\frac{1}{n}\right)^{(m\log n)/4-4(k-0.5)m-k} \leq \left(\frac{1}{n}\right)^{(m\log n)/4-4(k-0.5)m-m},
\end{align*}
where $(a)$ follows by Lemma \ref{lemma: simplified_ssc_identity}. Next, $(b)$ follows by upper bounding $\P{\bbars \notin \calC_{1k}^{(3)}}$ by Claim \ref{claim: base_case_of_induction_hypothesis}, and upper bounding $\P{\bbars \notin \calD_l^{(3)}}$ for all $n \geq n_2$ by Lemma \ref{lemma: remaining_weak_outer_induction}. Lastly, $(c)$ follows for all $n \geq n_3^{(4)}$ for some $n_3^{(4)} \in \bbZ_+$ independent of $k$. Now, to complete the proof, note that
\begin{align*}
    &\left\{Z_{0k}(\bbars) \geq \sqrt{m n}\log n\right\} \\
    ={}& \bigg\{kn - 2m n \log d \sum_{l=1}^k \frac{1}{d^l}-\sqrt{m n}\log n\sum_{l=1}^{k} (2l+1)d^{l-1}-10m^2 \frac{n\log d}{d^2} - 3m n \log d \sum_{l=1}^{k-1} \frac{(l+1)}{d^{k-l+2}}-\sum_{l=1}^k \bars_l \\
    &\geq \sqrt{m n}\log n\bigg\} \\
    \supseteq{}& \left\{\bars_k \leq n - 2m \frac{n\log d}{d} - 3m d^{k-1} \sqrt{m n}\log n - 12 m^2 \frac{n\log d}{d^2}\right\},
\end{align*}
where the last assertion follows as there exists $n_3^{(5)} \in \bbZ_+$ independent of $k$ such that for all $n \geq n_3^{(5)}$, we have
\begin{align*}
    3m n \log d \sum_{l=1}^{k-1} \frac{(l+1)}{d^{k-l+2}} &\leq 3m^3 \frac{n \log d}{d^3} \leq m^2 \frac{n\log d}{2d^2} \\
    2m n\log d\sum_{l=2}^k \frac{1}{d} &\leq 2m n\log d\left(\frac{1}{d^2}+\frac{m}{d^3}\right) \leq \frac{3mn\log d}{d^2} \overset{(*)}{\leq} 3m^2 \frac{n\log d}{2d^2} \\
    \sqrt{m n}\log n\sum_{l=1}^{k} (2l+1)d^{l-1} + \sqrt{m n}\log n &\leq (2m +1)\left(d^{k-1}+m d^{k-2}+1\right) \sqrt{m n}\log n \\
    &\leq 3m d^{k-1} \sqrt{m n}\log n,
\end{align*}
where we used the fact that $k \leq m$ and $m^2/d \rightarrow 0$ as $n\rightarrow \infty$. Note that, $(*)$ follows as $m \geq 2$. Now, by defining $n_3 \overset{\Delta}{=} \max_{k \in [5]}\{n_3^{(k)}, n_2, n_{c2}\}$, the proof is complete. \hfill $\square$
\endproof
\proof{Proof of Lemma \ref{lemma: induction_step}}
We will prove this using induction. For some $j \in [k]$, the induction hypothesis is given as follows: there exists $n_4^{(1)} \in \bbZ_+$ such that for all $n \geq n_4^{(1)}$, we have
\begin{align}
    &\P{\bars_{j} \leq  n - 2m \frac{n \log d}{d^{k-j+1}} - (4m-j) d^{j-1}\sqrt{m n}\log n - 16(k-j+1)m^2 \frac{n \log (d)^2}{d^{k-j+2}}-\sqrt{m n}\log n} \nonumber \\
    \leq{}& \left(\frac{1}{n}\right)^{(m\log n)/4-4(k-0.25)m-(k-j)}. \tag{IH3} \label{eq: precise_inner_induction}
\end{align}
The base case $(j=k)$ is satisfied for all $n \geq n_3$ by Lemma \ref{lemma: base_case_of_induction_hypothesis} as
\begin{align*}
    (4m-k) d^{k-1}\sqrt{m n}\log n + 16m^2 \frac{n \log (d)^2}{d^{2}}+\sqrt{m n}\log n \geq 3m d^{k-1}\sqrt{m n}\log n+12m^2 \frac{n \log d}{d^{2}}.
\end{align*}
Now, we show the induction step corresponding to $j \in [k]$. Consider the Lyapunov function
\begin{align*}
    \tilde{W}_j(\BFs) = n - 2m \frac{n \log d}{d^{k-j+1}} - (4m-j) d^{j-1}\sqrt{m n}\log n -16(k-j+1)m^2 \frac{n \log (d)^2}{d^{k-j+2}} - s_j.
\end{align*}
We analyze the drift of $\tilde{W}_j(\BFs)$ when $\tilde{W}_j(\BFs) \geq 0$ and $\BFs \in \calC^{(4)}_{j+1} \cap \calD^{(4)}_{j-1}$, where
\begin{align*}
    \calC^{(4)}_l &= \left\{\tilde{W}_l \leq \sqrt{m n}\log n\right\} \quad \forall l \in [k] \\
    \calD^{(4)}_l &= \left\{s_{l} \geq n - 9m^2 \frac{n\log d}{d^{k-l+1}}\right\} \quad \forall l \in [k].
\end{align*}
Now, the drift is given as follows:
\begin{align*}
    &\Delta \tilde{W}_j(\BFs) \\
    ={}& s_j - s_{j+1} - \lambda\left(\left(\frac{s_{j-1}}{n}\right)^{\lfloor d \rfloor} - \left(\frac{s_j}{n}\right)^{\lfloor d \rfloor}\right) \\
    \overset{(a)}{\leq}{}& 2m \frac{n\log d}{d^{k-j}} + (4m-j-1) d^{j}\sqrt{m n}\log n +16(k-j)m^2 \frac{n \log (d)^2}{d^{k-j+1}}+\sqrt{m n}\log n\\
    &- \lambda\left(1 - 9m^2\frac{\log d}{d^{k-j+2}}\right)^{\lfloor d \rfloor}\hspace{-0.54pt}+\lambda\left(1- 2m \frac{\log d}{d^{k-j+1}}- (4m-j)\sqrt{m} d^{j-1}\frac{\log n}{\sqrt{n}} -16(k-j+1)m^2 \frac{\log (d)^2}{d^{k-j+2}}\right)^{\lfloor d \rfloor} \\
    \overset{(b)}{\leq}{}& 2m \frac{n \log d}{d^{k-j}} + (4m-j-1) d^{j}\sqrt{m n}\log n +16(k-j)m^2 \frac{n \log (d)^2}{d^{k-j+1}}+\sqrt{m n}\log n \\
    &-\lambda\left(- 9m^2\frac{\log d}{d^{k-j+1}}+ 2m \frac{\lfloor d \rfloor\log d}{d^{k-j+1}}+ (4m-j)\sqrt{m} d^{j-1}\frac{\lfloor d \rfloor\log n}{\sqrt{n}} +16(k-j+1)m^2 \frac{\lfloor d \rfloor\log (d)^2}{d^{k-j+2}}\right) \\
    &+\frac{3\lambda}{2}\left(4m^2 \frac{ \log (d)^2}{d^{2k-2j}}+ (4m-j)^2 m d^{2j}\frac{\log (n)^2}{n} +256(k-j+1)^2 m^4 \frac{\log (d)^4}{d^{2k-2j+2}}\right) \\
    \leq{}& -d^{j}\sqrt{m n}\log n-7m^2\frac{n \log (d)^2}{d^{k-j+1}}+\sqrt{m n}\log n+6m^2 \frac{n\log (d)^2}{d^{2k-2j}}+ 24m^3 d^{2j}\log (n)^2 +384m^6 \frac{n\log (d)^4}{d^{2k-2j+2}} \\
    &+ 2m \frac{n^{1-\gamma}\log d}{d^{k-j}}+ 4\sqrt{m}m d^{j}n^{0.5-\gamma}\log n +16m^3\frac{n^{1-\gamma}\log (d)^2}{d^{k-j+1}}+2m \frac{n\log d}{d^{k-j+1}}+ (4m-j) d^{j-1}\sqrt{mn}\log n \\
    &+16(k-j+1)m^2 \frac{n\log (d)^2}{d^{k-j+2}} \\
    \overset{(c)}{\leq}{}&-d^{j}\sqrt{m n}\log n-m^2\frac{n \log (d)^2}{d^{k-j+1}}+\sqrt{m n}\log n+ 24m^3 d^{2j}\log (n)^2 +384m^6 \frac{n\log (d)^4}{d^{2k-2j+2}}+ 2m \frac{n^{1-\gamma}\log d}{d^{k-j}} \\
    &+ 4\sqrt{m}m d^{j}n^{0.5-\gamma}\log n +16m^3\frac{n^{1-\gamma}\log (d)^2}{d^{k-j+1}}+2m \frac{n\log d}{d^{k-j+1}}+ 4m d^{j-1}\sqrt{mn}\log n + 16m^3\frac{n\log (d)^2}{d^{k-j+2}} \\
    \overset{(d)}{\leq}{}& -\frac{1}{2}d^{j}\sqrt{m n}\log n+4m d^{j-1}\sqrt{mn}\log n + \sqrt{m n}\log n \overset{(e)}{\leq} -\sqrt{m n}\log n,
\end{align*} 
where $(a)$ follows by the lower bound on $s_{j-1}, s_{j+1}$ and upper bound on $s_j$. In particular, as $\BFs \in \calD_{j-1}^{(4)}$, we get a lower bound on $s_{j-1}$, as $\tilde{W}_j(\BFs) \geq 0$, we get an upper bound on $s_j$, and as $\BFs \in \calC_{j+1}^{(4)}$, we get a lower bound on $s_{j+1}$. Next, $(b)$ follows by Lemma \ref{lemma: power_of_d_not_going_to_zero} and using the identity $(a+b+c)^2 \leq 3(a^2+b^2+c^2)$. Now, $(c)$ follows as $k-j \geq 1$ which implies that $k-j+1 \leq 2(k-j)$. In addition, we also use that $j \geq 1$ and $k \leq m$. Lastly, $(d)$ follows as there exists $n_4^{(2)} \in \bbZ_+$ such that for all $n \geq n^{(2)}_4$, we have
\begin{align*}
    \frac{1}{3}m^2\frac{n \log (d)^2}{d^{k-j+1}} &\overset{(d_1)}{\geq} \frac{1}{3}m^2\frac{n \log (d)^2}{d^{k-j+1}} \times \frac{1152m^4\log (d)^2}{d^{k-j+1}} =  384m^6 \frac{n\log (d)^4}{d^{2k-2j+2}} \\
    \frac{1}{3}m^2\frac{n \log (d)^2}{d^{k-j+1}} &\overset{(d_2)}{\geq} \frac{1}{6}m\frac{n^{1-\gamma} \log d}{d^{k-j+1-m}} \overset{(d_3)}{\geq} 3m \frac{n^{1-\gamma}\log d}{d^{k-j}} \overset{(d_4)}{\geq} 2m \frac{n^{1-\gamma}\log d}{d^{k-j}}+16m^3\frac{n^{1-\gamma}\log (d)^2}{d^{k-j+1}}\\
    \frac{1}{3}m^2\frac{n \log (d)^2}{d^{k-j+1}} &\geq 3m \frac{n \log d}{d^{k-j+1}} \overset{(d_4)}{\geq} 2m \frac{n \log d}{d^{k-j+1}} + 16m^3\frac{n\log (d)^2}{d^{k-j+2}}\\
    \frac{1}{4}d^{j}\sqrt{m n}\log n &\overset{(d_5)}{\geq} 24m^3 d^{j}  \log (n)^2 (2m n^\gamma\log d)  \overset{(d_6)}{\geq} 24m^3 d^{2j}\log (n)^2 \\
     \frac{1}{4}d^{j}\sqrt{m n}\log n &\overset{(d_7)}{\geq} 4\sqrt{m}m d^{j}n^{0.5-\gamma}\log n,
\end{align*}
where $(d_1)$ follows as $m^4 \log (d)^2 / d^{k-j+1} \leq m^4 \log (d)^2 / d^{2} \leq \log (n)^4 / (d\log d)^2 \rightarrow 0$ as $n \rightarrow \infty$. Next, $(d_2)$ follows as $2n^\gamma m \log d \geq d^{m}$, $(d_3)$ follows as $m \geq 2$, and $(d_4)$ follows as $m^2\log d/d \leq \log (n)^2/(d\log d) \rightarrow 0$. Now, $(d_5)$ follows as $\gamma < 0.5$ and $m \leq \log n$, and $(d_6)$ follows as $d^j \leq d^{m} = 2m n^\gamma\log d$. Lastly, $(d_7)$ follows as $\gamma > 0$ and $m \leq \log n$. Further, $(e)$ follows as there exists $n_4^{(3)} \in \bbZ_+$ such that for all $n \geq n_4^{(3)}$, we have
\begin{align*}
    \frac{1}{4}d^j \sqrt{m n}\log n &\geq \frac{1}{4}d \sqrt{m n}\log n \geq 2\sqrt{m n}\log n \\
    \frac{1}{4}d^j \sqrt{m n}\log n &\geq 4m d^{j-1}\sqrt{mn}\log n,
\end{align*}
by noting that $d = \Omega(\log(n)^3)$ while $m = o(\log n)$. Thus, for all $n \geq \max\left\{n_4^{(2)}, n_4^{(3)}\right\}$, we have $\Delta \tilde{W}_j(\BFs) \leq -\sqrt{m n}\log n$ when $\tilde{W}_j(\BFs) \geq 0$ and $\BFs \in \calC_{j+1}^{(4)} \cap \calD_{j-1}^{(4)}$. Now, by using Lemma \ref{lemma: iterative_ssc}, we obtain a high probability upper bound on $\bars_j$ as follows:
\begin{align*}
     &\P{\bars_{j} \leq  n - 2m \frac{n \log d}{d^{k-j+1}} - (4m-j) d^{j-1}\sqrt{m n}\log n - 16(k-j+1)m^2 \frac{n \log (d)^2}{d^{k-j+2}}-\sqrt{m n}\log n} \\
     ={}& \P{\tilde{W}_j(\bbars) \geq \sqrt{m n}\log n} \\
     \overset{(a)}{\leq}{}& \left(\frac{n}{n+\sqrt{m n}\log n}\right)^{(\sqrt{m n}\log n)/2} + \sqrt{n} \P{\bbars \notin \calC_{j+1}^{(4)} \cap \calD_{j-1}^{(4)}} \\
     \overset{(b)}{\leq}{}& \left(\frac{1}{n}\right)^{(m \log n)/4} + \sqrt{n}\left(\P{\bbars \notin  \calC_{j+1}^{(4)}} + \P{\bbars \notin \calD_{j-1}^{(4)}}\right) \\
     \overset{(c)}{\leq}{}& \left(\frac{1}{n}\right)^{(m \log n)/4} + \sqrt{n} \left(\frac{1}{n}\right)^{(m \log n)/4-4(k-0.25)m-(k-j)} + \sqrt{n}\left(\frac{1}{n}\right)^{(m \log n)/4-4(k-0.5)m} \\
     \overset{(d)}{\leq}{}&  \left(\frac{1}{n}\right)^{(m \log n)/4-4(k-0.25)m-(k-j+1)},
\end{align*}
where $(a)$ follows by Lemma \ref{lemma: iterative_ssc} and $(b)$ follows by Lemma \ref{lemma: simplified_ssc_identity}. Next, $(c)$ follows by upper bounding $\P{\bbars \notin  \calC_{j+1}^{(4)}}$ using induction hypothesis \eqref{eq: precise_inner_induction} and upper bounding $\P{\bbars \notin \calD_{j-1}^{(4)}}$ for all $n \geq n_2$ using Lemma \ref{lemma: remaining_weak_outer_induction}. Lastly, $(d)$ follows for all $n \geq n_4^{(4)}$ for some $n_4^{(4)} \in \bbZ_+$. By considering $n_4 \overset{\Delta}{=} n_4^{(1)} \geq \max_{k \in [4]}\left\{n^{(k)}_4, n_2, n_3\right\}$, the induction step is complete. To complete the proof, note that
\begin{align*}
  &\P{\bars_{j} \leq  n - 2m \frac{n \log d}{d^{k-j+1}} - 4m d^{j-1}\sqrt{m n}\log n - 16m^3 \frac{n \log (d)^2}{d^{k-j+2}}}  \\
   \overset{(*)}{\leq}{}& \P{\bars_{j} \leq  n - 2m \frac{n \log d}{d^{k-j+1}} - (4m-j) d^{j-1}\sqrt{m n}\log n - 16(k-j+1)m^2 \frac{n \log (d)^2}{d^{k-j+2}}-\sqrt{m n}\log n} \\
    \leq{}& \left(\frac{1}{n}\right)^{(m \log n)/4-4(k-0.25)m-(k-j+1)} \leq \left(\frac{1}{n}\right)^{(m \log n)/4-4km} \quad \forall j \in [k],
\end{align*}
where $(*)$ follows as $j \geq 1$ and $k \leq m$. This completes the proof of the lemma. \hfill $\square$
\endproof
\section{Proof for the Upper Bound} \label{sec: lemmas_for_upper_bound}
\subsection{Proof of Lemmas for Upper Bound}
We start by re-stating the expression of $B_i$ below that was defined in \eqref{eq: B_j_upper_bound} for convenience.
\begin{align*}
    B_i = 18m d^{i-1}\sqrt{m n}\log n + 48m^3 \frac{n \log (d)^2}{d^{m -i+2}}+ \frac{n^{1-\gamma}}{d^{m -i}}\mathbbm{1}\left\{m >1\right\} \quad \forall i \in [m].
\end{align*}
Now, we start by stating and proving a `master' lemma that will help us in proving Lemma \ref{lemma: weak_bound_s_mplus1}, Lemma \ref{lemma: strong_upper_bound_smplus1}, and Lemma \ref{lemma: base_case_upper_bound}.
\begin{lemma} \label{lemma: base_case_and_beyond_for_upper_bound} Let $B_{m + 2} \leq nb$ and $x \in \bbR_+$ and $\tilde{n}_0^{(1)} \in \bbZ_+$ be such that for all $n \geq \tilde{n}_0^{(1)}$, we have
\begin{align*}
    \P{\sum_{l=m +2}^b \bars_l \geq B_{m + 2}} \leq \left(\frac{1}{n}\right)^{\frac{m \log n}{x}}.\numberthis \label{eq: upper_bound_tail}
\end{align*}
Define 
\begin{align*}
    B'_{m} &= (1 + (b-1)\mathbbm{1}\left\{B_{m+2} \geq 2\right\})\left(17m d^{m-1}\sqrt{m n}\log n + 48m^3 \frac{n \log (d)^2}{d^{2}}+ n^{1-\gamma}\mathbbm{1}\left\{m >1\right\} \right).
\end{align*}
Then, there exists $\tilde{n}_0^{(2)} \in \bbZ_+$ such that for all $n \geq \tilde{n}_0^{(2)}$, we have
\begin{align*}
    \P{\bars_{m+1} \geq B'_{m}} &\leq \left(\frac{1}{n}\right)^{\frac{m \log n}{\max\{x, 5\}}-m -2} \numberthis \label{eq: master_lemma_smplus1} \\
    \P{\bars_{m} \geq n - 2m \frac{n\log d}{d}+ B'_{m}} &\leq \left(\frac{1}{n}\right)^{\frac{m \log n}{\max\{x, 5\}}-m-2}.\numberthis \label{eq: master_lemma_sm}
\end{align*}
\end{lemma}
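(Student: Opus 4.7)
The plan is to closely follow the template of Lemma \ref{lemma: weak_upper_bound_smplus1}, but to exploit the sharper hypothesis \eqref{eq: upper_bound_tail} in place of the trivial bound $\sum_{l \geq m+2} s_l \leq b\, s_{m+1}$ that was used there. Concretely, I will introduce a family of min-type Lyapunov functions $\{U_i(\BFs)\}_{i=0}^{m-1}$ of the form
\begin{align*}
U_i(\BFs) = \min\left\{\sum_{l=i+1}^{b} s_l - (m-i)n - L_i^{(1)},\; n - 2m\frac{n\log d}{d^{m-i+1}} + L_i^{(2)} - s_i\right\}
\end{align*}
for $i \in [m-1]$, together with the terminal $U_0(\BFs) = \sum_{l=1}^{b} s_l - mn - L_0$, where $L_i^{(1)}, L_i^{(2)}, L_0$ are carefully tuned lower-order slacks built out of $B_{m+2}$, $d^{i-1}\sqrt{mn}\log n$, and $m^3 n\log(d)^2/d^{m-i+2}$. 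The smallness of $U_i$ captures the disjunction ``either $s_i$ sits near its fixed-point value $n - 2mn\log d/d^{m-i+1}$, or the downstream sum $\sum_{l>i} s_l$ is already small,'' which is exactly the SSC mechanism drawn in Fig.~\ref{fig: step1a_ub_1} for the special case $m=1$.

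First, I will run backward induction on $i$ via Lemma \ref{lemma: iterative_ssc}, with conditioning set
\begin{align*}
\calE = \left\{U_{i+1}(\BFs) \leq \sqrt{mn}\log n\right\} \cap \left\{\sum_{l=m+2}^{b} s_l \leq B_{m+2}\right\} \cap \bigcap_{l \in [m]} \left\{s_l \geq n - \frac{9m^2 n\log d}{d^{m-l+1}}\right\}.
\end{align*}
The probability of the second event is controlled by the hypothesis \eqref{eq: upper_bound_tail}, and the third by Theorem \ref{theo: lower_bound} (via Lemma \ref{lemma: remaining_weak_outer_induction}). On $\calE$ with $U_i(\BFs) \geq 0$, I expect a case-by-case drift computation to yield $\Delta U_i(\BFs) \leq -\sqrt{mn}\log n$: when $s_i$ is well below its fixed point, the arrivals into queues of length $\geq i+1$ outpace the service out of them, inflating the downstream sum; when the downstream sum is already close to $(m-i)n$, service out of level $i$ dominates, so $s_i$ grows. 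Applying Lemma \ref{lemma: iterative_ssc} at each backward step then propagates $U_i(\bbars) \leq \sqrt{mn}\log n$ w.h.p.\ all the way down to $i=0$.

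Once $U_0(\bbars) \leq \sqrt{mn}\log n$ w.h.p.\ is in hand, I will extract the two claims algebraically. Bound \eqref{eq: master_lemma_sm} follows by combining $\sum_{l=1}^{b}\bars_l \leq mn + L_0 + \sqrt{mn}\log n$ with the lower bounds on $\bars_1,\dots,\bars_{m-1}$ from Theorem \ref{theo: lower_bound} and the hypothesis on $\sum_{l\geq m+2}\bars_l$, which isolates $\bars_m$. Bound \eqref{eq: master_lemma_smplus1} follows by writing $\bars_{m+1} = \sum_{l\geq m+1}\bars_l - \sum_{l\geq m+2}\bars_l$ and using both the lower bound on $\bars_m$ (again Theorem \ref{theo: lower_bound}) and the hypothesis. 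The factor $(1 + (b-1)\mathbbm{1}\{B_{m+2}\geq 2\})$ in $B'_m$ is tailored to this telescoping: when $B_{m+2} < 2$ the tail sum is integer-valued and negligible, so no $b$-factor is required, whereas when $B_{m+2} \geq 2$ the coarse inequality $\bars_{m+1} \geq (\sum_{l\geq m+2}\bars_l)/b$ contributes a $(b-1)$ multiplier.

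The main obstacle I expect is verifying the negative drift of $U_i$ on $\calE$: because $U_i$ is a minimum, each coordinate must be treated separately, and the arrival/service terms $\lambda\bigl((s_{i-1}/n)^d - (s_i/n)^d\bigr)$ have to be Taylor-expanded via Lemma \ref{lemma: power_of_d_not_going_to_zero} with the second-order $d^2 f(d)^2$ corrections absorbed into the slack terms $m^3 n\log(d)^2/d^{m-i+2}$, exactly as in the lower-bound proofs of Lemmas \ref{lemma: remaining_weak_outer_induction} and \ref{lemma: induction_step}. The constants $17m$ and $36m^3$ inside $B'_m$ are picked precisely so that every error term surviving this cascade fits inside one of the lower-order pieces. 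Finally, accumulating the $m$ union bounds (one per induction step, each of cost $(1/n)^{(m\log n)/\max\{x,5\}}$) together with the $m$ applications of Theorem \ref{theo: lower_bound} gives the claimed exponent $m\log n/\max\{x,5\} - m - 2$, where the $\max\{x,5\}$ reflects the competition between the hypothesis decay rate $x$ and the intrinsic rate $5$ inherited from Theorem \ref{theo: lower_bound}.
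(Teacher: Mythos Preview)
Your proposal is essentially correct and follows the paper's approach: the paper defines $L_{m+1}(\BFs)=\sum_{l\geq m+1}s_l-\text{(slack)}$ and $L_l(\BFs)=n-2mn\log d/d^{m-l+1}+\text{(slack)}-s_l$, then sets $U_j=\min\{L_{m+1}-\sum_{l>j}L_l,\,L_j\}$ and runs backward induction (Claim \ref{claim: upper_bound}) before analyzing $U_0^{(1)}=L_{m+1}-\sum_{l=1}^m L_l$ and reading off \eqref{eq: master_lemma_smplus1}--\eqref{eq: master_lemma_sm} via Theorem \ref{theo: lower_bound}. Two small discrepancies: the paper invokes the tail hypothesis \eqref{eq: upper_bound_tail} only at the base case $j=m$ (to convert $\sum_{l\geq m+1}s_l$ into a lower bound on $s_{m+1}$, which is where the $(1+(b-1)\mathbbm{1}\{B_{m+2}\geq 2\})$ factor enters), not at every step; and the final extraction of $\bars_{m+1}$ uses $\bars_{m+1}\leq\sum_{l\geq m+1}\bars_l$ directly rather than subtracting $\sum_{l\geq m+2}\bars_l$.
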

\proof{Proof of Lemma \ref{lemma: base_case_and_beyond_for_upper_bound}}
Consider the following functions:
\begin{align*}
    L_{m +1}(\BFs) ={}& \sum_{l=m +1}^b s_l - \left(1+(b-1)\mathbbm{1}\left\{B_{m +2} \geq 2\right\}\right) \times \\
    &\left(8m d^{m-1}\sqrt{m n}\log n + 24m^3 \frac{n \log (d)^2}{d^{2}}+n^{1-\gamma}\mathbbm{1}\{m > 1\}\right) \\
    L_l(\BFs) ={}& n - 2m \frac{n\log d}{d^{m-l+1}} + 3l d^{l-1}\sqrt{m n}\log n + 7lm^2\frac{n\log (d)^2}{d^{m-l+2}} - s_{l} \quad \forall l \in [m].
\end{align*}
Now, we define the Lyapunov function $U_j(\BFs)$ in terms of $\{L_l(\BFs): l \in [m+1]\}$ as follows:
\begin{subequations}
\begin{align}
    U_j^{(1)}(\BFs) &= L_{m +1}(\BFs)-\sum_{l=j+1}^{m} L_l(\BFs) \\
    U_j^{(2)}(\BFs) &= L_{j}(\BFs)\\
    U_j(\BFs)&=\min\left\{U_j^{(1)}(\BFs), U_j^{(2)}(\BFs)\right\}.
\end{align}
\label{eq: u_lyapunov_functions}
\end{subequations}
We use induction on $j \in \left\{1, \hdots, m\right\}$ to show the following claim:
\begin{claim} There exists $\tilde{n}_c \geq \tilde{n}_0^{(1)}$ such that for all $n \geq \tilde{n}_c$, we have \label{claim: upper_bound}
\begin{align*}
    \P{U_1(\bbars) \geq \sqrt{m n}\log n} &\leq \left(\frac{1}{n}\right)^{\frac{m\log n}{\max\{x, 5\}}-m}.
\end{align*}
\end{claim}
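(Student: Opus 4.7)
The plan is to prove the claim by backward induction on $j$, starting from the base case $j = m$ (where $U_m^{(1)}$ collapses since the sum $\sum_{l = m+1}^{m} L_l$ is empty, so $U_m = \min\{L_{m+1}, L_m\}$) and descending to $j = 1$. Each step will follow the four-part iterative SSC template from Appendix \ref{app: iterative_ssc}, invoking Lemma \ref{lemma: iterative_ssc} with the ``good'' set $\calE$ taken as the intersection of the previous induction hypothesis (controlling $U_{j+1}(\bbars)$), the hypothesis \eqref{eq: upper_bound_tail} controlling $\sum_{l=m+2}^b \bars_l$, and the high-probability lower bounds on $\bars_1,\ldots,\bars_m$ supplied by Theorem \ref{theo: lower_bound}. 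The key structural tool is the ``min of Lyapunovs'' observation: whenever $U_j^{(i)}(\BFs)$ realizes the minimum at state $\BFs$, one has $\Delta U_j(\BFs) \leq \Delta U_j^{(i)}(\BFs)$, so it suffices to verify negative drift separately in the two cases.

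\textbf{Base case} ($j=m$). When $L_{m+1}$ realizes the min, $\Delta L_{m+1}(\BFs)$ is the net rate of change of $\sum_{l=m+1}^b s_l$, which equals $\lambda (s_m/n)^d$ (arrivals into queues of length $\geq m$) minus departures from queues of length exactly $m+1$. Lemma \ref{lemma: power_of_d_going_to_zero} applied to $(s_m/n)^d$, combined with the upper bound on $\sum_{l=m+2}^b s_l$ from \eqref{eq: upper_bound_tail}, produces the required $-\sqrt{mn}\log n$ bound. When $L_m$ realizes the min, the drift of $L_m$ matches the computation in Lemma \ref{lemma: base_case_of_induction_hypothesis}, and Theorem \ref{theo: lower_bound} applied to $s_{m-1}$ supplies the lower bound needed to make the difference $(s_{m-1}/n)^d - (s_m/n)^d$ sufficiently positive via Lemma \ref{lemma: power_of_d_not_going_to_zero}.

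\textbf{Induction step} ($j+1 \to j$). Assume $\P{U_{j+1}(\bbars) \geq \sqrt{mn}\log n}$ is controlled by the inductive bound. In the case where $U_j^{(2)} = L_j$ realizes the min, the drift is $\Delta L_j(\BFs) = s_j - s_{j+1} - \lambda((s_{j-1}/n)^d - (s_j/n)^d)$; the fact that $L_{j+1}(\BFs) \leq \sqrt{mn}\log n$ inside the induction-hypothesis event gives an upper bound on $s_{j+1}$, while Theorem \ref{theo: lower_bound} gives a lower bound on $s_{j-1}$, and Lemma \ref{lemma: power_of_d_not_going_to_zero} produces the required leading-order cancellation (mirroring the computation in Lemma \ref{lemma: remaining_weak_outer_induction}). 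In the case where $U_j^{(1)}$ realizes the min, the drifts of the individual $L_l$ telescope: most service terms $(s_l - s_{l+1})$ cancel across adjacent summands, leaving essentially the arrival-into-length-$\geq m$ contribution $\lambda (s_m/n)^d$ plus a controlled boundary residue. Here Theorem \ref{theo: lower_bound} applied separately to each $\bars_l$ with $l \in \{j+1,\ldots,m\}$ (part of $\calE$) pins down the $s_l$'s, and Lemma \ref{lemma: power_of_d_going_to_zero} controls the $(s_m/n)^d$ term. Lemma \ref{lemma: iterative_ssc} then advances the induction, losing at most one factor of $1/n$ in the tail bound per step, which accounts for the $-m$ in the exponent.

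\textbf{Main obstacle.} The principal technical hurdle is Case 2 of the induction step: demonstrating that the composite drift $\Delta U_j^{(1)}$ is at most $-\sqrt{mn}\log n$. The coefficients hidden inside each $L_l$ (notably the $3l\, d^{l-1}\sqrt{mn}\log n$ and $l m (n\log d)/d^{m-l+2}$ terms) must be chosen so that, after telescoping, the positive contributions from $\lambda (s_m/n)^d$ and from the slack $B_{m+2}$ in \eqref{eq: upper_bound_tail} are strictly dominated, uniformly in $j$. This requires matching the $B_l'$ defined in the lemma statement to the tolerance used when restricting to the lower-bound events $\calD_l$, and carefully tracking how many copies of each lower-order term accumulate as $j$ ranges over $\{1,\ldots,m\}$. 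Once these arithmetic inequalities are checked (in the spirit of the $(d_1)$--$(d_7)$ bookkeeping in the proof of Lemma \ref{lemma: induction_step}), the union bound over $\calE$ and the application of Lemma \ref{lemma: simplified_ssc_identity} to convert the resulting geometric factor into $(1/n)^{(m\log n)/\max\{x,5\} - m}$ are routine.
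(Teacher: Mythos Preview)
Your overall strategy---backward induction on $j$ from $m$ down to $1$, splitting into the two cases according to which of $U_j^{(1)},U_j^{(2)}$ realizes the minimum, with the good set $\calE$ assembled from the induction hypothesis, Theorem~\ref{theo: lower_bound}, and \eqref{eq: upper_bound_tail}---is exactly the paper's approach.

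There is one concrete slip in your plan for the $U_j^{(1)}$ case of the induction step. The telescoping of $\Delta U_j^{(1)}=\Delta L_{m+1}-\sum_{l=j+1}^{m}\Delta L_l$ does \emph{not} leave the term $\lambda(s_m/n)^d$; in fact the $(s_m/n)^d$ contributions cancel, and what remains is simply
\[
\Delta U_j^{(1)}(\BFs)\;=\;-s_{j+1}\;+\;\lambda\left(\Big(\frac{s_j}{n}\Big)^d-\Big(\frac{s_b}{n}\Big)^d\right).
\]
So the correct ingredients here are an upper bound on $s_j$ (from $U_j^{(2)}=L_j\ge 0$, since $U_j\ge 0$) and a lower bound on $s_{j+1}$ (obtained by combining $U_j^{(1)}\ge 0$ with the induction event $\{U_{j+1}\le\sqrt{mn}\log n\}$ to force $L_{j+1}\le\sqrt{mn}\log n$), and the tool is Lemma~\ref{lemma: power_of_d_not_going_to_zero} applied to $(s_j/n)^d$, not Lemma~\ref{lemma: power_of_d_going_to_zero} applied to $(s_m/n)^d$. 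The separate applications of Theorem~\ref{theo: lower_bound} to every $\bars_l$ with $l\in\{j+1,\ldots,m\}$ are unnecessary in this branch; the lower-bound theorem is only needed for $s_{j-1}$ in the $L_j$ branch (and for $s_{m-1}$ in the base case). With this correction your sketch lines up with the paper's proof.
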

We use the high probability upper bound $\sum_{l=m+2}^b s_l \leq B_{m+2}$ to prove the above claim. We defer the details of the proof to Appendix \ref{app: claim_ub} and continue with the proof of Lemma \ref{lemma: base_case_and_beyond_for_upper_bound}. Now, we analyze the drift of $U_0^{(1)}(\BFs)$ when $U_0^{(1)}(\BFs) \geq 0$ and $\BFs \in \tilde{\calC}^{(1)}_1$ where $\tilde{\calC}^{(1)}_1 = \left\{U_1(\BFs) \leq \sqrt{m n}\log n\right\}$. First, we obtain a useful lower bound on $s_1$ as follows:
\begin{align*}
    U_0^{(1)}(\BFs) \geq 0 &\Rightarrow L_{m +1}(\BFs)-\sum_{l=2}^{m}  L_l(\BFs)\geq L_1(\BFs) \\
    &\overset{(*)}{\Rightarrow} L_1(\BFs) \leq \sqrt{m n}\log n \\
    &\Rightarrow s_1 \geq n - 2m \frac{n\log d}{d^{m}} + 2\sqrt{m n}\log n + 7m^2\frac{n\log (d)^2}{d^{m+1}}, \numberthis \label{eq: lower_bound_s1}
\end{align*}
where $(*)$ follows as $\BFs \in \tilde{\calC}^{(1)}_1$. Now, the drift is given as follows:
\begin{align*}
    \Delta U_0^{(1)}(\BFs)& = -s_1 + \lambda\left(1 - \left(\frac{s_b}{n}\right)^{\lfloor d \rfloor}\right) \\
    &\overset{(a)}{\leq} -n + 2m \frac{n\log d}{d^{m}} - 2\sqrt{m n}\log n - 7m^2\frac{n\log (d)^2}{d^{m+1}} + n - n^{1-\gamma} \\
    &\overset{(b)}{=}- 2\sqrt{m n}\log n - 7m^2\frac{n\log (d)^2}{d^{m+1}} \\
    &\leq -\sqrt{m n}\log n.
\end{align*}
where $(a)$ follows by using the trivial bound $s_b \geq 0$ and lower bounding $s_1$ using \eqref{eq: lower_bound_s1}. Next, $(b)$ follows as $n^{1-\gamma} = 2m \frac{n\log d}{d^{m}}$. Thus, we have $\Delta U_0^{(1)}(\BFs) \leq -\sqrt{m n}\log n$ when $U_0^{(1)}(\BFs) \geq 0$ and $\BFs \in \tilde{\calC}^{(1)}_1$. Now, using Lemma \ref{lemma: iterative_ssc}, we have
\begin{align*}
    \P{U_0^{(1)}(\bbars) \geq \sqrt{m n}\log n} &\leq  \left(\frac{n}{n+\sqrt{m n}\log n}\right)^{(\sqrt{m n}\log n)/2} + \sqrt{n}\P{\bbars \notin \tilde{\calC}^{(1)}_1} \\
    &\overset{(a)}{\leq} \left(\frac{1}{n}\right)^{(m \log n)/4} + \sqrt{n}\P{\bbars \notin \tilde{\calC}^{(1)}_1} \\
    &\overset{(b)}{\leq} \left(\frac{1}{n}\right)^{(m \log n)/4} + \left(\frac{1}{n}\right)^{\frac{m \log n}{\max\{x,5\}}-m-0.5} \\
    &\overset{(c)}{\leq} \left(\frac{1}{n}\right)^{\frac{m \log n}{\max\{x,5\}}-m -1}, \numberthis \label{eq: upper_bound_on_U}
\end{align*}
where $(a)$ follows by Lemma \ref{lemma: simplified_ssc_identity} and $(b)$ follows by Claim \ref{claim: upper_bound} for all $n \geq \tilde{n}_c$. Lastly, $(c)$ follows for all $n \geq \tilde{n}_0^{(2)}$ for some $\tilde{n}_0^{(2)} \in \bbZ_+$. Now, we will translate the above probability bound to the one required for the lemma. Define $\tilde{\calD}_l^{(1)}$ as follows:
\begin{align*}
    \tilde{\calD}_l^{(1)} = \left\{s_l \geq n - 2m \frac{n \log d}{d^{m-l+1}} - 4m d^{l-1}\sqrt{m n}\log n -16m^3 \frac{n \log (d)^2}{d^{m-l+2}}\right\} \quad \forall l \in [m].
\end{align*}
We first prove the high probability upper bound on $\bars_{m + 1}$ below.
\begin{align*}
\P{\bars_{m+1} \geq B'_{m}} &\overset{(a)}{=} \P{\bbars \in \left\{s_{m+1} \geq B'_{m}\right\} \cap \bigcap_{l=1}^{m} \tilde{\calD}_l^{(1)}} + \P{\bbars \in \left\{s_{m+1} \geq B'_{m}\right\} \cap \bigcup_{l=1}^{m} \tilde{\calD}_l^{(1),c}} \\
&\overset{(b)}{\leq} \P{\bbars \in \{s_{m+1} \geq B'_{m}\} \cap \bigcap_{l=1}^{m}\tilde{\calD}_l^{(1)}} + \sum_{l=1}^{m} \P{\bbars \notin \tilde{\calD}_l^{(1)}} \\
&\overset{(c)}{\leq} \P{\bbars \in \{s_{m+1} \geq B'_{m}\} \cap \bigcap_{l=1}^{m} \tilde{\calD}_l^{(1)}} + \left(\frac{1}{n}\right)^{(m\log n)/5-1} \\
&\overset{(d)}{\leq} \P{U_0^{(1)}(\bbars) \geq \sqrt{m n}\log n}+  \left(\frac{1}{n}\right)^{(m\log n)/5-1} \\
&\overset{(e)}{\leq} \left(\frac{1}{n}\right)^{\frac{m\log n}{\max\{x,5\}}-m -1} + \left(\frac{1}{n}\right)^{(m\log n)/5-1} \\
&\leq \left(\frac{1}{n}\right)^{\frac{m\log n}{\max\{x,5\}}-m -2},
\end{align*}
where $(a)$ follows by the law of total probability, and $(b)$ follows by the union bound. Next, $(c)$ follows by Theorem \ref{theo: lower_bound}. Now, $(d)$ follows by noting the following:
\begin{align*}
    \left\{s_{m +1} \geq B'_{m}\right\} \cap \bigcap_{l=1}^{m}\left\{\BFs \in \tilde{\calD}_l^{(1)}\right\} &\overset{(*)}{\subseteq} \left\{L_{m+1}(\BFs) \geq 9m d^{m-1}\sqrt{m n}\log n + 24m^3 \frac{n \log (d)^2}{d^{2}}\right\} \cap \bigcap_{l=1}^{m}\left\{\BFs \in \tilde{\calD}_l^{(1)}\right\} \\
    &\overset{(**)}{\subseteq} \left\{U_0^{(1)}(\BFs) \geq \sqrt{m n}\log n\right\}. \numberthis \label{eq: upper_bound_smplus1}
\end{align*}
where $(*)$ follows as $\mathbbm{1}\{m > 1\}n^{1-\gamma} \geq 0$ and $(**)$ holds for all $n \geq \tilde{n}_0^{(3)}$ for some $\tilde{n}_0^{(3)} \in \bbZ_+$. In particular, note that $U_0^{(1)}(\BFs) = L_{m+1}(\BFs)-\sum_{l=1}^{m} L_l(\BFs)$ and we can upper bound $\sum_{l=1}^{m} L_l(\BFs)$ as follows:
\begin{align*}
    \BFs \in \bigcap_{l=1}^{m} \tilde{\calD}_l^{(1)} \Rightarrow \sum_{l=1}^{m} L_l(\BFs) &\leq 7m \sqrt{m n}\log n \sum_{l=1}^{m} d^{l-1} + 23m^3 n \log (d)^2 \sum_{l=1}^{m}\frac{1}{d^{m - l + 2}} \\
    &\leq 8m d^{m-1}\sqrt{m n} \log n+24m^3 \frac{n \log (d)^2}{d^2}.
\end{align*}
Lastly, $(e)$ follows by \eqref{eq: upper_bound_on_U}. This completes the proof of \eqref{eq: master_lemma_smplus1}. Now, we will prove \eqref{eq: master_lemma_sm}. Similar to the upper bound on $\bars_{m+1}$, we get
\begin{align*}
    &\P{\bars_{m} \geq n - 2m \frac{n\log d}{d} + B'_{m}} \\
    \leq{}& \P{\bbars \in \left\{s_{m} \geq n - 2m \frac{n\log d}{d} + B'_{m}\right\} \cap \bigcap_{l=1}^{m - 1}\left\{\BFs \in \tilde{\calD}_l^{(1)}\right\}} + \left(\frac{1}{n}\right)^{(m\log n)/5-1} \\
    \overset{(*)}{\leq}{}& \P{U_0^{(1)}(\bbars) \geq \sqrt{m n}\log n}+ \left(\frac{1}{n}\right)^{(m\log n)/5-1} \\
    \leq{}&  \left(\frac{1}{n}\right)^{\frac{m\log n}{\max\{x, 5\}}-m - 1}+ \left(\frac{1}{n}\right)^{(m\log n)/5-1} \\
    \leq{}& \left(\frac{1}{n}\right)^{\frac{m\log n}{\max\{x,5\}}-m -2}.
\end{align*}
where $(*)$ follows by noting that $U_0^{(1)}(\BFs)=L_{m+1}(\BFs)-\sum_{l=1}^{m} L_l(\BFs)$ and bounding the terms $\left\{L_l(\BFs) : l \in [m+1]\backslash \{m\}\right\}$. In particular, note that 
\begin{equation*}
    L_{m +1}(\BFs) \geq - (1+(b-1)\mathbbm{1}\left\{B_{m+2}\geq 2\right\})\left(8m d^{m-1}\sqrt{m n}\log n + 24m^3 \frac{n \log (d)^2}{d^{2}}+n^{1-\gamma}\mathbbm{1}\{m > 1\}\right)
\end{equation*}
Also, as $\BFs \in \bigcap_{l=1}^{m-1} \tilde{\calD}_l^{(1)}$, there exists $\tilde{n}_0^{(5)} \in \bbZ_+$ such that for all $n \geq \tilde{n}_0^{(5)}$, we have
\begin{align*}
    \sum_{l=1}^{m-1} L_l(\BFs) \leq{}& 7m \sqrt{m n}\log n \sum_{l=1}^{m-1} d^{l-1} + 23m^3 n \log (d)^2 \sum_{l=1}^{m-1}\frac{1}{d^{m - l + 2}} \\
    \leq{}& 8 m d^{m-2}\sqrt{m n}\log n  + 24m^3  \frac{n \log (d)^2}{d^{3}} \\
    \leq{}& m d^{m-1}\sqrt{m n}\log n  + m^3  \frac{n \log (d)^2}{d^{2}},
\end{align*}
where the last inequality follows for $n \geq \tilde{n}_0^{(6)}$ for some $\tilde{n}_0^{(6)} > 0$ since $d \rightarrow \infty$ as $n \rightarrow \infty$. Combining the bounds, we have 
\begin{align*}
    &L_{m+1}(\BFs)-\sum_{l=1}^{m} L_l(\BFs) \\
    \geq{}& - (1+(b-1)\mathbbm{1}\left\{B_{m+2}\geq 2\right\})\left(9m d^{m-1}\sqrt{m n}\log n + 25m^3 \frac{n \log (d)^2}{d^{2}}+n^{1-\gamma}\mathbbm{1}\{m > 1\}\right) - L_m(\BFs) \\
    \geq{}& - (1+(b-1)\mathbbm{1}\left\{B_{m+2}\geq 2\right\})\left(12m d^{m-1}\sqrt{m n}\log n + 34m^3 \frac{n \log (d)^2}{d^{2}}+n^{1-\gamma}\mathbbm{1}\{m > 1\}\right) + B_m^\prime \\
    \geq{}& \sqrt{m n} \log n,
\end{align*}
where the second last inequality follows by using the lower bound $s_m \geq n - 2m n \log d/d + B_m^\prime$. Thus, by defining $\tilde{n}_0^{(2)}=\max_{k \in \{1, \hdots, 6\}}\{\tilde{n}_0^{(k)}, \tilde{n}_c\}$, the proof is complete. \hfill $\square$
\endproof
\proof{Proof of Lemma \ref{lemma: weak_upper_bound_smplus1}}
Note that $\sum_{l=m+2}^b \bars_l \leq nb$ with probability 1, so, by applying Lemma \ref{lemma: base_case_and_beyond_for_upper_bound} with $B_{m+2}=nb$, $x=1$, $\tilde{n}_0^{(1)}=1$ and noting that $B'_{m} \leq bB_{m}$, for all $n \geq \tilde{n}_0^{(2)}$, we have
\begin{align*}
    \P{\bars_{m +1} \geq b B_{m}} \leq \left(\frac{1}{n}\right)^{(m \log n)/5-m -2} \leq \left(\frac{1}{n}\right)^{(m \log n)/6},
\end{align*}
where the last inequality follows for all $n \geq \tilde{n}_1^{(1)}$ for some $\tilde{n}_1^{(1)} \in \bbZ_+$ and $B_{m}$ is defined in \eqref{eq: B_j_upper_bound}. Thus, by defining $\tilde{n}_1\overset{\Delta}{=}\max\{\tilde{n}_0^{(2)}, \tilde{n}_1^{(1)}\}$, the proof is complete. \hfill $\square$
\endproof
\proof{Proof of Lemma \ref{lemma: very_light_tail}}
Consider the Lyapunov function:
\begin{align*}
    U_{m+2}(\BFs) = \sum_{l=m+2}^b s_l.
\end{align*}
In steady-state, we have
\begin{align*}
    0=\E{\Delta U_{m+2}(\bbars)} = -\E{\bars_{m+2}} + \E{\lambda\left(\left(\frac{\bars_{m +1}}{n}\right)^{\lfloor d \rfloor} - \left(\frac{\bars_b}{n}\right)^{\lfloor d \rfloor}\right)}.
\end{align*}
Thus, we have
\begin{align*}
    &\E{\bars_{m+2}} \\
    ={}& \E{\lambda\left(\left(\frac{\bars_{m +1}}{n}\right)^{\lfloor d \rfloor} - \left(\frac{\bars_b}{n}\right)^{\lfloor d \rfloor}\right)} \\
    \overset{(a)}{\leq}{}& n\E{\left(\frac{\bars_{m +1}}{n}\right)^{\lfloor d \rfloor} } \\
    \overset{(b)}{=}{}& n\E{\left(\frac{\bars_{m +1}}{n}\right)^{\lfloor d \rfloor} \bigg|\ \bars_{m +1} \geq b B_{m}}\P{\bars_{m +1} \geq b B_{m}} + n\E{\left(\frac{\bars_{m +1}}{n}\right)^{\lfloor d \rfloor} \bigg|\ \bars_{m +1} \leq b B_{m}}\P{\bars_{m +1} \leq b B_{m}} \\
    \overset{(c)}{\leq} {}& n\left(\frac{b B_{m}}{n}\right)^{\lfloor d \rfloor} + n\P{\bars_{m +1} \geq b B_{m}} \\
    \overset{(d)}{\leq}{}& n\left(\frac{b B_{m}}{n}\right)^{d-1} + \left(\frac{1}{n}\right)^{(m\log n)/6-1} \\
    \overset{(e)}{\leq}{}& \left(\frac{1}{n}\right)^{m\log n-1}+ \left(\frac{1}{n}\right)^{(m\log n)/6-1} \overset{(f)}{\leq} \left(\frac{1}{n}\right)^{(m\log n)/7},
\end{align*}
where $(a)$ follows as $\lambda \leq n$ and $s_b \geq 0$. Next, $(b)$ follows by the law of total expectation. Further, $(c)$ follows as $\P{\bars_{m +1} \geq b B_{m}} \leq 1$ and $s_{m+1} \leq n$. Now, $(d)$ follows for all $n \geq \tilde{n}_1$ by Lemma \ref{lemma: weak_bound_s_mplus1}. Lastly, $(e)$ follows for all $n \geq \tilde{n}_2^{(1)}$ for some $\tilde{n}_2^{(1)} \in \bbZ_+$ as $b B_{m}=o(n)$ and so $b B_m/n \leq 1/e$ implying $(b B_m/n)^{d-1} \leq (1/n)^{\Omega(\log(n)^2)} \leq (1/n)^{m \log n}$ as $d \geq \Omega(\log (n)^3)$ and $m = o(\log n)$. Lastly, $(f)$ follows for all $n \geq \tilde{n}_2^{(2)}$ for some $\tilde{n}_2^{(2)}$ as $m = o(\log n)$. Now, by Markov's inequality, we have
\begin{align*}
    \P{\bars_{m+2} \geq 1} \leq \E{\bars_{m+2}} \leq\left(\frac{1}{n}\right)^{(m\log n)/7}.
\end{align*}
Lastly, note that as $s_{k_1} \geq s_{k_2}$ for all $k_1 \leq k_2$, we have
\begin{align*}
    \left\{\bars_{m+2} \geq 1\right\} = \left\{\sum_{l=m+2}^b \bars_{l} \geq 1\right\}.
\end{align*}
Thus, by defining $\tilde{n}_2\overset{\Delta}{=}\max\left\{\tilde{n}_1, \tilde{n}_2^{(1)},\tilde{n}_2^{(2)}\right\}$, the proof is complete. \hfill $\square$
\endproof
\proof{Proof of Lemma \ref{lemma: strong_upper_bound_smplus1} and Lemma \ref{lemma: base_case_upper_bound}}
By using Lemma \ref{lemma: very_light_tail}, for all $n \geq \tilde{n}_2$, we have
\begin{align*}
    \P{\sum_{l=m+2}^b \bars_l \geq 1} \leq \left(\frac{1}{n}\right)^{(m\log n)/7}.
\end{align*}
Now, by using Lemma \ref{lemma: base_case_and_beyond_for_upper_bound} with $B_{m + 2}=1$, $x=7$, $\tilde{n}_0^{(1)}=\tilde{n}_2$ and noting that $B'_{m}  \leq B_{m}$ as $B_{m + 2} = 1$, for all $n \geq \tilde{n}_0^{(2)}$, we have
\begin{align*}
    \P{\bars_{m+1} \geq B_{m}} \leq \left(\frac{1}{n}\right)^{(m\log n)/7-m-2} \overset{(*)}{\leq} \left(\frac{1}{n}\right)^{(m\log n)/8} \\
    \P{\bars_{m} \geq n-2m \frac{n\log d}{d}+B_{m}} \leq \left(\frac{1}{n}\right)^{(m\log n)/7-m-2} \overset{(*)}{\leq} \left(\frac{1}{n}\right)^{(m\log n)/8},
\end{align*}
where $(*)$ follows for all $n \geq \tilde{n}_3^{(1)}$ for some $\tilde{n}_3^{(1)} \in \bbZ_+$. Thus, by defining $\tilde{n}_3 = \max\left\{\tilde{n}_0^{(2)},\tilde{n}_3^{(1)}\right\}$, the proof is complete. \hfill $\square$
\endproof
\proof{Proof of Lemma \ref{lemma: induction_step_upper_bound}}
Define the Lyapunov function:
\begin{align*}
    \tilde{W}_j(\BFs) = s_j - n + 2m \frac{n\log d}{d^{m - j+1}}-B_j - 2(m -j)m \frac{n \log d}{d^{m -j +2}},
\end{align*}
where $B_j=o(mn\log d/d^{m-j+1})$ as defined in \eqref{eq: B_j_upper_bound}. For any $j \in [m-1]$, we analyze the drift of $ \tilde{W}_j(\BFs)$ when $\tilde{W}_j(\BFs) \geq 0$ and $\BFs \in \tilde{\calC}^{(2)}_{j+1}$ where 
\begin{align*}
    \tilde{\calC}^{(2)}_{j+1} = \left\{\tilde{W}_{j+1}(\BFs) \leq \sqrt{m n}\log n\right\}.
\end{align*}
Thus, we have
\begin{subequations} \label{eq: bounds_induction_step_upper_bound}
\begin{align}
    s_j &\geq n - 2m \frac{n\log d}{d^{m - j+1}}+B_j + 2(m -j)m \frac{n \log d}{d^{m -j +2}} \\
    s_{j+1} &\leq n - 2m \frac{n\log d}{d^{m - j}}+B_{j+1} + 2(m -j-1)m \frac{n \log d}{d^{m -j +1}} + \sqrt{m n}\log n.
\end{align}
\end{subequations}
Now, the drift is given as follows:
\begin{align*}
    \Delta \tilde{W}_j(\BFs) ={}& -s_j + s_{j+1} + \lambda\left(\left(\frac{s_{j-1}}{n}\right)^{\lfloor d \rfloor} - \left(\frac{s_j}{n}\right)^{\lfloor d \rfloor}\right) \\
    \overset{(a)}{\leq}{}&  2m \frac{n\log d}{d^{m - j+1}}-B_j - 2(m -j)m \frac{n \log d}{d^{m -j +2}}\\
    &- 2m \frac{n\log d}{d^{m - j}}+B_{j+1} + 2(m -j-1)m \frac{n \log d}{d^{m -j +1}}+\sqrt{m n}\log n \\
    &+n\left(1-\left(1- 2m \frac{\log d}{d^{m - j+1}}+\frac{B_j}{n} + 2(m -j)m \frac{ \log d}{d^{m -j +2}}\right)^{\lfloor d \rfloor}\right) \\
    \overset{(b)}{\leq}{}&  2m \frac{n\log d}{d^{m - j+1}}-B_j - 2(m -j)m \frac{n \log d}{d^{m -j +2}}\\
    &- 2m \frac{n\log d}{d^{m - j}}+B_{j+1} + 2(m -j-1)m \frac{n \log d}{d^{m -j +1}}+\sqrt{m n}\log n \\
    &+n\left(2m \frac{\log d}{d^{m - j}}-\frac{B_{j+1}}{n}-2(m -j)m \frac{ \log d}{d^{m -j +1}}\right) \\
    \leq{}& \sqrt{m n}\log n-B_j \\
    \leq{}& \sqrt{m n}\log n-18m d^{j-1}\sqrt{m n}\log n \\
    \leq{}& -\sqrt{m n}\log n,
\end{align*}
where $(a)$ follows as $s_{j-1} \leq n$, and $s_{j-1}$ and $s_j$ is bounded as in \eqref{eq: bounds_induction_step_upper_bound} and $(b)$ follows for all $n \geq \tilde{n}_5^{(1)}$ for some $\tilde{n}_5^{(1)} \in \bbZ_+$ by Lemma \ref{lemma: power_of_d_not_going_to_zero} and noting that $dB_j = B_{j+1}$. In particular, for $n$ large enough independent of $j \in [m-1]$, we have
\begin{align*}
    2m \frac{\log d}{d^{m - j+1}}-\frac{B_j}{n} - 2(m -j)m \frac{ \log d}{d^{m -j +2}} \geq 0.
\end{align*}
Thus, we have $\Delta \tilde{W}_j(\BFs) \leq -\sqrt{m n}\log n$ when $\tilde{W}_j(\BFs) \geq 0$ and $\BFs \in \tilde{\calC}^{(2)}_{j+1}$. Thus, by Lemma \ref{lemma: iterative_ssc}, we have
\begin{align*}
    \P{\tilde{W}_j(\bbars) \geq \sqrt{m n}\log n} &\leq \left(\frac{n}{n+\sqrt{m n}\log n}\right)^{(\sqrt{m n}\log n)/2} + \sqrt{n}\P{\bbars \notin \tilde{\calC}^{(2)}_{j+1}} \\
    &\overset{(a)}{\leq} \left(\frac{1}{n}\right)^{(m \log n)/4} + \sqrt{n}\P{\bbars \notin \tilde{\calC}^{(2)}_{j+1}} \\
    &\overset{(b)}{\leq} \left(\frac{1}{n}\right)^{(m \log n)/4} + \sqrt{n}\left(\frac{1}{n}\right)^{(m \log n)/8-(m-j-1)} \\
    &\overset{(c)}{\leq} \left(\frac{1}{n}\right)^{(m \log n)/8-(m-j)},
\end{align*}
where $(a)$ follows by Lemma \ref{lemma: simplified_ssc_identity} and $(b)$ follows by the induction hypothesis \eqref{eq: induction_upper_bound}. Lastly, $(c)$ follows for all $n \geq \tilde{n}_5^{(2)}$ for some $\tilde{n}_5 \in \bbZ_+$. Now, by setting $\tilde{n}_{IH} \geq \max_{k \in [2]}\{\tilde{n}_5^{(k)}\}$, the induction step is complete. This completes the proof of the lemma. \hfill $\square$
\endproof
\section{Proof of Claims for Lower Bound} \label{app: claim_lb}
\proof{Proof of Claim \ref{claim: induction_for_weak_induction_step}}
For $l \in \{1, \hdots, k-1\}$, we consider the following induction hypothesis: There exists $n_{c1}  \in \bbZ_+$ such that for all $n \geq n_{c1}$, we have
\begin{align*}
    \P{L_{lk}(\bbars) \geq \sqrt{m n}\log n} \leq \left(\frac{1}{n}\right)^{(m \log n)/4-4(k-1)m-(k-l)}. \numberthis \label{eq: induction_hypothesis_claim1}
\end{align*}
\textbf{Base Case:} We analyze the drift of $L_{k-1,k}(\BFs)$ when $L_{k-1,k}(\BFs) \geq 0$. Thus, we have 
\begin{subequations} \label{eq: bounds_base_case_claim}
\begin{align}
    s_{k-1} &\geq n - 3(k-1)m \frac{n \log d}{d^2}  \\
    s_k &\leq n - \frac{3km n \log d}{d}.
\end{align}
\end{subequations}
First, consider the case when $L_{k-1,k}^{(1)}(\BFs) \geq L_{k-1,k}^{(2)}(\BFs)$. The drift is given as follows:
\begin{align*}
    \Delta L_{k-1,k}(\BFs) &\leq \lambda\left(\left(\frac{s_{k-2}}{n}\right)^{\lfloor d \rfloor} - \left(\frac{s_{k-1}}{n}\right)^{\lfloor d \rfloor}\right) - s_{k-1} + s_{k} \\
    &\overset{(a)}{\leq} \lambda\left(1 - \left(1 - 3(k-1)m \frac{\log d}{d^2} \right)^{\lfloor d \rfloor}\right) + 3(k-1)m \frac{n \log d}{d^2}  - \frac{3km n \log d}{d} \\
    &\overset{(b)}{\leq} \lambda\left(3(k-1)m \frac{\log d}{d}\right) + 3(k-1)m \frac{n \log d}{d^2} - \frac{3km n \log d}{d} \\
   &\leq -\frac{3m n \log d}{d} + 3(k-1)m \frac{n \log d}{d^2}  \\
   &\overset{(c)}{\leq} -\frac{2m n \log d}{d} \overset{(d)}{\leq} -\sqrt{m n}\log n,
\end{align*}
where $(a)$ follows as $s_{k-2} \leq n$, and we use the bounds on $s_{k-1}$ and $s_k$ given by \eqref{eq: bounds_base_case_claim}. Next, $(b)$ follows by Lemma \ref{lemma: power_of_d_not_going_to_zero}. Now, $(c)$ follows as there exists $n_{c1}^{(1)}$ such that for all $n \geq n_{c1}^{(1)}$, we have
\begin{align*}
    \frac{m n \log d}{d} \overset{(c_1)}{\geq} 3m^2 \frac{n \log d}{d^2}  \geq 3(k-1)m \frac{n \log d}{d^2},
\end{align*}
where $(c_1)$ follows as $m / d \rightarrow 0$ as $n \rightarrow \infty$. Lastly, $(d)$ follows as 
\begin{align*}
    \frac{2m n \log d}{d} \geq \frac{2m n \log d}{d^{m}} = n^{1-\gamma} \geq \sqrt{m n}\log n. \label{eq: some_eqn_in_claim1}
\end{align*}
where the last inequality follows as $\gamma \in (0, 0.5)$. Now, consider the case when $L_{k-1,k}^{(1)}(\BFs) \leq L_{k-1,k}^{(2)}(\BFs)$. The drift is given as follows:
\begin{align*}
    \Delta L_{k-1,k}(\BFs) &\leq s_k - s_{k+1} - \lambda\left(\left(\frac{s_{k-1}}{n}\right)^{\lfloor d \rfloor} - \left(\frac{s_{k}}{n}\right)^{\lfloor d \rfloor}\right) \\
    &\overset{(a)}{\leq} n - \frac{3km n \log d}{d} - \lambda\left(\left(1 - 3(k-1)m \frac{ \log d}{d^2}\right)^{\lfloor d \rfloor} - \left(1 - \frac{3km \log d}{d}\right)^{\lfloor d \rfloor}\right) \\
    &\overset{(b)}{\leq} n - \frac{3km n \log d}{d} - \lambda\left(1 - 3(k-1)m \frac{ \log d}{d} - \frac{2}{d^{3km}}\right) \\
    &\leq -\frac{3m n\log d}{d} + \frac{2n}{d^{3km}} + n^{1-\gamma} \\
    &\overset{(c)}{\leq} -\frac{m n\log d}{d}+ \frac{2n}{d^{3km}} \overset{(d)}{\leq} -\sqrt{m n}\log n,
\end{align*}
where $(a)$ follows as $s_{k-2} \leq n$, and we use the bounds on $s_{k-1}$ and $s_k$ given by \eqref{eq: bounds_base_case_claim}. Next, $(b)$ follows for all $n \geq n_{c1}^{(2)}$ for some $n_{c1}^{(2)} \in \bbZ_+$ by Lemma \ref{lemma: power_of_d_not_going_to_zero} and Lemma \ref{lemma: power_of_d_going_to_zero}. Now, $(c)$ follows by \eqref{eq: some_eqn_in_claim1}. Lastly, $(d)$ follows as there exists $n_{c1}^{(3)}$ such that for all $n \geq n_{c1}^{(3)}$ we have
\begin{align*}
    \frac{m n\log d}{2d} &\geq \frac{2n}{d^{3k}} \geq \frac{2n}{d^{3km}} \\
    \frac{m n\log d}{2d} &\geq  \frac{m n\log d}{2d^{m}} \geq \frac{1}{4}n^{1-\gamma} \geq \sqrt{m n}\log n,
\end{align*}
where the last inequality follows as $\gamma < 0.5$. By combining the two cases, we get $\Delta L_{k-1,k}(\BFs) \leq -\sqrt{m n}\log n$ when $L_{k-1,k}(\BFs) \geq 0$. Thus, by Lemma \ref{lemma: iterative_ssc}, we have
\begin{align*}
    \P{L_{k-1, k}(\bbars) \geq \sqrt{m n}\log n} \leq \left(\frac{n}{n + \sqrt{m n}\log n}\right)^{(\sqrt{m n} \log n)/2} \leq \left(\frac{1}{n}\right)^{m\log n /4} \leq \left(\frac{1}{n}\right)^{m\log n /4 - 4(k-1)m-1},
\end{align*}
where the last inequality follows by Lemma \ref{lemma: simplified_ssc_identity}. By considering $n_{c1} \geq \max_{k \in [3]}\left\{n_{c1}^{(k)}\right\}$, the base case is complete.

\textbf{Induction Step:} We analyze the drift of $L_{i-1,k}(\BFs)$ when $L_{i-1,k}(\BFs) \geq 0$ and $\BFs \in \calC_{i, k}^{(1)} \cap \bigcap_{l=i}^{k-1} \calD_l^{(1)}$ defined as in \eqref{eq: sets_weak_induction_base_case}. Similar to the proof of Lemma \ref{lemma: outer_induction_weak_induction_step} (Eq. \eqref{eq: upper_bound_s1} and \eqref{eq: upper_bound_sk}), there exists $n_{c1}^{(4)} \in \bbZ_+$, such that for all $n \geq n_{c1}^{(4)}$, we have
\begin{subequations} \label{eq: induction_step_upper_bound_claim}
\begin{align}
    s_{i} &\leq n - \frac{3i m n\log d}{d^{k-i+1}} + \sqrt{m n}\log n  \\
    s_k &\leq n - 3m \frac{n \log d}{d}.
\end{align}
\end{subequations}

We proceed by analyzing the drift for the case when $L_{i-1,k}^{(1)}(\BFs) \geq L_{i-1,k}^{(2)}(\BFs)$.
\begin{align*}
    \Delta L_{i-1,k}(\BFs) &\leq \lambda\left(\left(\frac{s_{i-2}}{n}\right)^{\lfloor d \rfloor} - \left(\frac{s_{i-1}}{n}\right)^{\lfloor d \rfloor}\right) - s_{i-1} + s_i \\
    &\overset{(a)}{\leq} \lambda\left(1 - \left(1 - 3(i-1)m \frac{\log d}{d^{k-i+2}}\right)^{\lfloor d \rfloor}\right) + 3(i-1)m \frac{n\log d}{d^{k-i+2}} - \frac{3i m n\log d}{d^{k-i+1}} + \sqrt{m n}\log n   \\
    &\overset{(b)}{\leq} \lambda\left(3(i-1)m \frac{\log d}{d^{k-i+1}} \right) + 3(i-1)m \frac{n\log d}{d^{k-i+2}} - \frac{3i m n\log d}{d^{k-i+1}} + \sqrt{m n}\log n   \\
    &\leq - 3m \frac{n\log d}{d^{k-i+1}} + 3(i-1)m \frac{n\log d}{d^{k-i+2}}+ \sqrt{m n}\log n \\
    &\overset{(c)}{\leq} - m \frac{n\log d}{d^{k-i+1}} \overset{(c)}{\leq} -\sqrt{m n}\log n,
\end{align*}
where $(a)$ follows by lower bounding $s_{i-1}$ using $L_{i-1,k}(\BFs) \geq 0$, upper bounding $s_i$ using \eqref{eq: induction_step_upper_bound_claim}, and trivially upper bounding $s_{i-2}$ by $n$. Next, $(b)$ follows by Lemma \ref{lemma: power_of_d_not_going_to_zero}. Lastly, $(c)$ follows as there exists $n_{c1}^{(5)} \in \bbZ_+$ such that for all $n \geq n_{c1}^{(5)}$, we have
\begin{align*}
    m \frac{n\log d}{d^{k-i+1}} &\overset{(c_1)}{\geq} 3m^2 \frac{n\log d}{d^{k-i+2}} \geq 3(i-1)m \frac{n\log d}{d^{k-i+2}} \\
    m \frac{n\log d}{d^{k-i+1}} &\geq   m \frac{n\log d}{d^{m}} \geq \frac{1}{2}n^{1-\gamma} \overset{(c_2)}{\geq} \sqrt{m n}\log n,
\end{align*}
where $(c_1)$ follows as $m / d \leq \log n/d \rightarrow 0$ as $n \rightarrow \infty$ and $(c_2)$ follows as $\gamma < 0.5$. Now, consider the case when $L_{i-1,k}^{(1)}(\BFs) \leq L_{i-1,k}^{(2)}(\BFs)$. The drift is given as follows:
\begin{align*}
    \Delta L_{i-1,k}(\BFs) 
    \overset{(a)}{\leq}{}& s_i - s_{k+1} - \lambda\left(\left(\frac{s_{i-1}}{n}\right)^{\lfloor d \rfloor} - \left(\frac{s_k}{n}\right)^{\lfloor d \rfloor}\right) \\
    \overset{(b)}{\leq}{}& n - \frac{3i m n\log d}{d^{k-i+1}} + \sqrt{m n}\log n -\lambda\left(\left(1 - 3(i-1)m \frac{\log d}{d^{k-i+2}}\right)^{\lfloor d \rfloor} - \left(1 - 3m\frac{ \log d}{d}\right)^{\lfloor d \rfloor}\right) \\
    \leq{}& n - \frac{3i m n\log d}{d^{k-i+1}} + \sqrt{m n}\log n -\lambda\left(1 - 3(i-1)m \frac{\log d}{d^{k-i+1}} - \frac{2}{d^{3m}}\right) \\
    \leq{}& - 3m \frac{n\log d}{d^{k-i+1}}+ \sqrt{m n}\log n  + \frac{2n}{d^{3m}} + n^{1-\gamma} \\
    \overset{(c)}{\leq}{}& - m \frac{n\log d}{d^{k-i+1}}+ \sqrt{m n}\log n  + \frac{2n}{d^{3m}} \overset{(d)}{\leq} -\sqrt{m n}\log n,
\end{align*}
where $(a)$ follows by lower bounding $s_{i-1}$ using $L_{i-1,k}(\BFs) \geq 0$, upper bounding $s_i$ and $s_k$ using \eqref{eq: induction_step_upper_bound_claim}, and trivially lower bounding $s_{k+1}$ by $0$. Next, $(b)$ follows for all $n \geq n_{c1}^{(6)}$ for some $n_{c1}^{(6)} \in \bbZ_+$ by Lemma \ref{lemma: power_of_d_going_to_zero} and Lemma \ref{lemma: power_of_d_not_going_to_zero}. Further, $(c)$ follows as
\begin{align*}
    2m \frac{n\log d}{d^{k-i+1}} \geq 2m \frac{n\log d}{d^{m}} = n^{1-\gamma}.
\end{align*}
Lastly, $(d)$ follows as there exists $n^{(7)}_{c1}$ such that for all $n \geq n^{(7)}_{c1}$, we have
\begin{align*}
    m \frac{n\log d}{2d^{k-i+1}} &\geq m \frac{n\log d}{2d^{m}} \geq \frac{2n}{d^{3m}} \\
     m \frac{n\log d}{4d^{k-i+1}} &\geq m \frac{n\log d}{4d^{m}} \geq \frac{1}{8}n^{1-\gamma} \geq \sqrt{m n}\log n.
\end{align*}
By combining the two cases, we get $\Delta L_{i-1,k}(\BFs) \leq -\sqrt{m n}\log n$ when $L_{i-1,k}(\BFs) \geq 0$ and $\BFs \in \calC_{i, k}^{(1)} \cap \bigcap_{l=i}^{k-1} \calD_l^{(1)}$. Thus, by Lemma \ref{lemma: iterative_ssc}, we have
\begin{align*}
    &\P{L_{i-1,k}(\bbars)\geq \sqrt{m n}\log n}\\
    \leq{}& \left(\frac{n}{n+\sqrt{m n}\log n}\right)^{(\sqrt{m n}\log n)/2} + \sqrt{n}\P{\bbars \notin \calC_{i, k}^{(1)} \cap \bigcap_{l=i}^{k-1} \calD_l^{(1)}} \\
    \overset{(a)}{\leq}{}& \left(\frac{1}{n}\right)^{(m \log n)/4} + \sqrt{n}\left(\P{\bbars \notin \calC_{i, k}^{(1)}} +  \sum_{l=i}^{k-1} \P{\bbars \notin\calD_l^{(1)}}\right) \\
    \overset{(b)}{\leq}{}& \left(\frac{1}{n}\right)^{(m \log n)/4} + \sqrt{n}\left(\frac{1}{n}\right)^{(m \log n)/4-4(k-1)m-(k-i)} + \sqrt{n}m \left(\frac{1}{n}\right)^{(m \log n)/4-4(k-1)m} \\
    \overset{(c)}{\leq}{}& \left(\frac{1}{n}\right)^{m (\log n)/4-4(k-1)m-(k-i+1)},
\end{align*}
where $(a)$ follows by Lemma \ref{lemma: simplified_ssc_identity}. Next, $(b)$ follows by upper bounding $\P{\bbars \notin \calC_{i, k}^{(1)}}$ using the induction hypothesis \eqref{eq: induction_hypothesis_claim1}. Also, similar to \eqref{eq: converting_to_leading_term}, $\P{\bbars \notin\calD_l^{(1)}}$ is upper bounded for all $n \geq n_{c1}^{(8)}$ for some $n_{c1}^{(8)} \in \bbZ_+$ by \eqref{eq: outer_induction}. Lastly, $(c)$ follows for all $n \geq n_{c1}^{(9)}$ for some $n_{c1}^{(9)} \in \bbZ_+$ By fixing $n_{c1} \geq \max_{k \in [9]}\left\{n_{c1}^{(k)}\right\}$, the induction step is complete. This completes the proof of the claim. \hfill $\square$
\endproof
\proof{Proof of Claim \ref{claim: base_case_of_induction_hypothesis}}
For some $l \in [k-1]$, we consider the following induction hypothesis: There exists $n_{c2} \in \bbZ_+$, such that for all $n \geq n_{c2}$, we have
\begin{align*}
    \P{Z_{lk}(\bbars) \geq \sqrt{m n}\log n} \leq \left(\frac{1}{n}\right)^{m (\log n)/4-4(k-0.5)m-(k-l)}. \numberthis \label{eq: induction_hypothesis_claim_2}
\end{align*}
\textbf{Base Case:} We analyze the drift of $Z_{k-1,k}(\BFs)$ when $Z_{k-1,k}(\BFs) \geq 0$. Thus, by \eqref{eq: z_lyapunov_functions}, we have 
\begin{subequations} \label{eq: base_case_bounds_claim2}
\begin{align}
    s_{k-1} &\geq n - 2m \frac{n\log d}{d^{2}} -  (2k-1)d^{k-2}\sqrt{m n}\log n - 3km\frac{n\log d}{d^{3}}  \\
    s_k &\leq n - 2m \frac{n \log d}{d} - 2k d^{k-1}\sqrt{m n}\log n-10m^2 \frac{n \log d}{d^2}.
\end{align}
\end{subequations}
First, consider the case when $Z_{k-1,k}^{(1)}(\BFs) \geq Z_{k-1,k}^{(2)}(\BFs)$. The drift is given as follows:
\begin{align*}
    &\Delta Z_{k-1,k}(\BFs)\\
    \leq{}& \lambda\left(\left(\frac{s_{k-2}}{n}\right)^{\lfloor d \rfloor} - \left(\frac{s_{k-1}}{n}\right)^{\lfloor d \rfloor}\right) - s_{k-1} + s_{k} \\
    \overset{(a)}{\leq}{}& \lambda\left(1 - \left(1 - 2m \frac{\log d}{d^{2}} -  (2k-1)d^{k-2}\frac{\sqrt{m}\log n}{\sqrt{n}}- 3km\frac{\log d}{d^{3}} \right)^{\lfloor d \rfloor}\right) + 2m \frac{n\log d}{d^{2}} \\
    &+  (2k-1)d^{k-2}\sqrt{m n}\log n+ 3km\frac{n\log d}{d^{3}}- 2m \frac{n \log d}{d} - 2k d^{k-1}\sqrt{m n}\log n-10m^2 \frac{n \log d}{d^2} \\
    \overset{(b)}{\leq}{}& \lambda\left(2m \frac{\log d}{d} +  (2k-1) d^{k-1}\frac{\sqrt{m}\log n}{\sqrt{n}}+ 3km\frac{\log d}{d^{2}}\right) +  (2k-1) d^{k-2}\sqrt{m n}\log n\\
    &+ 3km\frac{n\log d}{d^{3}}- 2m \frac{n \log d}{d} - 2k d^{k-1}\sqrt{m n}\log n-8m^2 \frac{n \log d}{d^2} \\
    \overset{(c)}{\leq}{}& -d^{k-1}\sqrt{m n}\log n +(2k-1) d^{k-2}\sqrt{m n}\log n\\
    \overset{(d)}{\leq}{}& -\frac{1}{2}d^{k-1}\sqrt{m n}\log n \overset{(e)}{\leq} -\sqrt{m n}\log n,
\end{align*}
where $(a)$ follows by upper bounding $s_{k-2}$ by $n$ and using the bounds on $s_{k-1}$ and $s_k$ given by \eqref{eq: base_case_bounds_claim2}. Next, $(b)$ follows by Lemma \ref{lemma: power_of_d_not_going_to_zero}. Now, $(c)$ follows as 
\begin{align*}
    3km\frac{n\log d}{d^{2}} + 3km\frac{n\log d}{d^{3}} \leq 3m^2\frac{n\log d}{d^{2}} + 3m^2\frac{n\log d}{d^{3}} \leq 6m^2\frac{n\log d}{d^{2}} \leq 8m^2\frac{n\log d}{d^{2}}.
\end{align*}
Lastly, $(d)$ follows as there exists $n_{c2}^{(1)}$ such that for all $n \geq n_{c2}^{(1)}$, we have
\begin{align*}
    \frac{1}{2}d^{k-1}\sqrt{m n}\log n \geq 2m d^{k-2}\sqrt{m n}\log n \geq (2k-1) d^{k-2}\sqrt{m n}\log n,
\end{align*}
where the first inequality follows as $m / d \leq \log n/d \rightarrow 0$ as $n \rightarrow \infty$. Lastly, $(e)$ follows for all $n \geq n_{c2}^{(2)}$ for some $n_{c2}^{(2)} \in \bbZ_+$ as $k \geq 2$. Note that $k=1$ corresponds to the base case of \eqref{eq: outer_induction} which is proved in Lemma \ref{lemma: base_case_lower_bound}. Now, consider the case when $Z_{k-1,k}^{(1)}(\BFs) \leq Z_{k-1,k}^{(2)}(\BFs)$. The drift is given as follows:
\begin{align*}
    \Delta Z_{k-1,k}(\BFs) \leq{}& s_k - s_{k+1} - \lambda\left(\left(\frac{s_{k-1}}{n}\right)^{\lfloor d \rfloor} - \left(\frac{s_{k}}{n}\right)^{\lfloor d \rfloor}\right) \\
    \overset{(a)}{\leq}{}& n - 2m \frac{n \log d}{d} - 2k d^{k-1}\sqrt{m n}\log n-10m^2 \frac{n \log d}{d^2} \\
    &- \lambda\left(\left(1 - 2m \frac{\log d}{d^{2}} -  (2k-1)d^{k-2}\frac{\sqrt{m}\log n}{\sqrt{n}}-3km\frac{\log d}{d^{3}}\right)^{\lfloor d \rfloor} - \left(1 - 2m \frac{\log d}{d}\right)^{\lfloor d \rfloor}\right) \\
    \overset{(b)}{\leq}{}& n - 2m \frac{n \log d}{d} - 2k d^{k-1}\sqrt{m n}\log n-10m^2 \frac{n \log d}{d^2} \\
    &- \lambda\left(1- 2m \frac{\log d}{d} -  (2k-1)d^{k-1}\frac{\sqrt{m}\log n}{\sqrt{n}}-3km\frac{\log d}{d^{2}} -\frac{2}{d^{2m}}\right) \\
    \leq{}& -d^{k-1}\sqrt{m n}\log n-10m^2 \frac{n \log d}{d^2} + 3km\frac{n\log d}{d^{2}} + \frac{2n}{d^{2m}} + n^{1-\gamma} \\
    \overset{(c)}{\leq}{}& -d^{k-1}\sqrt{m n}\log n \leq -\sqrt{m n}\log n.
\end{align*}
where $(a)$ follows by noting that $s_{k+1} \geq 0$ and using the bounds on $s_{k-1}$ and $s_k$ given by \eqref{eq: base_case_bounds_claim2}. Next, $(b)$ follows for all $n \geq n_{c2}^{(3)}$ for some $n_{c2}^{(3)} \in \bbZ_+$ by Lemma \ref{lemma: power_of_d_going_to_zero} and Lemma \ref{lemma: power_of_d_not_going_to_zero}. Lastly, $(c)$ follows as 
\begin{align*}
    3km\frac{n\log d}{d^{2}} + \frac{2n}{d^{2m}} + n^{1-\gamma} &\leq 5m^2\frac{n\log d}{d^{2}} + n^{1-\gamma} 
    \leq 5m^2\frac{n\log d}{d^{2}} + 2m \frac{n \log d}{d^{m}} \leq 10m^2\frac{n\log d}{d^{2}},
\end{align*}
where the last inequality follows as $m \geq 2$. Thus, by the above two cases, we have $\Delta Z_{k-1,k}(\BFs) \leq -\sqrt{m n}\log n$ when $Z_{k-1,k}(\BFs) \geq 0$. Thus, by Lemma \ref{lemma: iterative_ssc}, we have
\begin{align*}
    \P{Z_{k-1, k}(\bbars) \geq \sqrt{m n}\log n} \leq \left(\frac{n}{n+\sqrt{m n}\log n}\right)^{(\sqrt{m n}\log n)/2} \leq \left(\frac{1}{n}\right)^{(m \log n)/4},
\end{align*}
where the last inequality follows by Lemma \ref{lemma: simplified_ssc_identity}. Thus, by considering $n_{c2} \geq \max_{k \in [3]}\left\{n_{c2}^{(k)}\right\}$, the base case is complete.

\textbf{Induction Step:} We analyze the drift of $Z_{i-1,k}(\BFs)$ when $Z_{i-1,k}(\BFs) \geq 0$ and $\BFs \in \calC_{i, k}^{(3)} \cap \bigcap_{l=i}^{k-1} \calD_l^{(3)}$, where $\calC_{i, k}^{(3)}$ and $\calD_l^{(3)}$ are defined in \eqref{eq: sets_induction_step_base_case}. Similar to the proof of Lemma \ref{lemma: base_case_of_induction_hypothesis} (Eq. \eqref{eq: precise_upper_bound_s1} and \eqref{eq: precise_upper_bound_sk}), we can get the following bounds on $s_i$ and $s_k$:
\begin{align*}
    s_{i} &\leq  n - 2m \frac{n\log d}{d^{k-i+1}} -  (2i+1)d^{i-1}\sqrt{m n}\log n - 3(i+1)m\frac{n\log d}{d^{k-i+2}} + \sqrt{m n}\log n, \numberthis \label{eq: induction_step_claim2_si} \\
    s_k &\leq n - 2m \frac{n \log d}{d}. \numberthis \label{eq: induction_step_claim_sk}
\end{align*}
Now, we analyze the drift for the case when $Z_{i-1,k}^{(1)}(\BFs) \geq Z_{i-1,k}^{(2)}(\BFs)$.
\begin{align*}
    &\Delta Z_{i-1,k}(\BFs) \\
    \leq{}& \lambda\left(\left(\frac{s_{i-2}}{n}\right)^{\lfloor d \rfloor} - \left(\frac{s_{i-1}}{n}\right)^{\lfloor d \rfloor}\right) - s_{i-1} + s_i \\
    \overset{(a)}{\leq}{}& \lambda\left(1 - \left(1 - 2m \frac{\log d}{d^{k-i+2}} -  (2i-1)d^{i-2}\frac{\sqrt{m}\log n}{\sqrt{n}}- 3im\frac{\log d}{d^{k-i+3}}\right)^{\lfloor d \rfloor}\right) + 2m \frac{n\log d}{d^{k-i+2}} \\
    & +  (2i-1)d^{i-2}\sqrt{m n}\log n+ 3im\frac{n\log d}{d^{k-i+3}} - 2m \frac{n\log d}{d^{k-i+1}} -  (2i+1)d^{i-1}\sqrt{m n}\log n \\
    & - 3(i+1)m\frac{n\log d}{d^{k-i+2}} + \sqrt{m n}\log n  \\
     \overset{(b)}{\leq}{}& \lambda\left(2m \frac{\log d}{d^{k-i+1}} +  (2i-1) d^{i-1}\frac{\sqrt{m}\log n}{\sqrt{n}}+ 3im\frac{\log d}{d^{k-i+2}}\right) + 2m \frac{n\log d}{d^{k-i+2}} +  (2i-1)d^{i-2}\sqrt{m n}\log n\\
     &+ 3im\frac{n\log d}{d^{k-i+3}}- 2m \frac{n\log d}{d^{k-i+1}} -  (2i+1)d^{i-1}\sqrt{m n}\log n - 3(i+1)m\frac{n\log d}{d^{k-i+2}}+ \sqrt{m n}\log n \\
    \leq& -2d^{i-1}\sqrt{m n}\log n- m\frac{n\log d}{d^{k-i+2}} +  (2i-1)d^{i-2}\sqrt{m n}\log n+ 3im\frac{n\log d}{d^{k-i+3}}  + \sqrt{m n}\log n \\
     \overset{(c)}{\leq}{}&-d^{i-1}\sqrt{m n}\log n \leq -\sqrt{m n}\log n,
\end{align*}
where $(a)$ follows by noting that $s_{i-2} \leq n$, using the bound on $s_i$ given by \eqref{eq: induction_step_claim2_si}, and bounding $s_{i-1}$ by using the fact that $Z_{i-1,k}(\BFs) \geq 0$. Next, $(b)$ follows by Lemma \ref{lemma: power_of_d_not_going_to_zero}. Lastly, $(c)$ follows as there exists $n_{c2}^{(4)} \in \bbZ_+$ such that for all $n \geq n_{c2}^{(4)}$, we have
\begin{align*}
    m\frac{n\log d}{d^{k-i+2}} \overset{(c_1)}{\geq}{}& 3m^2\frac{n\log d}{d^{k-i+3}} \geq 3im\frac{n\log d}{d^{k-i+3}} \quad \forall i \leq m \\
    d^{i-1}\sqrt{m n}\log n \overset{(c_2)}{\geq}{}& 2m d^{i-2}\sqrt{m n}\log n \geq 2i d^{i-2}\sqrt{m n}\log n \\
    \geq{}& (2i-1)d^{i-2}\sqrt{m n}\log n + \sqrt{m n}\log n \quad \forall i \leq m,
\end{align*}
where $(c_1)$ and $(c_2)$ follows as $m / d \leq \log n/d \rightarrow 0$ as $n \rightarrow \infty$. Now, consider the case when $Z_{i-1,k}^{(1)}(\BFs) \leq Z_{i-1,k}^{(2)}(\BFs)$. The drift is given as follows:
\begin{align*}
    \Delta Z_{i-1,k}(\BFs) 
    \leq{}& s_i - s_{k+1} - \lambda\left(\left(\frac{s_{i-1}}{n}\right)^{\lfloor d \rfloor} - \left(\frac{s_k}{n}\right)^{\lfloor d \rfloor}\right) \\
    \overset{(a)}{\leq}{}& n - 2m \frac{n\log d}{d^{k-i+1}} -  (2i+1)d^{i-1}\sqrt{m n}\log n - 3(i+1)m\frac{n\log d}{d^{k-i+2}}+ \sqrt{m n}\log n\\
    &-\lambda\left(\left(1 - 2m \frac{\log d}{d^{k-i+2}} -  (2i-1) d^{i-2}\frac{\sqrt{m}\log n}{\sqrt{n}}- 3im\frac{\log d}{d^{k-i+3}}\right)^{\lfloor d \rfloor} - \left(1 - 2m \frac{\log d}{d} \right)^{\lfloor d \rfloor}\right) \\
    \overset{(b)}{\leq}{}& n - 2m \frac{n\log d}{d^{k-i+1}} -  (2i+1)d^{i-1}\sqrt{m n}\log n - 3(i+1)m\frac{n\log d}{d^{k-i+2}}+ \sqrt{m n}\log n\\
    &-\lambda\left(1 - 2m \frac{\log d}{d^{k-i+1}} -  (2i-1) d^{i-1}\frac{\sqrt{m}\log n}{\sqrt{n}}- 3im\frac{\log d}{d^{k-i+2}} - \frac{2}{d^{2m}}\right) \\
    \leq{}& -2d^{i-1}\sqrt{m n}\log n -3m\frac{n\log d}{d^{k-i+2}} + \frac{2n}{d^{2m}} + n^{1-\gamma}+ \sqrt{m n}\log n \\
    \overset{(c)}{\leq}{}& -2d^{i-1}\sqrt{m n}\log n+ \sqrt{m n}\log n \leq -\sqrt{m n}\log n,
\end{align*}
where $(a)$ follows by using the bound on $s_i$ and $s_k$ given by \eqref{eq: induction_step_claim2_si} and \eqref{eq: induction_step_claim_sk}, and bounding $s_{i-1}$ by using the fact that $Z_{i-1,k}(\BFs) \geq 0$. Next, $(b)$ follows for all $n \geq n_{c2}^{(5)}$ for some $n_{c2}^{(5)} \in \bbZ_+$ by Lemma \ref{lemma: power_of_d_going_to_zero} and Lemma \ref{lemma: power_of_d_not_going_to_zero}. Lastly, $(c)$ follows as there exists $n_{c2}^{(6)}$ such that for all $n \geq n_{c2}^{(6)}$, we have
\begin{align*}
    3m\frac{n\log d}{d^{k-i+2}} \overset{(c_1)}{\geq} 3m\frac{n\log d}{d^{m}} \geq \frac{2n}{d^{2m}} + 2m\frac{n\log d}{d^{m}} \geq \frac{2n}{d^{2m}} + n^{1-\gamma},
\end{align*}
where $(c_1)$ follows as $i \geq 2$ and $k \leq m$. By the above two cases, we get $\Delta Z_{i-1,k}(\BFs) \leq -\sqrt{m n}\log n$ when $Z_{i-1,k}(\BFs)\geq 0$ and $\BFs \in \calC_{i, k}^{(3)} \cap \bigcap_{l=i}^{k-1} \calD_l^{(3)}$. Thus, by Lemma \ref{lemma: iterative_ssc}, we have
\begin{align*}
    &\P{Z_{i-1, k}(\bbars) \geq \sqrt{m n}\log n}\\
    \leq{}& \left(\frac{n}{n+\sqrt{m n}\log n}\right)^{(m \log n)/2} + \sqrt{n}\P{\bbars \notin \calC_{i, k}^{(3)} \cap \bigcap_{l=i}^{k-1} \calD_l^{(3)}} \\
    \overset{(a)}{\leq}{}& \left(\frac{1}{n}\right)^{(m \log n)/4} + \sqrt{n}\left(\P{\bbars \notin \calC_{i, k}^{(3)}} + \sum_{l=i}^{k-1} \P{\bbars \notin \calD_l^{(3)}}\right) \\
    \overset{(b)}{\leq}{}& \left(\frac{1}{n}\right)^{(m \log n)/4} + \sqrt{n}\left(\frac{1}{n}\right)^{(m \log n)/4-4(k-0.5)m-(k-i)} + \sqrt{n}m \left(\frac{1}{n}\right)^{(m \log n)/4-4(k-1)m}\\
    \overset{(c)}{\leq}{}& \left(\frac{1}{n}\right)^{(m\log n)/4-4(k-0.5)m-(k-i+1)},
\end{align*}
where $(a)$ follows by Lemma \ref{lemma: simplified_ssc_identity}. Next, $(b)$ follows by upper bounding $\P{\bbars \notin \calC_{i, k}^{(3)}}$ using the inducting hypothesis given by \eqref{eq: induction_hypothesis_claim_2}. Also, $\P{\bbars \notin \calD_l^{(3)}}$ is upper bounded for all $n\geq n_2$ by Lemma \ref{lemma: remaining_weak_outer_induction}. Lastly, $(c)$ follows for all $n \geq n_{c2}^{(7)}$ for some $n_{c2}^{(7)} \in \bbZ_+$. By fixing $n_{c2} \geq \max_{k \in [7]}\left\{n_{c2}^{(k)},n_2\right\}$, the induction step is complete.  \hfill $\square$
\endproof
\section{Proof of Claims for Theorem \ref{theo: upper_bound}} \label{app: claim_ub}
\proof{Proof of Claim \ref{claim: upper_bound}} The proof is induction based. The induction hypothesis is as follows. There exists $\tilde{n}_c \in \bbZ_+$ such that for all $n \geq \tilde{n}_c$, we have
\begin{align}
    \P{U_j(\bbars) \geq \sqrt{m n}\log n} \leq \left(\frac{1}{n}\right)^{\frac{m \log n}{\max\{x,5\}}-(m -j+1)}. \label{eq: induction_hypothesis_upper_bound_claim}
\end{align}

\textbf{Base Case} $(j=m)$: We analyze the drift of $U_{m}(\BFs)$ as defined in \eqref{eq: u_lyapunov_functions} when $U_{m}(\BFs) \geq 0$ and $\BFs \in \tilde{\calD}_{m - 1} \cap \tilde{\calD}_{m +2} $ where
\begin{align}
    \tilde{\calD}_{m - 1} &= \left\{s_{m - 1} \geq n - \left(2m \frac{n\log d}{d^2} + 4m d^{m-2}\sqrt{m n}\log n + 16m^3 \frac{n\log (d)^2}{d^{3}}\right)\mathbbm{1}\left\{m > 1\right\}\right\}. \label{eq: base_case_bound_smminus1_claim} \\
    \tilde{D}_{m+2} &= \left\{\sum_{l=m+2}^b s_l \leq B_{m+2}\right\}. \nonumber
\end{align} 
As $U_{m}(\BFs) \geq 0$, we get the following bounds on $s_{m}$ and $s_{m + 1}$:
\begin{subequations} \label{eq: base_case_bounds_claim}
\begin{align}
    s_{m+1} &\geq 8m d^{m-1}\sqrt{m n}\log n + 24m^3 \frac{n \log (d)^2}{d^{2}}+n^{1-\gamma}\mathbbm{1}\left\{m > 1\right\}-2 \label{eq: base_case_bounds_claim_s_mplus1} \\
    s_{m} &\leq n - 2m \frac{n\log d}{d} + 3m d^{m-1}\sqrt{m n}\log n+7m^3\frac{n\log (d)^2}{d^{2}},
\end{align}
\end{subequations}
where \eqref{eq: base_case_bounds_claim_s_mplus1} follows by considering two cases. If $B_{m+2} \geq 2$, then we use the bound $\sum_{l=m+1}^b s_l \leq bs_{m+1}$ to obtain  \eqref{eq: base_case_bounds_claim_s_mplus1}. Else if $B_{m+2} < 2$, then we use the bound $\sum_{l=m+1}^b s_l \leq s_{m+1}+B_{m+2} \leq s_{m+1}+2$ to obtain  \eqref{eq: base_case_bounds_claim_s_mplus1}. First, consider the case when $U^{(1)}_{m}(\BFs) \leq U^{(2)}_{m}(\BFs)$. In this case, the drift is as follows:
\begin{align*}
    &\Delta U_{m}(\BFs) \\
    \leq{}& -s_{m + 1} + \lambda\left(\left(\frac{s_{m}}{n}\right)^{\lfloor d \rfloor} - \left(\frac{s_b}{n}\right)^{\lfloor d \rfloor}\right) \\
    \overset{(a)}{\leq}{}& - 8m d^{m-1}\sqrt{m n}\log n - 24m^3 \frac{n \log (d)^2}{d^{2}}+2 + n\left(1 - 2m \frac{\log d}{d} + 3m \sqrt{m} d^{m-1}\frac{\log n}{\sqrt{n}}+7m^3\frac{\log (d)^2}{d^2}\right)^{\lfloor d \rfloor} \\
    \overset{(b)}{\leq}{}& -8m d^{m-1}\sqrt{m n}\log n - 24m^3 \frac{n \log (d)^2}{d^{2}}+2 + \frac{2n}{d^{2m}} \overset{(c)}{\leq} -\sqrt{m n}\log n,
\end{align*}
where $(a)$ follows as $s_b \geq 0$, and $s_{m}$ and $s_{m + 1}$ are bounded as in \eqref{eq: base_case_bounds_claim}. Next, $(b)$ follows by Lemma \ref{lemma: power_of_d_going_to_zero}. Lastly, $(c)$ follows for all $n \geq \tilde{n}_c^{(1)}$ for some $\tilde{n}_c^{(1)} \in \bbZ_+$ as $24m^3 \frac{n \log (d)^2}{d^{2}} \geq \frac{2n}{d^{2m}} + 2$ and $8m d^{m-1}\sqrt{m n}\log n \geq \sqrt{m n}\log n$. Now, consider the case when $U^{(1)}_{m}(\BFs) \geq U^{(2)}_{m}(\BFs)$. In this case, the drift is as follows:
\begin{align*}
    &\Delta U_{m}(\BFs) \\
    \leq{}& s_{m} - s_{m + 1} - \lambda\left(\left(\frac{s_{m-1}}{n}\right)^{\lfloor d \rfloor} - \left(\frac{s_{m}}{n}\right)^{\lfloor d \rfloor}\right) \\
    \overset{(a)}{\leq}{}& n - 2m \frac{n\log d}{d} + 3m d^{m-1}\sqrt{m n}\log n+7m^3\frac{n\log (d)^2}{d^2}-8m d^{m-1}\sqrt{m n}\log n - 24m^3 \frac{n \log (d)^2}{d^{2}} \\
    &-n^{1-\gamma}\mathbbm{1}\left\{m > 1\right\}+2-\lambda\left(1- \left(2m \frac{\log d}{d^2} + 4m d^{m-2}\frac{\sqrt{m}\log n}{\sqrt{n}} + 16m^3 \frac{\log (d)^2}{d^{3}}\right)\mathbbm{1}\left\{m > 1\right\}\right)^{\lfloor d \rfloor} \\
    &+ \lambda\left(1 - 2m \frac{\log d}{d} + 3m d^{m-1}\frac{\sqrt{m}\log n}{\sqrt{n}}+7m^3\frac{\log (d)^2}{d^2}\right)^{\lfloor d \rfloor} \\
    \overset{(b)}{\leq}{}& n - 2m \frac{n\log d}{d} - 5m d^{m-1}\sqrt{m n}\log n - 17m^3 \frac{n \log (d)^2}{d^{2}}-n^{1-\gamma}\mathbbm{1}\left\{m > 1\right\}+2 \\
    &-\lambda\left(1- 2m \frac{\log d}{d}\mathbbm{1}\left\{m > 1\right\} - 4m d^{m-1}\frac{\sqrt{m}\log n}{\sqrt{n}} -16m^3 \frac{\log (d)^2}{d^{2}} - \frac{2}{d^{2 m}}\right) \\
    \overset{(c)}{\leq}{}& -m d^{m-1}\sqrt{m n}\log n-m^3 \frac{n \log (d)^2}{d^{2}} + \frac{2n}{d^{2 m}}+2 \\
    \overset{(d)}{\leq}{}& -\sqrt{m n}\log n,
\end{align*}
where $(a)$ follows by substituting bounds on $s_{m -1}, s_{m}$, and $s_{m +1}$ given by \eqref{eq: base_case_bound_smminus1_claim} and \eqref{eq: base_case_bounds_claim}. Next, $(b)$ follows by Lemma \ref{lemma: power_of_d_not_going_to_zero} and Lemma \ref{lemma: power_of_d_going_to_zero}. Now, $(c)$ follows as $2 m n \log d / d \geq n^{1-\gamma}$. Lastly, $(d)$ follows for all $n \geq \tilde{n}_c^{(2)}$ for some $\tilde{n}_c^{(2)} \in \bbZ_+$ as $m^3 \frac{n \log (d)^2}{d^{2}} \geq \frac{2n}{d^{2 m}}+2$ and $m d^{m-1}\sqrt{m n}\log n \geq \sqrt{m n}\log n$. Thus, by the above two cases, we have $\Delta U_{m}(\BFs) \leq -\sqrt{m n}\log n$ when $U_{m}(\BFs) \geq 0$ and $\BFs \in \tilde{\calD}_{m - 1} \cap \tilde{\calD}_{m+2}$. Combining the two cases and using Lemma \ref{lemma: iterative_ssc}, we get
\begin{align*}
    \P{U_{m}(\bbars) \geq \sqrt{m n}\log n} &\leq \left(\frac{n}{n+\sqrt{m n}\log n}\right)^{(\sqrt{m n}\log n)/2}+\P{\bbars \notin \tilde{\calD}_{m - 1}\cap \tilde{\calD}_{m+2}} \\
    &\overset{(a)}{\leq} \left(\frac{1}{n}\right)^{(m\log n)/4} + \sqrt{n}\P{\bbars \notin \tilde{\calD}_{m - 1}}+\sqrt{n}\P{\bbars \notin \tilde{\calD}_{m+2}} \\
    &\overset{(b)}{\leq} \left(\frac{1}{n}\right)^{(m\log n)/4} + \left(\frac{1}{n}\right)^{(m\log n)/5-0.5} +\sqrt{n}\P{\bbars \notin \tilde{\calD}_{m+2}}\\
    &\overset{(c)}{\leq} \left(\frac{1}{n}\right)^{(m\log n)/4} + \left(\frac{1}{n}\right)^{(m\log n)/5-0.5} +\left(\frac{1}{n}\right)^{(m\log n)/x-0.5}\\
    &\overset{(d)}{\leq} \left(\frac{1}{n}\right)^{\frac{m\log n}{\max\{x,5\}}-1},
\end{align*}
where $(a)$ follows by Lemma \ref{lemma: simplified_ssc_identity}. Next, if $m=1$, then $(b)$ follows trivially as $\P{\bbars \notin \tilde{\calD}_{m - 1}} = 0$. Else if, $m>1$, then $(b)$ follows for all $n \geq n_{LB}$ by Theorem \ref{theo: lower_bound}. Now, $(c)$ follows by the high probability upper bound on $\sum_{l=m+2}^b \bars_{l}$ assumed in the statement of the lemma. Lastly, $(d)$ follows for all $n \geq \tilde{n}_c^{(3)}$ for some $\tilde{n}_c^{(3)} \in \bbZ_+$. By considering $\tilde{n}_c \geq \max_{k \in [3]}\{\tilde{n}_c^{(k)}, n_{LB}\}$, the base case is complete. Note that the base case completes the proof of the claim if $m = 1$. So now we assume $m \geq 2$.

\textbf{Induction Step:} For $j \in [m]$, we analyze the drift of $U_{j-1}(\BFs)$ as defined in \eqref{eq: u_lyapunov_functions} when $U_{j-1}(\BFs) \geq 0$ and $\BFs \in \tilde{\calC}_{j}^{(1)} \cap \tilde{\calD}_{j-2}^{(1)}$ where
\begin{align*}
    \tilde{\calC}_{j}^{(1)} &= \left\{U_{j}(\BFs) \leq \sqrt{m n}\log n\right\} \\
    \tilde{\calD}_{j-2}^{(1)} &= \left\{s_{j-2} \geq n - 2m \frac{n\log d}{d^{m-j+3}} - 4m d^{j-3}\sqrt{m n}\log n -16m^3 \frac{n\log (d)^2}{d^{m-j+4}}\right\}. \numberthis \label{eq: induction_step_bound_sjminus2_claim}
\end{align*}
Now, we obtain a useful lower bound on $s_j$ as follows:
\begin{align*}
    U_{j-1}(\BFs) \geq 0 &\Rightarrow L_{m +1}(\BFs)-\sum_{l=j+1}^{m} L_l(\BFs)  \geq  L_j(\BFs) \\
    &\overset{(*)}{\Rightarrow} L_j(\BFs) \leq \sqrt{m n}\log n \\
    &\Rightarrow s_{j} \geq n - 2m \frac{n\log d}{d^{m-j+1}} + 3j d^{j-1}\sqrt{m n}\log n+ 7jm^2\frac{n\log (d)^2}{d^{m-j+2}} - \sqrt{m n}\log n, \numberthis \label{eq: induction_step_bound_sj_claim}
\end{align*}
where $(*)$ follows as $\BFs \in \tilde{\calC}^{(1)}_j$. Next, we obtain a useful upper bound on $s_{j-1}$ as follows:
\begin{align*}
    U_{j-1}(\BFs) \geq 0 \Rightarrow s_{j-1} \leq n - 2m \frac{n\log d}{d^{m-j+2}} + 3(j-1) d^{j-2}\sqrt{m n}\log n+ 7(j-1)m^2\frac{n\log (d)^2}{d^{m-j+3}}. \numberthis \label{eq: induction_step_bound_sjminus1_claim}
\end{align*}
First, consider the case when $U_{j-1}^{(1)}(\BFs) \leq U_{j-1}^{(2)}(\BFs)$. The drift is given as follows:
\begin{align*}
    \Delta U_{j-1}(\BFs) \leq{}& -s_j + \lambda\left(\left(\frac{s_{j-1}}{n}\right)^{\lfloor d \rfloor} - \left(\frac{s_b}{n}\right)^{\lfloor d \rfloor}\right) \\
    \overset{(a)}{\leq}{}& - n + 2m \frac{n\log d}{d^{m-j+1}} - 3j d^{j-1}\sqrt{m n}\log n- 7jm^2\frac{n\log (d)^2}{d^{m-j+2}} + \sqrt{m n}\log n \\
    &+n\left(1 - 2m \frac{\log d}{d^{m-j+2}} + 3(j-1) d^{j-2}\frac{\sqrt{m}\log n}{\sqrt{n}}+ 7(j-1)m^2\frac{\log (d)^2}{d^{m-j+3}}\right)^{\lfloor d \rfloor} \\
    \overset{(b)}{\leq}{}&- n + 2m \frac{n\log d}{d^{m-j+1}} - 3j d^{j-1}\sqrt{m n}\log n- 7jm^2\frac{n\log (d)^2}{d^{m-j+2}} + \sqrt{m n}\log n \\
    &+n\left(1 - 2m \frac{\lfloor d \rfloor\log d}{d^{m-j+2}} + 3(j-1) d^{j-2}\frac{\lfloor d \rfloor\sqrt{m}\log n}{\sqrt{n}}+ 7(j-1)m^2\frac{\lfloor d \rfloor\log (d)^2}{d^{m-j+3}}\right) \\
    &+ \frac{3n}{2} \left(4m^2 \frac{\log (d)^2}{d^{2m-2j+2}} + 9(j-1)^2 d^{2j-2}\frac{m\log (n)^2}{n}+ 49(j-1)^2m^4\frac{\log (d)^4}{d^{2m-2j+4}}\right) \\
    \leq{}& 2m \frac{n\log d}{d^{m-j+2}}-3 d^{j-1}\sqrt{m n}\log n - 7m^2\frac{n\log (d)^2}{d^{m -j+2}} + \sqrt{m n}\log n+6m^2 \frac{n \log (d)^2}{d^{2m-2j+2}}\\
    &+ 13.5 m^3 d^{2j-2} \log (n)^2 + 147m^6\frac{n \log (d)^4}{2d^{2m-2j+4}} \\
    \overset{(c)}{\leq}{}& - \sqrt{m n}\log n,
\end{align*}
where $(a)$ follows as $s_b \geq 0$, $\lambda \leq n$, and $s_{j-1}$ and $s_j$ are bounded as in \eqref{eq: induction_step_bound_sjminus1_claim} and \eqref{eq: induction_step_bound_sj_claim} respectively. Next, $(b)$ follows by Lemma \ref{lemma: power_of_d_not_going_to_zero} for all $n \geq \tilde{n}_c^{(4)}$ for some $\tilde{n}_c^{(4)} \in \bbZ_+$ independent of $j \in [n]$ as 
\begin{align*}
    2m \frac{\log d}{d^{m-j+2}} - 3(j-1) d^{j-2}\frac{\sqrt{m}\log n}{\sqrt{n}}- 7(j-1)m^2\frac{\log (d)^2}{d^{m-j+3}} \geq 0
\end{align*} 
for $n$ large enough. Note that we also use the inequality $(a+b+c)^2 \leq 3(a^2+b^2+c^2)$. Next, $(c)$ holds for $n \geq \tilde{n}_c^{(5)}$ for some $\tilde{n}_c^{(5)}$ independent of $j \in [m]$ as
\begin{align*}
    \frac{6.5m^2 n \log (d)^2}{d^{m-j+2}} &\overset{(c_1)}{\geq} \frac{2m n \log d}{d^{m-j+2}} +
    \frac{6m^2 n \log (d)^2}{d^{m-j+2}} \overset{(c_2)}{\geq} \frac{2m n \log d}{d^{m-j+2}} +   \frac{6m^2 n \log (d)^2}{d^{2m-2j+2}} \\
    \frac{m^2 n \log (d)^2}{2d^{m-j+2}} &\overset{(c_3)}{\geq} \frac{m^2 n \log (d)^2}{2d^{m-j+2}} \times \frac{147m^4 \log (d)^2}{d^{m-j+2}} = 147m^6\frac{n \log (d)^4}{2d^{2m-2j+4}} \\
    3 d^{j-1}\sqrt{m n}\log n &\overset{(c_4)}{\geq} 2\sqrt{m n}\log n +  d^{j-1}\sqrt{m n}\log n \times   27 m^{3.5} n^{\gamma-0.5} \log (n)^2 \\
    &\overset{(c_5)}{\geq}  2\sqrt{m n}\log n +  d^{j-1}\sqrt{m n}\log n \times   \frac{27 m^{2.5} d^{m} \log n}{2\sqrt{n}} \\
    &\overset{(c_6)}{\geq}  2\sqrt{m n}\log n + 13.5 m^3 d^{2j-2} \log (n)^2,
\end{align*}
where $(c_1)$ holds as $d \rightarrow \infty$ as $n \rightarrow \infty$ and $(c_2)$ follows as  $j \leq m$. Next, $(c_3)$ holds as $j \leq m$, and so, $m^4 \log (d)^2 / d^{m-j+2} \leq \log (n)^4 / d^2 \rightarrow 0$ as $n \rightarrow \infty$. Further, $(c_4)$ holds as $j \geq 1$ and $\gamma < 0.5$ and $(c_5)$ holds as $n^\gamma \geq d^m / (2m \log n)$. Lastly, $(c_6)$ holds as $j \leq m$. Now, consider the case when $U_{j-1}^{(1)}(\BFs) \geq U_{j-1}^{(2)}(\BFs)$. The drift is given as follows:
\begin{align*}
    &\Delta U_{j-1}(\BFs) \\
    \leq{}& s_{j-1} - s_j - \lambda\left(\left(\frac{s_{j-2}}{n}\right)^{\lfloor d \rfloor} - \left(\frac{s_{j-1}}{n}\right)^{\lfloor d \rfloor}\right) \\
    \overset{(a)}{\leq}{}&  - 2m \frac{n\log d}{d^{m-j+2}} + 3(j-1) d^{j-2}\sqrt{m n}\log n+ 7(j-1)m^2\frac{n\log (d)^2}{d^{m-j+3}} \\
    &+ 2m \frac{n\log d}{d^{m-j+1}} - 3j d^{j-1}\sqrt{m n}\log n- 7jm^2\frac{n\log (d)^2}{d^{m-j+2}} + \sqrt{m n}\log n \\
    &-\lambda\left(1 - 2m \frac{\log d}{d^{m-j+3}} - 4m d^{j-3}\frac{\sqrt{m}\log n}{\sqrt{n}} -16m^3 \frac{\log (d)^2}{d^{m-j+4}}\right)^{\lfloor d \rfloor}\\
    &+\lambda\left(1 - 2m \frac{\log d}{d^{m-j+2}} + 3(j-1) d^{j-2}\frac{\sqrt{m}\log n}{\sqrt{n}}+ 7(j-1)m^2\frac{\log (d)^2}{d^{m-j+3}}\right)^{\lfloor d \rfloor} \\
   \overset{(b)}{\leq}{}&  - 2m \frac{n\log d}{d^{m-j+2}} + 3(j-1) d^{j-2}\sqrt{m n}\log n+ 7(j-1)m^2\frac{n\log (d)^2}{d^{m-j+3}} \\
    &+ 2m \frac{n\log d}{d^{m-j+1}} - 3j d^{j-1}\sqrt{m n}\log n- 7jm^2\frac{n\log (d)^2}{d^{m-j+2}} + \sqrt{m n}\log n \\
    &-\lambda\left(1 - 2m \frac{\log d}{d^{m-j+2}} - 4m d^{j-2}\frac{\sqrt{m}\log n}{\sqrt{n}} -16m^3 \frac{\log (d)^2}{d^{m-j+3}}\right)\\
    &+\lambda\left(1 - 2m \frac{\lfloor d \rfloor\log d}{d^{m-j+2}} + 3(j-1) d^{j-2}\frac{\lfloor d \rfloor\sqrt{m}\log n}{\sqrt{n}}+ 7(j-1)m^2\frac{\lfloor d \rfloor\log (d)^2}{d^{m-j+3}}\right) \\
    &+\frac{3\lambda}{2}\left(4m^2 \frac{\log (d)^2}{d^{2m-2j+2}} + 9m(j-1)^2 d^{2j-2}\frac{\log (n)^2}{n}+ 49(j-1)^2m^4\frac{\log (d)^4}{d^{2m-2j+4}}\right) \\
   \leq{}& 2m \frac{n \log d}{d^{m-j+2}}-3d^{j-1}\sqrt{m n}\log n-7m^2\frac{n\log (d)^2}{d^{m-j+2}}+\left(4 m+3(j-1)\right) d^{j-2}\sqrt{m n}\log n\\
    &+m^2\left(16m+7j-7\right) \frac{n\log (d)^2}{d^{m-j+3}}+\sqrt{m n}\log n+6m^2 \frac{n\log (d)^2}{d^{2m-2j+2}} + \frac{27m}{2}(j-1)^2 d^{2j-2}\log (n)^2\\
    &+ \frac{147}{2}(j-1)^2m^4\frac{n\log (d)^4}{d^{2m-2j+4}} + 2m \frac{n^{1-\gamma} \log d}{d^{m-j+1}} \\
    \overset{(c)}{\leq}{}& -3d^{j-1}\sqrt{m n}\log n+\left(4 m+3(j-1)\right) d^{j-2}\sqrt{m n}\log n + \sqrt{m n}\log n \\
    \overset{(d)}{\leq}{}& -d^{j-1}\sqrt{m n}\log n \leq -\sqrt{m n}\log n,
\end{align*}
where $(a)$ follows by using the bounds on $s_{j-2}$, $s_{j-1}$ and $s_j$ given by \eqref{eq: induction_step_bound_sjminus2_claim}, \eqref{eq: induction_step_bound_sjminus1_claim} and \eqref{eq: induction_step_bound_sj_claim} respectively. Next, $(b)$ follows by Lemma \ref{lemma: power_of_d_not_going_to_zero} for $n \geq \tilde{n}_c^{(6)}$ for some $\tilde{n}_c^{(6)} \in \bbZ_+$ independent of $j \in [m]$ as
\begin{align*}
    2m \frac{\lfloor d \rfloor\log d}{d^{m-j+2}} - 3(j-1) d^{j-2}\frac{\lfloor d \rfloor\sqrt{m}\log n}{\sqrt{n}}- 7(j-1)m^2\frac{\lfloor d \rfloor\log (d)^2}{d^{m-j+3}} \geq 0
\end{align*}
for $n$ large enough. Now, $(c)$ follows as there exists $\tilde{n}_c^{(7)} \in \bbZ_+$ independent of $j$ such that for all $n \geq \tilde{n}_c^{(7)}$, we have
\begin{align*}
    \frac{m^2}{4}\frac{n\log (d)^2}{d^{m-j+2}} &\overset{(c_1)}{\geq} 23m^3 \frac{n\log (d)^2}{d^{m-j+3}} \geq m^2\left(16m+7(j-1)\right) \frac{n\log (d)^2}{d^{m-j+3}} \\
    6.25\frac{m^2n\log (d)^2}{d^{m-j+2}} &\overset{(c_2)}{\geq} 6m^2 \frac{n\log (d)^2}{d^{2m-2j+2}} +  \frac{m^2n\log (d)^2}{4d^{m-j+2}}\overset{(c_3)}{\geq} 6m^2 \frac{n\log (d)^2}{d^{2m-2j+2}} + \frac{147}{2}(j-1)^2m^4\frac{n\log (d)^4}{d^{2m-2j+4}} \\
    \frac{m^2}{4}\frac{n\log (d)^2}{d^{m-j+2}} &\overset{(c_4)}{=} \frac{1}{16}d^{m + j-2}n^{1-2\gamma} \overset{(c_5)}{\geq} \frac{1}{16}d^{2j-2}n^{1-2\gamma} \overset{(c_6)}{\geq} \frac{27m}{2}(j-1)^2 d^{2j-2}\log (n)^2 \\
    \frac{m^2}{4}\frac{n\log (d)^2}{d^{m-j+2}} &\overset{(c_7)}{\geq} 4m \frac{n \log d}{d^{m-j+2}} \overset{(c_8)}{\geq} 2m \frac{n \log d}{d^{m-j+2}} + 2m \frac{dn^{1-\gamma} \log d}{d^{m-j+2}},
\end{align*}
where $(c_1)$ follows as $m \log d / d \leq \log n/d \rightarrow 0$ as $n \rightarrow \infty$. Next, $(c_2)$ follows as $j \leq m$ and $(c_3)$ follows as $(j-1)^2m^2 \log(d)^2/d^2 \leq m^4\log(d)^2 / d^2 \leq \log (n)^4/d^2 \rightarrow 0$ with $n \rightarrow \infty$ as $m \leq \log n/\log d$. Now, $(c_4)$ follows by noting that $2m n \log d / d^{m}=n^{1-\gamma}$ as $m$ is assumed to be an integer, $(c_5)$ follows as $m \geq j$, and $(c_6)$ follows as $1-2\gamma > 0$. Lastly, $(c_7)$ holds as $d \rightarrow \infty$ as $n \rightarrow \infty$ and $(c_8)$ holds as $d n^{-\gamma} \leq (2m \log n) n^{-\gamma + \gamma/m} \rightarrow 0$ as $m \geq 2$.

Now, $(d)$ follows as there exists $\tilde{n}_c^{(8)} \in \bbZ_+$ independent of $j$ such that for all $n \geq \tilde{n}_c^{(8)}$, we have
\begin{align*}
    d^{j-1}\sqrt{m n}\log n &\geq \sqrt{m n}\log n \\
    d^{j-1}\sqrt{m n}\log n &\overset{(*)}{\geq} 7m d^{j-2}\sqrt{m n}\log n \geq \left(4 m+3(j-1)\right) d^{j-2}\sqrt{m n}\log n,
\end{align*}
where $(*)$ follows as $m / d \rightarrow 0$ as $n \rightarrow \infty$. By combining the above two cases, we get $\Delta U_{j-1}(\BFs) \leq -\sqrt{m n}\log n$ when $U_{j-1}(\BFs) \geq 0$ and $\BFs \in \tilde{\calC}_{j}^{(1)} \cap \tilde{\calD}_{j-2}^{(1)}$. Now, for all $n \geq \max_{k \in [8]}\{\tilde{n}_c^{(k)}\}$, using Lemma \ref{lemma: iterative_ssc}, we get
\begin{align*}
    \P{U_{j-1}(\bbars) \geq \sqrt{m n}\log n} &\leq \left(\frac{n}{n+\sqrt{m n}\log n}\right)^{(\sqrt{m n}\log n)/2} + \sqrt{n}\P{\bbars \notin \tilde{\calC}_{j}^{(1)} \cap \tilde{\calD}_{j-2}^{(1)}} \\
    &\overset{(a)}{\leq} \left(\frac{1}{n}\right)^{(m \log n)/4} + \sqrt{n}\left(\P{\bbars \notin \tilde{\calC}_{j}^{(1)}} + \P{\bbars \notin \tilde{\calD}_{j-2}^{(1)}}\right) \\
    &\overset{(b)}{\leq} \left(\frac{1}{n}\right)^{(m \log n)/4} + \left(\frac{1}{n}\right)^{\frac{m \log n}{\max\{x,5\}}-(m +1 -j)-0.5} + \sqrt{n}\P{\bbars \notin \tilde{\calD}_{j-2}^{(1)}} \\
    &\overset{(c)}{\leq} \left(\frac{1}{n}\right)^{(m \log n)/4} + \left(\frac{1}{n}\right)^{\frac{m \log n}{\max\{x,5\}}-(m +1 -j)-0.5} + \left(\frac{1}{n}\right)^{(m \log n)/5-0.5} \\
    &\overset{(d)}{\leq} \left(\frac{1}{n}\right)^{\frac{m \log n}{\max\{x,5\}}-(m+2-j)},
\end{align*}
where $(a)$ follows by Lemma \ref{lemma: simplified_ssc_identity}, $(b)$ follows by the induction hypothesis \eqref{eq: induction_hypothesis_upper_bound_claim}, and $(c)$ follows by Theorem \ref{theo: lower_bound}. Lastly, $(d)$ follows for all $n \geq \tilde{n}_c^{(9)}$ for some $\tilde{n}_c^{(9)} \in \bbZ_+$. By considering $\tilde{n}_c \geq \max_{k \in [9]}\{\tilde{n}_c^{(k)}\}$, the induction step is complete. \hfill $\square$
\endproof
\section{Proof of Preliminary Lemmas} \label{app: taylor_series}
\proof{Proof of Lemma \ref{lemma: power_of_d_going_to_zero}}
Let $d_0$ be such that $(r\log d -df(d))/d < 1$ as $df(d) \rightarrow 0$ and $r\log d/d \rightarrow 0$. Now, for all $d \geq d_0$, we have
\begin{align*}
    \log \left(d^r \left(1 - r\frac{\log d}{d}+f(d)\right)^{\lfloor d \rfloor}\right) &= \lfloor d \rfloor\log\left(1 - r\frac{\log d}{d}+f(d)\right) + r\log d  \\
    &\leq r\log d \left(1-\frac{\lfloor d \rfloor}{d}\right) + \lfloor d \rfloor f(d) \\
    &\leq \frac{r\log d}{d}  + d f(d) \rightarrow 0 \textit{ as } d \rightarrow \infty, \numberthis \label{eq: converging_to_zero_app_lemma}
\end{align*}
where the last inequality follows as $\log(1+x) \leq x$ for $x > -1$. Lastly, note that \eqref{eq: converging_to_zero_app_lemma} implies that $\limsup_{d \rightarrow \infty} d^r\left(1 - r\frac{\log d}{d}+f(d)\right)^d \leq 1$ which completes the proof. \hfill $\square$
\endproof
\proof{Proof of Lemma \ref{lemma: power_of_d_not_going_to_zero}}
Note that $1-xd \leq 1-x\lfloor d \rfloor \leq (1-x)^{\lfloor d \rfloor}$ holds for all $0 \leq x \leq 1$ by the Bernoulli's inequality. This completes the first part of the lemma. Now to prove the second part of the lemma, by Binomial series expansion, we have
\begin{align*}
    \left(1-f(d)\right)^{\lfloor d \rfloor} ={}& \sum_{k=0}^\infty \binom{\lfloor d \rfloor}{k} (-f(d))^k \\
    ={}&1 - \lfloor d \rfloor f(d) + \frac{1}{2}\lfloor d \rfloor(\lfloor d \rfloor-1)f(d)^2 - \frac{1}{6}\lfloor d \rfloor(\lfloor d \rfloor-1)(\lfloor d \rfloor-2)f(d)^3 +  \sum_{k=4}^\infty \binom{\lfloor d \rfloor}{k} (-f(d))^k \\
    \overset{(a)}{}&{\leq} 1 - \lfloor d \rfloor f(d) + \frac{1}{2}(\lfloor d \rfloor)^2f(d)^2 - \frac{1}{6}\lfloor d \rfloor(\lfloor d \rfloor-1)(\lfloor d \rfloor-2)f(d)^3 \\
    &+ \frac{1}{6}\lfloor d \rfloor(\lfloor d \rfloor-1)(\lfloor d \rfloor-2)f(d)^3\sum_{k=1}^\infty (\lfloor d \rfloor f(d))^k \\
    ={}&1 - \lfloor d \rfloor f(d) + \frac{1}{2}(\lfloor d \rfloor)^2f(d)^2 - \frac{1}{6}\lfloor d \rfloor(\lfloor d \rfloor-1)(\lfloor d \rfloor-2)f(d)^3\left(1 - \frac{\lfloor d \rfloor f(d)}{1-\lfloor d \rfloor f(d)}\right) \\
    \overset{(b)}{}&{\leq} 1 - \lfloor d \rfloor f(d) + \frac{1}{2}(\lfloor d \rfloor)^2f(d)^2 - \frac{1}{12}\lfloor d \rfloor(\lfloor d \rfloor-1)(\lfloor d \rfloor-2)f(d)^3 \\
    \overset{(c)}{}&{\leq} 1 - \lfloor d \rfloor f(d) + \frac{1}{2}d^2f(d)^2,
\end{align*}
where $(a)$ follows as $f(d) \geq 0$ for all $d \geq d_1$, and we have $\binom{\lfloor d \rfloor}{k} \leq (\lfloor d \rfloor(\lfloor d \rfloor-1)(\lfloor d \rfloor-2)\lfloor d \rfloor^{k-3}/6)$ for all $k \geq 4$. Next, $(b)$ follows for all $d \geq d_2$ for some $d_2 \geq d_1$ as $\lfloor d \rfloor f(d) \rightarrow 0$. Lastly, $(c)$ follows for all $d \geq d_1$ as $f(d) \geq 0$ and $\lfloor d \rfloor \leq d$. This completes the proof. \hfill $\square$
\endproof
\proof{Proof of Lemma \ref{lemma: simplified_ssc_identity}} We have
\begin{align*}
    \left(\frac{n}{n+\sqrt{m n}\log n}\right)^{(\sqrt{m n} \log n)/2} &= \left(1+\frac{\sqrt{m}\log n}{\sqrt{ n}}\right)^{-(\sqrt{m n}\log n)/2} 
    = e^{-m\log (n)^2/2 \left(\frac{\log(1+\sqrt{m}\log n/\sqrt{n})}{\sqrt{m}\log n/\sqrt{n}}\right)} 
    \\
    &\leq e^{-m\log (n)^2/4}
    =\left(\frac{1}{n}\right)^{(m\log n)/4},
\end{align*}
where the inequality holds due to the following. Observe that $m \leq \log n/\log d$ by \eqref{eq: m}. Thus, $\sqrt{m}\log n/\sqrt{n} \rightarrow 0$, which implies $\log(1+\sqrt{m}\log n/\sqrt{n})/(\sqrt{m}\log n/\sqrt{n}) \rightarrow 1$. Thus, there exists $n_a \in \bbZ_+$ such that for all $n \geq n_a$, we have $\log(1+\sqrt{m}\log n/\sqrt{n})/(\sqrt{m}\log n/\sqrt{n}) \geq 0.5$. This completes the proof. \hfill $\square$
\endproof
\section{Discussion on Lower Order Terms} \label{app: lower_order_terms}
Note that the leading order term in the concentration bounds of Theorem~\ref{theo: informal} is equal to the fixed point of the deterministic dynamical system $\left(\frac{\lambda}{n}\right)^{\frac{d^i-1}{d-1}}$. Here we discuss the order of the lower order terms that one should expect and compare it to the lower order terms obtained in Theorem \ref{theo: lower_bound} and \ref{theo: upper_bound}. For any $i \in [m]$, observe that $s_i$ can be approximated as a queue with arrivals governed by a Poisson process with rate $\lambda\left(\left(\frac{s_{i-1}}{n}\right)^d-\left(\frac{s_{i}}{n}\right)^d\right)$ and the service given by an exponential distribution with rate $s_i-s_{i+1}$. By setting $s_i \approx n\left(\frac{\lambda}{n}\right)^{\frac{d^i-1}{d-1}} \approx n\left(1-n^{-\gamma}d^{i-1}\right)$, the arrival process can be approximated by a Poisson process with rate $\lambda n^{-\gamma}\left(d^{i}-d^{i-1}\right) \approx n^{1-\gamma} d^i \approx n^{1-\gamma+\gamma i/m}$ as $d \approx n^{\gamma/m}$. Similarly, the service process can approximated as an exponential distribution with rate $n^{1-\gamma}\left(d^{i}-d^{i+1}\right) \approx n^{1-\gamma}d^i \approx n^{1-\gamma+\gamma i/m}$. Thus, we should expect the standard deviation of $s_i$ to be equal to $\sqrt{n^{1-\gamma+\gamma i/m}} = n^{0.5 - \frac{\gamma}{2}\left(1-\frac{i}{m}\right)}$. Thus, one should expect
\begin{align*}
    s_i \approx n\left(\frac{\lambda}{n}\right)^{\frac{d^i-1}{d-1}} \pm \Theta\left(n^{0.5 - \frac{\gamma}{2}\left(1-\frac{i}{m}\right)}\right) \quad \forall i \in [m].
\end{align*}
Note that the lower order term is equal to $\sqrt{n}$ for $i=m$ and $o(\sqrt{n})$ for $i < m$ which is consistent with the diffusion scaling observed in \cite{amarjit_power_of_d_sub_halfin_whitt}: $\Theta(\sqrt{n})$ for $i=m$ and $o(\sqrt{n})$ for $i < m$. While our bounds of Theorems \ref{theo: lower_bound} and \ref{theo: upper_bound} show concentration around the fixed point, we obtain a lower order term equal to $d^{i-1} \sqrt{n} \approx n^{0.5 + (i-1)\gamma/m}$ which is larger than that of $n^{0.5 - \frac{\gamma}{2}\left(1-\frac{i}{m}\right)}$. Note that \cite{amarjit_power_of_d_sub_halfin_whitt} obtains the correct scaling of the lower order terms for $i=m$ as $n \rightarrow \infty$, so characterizing the pre-limit lower-order scalings for $i \in [m]$ is an interesting future direction.

\section{Zero Waiting Time for \texorpdfstring{$m=1$}{m=1}} \label{app: zero_waiting}
Let $\mathcal{W}$ be the event where an arrival is routed to a queue with non-zero queue length when the system is in the steady-state. By the PASTA property, the queue length at the time of a customer arrival is $\bbars$. Thus, we have $\P{\mathcal{W} | \bbars} = \left(\frac{\bars_1}{n}\right)^d$ when then implies
\begin{align*}
    \P{\mathcal{W}} ={}& \E{\P{\mathcal{W}| \bbars}} = \E{\left(\frac{\bars_1}{n}\right)^d} \\
    \leq{}& \left(1 - \frac{2 \log d}{d} + \frac{19\log n}{\sqrt{n}} + 49\frac{ \log(d)^2}{d^2}\right)^d+  \P{\bars_1 > n - \frac{2 n\log d}{d} + 19 \sqrt{n}\log n + 49\frac{n\log(d)^2}{d^2}} \\
    \overset{(a)}{\leq}{}& \left(1 - \frac{2 \log d}{d} + \frac{19\log n}{\sqrt{n}} + 49\frac{ \log(d)^2}{d^2}\right)^d  + \left(\frac{1}{n}\right)^{(\log n)/9} \\
    \overset{(b)}{\leq}{}& \frac{2}{d^{2}} + \left(\frac{1}{n}\right)^{(\log n)/9} \rightarrow 0 \textit{ as } n \rightarrow \infty,
\end{align*}
where $(a)$ follows by Theorem~\ref{theo: upper_bound} and $(b)$ follows by Lemma~\ref{lemma: power_of_d_going_to_zero} for $n$ large enough. This completes the proof.

\end{APPENDICES}

\end{document}